\providecommand{\U}[1]{\protect\rule{.1in}{.1in}}
\numberwithin{equation}{section}
\newtheorem{theorem}{Theorem}[section]
\newtheorem{corollary}[theorem]{Corollary}
\newtheorem{definition}[theorem]{Definition}
\newtheorem{example}[theorem]{Example}
\newtheorem{lemma}[theorem]{Lemma}
\newtheorem{proposition}[theorem]{Proposition}
\newtheorem{remark}[theorem]{Remark}
\newenvironment{proof}[1][Proof]{\noindent\textbf{#1.} }{\ \rule{0.5em}{0.5em}}
\begin{document}

\title{Centers of Braided Tensor Categories}
\author{Zhimin Liu\thanks{E-mail: zhiminliu13@fudan.edu.cn} \thanks{Project funded by China Postdoctoral Science Foundation grant 2019M661327}\hspace{1cm} Shenglin Zhu
\thanks{CONTACT: mazhusl@fudan.edu.cn} \thanks{This work was
supported by NNSF of China (No. 11331006).}\\Fudan University, Shanghai 200433, China}
\date{}
\maketitle

\begin{abstract}
Let $\mathcal{C}$ be a finite braided multitensor category. Let $B$ be Majid's
automorphism braided group of $\mathcal{C}$, then $B$ is a cocommutative Hopf
algebra in $\mathcal{C}$. We show that the center of $\mathcal{C}$ is
isomorphic to the category of left $B$-comodules in $\mathcal{C}$, and the
decomposition of $B$ into a direct sum of indecomposable $\mathcal{C}%
$-subcoalgebras leads to a decomposition of $B$-$\operatorname*{Comod}%
_{\mathcal{C}}$ into a direct sum of indecomposable $\mathcal{C}$-module subcategories.

As an application, we present an explicit characterization of the structure of
irreducible Yetter-Drinfeld modules over semisimple quasi-triangular weak Hopf
algebras. Our results generalize those results on finite groups and on
quasi-triangular Hopf algebras.

\end{abstract}

\textbf{KEYWORDS}: Drinfeld Center, Braided tensor category,
Automorphism braided group, Module category over monoidal category

\textbf{2000 MATHEMATICS SUBJECT CLASSIFICATION}: 16W30

\section{Introduction}

\label{section-intro}

The theory of module categories over a tensor category was introduced
respectively by Bernstein's \cite{Bernstein1995Sackler}, by Crane and Frenkel
\cite{Crane1994Four}, and well-developed by Ostrik \cite{Ostrik2003module}, by
Etingof and Ostrik \cite{Etingof2004finite}.

Let $\left(  \mathcal{M},\otimes,a,\ell\right)  $ be a semisimple module
category over a finite multitensor category $\mathcal{C}$, and $M\in
\mathcal{M}$ be a generator of $M$. It is proved in
\cite{Ostrik2003module,Etingof2004finite,Etingof2015tensor} that
$A=\underline{\operatorname{Hom}}\left(  M,M\right)  $ is a semisimple algebra
in $\mathcal{C}$, and the internal Hom functor $F=\underline
{\operatorname{Hom}}\left(  M,\bullet\right)  :\mathcal{M}\rightarrow
\operatorname{Mod}_{\mathcal{C}}$-$A$ induces a $\mathcal{C}$-module category
equivalence. The proof is based on the fact that $F$ is faithful and full, and
essentially surjective on objects. In \cite{LiuZhu2019On}, for a right
$A$-module $\left(  U,q\right)  $ in $\mathcal{C}$ and a left $A$-module
$(N,p)$ in $\mathcal{M}$ the authors defined the tensor product $U\otimes
_{A}N$ and proved that the functor $G=\bullet\otimes_{A}M:\operatorname{Mod}%
_{\mathcal{C}}$-$A\rightarrow\mathcal{M}$ is a quasi-inverse of $F$.

Let $\mathcal{C}$ be a monoidal category. There is a well-known braided
category construction $\mathcal{Z}_{l}\left(  \mathcal{C}\right)  $, called
the Drinfeld center of $\mathcal{C}$ (see \cite{Joyal1991Tortile}). The
objects of $\mathcal{Z}_{l}\left(  \mathcal{C}\right)  $ are those objects of
$\mathcal{C}$ together with natural transformations satisfying a hexagon
axiom. The center is a categorical version of the Hopf algebraic construction
of the Drinfeld double. If $H$ is a finite dimensional Hopf algebra over a
field and $\mathcal{C=}${}$_{H}\mathcal{M}$, then $\mathcal{Z}_{l}\left(
\mathcal{C}\right)  $ is equivalent to the Yetter-Drinfeld module category
${}_{H}^{H}\mathcal{YD}$.

Assume further that $\mathcal{C}$ is braided. The center $\mathcal{Z}%
_{l}\left(  \mathcal{C}\right)  $ can be viewed as a right module category
over $\mathcal{C}$. If $\mathcal{C}$ is multitensor with certain additional
assumption, then there is a cocommutative $\mathcal{C}$-Hopf algebra $U\left(
\mathcal{C}\right)  $, coming from the braided reconstruction theory, which is
named the automorphism braided group of $\mathcal{C}$ by Majid
\cite{Majid1991Reconstruction,Majid1991Braided}.

Let $\left(  H,R\right)  $ be a quasi-triangular Hopf algebra over a field $k$
and $\mathcal{C}$ be the braided tensor category {}$_{H}\mathcal{M}$. Then
$U\left(  \mathcal{C}\right)  =H$ \cite{Majid1991Braided}, with the same
algebra structure of $H$ and an $R$-twisted coalgebra structure $\Delta_{R}$.
(For brevity, we denote the $\mathcal{C}$-coalgebra $\left(  H,\Delta
_{R}\right)  $ by $H_{R}$.) The Yetter-Drinfeld module category${}$ $_{H}%
^{H}\mathcal{YD}$ is equivalent to the relative module category {}$_{H}%
^{H_{R}}\mathcal{M}$ \cite{Zhu2015Relative}. In \cite{LiuZhu2019On}, the
authors have proved that each Yetter-Drinfeld submodule of $H\in{}_{H}%
^{H}\mathcal{YD}$ is a subcoalgebra of $H_{R}$, and $H$ admits a unique
decomposition into the direct sum of indecomposable Yetter-Drinfeld
submodules, while this decomposition coincides with the direct sum $\left(
H,\Delta_{R}\right)  =D_{1}\oplus\cdots\oplus D_{r}$ of the indecomposable
$\mathcal{C}$-subcoalgebras of $H_{R}$. Furthermore, the tensor category
\[
{}_{H}^{H}\mathcal{YD}\cong{}_{H}^{H_{R}}\mathcal{M=}\oplus_{i=1}^{r}{}%
_{H}^{D_{i}}\mathcal{M}%
\]
is a canonical direct sum of indecomposable module categories over
$\mathcal{C}$, and by
\cite{Ostrik2003module,Etingof2004finite,Etingof2015tensor} each category
${}_{H}^{D_{i}}\mathcal{M}$ is equivalent to the category $A_{i}%
$-$\operatorname{Mod}_{\mathcal{C}}$, where $A_{i}=\underline
{\operatorname{Hom}}\left(  M_{i},M_{i}\right)  $ for a nonzero object
$M_{i}\in{}_{H}^{D_{i}}\mathcal{M} $.

Moreover, ${}_{H}^{H}\mathcal{YD}$ can also be viewed as a left module
category over $\mathcal{C}^{\prime}=Vec_{k}$. In this case, internal Homs in
${}_{H}^{H}\mathcal{YD}$ are constructed concretely, and the structure of
irreducible objects of${}_{H}^{H}\mathcal{YD}$ are given in
\cite{LiuZhu2019On}. This structure theorem deduces the classical results on
finite groups.

This paper is devoted to the study of the center $\mathcal{Z}_{l}\left(
\mathcal{C}\right)  $ of a finite braided multitensor category $\mathcal{C}$.
We develop a purely categorical version of the structure theorem on
Yetter-Drinfeld modules for quasi-triangular Hopf algebras, which appeared in
\cite{LiuZhu2019On}, extend the results to the center of finite braided
multitensor categories. Explicitly, we prove that as module categories over
$\mathcal{C}$, $\mathcal{Z}_{l}\left(  \mathcal{C}\right)  $ is equivalent to
the category $U\left(  \mathcal{C}\right)  $-$\operatorname*{Comod}%
_{\mathcal{C}}$ of left $U\left(  \mathcal{C}\right)  $-comodules in
$\mathcal{C}$, and the decomposition of $U\left(  \mathcal{C}\right)  $ into a
direct sum of indecomposable $\mathcal{C}$-subcoalgebras leads to a
decomposition of $U\left(  \mathcal{C}\right)  $-$\operatorname*{Comod}%
_{\mathcal{C}}$ into a direct sum of indecomposable $\mathcal{C}$-module
subcategories, and each such indecomposable $\mathcal{C}$-module subcategory
is equivalent to the category of left modules over a $\mathcal{C}$-algebra.
And we present a characterization of the internal Hom for $U\left(
\mathcal{C}\right)  $-$\operatorname*{Comod}_{\mathcal{C}}$.

It is known that any finite multifusion category is equivalent to the category
of finite dimensional representations of a regular semisimple weak Hopf
algebra \cite{Hayashi1999ACanonical}\cite{Szlachanyi2000Finite}. The main
results of this paper are applied to the study of Yetter-Drinfeld module for
quasi-triangular weak Hopf algebras. An explicit characterization of the
structure of irreducible Yetter-Drinfeld modules over semisimple
quasi-triangular weak Hopf algebras will be given, which generalize those
results on finite groups \cite{dijkgraaf1992quasi,gould1993quantum} and on
quasi-triangular Hopf algebras \cite{LiuZhu2019On}.

The paper is organized as follows. Section \ref{sec-preliminary} recalls
module categories, Drinfeld centers of monoidal categories. Section
\ref{Section Center of BRC} discusses the center $\mathcal{Z}_{l}\left(
\mathcal{C}\right)  $ of a braided rigid category $\mathcal{C}$. Using
graphical calculus, we prove that when the automorphism braided group
$U\left(  \mathcal{C}\right)  $ exists, the category $\mathcal{Z}_{l}\left(
\mathcal{C}\right)  $ is equivalent to the category $U\left(  \mathcal{C}%
\right)  $-$\operatorname*{Comod}_{\mathcal{C}}$ of left $U\left(
\mathcal{C}\right)  $-comodules in $\mathcal{C}$. In Section \ref{sec-decomp
thm} we show that a decomposition of the automorphism braided group induces a
decomposition of $\mathcal{Z}_{l}\left(  \mathcal{C}\right)  $ as
$\mathcal{C}$-module subcategories. Section \ref{sec-appl to WHA} is devoted
to an application of the theory developed to weak Hopf algebras.

\section{Preliminaries}

\label{sec-preliminary}

\subsection{Notations and Conventions}

Throughout this paper, $k$ denotes a field, and $\mathrm{Vec}_{k}$ denotes the
category of finite dimensional vector spaces over $k$. For the basic theory of
monoidal categories, the reader is referred to \cite{Etingof2015tensor}. It is
well-known that any monoidal category is equivalent to a strict one by
MacLane's strictness theorem \cite{MacLane1998Categories}, we assume that the
monoidal categories considered are all strict.

Let $\left(  \mathcal{C},\otimes,1\right)  $ be a monoidal category. We will
use graphical calculus to calculate in $\mathcal{C}$, representing morphisms
by diagrams to be read downwards. Our references are
\cite{Yetter1990Quantum,Kassel1995Quantum,NeuchlSchauenburg1998Reconstruction}%
. We denote respectively the evaluations, the coevaluations for left dual
$X^{\ast}$ and right dual $^{\ast}X$ of an object $X\in\mathcal{C}$ by
\[
ev_{X}%
=\tikz[baseline=(current bounding box.west),scale=0.7,samples=100,thick] { \halfcircle{(0,0)}{(1,0)}; \draw (1,0) node[above] {$X$} (0,0) node[above] {$X^{*}$}; },\quad
coev_{X}%
=\tikz[baseline=(current bounding box.west),scale=0.7,samples=100,thick] { \halfcircle{(1,0)}{(0,0)}; \draw (0,0) node[below] {$X$} (1,0) node[below] {$X^{*}$}; },\quad
ev_{X}^{\prime}%
=\tikz[baseline=(current bounding box.west),scale=0.7,samples=100,thick] { \halfcircle{(0,0)}{(1,0)}; \draw (1,0) node[above] {${^{*}\hspace{-0.5ex}X}$} (0,0) node[above] {$X$}; },\quad
coev_{X}^{\prime}%
=\tikz[baseline=(current bounding box.west),scale=0.7,samples=100,thick] { \halfcircle{(1,0)}{(0,0)}; \draw (0,0) node[below] {${^{*}\hspace{-0.5ex}X}$} (1,0) node[below] {$X$}; }.
\]

If $\mathcal{C}$ is also braided, the braiding $c$ and its inverse $c^{-1}$
are denoted respectively by
\[
c_{X,Y}%
=\tikz[baseline=(current bounding box.west),scale=0.7,samples=100,thick] {\drawcrossing[crosstyle=mn,inlefthandlen=0,inrighthandlen=0,outlefthandlen=0,outrighthandlen=0,yscale=.7]{(0,0)}{(1,0)}; \draw (M1) node[above] {$X$} (N1) node[above] {$Y$} (M2) node[below] {$X$} (N2) node[below] {$Y$};},\quad
c_{X,Y}^{-1}%
=\tikz[baseline=(current bounding box.west),scale=0.7,samples=100,thick] {\drawcrossing[crosstyle=nm,inlefthandlen=0,inrighthandlen=0,outlefthandlen=0,outrighthandlen=0,yscale=.7]{(0,0)}{(1,0)};\draw (M1) node[above] {$Y$} (N1) node[above] {$X$} (M2) node[below] {$Y$} (N2) node[below] {$X$};}.
\]
If $B$ is a Hopf algebra in $\mathcal{C}$, we denote its multiplication
$m_{B}$, unit $u_{B}$, comultiplication $\Delta_{B}$, counit $\varepsilon_{B}%
$, antipode $S_{B}$ and the inverse $S_{B}^{-1}$ (if it exists) as follows:
\[
m_{B}%
=\tikz[baseline=(current bounding box.west),scale=1,samples=100,thick] { \drawcoop[CoProdStyle=coproduct,leftarmlen=0.7,rightarmlen=0.7,handlelen=.7,yscale=1]{(0.7,0)}{(0,0)}; \draw (T) node[below] {$B$} (M1) node[above] {$B$} (N1) node[above] {$B$}; },\quad
u_{B}%
=\tikz[baseline=(current bounding box.west),scale=1,samples=100,thick] { \linewithtext[text={{circle/0/\text{\normalsize $u$}}}]{(0,.8)}{(0,0)}; \draw (0,0) node[below] {$B$}; },\quad
\Delta_{B}%
=\tikz[baseline=(current bounding box.west),scale=1,samples=100,thick] { \drawcoop[CoProdStyle=coproduct,leftarmlen=0.7,rightarmlen=0.7,handlelen=.7,yscale=1]{(0,0)}{(0.7,0)}; \draw (T) node[above] {$B$} (M1) node[below] {$B$} (N1) node[below] {$B$}; },\quad
\varepsilon_{B}%
=\tikz[baseline=(current bounding box.west),scale=1,samples=100,thick] { \linewithtext[text={{circle/1/\text{\normalsize $\varepsilon$}}}]{(0,.8)}{(0,0)}; \draw (0,0.8) node[above] {$B$}; },\quad
S_{B}%
=\tikz[baseline=(current bounding box.west),scale=.8,samples=100,thick] { \linewithtext[text={{circle/.5/\text{\tiny{$+$}}}}]{(0,1)}{(0,0)}; \draw (0,1) node[above] {$B$} (0,0) node[below] {$B$}; },\quad
S_{B}^{-1}%
=\tikz[baseline=(current bounding box.west),scale=.8,samples=100,thick] { \linewithtext[text={{circle/.5/\text{\tiny{$-$}}}}]{(0,1)}{(0,0)}; \draw (0,1) node[above] {$B$} (0,0) node[below] {$B$}; }.
\]
\label{graphical notation}

\subsection{Module Categories}

The Morita theory of module categories over a monoidal category was well
developed by Ostrik and Etingof. For references, one can see
\cite{Ostrik2003module,Etingof2015tensor}.

A left module category over a monoidal category $\mathcal{C}$ is a category
$\mathcal{M}$ endowed with an action bifunctor $\otimes:\mathcal{C}%
\times\mathcal{M}\rightarrow\mathcal{M}$, an associativity constraint
$a_{X,Y,M}:\left(  X\otimes Y\right)  \otimes M\rightarrow X\otimes\left(
Y\otimes M\right)  $ and a functorial unit isomorphism $\ell_{M}:1\otimes
M\rightarrow M$, for $X,Y\in\mathcal{C}$, $M\in\mathcal{M}$, satisfying a
pentagon axiom and a triangle axiom.

Similarly, one can define the notion of right module category over
$\mathcal{C}$. Denote the opposite monoidal category of $\mathcal{C}$ by
$\mathcal{C}^{op}$, which is the category $\mathcal{C}$ with reversed order of
tensor product and inverted associativity isomorphism. Then a right
$\mathcal{C}$-module category is a left module category over $\mathcal{C}%
^{op}$.

In the case that $\mathcal{C}$ is a multitensor category, we are interested in
module categories over $\mathcal{C}$ with additional properties in the sense
of \cite[Definition 7.3.1]{Etingof2015tensor}. That is, if we say
$\mathcal{M}$ is a left module category over $\mathcal{C}$, we mean that
$\mathcal{M}$ is a locally finite abelian category equipped with a structure
of a left $\mathcal{C}$-module category, such that the module product
bifunctor $\otimes:\mathcal{C\times M}\rightarrow\mathcal{M}$ is bilinear on
morphisms and exact in the first variable.

In 2003, Ostrik \cite{Ostrik2003module} characterized semisimple
indecomposable module categories over a fusion category $\mathcal{C}$. Later,
Etingof and Ostrik \cite{Etingof2004finite} generalized that result to
nonsemisimple case.

In the study of the structure of a module category $\mathcal{M}$ over a
multitensor category $\mathcal{C}$, a basic tool is the internal Hom. We first
recall this notion here. For objects $M_{1},M_{2},M_{3}$ of $\mathcal{M}$, the
\textit{internal} Hom of $M_{1}$ and $M_{2}$ is an object $\underline
{\operatorname{Hom}}\left(  M_{1},M_{2}\right)  $ of $\mathcal{C}$
representing the contravariant functor $X\mapsto\operatorname{Hom}%
_{\mathcal{M}}\left(  X\otimes M_{1},M_{2}\right)  :\mathcal{C}\rightarrow
\mathrm{Vec}_{k}$, i.e., there exists a natural isomorphism%

\begin{equation}
\eta_{\bullet,M_{1},M_{2}}:\operatorname{Hom}_{\mathcal{M}}\left(
\bullet\otimes M_{1},M_{2}\right)  \overset{\cong}{\longrightarrow
}\operatorname{Hom}_{\mathcal{C}}\left(  \bullet,\underline{\operatorname{Hom}%
}\left(  M_{1},M_{2}\right)  \right)  .\label{def-in-hom}%
\end{equation}
The evaluation morphism $ev_{M_{1},M_{2}}=\eta^{-1}\left(  id_{\underline
{\operatorname{Hom}}\left(  M_{1},M_{2}\right)  }\right)  :\underline
{\operatorname{Hom}}\left(  M_{1},M_{2}\right)  \otimes M_{1}\rightarrow
M_{2}$ is obtained from the isomorphism
\[
\operatorname{Hom}_{\mathcal{C}}\left(  \underline{\operatorname{Hom}}\left(
M_{1},M_{2}\right)  ,\underline{\operatorname{Hom}}\left(  M_{1},M_{2}\right)
\right)  \overset{\cong}{\longrightarrow}\operatorname{Hom}_{\mathcal{M}%
}\left(  \underline{\operatorname{Hom}}\left(  M_{1},M_{2}\right)  \otimes
M_{1},M_{2}\right)  .
\]
The multiplication (composition)
\[
\mu_{M_{1},M_{2},M_{3}}:\underline{\operatorname{Hom}}\left(  M_{2}%
,M_{3}\right)  \otimes\underline{\operatorname{Hom}}\left(  M_{1}%
,M_{2}\right)  \rightarrow\underline{\operatorname{Hom}}\left(  M_{1}%
,M_{3}\right)
\]
is defined as the image of the morphism%

\[
ev_{M_{2},M_{3}}\left(  id\otimes ev_{M_{1},M_{2}}\right)  a_{\underline
{\operatorname{Hom}}\left(  M_{2},M_{3}\right)  ,\underline{\operatorname{Hom}%
}\left(  M_{1},M_{2}\right)  ,M_{1}}%
\]
under the isomorphism
\begin{align*}
&  \operatorname{Hom}_{\mathcal{M}}\left(  \left(  \underline
{\operatorname{Hom}}\left(  M_{2},M_{3}\right)  \otimes\underline
{\operatorname{Hom}}\left(  M_{1},M_{2}\right)  \right)  \otimes M_{1}%
,M_{3}\right) \\
&  \overset{\cong}{\longrightarrow}\operatorname{Hom}_{\mathcal{C}}\left(
\underline{\operatorname{Hom}}\left(  M_{2},M_{3}\right)  \otimes
\underline{\operatorname{Hom}}\left(  M_{1},M_{2}\right)  ,\underline
{\operatorname{Hom}}\left(  M_{1},M_{3}\right)  \right)  .
\end{align*}
Then $A=\left(  \underline{\operatorname{Hom}}\left(  M_{1},M_{1}\right)
,\mu_{M_{1},M_{1},M_{1}}\right)  $ is an algebra in $\mathcal{C}$ with unit
morphism $u_{M_{1}}:1\rightarrow\underline{\operatorname{Hom}}\left(
M_{1},M_{1}\right)  $ obtained from the isomorphism $\operatorname{Hom}%
_{\mathcal{M}}\left(  M_{1},M_{1}\right)  \overset{\cong}{\longrightarrow
}\operatorname{Hom}_{\mathcal{C}}\left(  1,\underline{\operatorname{Hom}%
}\left(  M_{1},M_{1}\right)  \right)  $, and $\left(  \underline
{\operatorname{Hom}}\left(  M_{1},M_{2}\right)  ,\mu_{M_{1},M_{1},M_{2}%
}\right)  $ is a natural right $A$-module in $\mathcal{C}$.

\begin{theorem}
[\cite{Ostrik2003module,Etingof2004finite,{Etingof2015tensor}}]\label{thm
Ostrik}Let $\mathcal{M}$ be a semisimple module category over a finite
multitensor category $\mathcal{C}$. If $M\in\mathcal{M}$ is a generator, then
$A=\underline{\operatorname{Hom}}\left(  M,M\right)  $ is a semisimple algebra
in $\mathcal{C}$. The functor $F=\underline{\operatorname{Hom}}\left(
M,\bullet\right)  :\mathcal{M}\rightarrow\mathrm{Mod}_{\mathcal{C}}$-$A$ given
by $V\mapsto\underline{\operatorname{Hom}}\left(  M,V\right)  $ is an
equivalence of $\mathcal{C}$-module categories.

If assume further that $\mathcal{M}$ is indecomposable, then every nonzero
object $M$ generates $\mathcal{M}$, and the functor $F=\underline
{\operatorname{Hom}}\left(  M,\bullet\right)  :\mathcal{M}\rightarrow
\mathrm{Mod}_{\mathcal{C}}$-$A$ is an equivalence of $\mathcal{C}$-module categories.
\end{theorem}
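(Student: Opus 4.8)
The plan is to exhibit an explicit quasi-inverse of $F$ and to deduce semisimplicity of $A$ from the resulting equivalence of abelian categories; the existence of $\underline{\operatorname{Hom}}$ and the right $A$-module structure $\mu_{M,M,V}$ on $\underline{\operatorname{Hom}}(M,V)$ recalled above serve as the input. First I would promote $F$ to a $\mathcal{C}$-module functor by producing natural isomorphisms $\underline{\operatorname{Hom}}(M,X\otimes V)\cong X\otimes\underline{\operatorname{Hom}}(M,V)$. By Yoneda it suffices to give, naturally in $Y\in\mathcal{C}$, the chain
\begin{align*}
\operatorname{Hom}_{\mathcal{C}}(Y,X\otimes\underline{\operatorname{Hom}}(M,V))
&\cong\operatorname{Hom}_{\mathcal{C}}(X^{\ast}\otimes Y,\underline{\operatorname{Hom}}(M,V))
\cong\operatorname{Hom}_{\mathcal{M}}(X^{\ast}\otimes Y\otimes M,V)\\
&\cong\operatorname{Hom}_{\mathcal{M}}(Y\otimes M,X\otimes V)
\cong\operatorname{Hom}_{\mathcal{C}}(Y,\underline{\operatorname{Hom}}(M,X\otimes V)),
\end{align*}
in which the first, third and fourth isomorphisms come from rigidity of $\mathcal{C}$ (the adjunction $X^{\ast}\otimes(-)\dashv X\otimes(-)$) and the remaining ones from \eqref{def-in-hom}; compatibility with the constraints $a,\ell$ and with the $A$-actions is a routine diagram chase. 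As $\mathcal{M}$ is semisimple every short exact sequence in it splits, so the additive functor $F$ is exact.

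Next I would prove $F$ faithful: if $f\colon V\to W$ satisfies $\underline{\operatorname{Hom}}(M,f)=0$, then by \eqref{def-in-hom} composition with $f$ kills $\operatorname{Hom}_{\mathcal{M}}(X\otimes M,V)$ for every $X$, and since $M$ generates $\mathcal{M}$ the object $V$ is a direct summand of some $X\otimes M$; composing $f$ with the corresponding epimorphism $X\otimes M\twoheadrightarrow V$ gives $0$, hence $f=0$. For fullness and essential surjectivity I would show that $G=\bullet\otimes_{A}M\colon\mathrm{Mod}_{\mathcal{C}}\text{-}A\to\mathcal{M}$, formed using the canonical left $A$-module structure $ev_{M,M}\colon A\otimes M\to M$ on $M$ and the relative tensor product of \cite{LiuZhu2019On}, is a quasi-inverse of $F$. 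On one side, $ev_{M,V}\colon\underline{\operatorname{Hom}}(M,V)\otimes M\to V$ coequalizes the pair $\underline{\operatorname{Hom}}(M,V)\otimes A\otimes M\rightrightarrows\underline{\operatorname{Hom}}(M,V)\otimes M$ defining $\otimes_{A}$, so it induces $\overline{ev}\colon GF(V)\to V$; specializing to $V=X\otimes M$, where $\underline{\operatorname{Hom}}(M,X\otimes M)\cong X\otimes A$ and hence $GF(X\otimes M)\cong X\otimes(A\otimes_{A}M)\cong X\otimes M$ with $\overline{ev}$ an isomorphism, and then passing to a retract of $X\otimes M$, one obtains $GF\cong\mathrm{id}_{\mathcal{M}}$ naturally. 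On the other side, applying the exact functor $\underline{\operatorname{Hom}}(M,-)$ to the coequalizer defining $U\otimes_{A}M$ and using the module-functor isomorphism together with $\underline{\operatorname{Hom}}(M,M)=A$ identifies $FG(U)=\underline{\operatorname{Hom}}(M,U\otimes_{A}M)$ with $U\otimes_{A}A\cong U$ as right $A$-modules, naturally in $U$; thus $FG\cong\mathrm{id}$. Hence $F$ is an equivalence, and a $\mathcal{C}$-module equivalence because it is a module functor.

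Finally, the equivalence $\mathrm{Mod}_{\mathcal{C}}\text{-}A\simeq\mathcal{M}$ shows $\mathrm{Mod}_{\mathcal{C}}\text{-}A$ is semisimple, so the regular module $A$ is a finite direct sum of simple $A$-modules, i.e.\ $A$ is a semisimple (separable) algebra in $\mathcal{C}$. For the last assertion, if $\mathcal{M}$ is indecomposable and $M\neq0$, the full subcategory $\mathcal{M}_{M}\subseteq\mathcal{M}$ of subquotients of the objects $X\otimes M$, $X\in\mathcal{C}$, is stable under the $\mathcal{C}$-action and under subobjects, quotients and direct sums, hence is a $\mathcal{C}$-module subcategory; it is nonzero since $M\in\mathcal{M}_{M}$, so indecomposability forces $\mathcal{M}_{M}=\mathcal{M}$, i.e.\ $M$ generates $\mathcal{M}$, and the first part applies.

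I expect the main obstacle to be showing that $\overline{ev}\colon GF(V)\to V$ is an isomorphism: this is precisely where the generator hypothesis enters essentially, and it relies on the explicit behaviour of $\underline{\operatorname{Hom}}$ on the free objects $X\otimes M$ (namely $\underline{\operatorname{Hom}}(M,X\otimes M)\cong X\otimes A$ and $A\otimes_{A}M\cong M$) together with the fact that every $V$ is a retract of such an object. By contrast, once $F$ is known to be exact the identification $FG\cong\mathrm{id}$ is essentially formal, being a manipulation of coequalizers preserved by $F$.
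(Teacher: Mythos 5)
The paper gives no proof of this theorem: it is quoted from \cite{Ostrik2003module,Etingof2004finite,Etingof2015tensor}, and the introduction records that those proofs proceed by showing $F$ is faithful, full and essentially surjective on objects. Your argument is correct but takes the other available route: you build the explicit quasi-inverse $G=\bullet\otimes_{A}M$ and check $GF\cong\mathrm{id}$ and $FG\cong\mathrm{id}$, which is precisely the strategy of \cite{LiuZhu2019On} that this paper records separately as Theorem~\ref{thm liu zhu}; in effect you prove Theorems~\ref{thm Ostrik} and~\ref{thm liu zhu} in one stroke. The key steps are all sound: the module-functor isomorphism $\underline{\operatorname{Hom}}(M,X\otimes V)\cong X\otimes\underline{\operatorname{Hom}}(M,V)$ via the rigidity adjunction, the reduction of $\overline{ev}\colon GF(V)\to V$ first to the free objects $X\otimes M$ (where $A\otimes_{A}M\cong M$ gives the isomorphism) and then to retracts using semisimplicity, and the identification $FG\cong\mathrm{id}$ by applying the exact functor $F$ to the defining coequalizer. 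What the classical route buys is that it avoids the relative tensor product $\otimes_{A}$ entirely (faithfulness and fullness come from the generator property and the internal-Hom adjunction, surjectivity from projectivity of objects of $\mathrm{Mod}_{\mathcal{C}}$-$A$); what your route buys is an explicit inverse, which is exactly what the present paper needs later (Proposition~\ref{prop D-comod equiv}, Theorem~\ref{theo-C=Vec-equiva}). Two points deserve a word more than you give them: in the final step you must actually exhibit the complementary $\mathcal{C}$-module subcategory spanned by the simple objects not occurring in any $X\otimes M$ and check, using duals in $\mathcal{C}$, that it is closed under the action, before indecomposability can be invoked; and you should verify that your chain of isomorphisms intertwines the right $A$-module structures, so that $F$ is a module functor into $\mathrm{Mod}_{\mathcal{C}}$-$A$ and not merely into $\mathcal{C}$. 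Both are routine in the semisimple setting.
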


Let $\left(  A,m,u\right)  $ be a $\mathcal{C}$-algebra. In
\cite{LiuZhu2019On}, the authors defined left $A$-modules in $\mathcal{M}$, by
using the module category tensor, and give the $A$-tensor product of a right
$A$-module $\left(  U,q\right)  $ in $\mathcal{C}$ and a left $A$-module
$\left(  M,p\right)  $ in $\mathcal{M}$. Explicitly, a left $A$-module in
$\mathcal{M}$ is a pair $\left(  M,p\right)  $, where $M$ is an object of
$\mathcal{M}$ and $p:A\otimes M\rightarrow M$ is a morphism (in $\mathcal{M}$)
satisfying two natural axioms,
\[
p\left(  m\otimes id_{M}\right)  =p\left(  id_{A}\otimes p\right)
a_{A,A,M},\quad p\left(  u\otimes id_{M}\right)  =id_{M},
\]
where $a$ is the associativity constraint for $\mathcal{M}$. For right
$A$-module $\left(  U,q\right)  $ in $\mathcal{C}$ and left $A$-module
$\left(  M,p\right)  $ in $\mathcal{M}$, the tensor product $U\otimes_{A}M$ is
the co-equalizer of the morphisms
\[
(U\otimes A)\otimes
M\;\tikz[baseline=-.3ex] {\draw[->] (0,.8ex) -- node[above]{$q\otimes id_M$}(3cm,0.8ex); \draw[->] (0,0ex) -- node[below]{$(id_U\otimes p) a_{U,A,M}$}(3cm,0ex);}\;U\otimes
M\longrightarrow U\otimes_{A}M,
\]
i.e., the cokernel of the morphism $q\otimes id_{M}-\left(  id_{U}\otimes
p\right)  a_{U,A,M}$.

With all these terms, the authors presented a quasi-inverse for the
equivalence $F=\underline{\operatorname{Hom}}\left(  M,\bullet\right)
:\mathcal{M}\rightarrow\mathrm{Mod}_{\mathcal{C}}$-$A$ given in
Theorem~\ref{thm Ostrik}.

\begin{theorem}
[{\cite[Theorem 4.3]{LiuZhu2019On}}]\label{thm liu zhu}Let $\mathcal{M}$ be a
semisimple module category over a finite multitensor category $\mathcal{C}$.
Let $M$ be a generator of $\mathcal{M}$, then the functor $G=\bullet
\otimes_{A}M:\mathrm{Mod}_{\mathcal{C}}$-$A\rightarrow\mathcal{M}$ is a
quasi-inverse to the equivalence $F=\underline{\operatorname{Hom}}\left(
M,\bullet\right)  :\mathcal{M}\rightarrow\mathrm{Mod}_{\mathcal{C}}$-$A$.
\end{theorem}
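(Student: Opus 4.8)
The plan is to show that the natural transformations $G F \Rightarrow \mathrm{id}_{\mathcal{M}}$ and $F G \Rightarrow \mathrm{id}_{\mathrm{Mod}_{\mathcal{C}}\text{-}A}$, built from the evaluation morphisms, are isomorphisms. First I would recall from Theorem~\ref{thm Ostrik} that $F=\underline{\operatorname{Hom}}(M,\bullet)$ is already known to be an equivalence of $\mathcal{C}$-module categories; hence it suffices to exhibit $G$ as \emph{left adjoint} to $F$ and then argue the unit and counit of this adjunction are isomorphisms (any left adjoint of an equivalence is automatically a quasi-inverse). So the first key step is to establish the adjunction $\operatorname{Hom}_{\mathcal{M}}(U\otimes_A M, V)\cong\operatorname{Hom}_{\mathrm{Mod}_{\mathcal{C}}\text{-}A}(U,\underline{\operatorname{Hom}}(M,V))$ naturally in $U\in\mathrm{Mod}_{\mathcal{C}}\text{-}A$ and $V\in\mathcal{M}$. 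This I would do by starting from the defining isomorphism \eqref{def-in-hom} for the internal Hom, $\operatorname{Hom}_{\mathcal{M}}(U\otimes M,V)\cong\operatorname{Hom}_{\mathcal{C}}(U,\underline{\operatorname{Hom}}(M,V))$, and checking that a morphism $U\otimes M\to V$ factors through the coequalizer $U\otimes_A M$ if and only if the corresponding morphism $U\to\underline{\operatorname{Hom}}(M,V)$ is right $A$-linear. This equivalence of conditions is the technical heart: it amounts to transporting the two parallel arrows $q\otimes\mathrm{id}_M$ and $(\mathrm{id}_U\otimes p)a_{U,A,M}$ through the natural isomorphism $\eta$ and identifying their images with the two arrows whose equalizer computes $A$-linearity of $U\to\underline{\operatorname{Hom}}(M,V)$, using the compatibility of $\eta$ with the module-category associativity constraint and the definition of $\mu_{M,M,M}$ (equivalently, the $A$-action on $\underline{\operatorname{Hom}}(M,V)$ via $\mu_{M,M,V}$).

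Having the adjunction, the second step is to identify the counit $\varepsilon_V\colon G F(V)=\underline{\operatorname{Hom}}(M,V)\otimes_A M\to V$ with the morphism induced by $ev_{M,V}$ on the coequalizer, and the unit $\vartheta_U\colon U\to\underline{\operatorname{Hom}}(M,U\otimes_A M)=F G(U)$ correspondingly. The third step is to prove these are isomorphisms. For the counit: since $F$ is an equivalence it is in particular faithful and conservative, so it suffices to show $F(\varepsilon_V)$ is an isomorphism, or alternatively—more in the spirit of \cite{Etingof2015tensor}—to check $\varepsilon_V$ is an isomorphism on a generator $M$ itself (where $\underline{\operatorname{Hom}}(M,M)=A$ and $A\otimes_A M\cong M$ by the module axioms $p(u\otimes\mathrm{id})=\mathrm{id}$ and $p(m\otimes\mathrm{id})=p(\mathrm{id}\otimes p)a$, which exactly say the coequalizer collapses to $M$), then extend to all of $\mathcal{M}$ by additivity, exactness of $\otimes$ in the first variable, and the fact that every object of $\mathcal{M}$ is a subquotient of $M^{\oplus n}$ when $M$ generates. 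For the unit $\vartheta_U$: apply $F$, use that $F G F\cong F$ via the counit (triangle identity) and that $F$ is faithful to conclude $\vartheta_U$ is invertible for $U$ in the image of $F$, i.e. for all $U$ since $F$ is essentially surjective onto $\mathrm{Mod}_{\mathcal{C}}\text{-}A$ by Theorem~\ref{thm Ostrik}.

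The main obstacle I expect is the first step: verifying, via graphical calculus, that passing a morphism $U\otimes M\to V$ across $\eta$ converts ``coequalizes the pair $(q\otimes\mathrm{id}_M,\,(\mathrm{id}_U\otimes p)a_{U,A,M})$'' into ``is right $A$-linear as a map $U\to\underline{\operatorname{Hom}}(M,V)$.'' This requires care because $p$ is a morphism in $\mathcal{M}$ while the $A$-module structure on $\underline{\operatorname{Hom}}(M,V)$ lives in $\mathcal{C}$ and is defined through $\mu$, i.e. through nested evaluation morphisms and the associativity constraint $a$ of the module category; one must unwind the definition of $\mu_{M,M,V}$ and use the naturality square for $\eta$ in the first variable together with the pentagon axiom for $\mathcal{M}$. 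Once this bookkeeping is done the rest is formal adjunction nonsense plus the collapse $A\otimes_A M\cong M$, so I would allocate most of the write-up to making this correspondence of conditions transparent.
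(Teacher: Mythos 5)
The paper does not actually prove this statement: Theorem~\ref{thm liu zhu} is imported verbatim from \cite[Theorem 4.3]{LiuZhu2019On} and used as a black box, so there is no in-paper argument to compare against. Judged on its own terms, your plan is correct and is essentially the standard route (and, as far as I can tell, the route of the cited paper): the whole content is the adjunction $\operatorname{Hom}_{\mathcal{M}}(U\otimes_A M,V)\cong\operatorname{Hom}_{\mathrm{Mod}_{\mathcal{C}}\text{-}A}(U,\underline{\operatorname{Hom}}(M,V))$, obtained by checking that under the defining isomorphism (\ref{def-in-hom}) the condition ``$g\colon U\otimes M\to V$ coequalizes $q\otimes id_M$ and $(id_U\otimes p)a_{U,A,M}$'' corresponds exactly to right $A$-linearity of $\eta(g)$, via naturality of $\eta$ in the first variable and the definition of $\mu_{M,M,V}$. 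You correctly identify this as the technical heart. Once it is in place, your observation that a left adjoint of a functor already known to be an equivalence (Theorem~\ref{thm Ostrik}) is automatically a quasi-inverse finishes the proof; your steps 2 and 3 (computing the counit on $M$ itself via the split coequalizer $A\otimes_A M\cong M$, then extending by semisimplicity and the generator property) are a valid but strictly redundant double-check. Two small points worth making explicit in a full write-up: (i) $U\otimes_A M$ exists because $\mathcal{M}$ is abelian, and $G$ is well defined on morphisms because right $A$-module maps induce maps of the defining cokernels; (ii) the identification $A\otimes_A M\cong M$ is cleanest via the split-coequalizer presented by $p$ with section $u\otimes id_M$, exactly as you indicate. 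I see no gap.
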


\subsection{The Drinfeld Center}

Recall that the left Drinfeld center (left center) of a monoidal category
$\mathcal{C}=\left(  \mathcal{C},\otimes,1\right)  $ is a category
$\mathcal{Z}_{l}\left(  \mathcal{C}\right)  $. An object of $\mathcal{Z}%
_{l}\left(  \mathcal{C}\right)  $ is a pair $\left(  Z,\gamma_{Z,\bullet
}\right)  $ consisting of an object $Z\in\mathcal{C}$ and a natural
isomorphism $\gamma_{Z,X}:Z\otimes X\rightarrow X\otimes Z$, $X\in\mathcal{C}%
$, such that
\begin{equation}
\gamma_{Z,X\otimes Y}=\left(  id_{X}\otimes\gamma_{Z,Y}\right)  \left(
\gamma_{Z,X}\otimes id_{Y}\right)  ,\quad\text{for all }X,Y\in\mathcal{C}%
\text{.}\label{condition for Z(C)}%
\end{equation}
A morphism from $\left(  Z,\gamma_{Z,\bullet}\right)  $ to $\left(  Z^{\prime
},\gamma_{Z^{\prime},\bullet}\right)  $ is a morphism $f\in\operatorname{Hom}%
_{\mathcal{C}}\left(  Z,Z^{\prime}\right)  $ such that
\[
\left(  id_{X}\otimes f\right)  \gamma_{Z,X}=\gamma_{Z^{\prime},X}\left(
f\otimes id_{X}\right)  ,
\]
for all $X\in\mathcal{C}$. The right center $\mathcal{Z}_{r}\left(
\mathcal{C}\right)  $ of $\mathcal{C}$ is a similar category with reversed
order of tensor product in its definition.

For any $Z\in\mathcal{Z}_{r}\left(  \mathcal{C}\right)  $, the objects
$Z\otimes1$ and $1\otimes Z$ are identified with $Z$. Then (\ref{condition for
Z(C)}) implies that $\gamma_{Z,1}=\gamma_{Z,1\otimes1}=\left(  \gamma
_{Z,1}\right)  ^{2}$. Hence, one has
\[
\gamma_{Z,1}=id_{Z}.
\]

The center $Z_{l}\left(  \mathcal{C}\right)  $ is a braided monoidal category
with braiding $c$ given by $c_{Z,Z^{\prime}}=\gamma_{Z,Z^{\prime}}$. Also,
$Z_{r}\left(  \mathcal{C}\right)  $ is a braided monoidal category, which is
isomorphic to $Z_{l}\left(  \mathcal{C}\right)  $ with the inverse braiding.

For the basic theory of the Drinfeld center, we refer to
\cite{Joyal1991Tortile,Kassel1995Quantum}.

By definition, an object of $Z_{l}\left(  \mathcal{C}\right)  $ is an object
in $\mathcal{C}$ with a natural isomorphism satisfying (\ref{condition for
Z(C)}). In fact if every object of $\mathcal{C}$ has a right dual, then
identity (\ref{condition for Z(C)}) is sufficient for a natural transformation
$\gamma_{Z,\bullet}$ to be a natural isomorphism.

\begin{lemma}
\label{lemma gama is iso}Let $\gamma_{Z,\bullet}:Z\otimes\bullet
\rightarrow\bullet\otimes Z$ be a natural transformation satisfying
\[
\gamma_{Z,X\otimes Y}=\left(  id_{X}\otimes\gamma_{Z,Y}\right)  \left(
\gamma_{Z,X}\otimes id_{Y}\right)  ,\quad\text{for all }X,Y\in\mathcal{C}%
\text{.}%
\]
If $X\in\mathcal{C}$ has a right dual $^{\ast}X$, then
\begin{align}
\left(  ev_{X}^{\prime}\otimes id_{Z}\right)  \left(  id_{X}\otimes
\gamma_{Z,X^{\ast}}\right)  \left(  \gamma_{Z,X}\otimes id_{^{\ast}X}\right)
& =id_{Z}\otimes ev_{X}^{\prime},\label{eq braided&ev}\\
\left(  id_{^{\ast}X}\otimes\gamma_{Z,X}\right)  \left(  \gamma_{Z,^{\ast}%
X}\otimes id_{X}\right)  \left(  id_{Z}\otimes coev_{X}^{\prime}\right)   &
=coev_{X}^{\prime}\otimes id_{Z}.\label{eq braided&coev}%
\end{align}
\ Moreover, if every object of $\mathcal{C}$ has a right dual, then
$\gamma_{Z,\bullet}$ is a natural isomorphism.

\begin{proof}
By the naturality of $\gamma_{Z,\bullet}$, the equalities (\ref{eq
braided&ev}) and (\ref{eq braided&coev}) are hold. If we denote $\gamma
_{Z,X}%
=\tikz[baseline=(current bounding box.west),scale=0.7,samples=100,thick,font=\scriptsize] {
	\drawcrossing[crosstyle=mn,inlefthandlen=0,inrighthandlen=0,outlefthandlen=0,outrighthandlen=0,yscale=0.7,decoration=circle]{(0,0)}{(1,0)};
	\draw (N1) node[above] {$X$} (M1) node[above] {$Z$} (N2) node[below] {$X$} (M2) node[below] {$Z$};
}$, then the pictorial transcriptions of (\ref{eq braided&ev}) and (\ref{eq
braided&coev}) are respectively%
\[
\tikz[baseline=(current bounding box.west),scale=0.5,samples=100,thick,font=\scriptsize] { \drawcrossing[crosstyle=mn,inlefthandlen=0,inrighthandlen=0,outlefthandlen=0.7,outrighthandlen=0.8,yscale=0.8,decoration=circle]{(0,0)}{(1,0)};
\draw (M1) node[above] {$Z$} (N1) node[above] {$X$}; 
\coordinate (*X1) at ($(M2)+(1,0)$); 
\coordinate (X1) at (N2); 
\coordinate (X2) at ($(M2)+(2,0)$); 
\drawcrossing[crosstyle=mn,inlefthandlen=0,inrighthandlen=1.6,outlefthandlen=0.05,outrighthandlen=0.7,yscale=0.7,decoration=circle]{(M2)}{(*X1)}; 
\coordinate (X3) at (X1 |- N2); 
\draw (X1)--(X3) (N1)node[above]{$^{\ast }X$}; 
\halfcircle{(X3)}{(N2)}; 
\draw(M2) node[below] {$Z$}; }
=\tikz[baseline=(current bounding box.west),scale=0.5,samples=100,thick,font=\scriptsize] { 
\draw (0,0)node[above]{$X$}--(0,-1) (-1,0) node[above]{$Z$} --(-1,-2)node[below]{$Z$}; 
\halfcircle {(0,-1)}{(1,-1)}; 
\draw (1,-1)--(1,0)node[above]{$^{\ast }X$}; },\quad
\tikz[baseline=(current bounding box.west),scale=0.5,samples=100,thick,font=\scriptsize] { 
\coordinate (*X1) at ($(1,0)+(1,0)$); 
\coordinate (X1) at (0,0.7); 
\coordinate (X2) at ($(1,0)+(2,0)$); 
\halfcircle{(X2)}{(*X1)}; 
\drawcrossing[crosstyle=mn,inlefthandlen=0.7,inrighthandlen=0,outlefthandlen=1.45,outrighthandlen=0.7,yscale=0.7,decoration=circle]{(1,0)}{(*X1)}; 
\coordinate (X3) at (X1 |- N2); 
\draw (N2) node[below] {${}^*X$} (M1) node[above] {$Z$}; 
\coordinate (X4) at (X2 |- M2); 
\draw (X2)--(X4); 
\drawcrossing[crosstyle=mn,inlefthandlen=0.5,inrighthandlen=0.5,outlefthandlen=0,outrighthandlen=0,yscale=0.7,decoration=circle]{(M2)}{(X4)}; 
\draw (N2) node [below] {$X$} (M2) node [below] {$Z$}; }=\tikz[baseline=(current bounding box.west),scale=0.5,samples=100,thick,font=\scriptsize] { 
\halfcircle[]{(2,-1)}{(1,-1)}; 
\draw (2,-1)--(2,-2)node[below]{$X$}; 
\draw (1,-1)--(1,-2)node[below]{${}^*X$} (3,0)node[above]{$Z$} --(3,-2)node[below]{$Z$};}.
\]
In addition, the inverse of $\gamma_{Z,X}$ is given by%
\[
\gamma_{Z,X}^{-1}=\left(  ev_{X}^{\prime}\otimes id_{Z}\otimes id_{X}\right)
\left(  id_{X}\otimes\gamma_{Z,^{\ast}X}\otimes id_{X}\right)  \left(
id_{X}\otimes id_{Z}\otimes coev_{X}^{\prime}\right)  .
\]
The compositions of $\gamma_{Z,X}$ and $\gamma_{Z,X}^{-1}$ are computed as
follows:%
\[
\tikz[baseline=(current bounding box.west),scale=0.5,samples=100,thick,font=\scriptsize] { 
\drawcrossing[crosstyle=mn,inlefthandlen=0,inrighthandlen=0,outlefthandlen=0.7,outrighthandlen=0.8,yscale=0.8,decoration=circle]{(0,0)}{(1,0)}; 
\draw (M1) node[above] {$Z$} (N1) node[above] {$X$}; \coordinate (*X1) at ($(M2)+(1,0)$); \coordinate (X1) at (N2); \coordinate (X2) at ($(M2)+(2,0)$); 
\halfcircle{(X2)}{(*X1)}; 
\drawcrossing[crosstyle=mn,inlefthandlen=0,inrighthandlen=0,outlefthandlen=0.05,outrighthandlen=0.7,yscale=0.7,decoration=circle]{(M2)}{(*X1)}; 
\coordinate (X3) at (X1 |- N2); \draw (X1)--(X3); 
\halfcircle{(X3)}{(N2)}; \coordinate (X4) at (X2 |- M2); 
\draw (X2)--(X4) node[below] {$X$} (M2) node[below] {$Z$}; 
\draw[thin,dashed]($(X1)+(-.2,.5)$) rectangle ($(X4)+(0.2,0.05)$); }=\tikz[baseline=(current bounding box.west),scale=0.5,samples=100,thick,font=\scriptsize] { 
\draw (0,0)node[above]{$X$}--(0,-1) (-1,0) node[above]{$Z$} --(-1,-2)node[below]{$Z$}; 
\halfcircle {(0,-1) }{(1,-1)}; 
\halfcircle {(2,-1) }{(1,-1)}; \draw (2,-1)--(2,-2)node[below]{$X$}; }=id_{Z\otimes
X},\quad
\tikz[baseline=(current bounding box.west),scale=0.5,samples=100,thick,font=\scriptsize] { 
\coordinate (*X1) at ($(1,0)+(1,0)$); 
\coordinate (X1) at (0,0.7); \coordinate (X2) at ($(1,0)+(2,0)$); \halfcircle{(X2)}{(*X1)}; 
\drawcrossing[crosstyle=mn,inlefthandlen=0.7,inrighthandlen=0,outlefthandlen=0.05,outrighthandlen=0.7,yscale=0.7,decoration=circle]{(1,0)}{(*X1)}; 
\coordinate (X3) at (X1 |- N2); \draw (X1) node[above] {$X$}--(X3) (M1) node[above] {$Z$}; 
\halfcircle{(X3)}{(N2)}; \coordinate (X4) at (X2 |- M2); \draw (X2)--(X4); 
\drawcrossing[crosstyle=mn,inlefthandlen=0.5,inrighthandlen=0.5,outlefthandlen=0,outrighthandlen=0,yscale=0.7,decoration=circle]{(M2)}{(X4)}; 
\draw (N2) node [below] {$X$} (M2) node [below] {$Z$}; \draw[thin,dashed]($(X1)+(-.2,-.1)$) rectangle ($(N1)+(0.2,-0.45)$); }
=\tikz[baseline=(current bounding box.west),scale=0.5,samples=100,thick,font=\scriptsize] { 
\halfcircle {(0,-1) }{(1,-1)}; 
\halfcircle {(2,-1) }{(1,-1)}; 
\draw (2,-1)--(2,-2)node[below]{$X$}; \draw (0,0)node[above]{$X$}--(0,-1) (3,0) node[above]{$Z$} --(3,-2)node[below]{$Z$};}=id_{X\otimes
Z},
\]
where $\gamma_{Z,X}^{-1}$ is represented by the morphism in the dashed box.
\end{proof}
\end{lemma}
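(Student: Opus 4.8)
The plan is to read the two displayed identities as instances of naturality of $\gamma_{Z,\bullet}$, and then to build an explicit inverse of $\gamma_{Z,X}$ by threading the $X$-leg through a right-duality zig-zag. For (\ref{eq braided&ev}) and (\ref{eq braided&coev}) I would use that $\gamma_{Z,1}=id_Z$ (as in the remark preceding the statement) together with the fact that $ev'_X\colon X\otimes{}^{\ast}X\to 1$ is a morphism of $\mathcal C$, so that naturality of $\gamma_{Z,\bullet}$ at $ev'_X$ reads
\[
(ev'_X\otimes id_Z)\circ\gamma_{Z,X\otimes{}^{\ast}X}=\gamma_{Z,1}\circ(id_Z\otimes ev'_X)=id_Z\otimes ev'_X .
\]
Substituting the hexagon expansion $\gamma_{Z,X\otimes{}^{\ast}X}=(id_X\otimes\gamma_{Z,{}^{\ast}X})(\gamma_{Z,X}\otimes id_{{}^{\ast}X})$ coming from (\ref{condition for Z(C)}) turns the left-hand side into that of (\ref{eq braided&ev}); dually, naturality at $coev'_X\colon 1\to{}^{\ast}X\otimes X$ together with the hexagon expansion of $\gamma_{Z,{}^{\ast}X\otimes X}$ yields (\ref{eq braided&coev}). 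Pictorially these are exactly the two drawn moves: a $\gamma_{Z,\bullet}$-crossing slides freely past a cap $ev'_X$ or a cup $coev'_X$.

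For the last assertion, assume $X$ has a right dual ${}^{\ast}X$, and take as candidate inverse
\[
\gamma_{Z,X}^{-1}=(ev'_X\otimes id_Z\otimes id_X)(id_X\otimes\gamma_{Z,{}^{\ast}X}\otimes id_X)(id_X\otimes id_Z\otimes coev'_X)\colon X\otimes Z\to Z\otimes X ,
\]
i.e.\ bend the $X$-leg of $\gamma_{Z,X}$ around a $coev'_X$--$ev'_X$ pair and let $Z$ cross the inserted ${}^{\ast}X$ via $\gamma_{Z,{}^{\ast}X}$. To check $\gamma_{Z,X}^{-1}\circ\gamma_{Z,X}=id_{Z\otimes X}$ I would push $coev'_X$ past $\gamma_{Z,X}$, regroup the two decorated crossings into a single $\gamma_{Z,X\otimes{}^{\ast}X}$ by the hexagon, replace it by $id_Z\otimes ev'_X$ using the slide identity (\ref{eq braided&ev}), and then collapse the surviving $ev'_X$--$coev'_X$ by the right-duality zig-zag $(ev'_X\otimes id_X)(id_X\otimes coev'_X)=id_X$. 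The identity $\gamma_{Z,X}\circ\gamma_{Z,X}^{-1}=id_{X\otimes Z}$ is symmetric, built from (\ref{eq braided&coev}) and the companion zig-zag $(id_{{}^{\ast}X}\otimes ev'_X)(coev'_X\otimes id_{{}^{\ast}X})=id_{{}^{\ast}X}$. In diagrams each composite is two decorated crossings threaded through a cap--cup pair; the slide moves untangle the crossings, leaving a bare cap--cup pair, which straightens by the snake equations. Hence $\gamma_{Z,X}$ is invertible whenever $X$ admits a right dual, so if every object of $\mathcal C$ does, $\gamma_{Z,\bullet}$ is a natural isomorphism.

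The mathematical content here is light; the real work is the position bookkeeping: in the diagram-free version one must track exactly which identity morphisms sit on the left and which on the right at every step, and one may only invoke (\ref{condition for Z(C)}) in its legal direction---expanding $\gamma_{Z,A\otimes B}$ in terms of $\gamma_{Z,A}$ and $\gamma_{Z,B}$, never the reverse, since only the half-hexagon is available (fortunately that is the direction needed above). A related subtlety worth flagging is that the normalization $\gamma_{Z,1}=id_Z$ is genuinely needed: without it the computations produce $\gamma_{Z,X}^{-1}\circ\gamma_{Z,X}=\gamma_{Z,1}\otimes id_X$ and $\gamma_{Z,X}\circ\gamma_{Z,X}^{-1}=id_X\otimes\gamma_{Z,1}$ rather than identities, so it should be invoked explicitly. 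For these reasons I would run the whole verification in graphical calculus, drawing $\gamma_{Z,X}$ as a decorated crossing and iterating the two slide moves together with the zig-zag identities.
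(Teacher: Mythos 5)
Your proof is correct and follows essentially the same route as the paper's: the two slide identities are obtained from naturality of $\gamma_{Z,\bullet}$ at $ev_{X}^{\prime}$ and $coev_{X}^{\prime}$ combined with the half-hexagon, the candidate inverse is the identical zig-zag formula, and the verification collapses the decorated crossings via the slide moves and the snake equations. Your explicit flagging of the normalization $\gamma_{Z,1}=id_{Z}$ is a worthwhile refinement rather than a deviation: the paper invokes it tacitly, and since its earlier derivation of that identity used invertibility of $\gamma_{Z,1}$ (which the lemma does not assume of a mere natural transformation satisfying the half-hexagon), it is genuinely an extra input here --- one that is supplied in the paper's application by the counit axiom of the comodule.
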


Let $H$ be a finite dimensional Hopf algebra over $k$, and $\mathcal{C}={}%
_{H}\mathcal{M}$ be the category of finite dimensional left $H$-modules. Then
the center $Z_{l}\left(  _{H}\mathcal{M}\right)  $ of the monoidal category
$_{H}\mathcal{M}$ is isomorphic to the Yetter-Drinfeld~category ${}_{H}%
^{H}\mathcal{YD}$ over $H$.

\section{The Center of Braided Rigid Categories}

\label{Section Center of BRC}

In this section, $\mathcal{C}$ will be a braided rigid category with a
braiding $c$. We will show that under some representability assumption the
Drinfeld center $\mathcal{Z}_{l}\left(  \mathcal{C}\right)  $ of $\mathcal{C}$
is equivalent to the left comodule category of the automorphism braided group
$U\left(  \mathcal{C}\right)  $ of $\mathcal{C}$, as right $\mathcal{C}%
$-module categories.

Firstly, let's recall Majid's reconstruction
\cite{Majid1991Reconstruction,Majid1991Braided,Majid1995foundations} of the
automorphism braided group $U\left(  \mathcal{C}\right)  $. We require the
\textbf{representability assumption for modules} \cite[\S \ 9.4]%
{Majid1995foundations}. That is, there exist an object $U\left(
\mathcal{C}\right)  \in\mathcal{C}$, and a natural isomorphism
\[
\theta_{V}:\operatorname{Hom}_{\mathcal{C}}\left(  V,U\left(  \mathcal{C}%
\right)  \right)  \rightarrow\operatorname{Nat}\left(  V\otimes
id_{\mathcal{C}},id_{\mathcal{C}}\right)
\]
for any $V\in\mathcal{C}$, and the maps
\begin{align*}
\theta_{V}^{2}  & :\operatorname{Hom}_{\mathcal{C}}\left(  V,U\left(
\mathcal{C}\right)  \otimes U\left(  \mathcal{C}\right)  \right)
\rightarrow\operatorname{Nat}\left(  V\otimes id_{\mathcal{C}}{}^{\otimes
2},id_{\mathcal{C}}{}^{\otimes2}\right)  ,\\
\theta_{V}^{3}  & :\operatorname{Hom}_{\mathcal{C}}\left(  V,U\left(
\mathcal{C}\right)  \otimes U\left(  \mathcal{C}\right)  \otimes U\left(
\mathcal{C}\right)  \right)  \rightarrow\operatorname{Nat}\left(  V\otimes
id_{\mathcal{C}}{}^{\otimes3},id_{\mathcal{C}}{}^{\otimes3}\right)  ,
\end{align*}
induced by $\alpha=\theta_{U\left(  \mathcal{C}\right)  }\left(  id_{U\left(
\mathcal{C}\right)  }\right)  $ and the braiding $c$, are bijective. With
graphical convention, we denote
\[
\alpha_{X}%
=\tikz[baseline=(current bounding box.west),yscale=0.8,samples=100,thick,font=\scriptsize] { 
\drawcoop[CoProdStyle=comodule,leftarmlen=0.7,rightarmlen=0.7,handlelen=1.5,yscale=0.8]{(0.8,0)}{(0,0)};
 \draw (T) node[below] {$X$} (M1) node[above] {$X$} (N1) node[above] {$U\left( \mathcal{C}\right)$} ($(T)+(0.3,0.3)$) node {$\alpha_{X}$}; }\text{,
or simply }\alpha_{X}%
=\tikz[baseline=(current bounding box.west),yscale=0.8,samples=100,thick,font=\scriptsize] { 
\drawcoop[CoProdStyle=comodule,leftarmlen=0.7,rightarmlen=0.7,handlelen=1.5,yscale=0.8]{(0.8,0)}{(0,0)};
 \draw (T) node[below] {$X$} (M1) node[above] {$X$} (N1) node[above] {$U\left( \mathcal{C}\right)$}; }\text{,
for }X\in\mathcal{C}\text{.}%
\]
Then $\theta_{V}^{2}$ and $\theta_{V}^{3}$ can be expressed graphically as
follows. For $X,Y,Z\in\mathcal{C}$, $t\in\operatorname{Hom}_{\mathcal{C}%
}\left(  V,U\left(  \mathcal{C}\right)  \otimes U\left(  \mathcal{C}\right)
\right)  $ and $s\in\operatorname{Hom}_{\mathcal{C}}\left(  V,U\left(
\mathcal{C}\right)  \otimes U\left(  \mathcal{C}\right)  \otimes U\left(
\mathcal{C}\right)  \right)  $,%

\begin{equation}
\theta_{V}^{2}\left(  t\right)  _{X,Y}%
=\tikz[baseline=(current bounding box.west),scale=1,samples=100,thick,font=\scriptsize] { 
\drawcoop[CoProdStyle=comodule,leftarmlen=0.9,rightarmlen=0.7,handlelen=1,yscale=.5,leftarmpt=P]{(0,0)}{(0.5,0)};  
\draw (T) node[above] {$V$}; 
\drawcrossing[crosstyle=mn,inlefthandlen=0,inrighthandlen=1.2,outlefthandlen=0.2,outrighthandlen=0.2]{(N1)}{($(N1)+(.5,0)$)}; \draw (N1) node[above] {$X$}; \drawcoop[CoProdStyle=comodule,leftarmlen=0.1,rightarmlen=1,handlelen=.7,yscale=.5]{(N2)}{(P |- N2)}; \draw (T) node[below] {$X$};  \drawcoop[CoProdStyle=comodule,leftarmlen=2.4,rightarmlen=0.1,handlelen=.7,yscale=.5]{($(M2)+(.5,0)$)}{(M2)}; \draw (M1) node[above] {$Y$} (T) node[below] {$Y$}; \draw[fill=white,thin] (-0.1,0.1) rectangle(0.6,-0.3); \draw ($(-0.1,0.1)!0.5!(0.6,-0.3)$) node {$t$}; },\quad
\theta_{V}^{3}\left(  s\right)  _{X,Y,Z}%
=\tikz[baseline=(current bounding box.west),scale=1,samples=100,thick,font=\scriptsize] { \drawcrossing[crosstyle=mn,inlefthandlen=0,inrighthandlen=1.1,outlefthandlen=0.1,outrighthandlen=0.1]{(0,0)}{(.5,0)}; \draw (N1) node[above] {$X$}; \coordinate (A) at (N2); \drawcrossing[crosstyle=mn,inlefthandlen=0,inrighthandlen=2.2,outlefthandlen=0.2,outrighthandlen=0.2]{(M2)}{($(M2)+(.5,0)$)}; \draw (N1) node[above] {$Y$}; \drawcoop[CoProdStyle=comodule,leftarmlen=3.4,rightarmlen=0.1,handlelen=.7,yscale=.5]{($(M2)+(.5,0)$)}{(M2)}; \draw (T) node[below] {$Z$}; \draw (M1) node[above] {$Z$}; \drawcrossing[crosstyle=mn,inlefthandlen=2.2,inrighthandlen=0,outlefthandlen=0.2,outrighthandlen=0.2]{($(A)+(-.5,0)$)}{(A)}; \draw (M1) node[above] {$V$}; \drawcoop[CoProdStyle=comodule,leftarmlen=0.1,rightarmlen=0.1,handlelen=.7,yscale=.5]{($(M2)+(.5,0)$)}{(M2)}; \draw (T) node[below] {$Y$}; \drawcoop[CoProdStyle=comodule,leftarmlen=0.1,rightarmlen=3,handlelen=.7,yscale=.5]{(N2)}{($(N2)+(-.5,0)$)}; \draw (T) node[below] {$X$}; \coordinate (B) at ($(N1)+(-0.1,0)$); \draw[fill=white,thin] (B|- 0,0.4) rectangle(0.1,0); \draw ($(B|- 0,0.4)!0.5!(0.1,0)$) node {$s$}; }.\label{eq deftheta}%
\end{equation}
Then $U\left(  \mathcal{C}\right)  $ is a Hopf algebra in the braided monoidal
category $\mathcal{C}$, named the automorphism braided group of $\mathcal{C}$,
which acts canonically on every object $X\in\mathcal{C}$ via $\alpha_{X}$.
Write $B=U\left(  \mathcal{C}\right)  $. Then with the graphical notations
(see Page \pageref{graphical notation}) the Hopf algebra structure on $B$ is
determined by the diagrams (see \cite[\S \ 9.4]{Majid1995foundations})
\begin{align}
& \theta_{B\otimes B}\left(  m_{B}\right)  _{X}%
=\tikz[baseline=(current bounding box.west),scale=1,samples=100,thick,font=\scriptsize] { \drawcoop[CoProdStyle=coproduct,leftarmlen=0.84,rightarmlen=0.84,handlelen=.7,yscale=.5, labels={\leftArmPt/above/B,\rightArmPt/above/B}]{(0.5,0)}{(0,0)};   \drawcoop[CoProdStyle=comodule,leftarmlen=.6cm,rightarmlen=0.05,handlelen=.7,yscale=.5, labels={\leftArmPt/above/X}]{($(T)+(.7,0)$)}{(T)}; \draw (T) node[below] {$X$}; }=\tikz[baseline=(current bounding box.west),scale=1,samples=100,thick,font=\scriptsize] { \drawcoop[CoProdStyle=copmodule,leftarmlen=0.84,rightarmlen=0.84,handlelen=.7,yscale=.5, labels={\leftArmPt/above/X,\rightArmPt/above/B}]{(0.5,0)}{(0,0)}; \drawcoop[CoProdStyle=comodule,leftarmlen=.05,rightarmlen=0.6cm,handlelen=.7,yscale=.5, labels={\rightArmPt/above/B}]{(T)}{($(T)-(.7,0)$)}; \draw (T) node[below] {$X$}; },\quad
\theta_{1}\left(  u_{B}\right)  _{X}%
=\tikz[baseline=(current bounding box.west),scale=1,samples=100,thick,font=\scriptsize] { \drawcoop[CoProdStyle=comodule,leftarmlen=1.2,rightarmlen=1,handlelen=.7,yscale=0.5,rightarmtext={{circle/1/\text{\normalsize $u$}}}]{(0.5,0)}{(0,0)};   \draw (M1) node[above] {$X$} (T)node[below]{$X$}; }=\tikz[baseline=(current bounding box.west),scale=1,samples=100,thick,font=\scriptsize] { \draw (0,0.8) node [above]{$X$} -- (0,0)node[below]{$X$};  },\label{Fig1 stru of B}%
\\
& \theta_{B}^{2}\left(  \Delta_{B}\right)  _{X,Y}%
=\tikz[baseline=(current bounding box.west),scale=1,samples=100,thick,font=\scriptsize] { \drawcoop[CoProdStyle=coproduct,leftarmlen=0.6,rightarmlen=0.4,handlelen=.7,yscale=.5,leftarmpt=P]{(0,0)}{(0.5,0)};   \draw (T) node[above] {$B$}; \drawcrossing[crosstyle=mn,inlefthandlen=0,inrighthandlen=.8,outlefthandlen=0.2,outrighthandlen=0.2]{(N1)}{($(N1)+(.5,0)$)}; \draw (N1) node[above] {$X$}; \drawcoop[CoProdStyle=comodule,leftarmlen=0.1,rightarmlen=1,handlelen=.7,yscale=.5]{(N2)}{(P |- N2)}; \draw (T) node[below] {$X$};  \drawcoop[CoProdStyle=comodule,leftarmlen=2,rightarmlen=0.1,handlelen=.7,yscale=.5]{($(M2)+(.5,0)$)}{(M2)}; \draw (M1) node[above] {$Y$} (T) node[below] {$Y$}; }=\tikz[baseline=(current bounding box.west),scale=1,samples=100,thick,font=\scriptsize] { \drawcoop[CoProdStyle=comodule,leftarmlen=1.3,rightarmlen=1.3,handlelen=.6,yscale=1]{(0.7,0)}{(0,0)};   \draw (N1) node[above] {$B$} (M1) node[above] {$X\otimes Y$} (T) node[below]{$X\otimes Y$}; }=\alpha
_{X\otimes Y},\quad\varepsilon_{B}=\alpha_{1},\label{Fig2 stru of B}\\
& \theta_{B}\left(  S_{B}\right)  _{X}%
=\tikz[baseline=(current bounding box.west),scale=1,samples=100,thick,font=\scriptsize] { \drawcoop[CoProdStyle=comodule,leftarmlen=1.3,rightarmlen=1.3,handlelen=.6,yscale=1,rightarmtext={{circle/.5/\text{\tiny{$+$}}}}]{(0.7,0)}{(0,0)};
 \draw (M1) node [above] {$X$} (N1) node [above] {$B$} (T) node[below]{$X$}; }=\tikz[baseline=(current bounding box.west),xscale=0.9,yscale=0.6,samples=100,thick,font=\scriptsize] { \drawcrossing[crosstyle=mn,inlefthandlen=.6,inrighthandlen=.2,outlefthandlen=0.8,outrighthandlen=0.2,yscale=.5]{(0,0)}{(1,0)}; \draw (M1) node[above] {$B$} (N2) node[below] {$X$} ($(N1)+(0.9,0)$) node[right] {$X^{*}$}; \halfcircle{($(N1)+(1,0)$)}{(N1)}; \drawcoop[CoProdStyle=comodule,leftarmlen=.9,rightarmlen=.1,handlelen=.3,yscale=.5]{($(M2)+(1,0)$)}{(M2)}; \draw ($(T)+(0.5,-0.2)$) node {$\alpha_{X^*}$}; \halfcircle{(T)}{($(T)+(1.5,0)$)}; \draw ($(T)+(1.5,0)$) -- ($(T)+(1.5,1.3)$) node[above] {$X$}; }.\label{Fig3 stru of B}%
\end{align}

It is known that if an object $X$ of a braided monoidal category has a left
dual $X^{\ast}$, then $X^{\ast}$ is naturally a right dual of $X$ with
$ev_{X}^{\prime}=ev_{X}\circ c_{X,X^{\ast}}$ and $coev_{X}^{\prime
}=c_{X,X^{\ast}}^{-1}\circ coev_{X}$. We will use the right rigidity of
$\mathcal{C}$ to construct the inverse of $S_{B}$.

\begin{proposition}
The antipode of $B$ is an isomorphism with its inverse $T_{B}%
=\tikz[baseline=(current bounding
box.west),scale=.8,samples=100,thick,font=\scriptsize] {
\linewithtext[text={{circle/.5/\text{\tiny{$-$}}}}]{(0,1)}{(0,0)}; \draw
(0,1) node[above] {$B$} (0,0) node[below] {$B$}; }$ given by
\[
\tikz[baseline=(current bounding box.west),scale=1,samples=100,thick,font=\scriptsize] { \drawcoop[CoProdStyle=comodule,leftarmlen=1.3,rightarmlen=1.3,handlelen=.6,yscale=1,rightarmtext={{circle/.5/\text{\tiny{$-$}}}}]{(0.7,0)}{(0,0)};   \draw (M1) node [above] {$X$} (N1) node [above] {$B$} (T) node[below]{$X$}; }=\tikz[baseline=(current bounding box.west),yscale=1,xscale=1.5,samples=100,thick,font=\scriptsize] { \drawcrossing[crosstyle=mn,inlefthandlen=.3,inrighthandlen=.3,outlefthandlen=0.3,outrighthandlen=0.2,yscale=.5]{(0,0)}{(.6,0)}; \draw (M1) node[above] {$B$} (N1) node[above] {$X$};  \drawcoop[CoProdStyle=comodule,leftarmlen=.7,rightarmlen=.2,handlelen=.1,yscale=.5]{($(M2)+(.6,0)$)}{(M2)};   \halfcircle{(N2)}{(T)}; \draw ($(M1)+(-0.15,0.15)$) node[] {${^{*}\hspace{-0.5ex}X}$} ($(T)+(0.25,-0.2)$) node {$\alpha_{^{*}\hspace{-0.5ex}X}$}; \halfcircle{($(M1)+(.6,0)$)}{(M1)}; \draw ($(M1)+(.6,0)$) -- ($(M1)+(.6,-1)$)node[below]{$X$};}.
\]

\begin{proof}
To show that $T_{B}$ be the inverse of $S_{B}$ is to show
\[
\theta_{B}\left(  S_{B}\circ T_{B}\right)  =\theta_{B}\left(  id_{B}\right)
=\theta_{B}\left(  T_{B}\circ S_{B}\right)  .
\]
The following graphical calculus yields the result:
\begin{align*}
\tikz[baseline=(current bounding box.west),yscale=1.2,samples=100,thick,font=\scriptsize] { \drawcoop[CoProdStyle=comodule,leftarmlen=1.3,rightarmlen=1.3,handlelen=.6,yscale=1,rightarmtext={{circle/.75/\text{\tiny{$+$}}},{circle/.3/\text{\tiny{$-$}}}}]{(0.7,0)}{(0,0)};   \draw (M1) node [above] {$X$} (N1) node [above] {$B$} (T) node[below]{$X$}; }
&
=\tikz[baseline=(current bounding box.west),xscale=1.5,yscale=1.2,samples=100,thick,font=\scriptsize] { \drawcrossing[crosstyle=mn,inlefthandlen=.2,inrighthandlen=.2,outlefthandlen=0.3,outrighthandlen=0.2,yscale=.5]{(0,0)}{(.5,0)}; \linewithtext[text={{circle/.3/\text{\tiny{$+$}}}}]{(M1)}{($(M1)+(0,0.4)$)}; \draw ($(M1)+(0,0.4)$) node[above] {$B$} (N1)--($(N1)+(0,0.4)$) node[above] {$X$};  \drawcoop[CoProdStyle=comodule,leftarmlen=.7,rightarmlen=.2,handlelen=.1,yscale=.5]{($(M2)+(.5,0)$)}{(M2)}; \halfcircle{(N2)}{(T)}; \draw ($(M1)+(-0.1,0.2)$) node[] {${^{*}\hspace{-0.5ex}X}$} ($(T)+(0.2,-0.2)$) node {$\alpha_{^{*}\hspace{-0.5ex}X}$}; \halfcircle{($(M1)+(.5,0)$)}{(M1)}; \draw ($(M1)+(.5,0)$) -- ($(M1)+(.5,-1)$)node[below]{$X$};}=\tikz[baseline=(current bounding box.west),xscale=1.5,yscale=1.2,samples=100,thick,font=\scriptsize] { \drawcrossing[crosstyle=mn,inlefthandlen=.2,inrighthandlen=.2,outlefthandlen=0.9,outrighthandlen=1,yscale=.5]{(0,0)}{(.5,0)}; \draw (M1) node[above] {$B$} (N1) node[above] {$X$};  \drawcoop[CoProdStyle=comodule,leftarmlen=.7,rightarmlen=.8,handlelen=.1,yscale=.5,rightarmtext={{circle/.6/\text{\tiny{$+$}}}}]{($(M2)+(.5,0)$)}{(M2)};   \halfcircle{(N2)}{(T)}; \draw ($(M1)+(-0.1,0.2)$) node[] {${^{*}\hspace{-0.5ex}X}$} ($(T)+(0.2,-0.2)$) node {$\alpha_{^{*}\hspace{-0.5ex}X}$}; \halfcircle{($(M1)+(.5,0)$)}{(M1)}; \draw ($(M1)+(.5,0)$) -- ($(M1)+(.5,-1)$)node[below]{$X$};}\\
&
=\tikz[baseline=(current bounding box.west),xscale=1.5,yscale=1.2,samples=100,thick,font=\scriptsize] { \drawcrossing[crosstyle=mn,inlefthandlen=.2,inrighthandlen=.2,outlefthandlen=2,outrighthandlen=0.8,yscale=.5,outleftnm=P]{(0,0)}{(.5,0)}; \draw (M1) node[above] {$B$} (N1) node[above] {$X$};  \drawcrossing[crosstyle=mn,inlefthandlen=0.5,inrighthandlen=0.2,outlefthandlen=0.8,outrighthandlen=0.2,yscale=.5]{(M2)}{($(M2)+(0.5,0)$)}; \draw ($(N2)+(0.15,0)$) node {${}{^{*}\hspace{-0.5ex}X}$}; \halfcircle{($(N1)+(0.5,0)$)}{(N1)}; \drawcoop[CoProdStyle=comodule,leftarmlen=.9,rightarmlen=.1,handlelen=.3,yscale=.5]{($(M2)+(0.5,0)$)}{(M2)}; \draw ($(T)+(0.25,-0.07)$) node {$\alpha_{X}$} ($(M1)+(0.15,0.07)$)node{$X$}; \halfcircle{(P)}{(N2)}; \halfcircle{(T)}{($(T)+(0.7,0)$)}; \draw ($(T)+(0.7,0)$) -- ($(T)+(0.7,1)$) ($(T)+(0.55,1.1)$) node{${}{^{*}\hspace{-0.5ex}X}$}; \halfcircle{($(T)+(1.2,1)$)}{($(T)+(0.7,1)$)}; \draw ($(T)+(1.2,1)$)-- ($(T)+(1.2,-0.5)$)node[below]{$X$}; }=\tikz[baseline=(current bounding box.west),xscale=1.5,yscale=1.2,samples=100,thick,font=\scriptsize] { \halfcircle {(0,-0.5) }{(0.5,-0.5)}; \halfcircle {(1,-0.5) }{(0.5,-0.5)}; \draw (1,-0.5)--(1,-1); \draw (0,0)node[above]{$X$}--(0,-0.5); \drawcoop[CoProdStyle=comodule,leftarmlen=0,rightarmlen=0.666,handlelen=.3,yscale=.5]{(1,-1)}{(-0.5,-1)}; \halfcircle{(T)}{($(T)+(1.1,0)$)}; \halfcircle{($(T)+(1.7,1)$)}{($(T)+(1.1,1)$)}; \draw (N1)node[above]{$B$} ($(T)+(0.25,0)$)node{$\alpha_{X}$} ($(T)+(1.1,0)$)--($(T)+(1.1,1)$) ($(T)+(1.7,1)$)--($(T)+(1.7,-0.7)$)node[below]{$X$}; }=\tikz[baseline=(current bounding box.west),xscale=1.5,yscale=1.2,samples=100,thick,font=\scriptsize] { \drawcoop[CoProdStyle=comodule,leftarmlen=1.5,rightarmlen=1.5,handlelen=1,yscale=1]{(0.5,0)}{(0,0)};   \draw (M1) node [above] {$X$} (N1) node [above] {$B$} (T) node[below]{$X$}; },
\end{align*}%
\begin{align*}
\tikz[baseline=(current bounding box.west),yscale=1.2,samples=100,thick,font=\scriptsize] { \drawcoop[CoProdStyle=comodule,leftarmlen=1.3,rightarmlen=1.3,handlelen=.6,yscale=1,rightarmtext={{circle/.75/\text{\tiny{$-$}}},{circle/.3/\text{\tiny{$+$}}}}]{(0.7,0)}{(0,0)};   \draw (M1) node [above] {$X$} (N1) node [above] {$B$} (T) node[below]{$X$}; }
&
=\tikz[baseline=(current bounding box.west),yscale=1.2,xscale=1.5,samples=100,thick,font=\scriptsize] { \drawcrossing[crosstyle=mn,inlefthandlen=.2,inrighthandlen=0.5,outlefthandlen=0.9,outrighthandlen=0.2,yscale=.5]{(0,0)}{(0.5,0)}; \linewithtext[text={{circle/.3/\text{\tiny{$-$}}}}]{(M1)}{($(M1)+(0,0.55)$)}; \draw ($(M1)+(0,0.55)$) node[above] {$B$}(N2) node[below] {$X$}; \halfcircle{($(N1)+(0.5,0)$)}{(N1)}; \drawcoop[CoProdStyle=comodule,leftarmlen=1.2,rightarmlen=.1,handlelen=.3,yscale=.5]{($(M2)+(0.5,0)$)}{(M2)}; \halfcircle{(T)}{($(T)+(0.75,0)$)}; \draw ($(T)+(0.75,0)$) -- ($(T)+(0.75,1.1)$) node[above] {$X$} ($(M1)+(0.15,0.2)$) node {$X^{*}$} ($(T)+(0.3,-0.1)$) node {$\alpha_{X^*}$}; }=\tikz[baseline=(current bounding box.west),yscale=1.2,xscale=1.5,samples=100,thick,font=\scriptsize] { \drawcrossing[crosstyle=mn,inlefthandlen=.5,inrighthandlen=0,outlefthandlen=1.8,outrighthandlen=0.9,yscale=.5]{(0,0)}{(0.5,0)}; \draw (M1) node[above] {$B$}(N2) node[below] {$X$}; \halfcircle{($(N1)+(0.5,0)$)}{(N1)}; \drawcoop[CoProdStyle=comodule,leftarmlen=1.5,rightarmlen=0.8,handlelen=.3,yscale=.5,rightarmtext={{circle/.6/\text{\tiny{$-$}}}}]{($(M2)+(0.5,0)$)}{(M2)}; \halfcircle{(T)}{($(T)+(0.75,0)$)}; \draw ($(T)+(0.75,0)$) -- ($(T)+(0.75,1.1)$) node[above] {$X$} ($(M1)+(0.15,0.2)$) node {$X^{*}$} ($(T)+(0.3,-0.1)$) node {$\alpha_{X^*}$}; }\\
&
=\tikz[baseline=(current bounding box.west),xscale=1.5,yscale=1.2,samples=100,thick,font=\scriptsize] { \drawcrossing[crosstyle=mn,inlefthandlen=.5,inrighthandlen=0,outlefthandlen=2.5,outrighthandlen=0.5,yscale=.5]{(0,0)}{(0.5,0)}; \draw (M1) node[above] {$B$}(N2) node[below] {$X$}; \halfcircle{($(N1)+(0.5,0)$)}{(N1)}; \draw ($(N1)+(0.7,0.15)$) node {$X^{*}$}; \drawcrossing[crosstyle=mn,inlefthandlen=.2,inrighthandlen=1,outlefthandlen=0.3,outrighthandlen=0.2,yscale=.5]{(M2)}{($(M2)+(.5,0)$)}; \drawcoop[CoProdStyle=comodule,leftarmlen=.7,rightarmlen=.2,handlelen=.1,yscale=.5]{($(M2)+(.5,0)$)}{(M2)}; \halfcircle{(N2)}{(T)}; \draw ($(T)+(0.25,-0.13)$) node {$\alpha_{X}$} ($(M1)+(-0.1,0.07)$)node{$X$}; \halfcircle{($(M1)+(.5,0)$)}{(M1)}; \draw ($(M1)+(.5,0)$) -- ($(M1)+(.5,-0.8)$); \halfcircle{($(M1)+(.5,-0.8)$)}{($(M1)+(1,-.8)$)}; \draw ($(M1)+(1,-.8)$) -- ($(M1)+(1,0.8)$) node[above] {$X$} ($(M1)+(.3,-0.8)$)node{$X^{*}$}; }=\tikz[baseline=(current bounding box.west),xscale=1.5,yscale=1.2,samples=100,thick,font=\scriptsize]{ \drawcoop[CoProdStyle=comodule,leftarmlen=.5,rightarmlen=1.7,handlelen=.5,yscale=.5]{(0.5,0)}{(0,0)}; \halfcircle{($(T)+(-.5,0)$)}{(T)}; \halfcircle{($(T)+(-.5,0)$)}{($(T)+(-1,0)$)}; \halfcircle{($(M1)+(.5,0)$)}{(M1)}; \halfcircle{($(M1)+(.5,0)$)}{($(M1)+(1,0)$)}; \draw ($(T)+(-1,0)$)--($(T)+(-1,-0.4)$)node[below]{$X$} (N1)node[above]{$B$} ($(M1)+(1,0)$)--($(M1)+(1,0.6)$)node[above]{$X$} ($(T)+(0.25,-0.03)$)node{$\alpha_{X}$}; }=\tikz[baseline=(current bounding box.west),xscale=1.5,yscale=1.2,samples=100,thick,font=\scriptsize] { \drawcoop[CoProdStyle=comodule,leftarmlen=1.5,rightarmlen=1.5,handlelen=1,yscale=1]{(0.5,0)}{(0,0)};   \draw (M1) node [above] {$X$} (N1) node [above] {$B$} (T) node[below]{$X$}; }.
\end{align*}

\end{proof}
\end{proposition}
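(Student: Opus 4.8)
The plan is to use the representability assumption to turn the identities $S_B\circ T_B=id_B$ and $T_B\circ S_B=id_B$ into identities of natural transformations, and then to verify those by graphical calculus. Concretely, since $\theta_B\colon\operatorname{Hom}_{\mathcal{C}}(B,B)\to\operatorname{Nat}(B\otimes id_{\mathcal{C}},id_{\mathcal{C}})$ is a bijection with $\theta_B(id_B)=\alpha$, and since $\theta_B(f\circ g)_X=\theta_B(f)_X\circ(id_X\otimes g)$ (the natural transformation attached to $f\circ g$ is the one attached to $f$ with $g$ additionally grafted onto the $B$-input), it is enough to prove the two diagrammatic identities $\theta_B(S_B\circ T_B)=\alpha$ and $\theta_B(T_B\circ S_B)=\alpha$. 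Before doing so I would observe that the right-hand side of the displayed formula for $T_B$ really is natural in $X$ — this follows from naturality of $\alpha$ and of the braiding together with functoriality of duals — so that $T_B:=\theta_B^{-1}$ of that natural transformation is a well-defined morphism in $\mathcal{C}$.

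The two ingredients that make the reduction go through in a braided rigid category are: first, the relations $ev'_X=ev_X\circ c_{X,X^{\ast}}$ and $coev'_X=c_{X,X^{\ast}}^{-1}\circ coev_X$ exhibiting the left dual $X^{\ast}$ as a right dual of $X$, together with the canonical identification $({}^{\ast}X)^{\ast}\cong X$; and second, the defining diagrams for the Hopf structure of $B$, in particular the formula for $\theta_B(S_B)_X$ in terms of $\alpha_{X^{\ast}}$ recorded in (\ref{Fig3 stru of B}). These are exactly the data needed to rewrite each of $\theta_B(S_B\circ T_B)_X$ and $\theta_B(T_B\circ S_B)_X$ as a closed diagram in $\mathcal{C}$.

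The heart of the argument is then a graphical computation. For $\theta_B(T_B\circ S_B)_X=\theta_B(T_B)_X\circ(id_X\otimes S_B)$ I would substitute the defining diagram of $T_B$ (which contains $\alpha_{{}^{\ast}X}$), push the inserted copy of $S_B$ through the braiding by naturality so that it lands on the $B$-leg of $\alpha_{{}^{\ast}X}$, thereby producing $\theta_B(S_B)_{{}^{\ast}X}$, substitute the formula (\ref{Fig3 stru of B}) for this (now with $\alpha_{({}^{\ast}X)^{\ast}}=\alpha_X$), and finally collapse the resulting array of cups, caps and crossings by repeated use of the zig-zag identities and the Reidemeister-type moves for $c$ until only $\alpha_X$ remains. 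The identity $\theta_B(S_B\circ T_B)=\alpha$ is obtained in the same way with the roles of the left and right duals interchanged. Applying $\theta_B^{-1}$ then gives $S_B\circ T_B=T_B\circ S_B=id_B$.

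I expect the only real difficulty to be the bookkeeping in this last step: correctly transporting the antipode strand across the braiding, tracking which dual ($X^{\ast}$, ${}^{\ast}X$, $({}^{\ast}X)^{\ast}$) appears after each substitution, and applying the snake identities in the right order so that every auxiliary evaluation/coevaluation pair cancels. No new conceptual input beyond the representability assumption and the braided-rigid structure of $\mathcal{C}$ is required; the formula for $T_B$ is built from the right duals precisely so that it mirrors $S_B$ under the left–right duality symmetry of a braided category, which is what forces the cancellation.
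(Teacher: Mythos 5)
Your proposal follows the paper's proof exactly: reduce the two identities $S_B\circ T_B=id_B=T_B\circ S_B$ to identities of natural transformations via the bijection $\theta_B$ (so that it suffices to check $\theta_B(S_B\circ T_B)=\alpha=\theta_B(T_B\circ S_B)$), substitute the defining diagrams for $S_B$ and $T_B$, transport the inner morphism across the braiding by naturality so that it lands on the $B$-leg of the relevant $\alpha$, identify $\alpha_{({}^{\ast}X)^{\ast}}$ (resp.\ $\alpha_{{}^{\ast}(X^{\ast})}$) with $\alpha_X$, and cancel the remaining duality morphisms by zig-zag identities. The only part you leave unexecuted is the diagram chase itself, which is precisely the content of the paper's two displayed chains of equalities, so the approach and all key ingredients coincide (modulo a harmless left/right slip in writing $\theta_B(f\circ g)_X=\theta_B(f)_X\circ(id_X\otimes g)$ where the grafting is on the $B$-tensorand).
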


For any $M,N\in\mathcal{C}$, we define%
\begin{align*}
\varphi_{M,N}  & :\operatorname{Hom}_{\mathcal{C}}\left(  M,B\otimes N\right)
\rightarrow\operatorname{Nat}\left(  M\otimes id_{\mathcal{C}},id_{\mathcal{C}%
}\otimes N\right)  ,\\
\varphi_{M,N}^{2}  & :\operatorname{Hom}_{\mathcal{C}}\left(  M,B\otimes
B\otimes N\right)  \rightarrow\operatorname{Nat}\left(  M\otimes
id_{\mathcal{C}}{}^{\otimes2},id_{\mathcal{C}}{}^{\otimes2}\otimes N\right)
\end{align*}
via
\begin{equation}
\varphi_{M,N}\left(  t\right)  _{X}%
=\tikz[baseline=(current bounding box.west),scale=1,samples=100,thick,font=\scriptsize] { \drawcoop[CoProdStyle=comodule,leftarmlen=0.9,rightarmlen=0.7,handlelen=1,yscale=.5,leftarmpt=P]{(0,0)}{(0.5,0)};   \draw (T) node[above] {$M$}; \drawcrossing[crosstyle=mn,inlefthandlen=0.4,inrighthandlen=1.2,outlefthandlen=0.2,outrighthandlen=0.55,yscale=.5]{(N1)}{($(N1)+(.5,0)$)}; \draw (N1) node[above] {$X$} (M2) node[below] {$N$}; \drawcoop[CoProdStyle=comodule,leftarmlen=0.1,rightarmlen=1,handlelen=.7,yscale=.5]{(N2)}{(P |- N2)}; \draw (T) node[below] {$X$};  \draw[fill=white,thin] (-0.1,0.1) rectangle(0.6,-0.3); \draw ($(-0.2,0.1)!0.5!(0.7,-0.3)$) node {$t$}; },\quad
\varphi_{M,N}^{2}\left(  s\right)  _{X,Y}%
=\tikz[baseline=(current bounding box.west),scale=1,samples=100,thick,font=\scriptsize] { \drawcrossing[crosstyle=mn,inlefthandlen=0,inrighthandlen=1.08,outlefthandlen=0.1,outrighthandlen=0.1,yscale=.5]{(0,0)}{(.5,0)}; \draw (N1) node[above] {$X$}; \coordinate (A) at (N2); \drawcrossing[crosstyle=mn,inlefthandlen=0,inrighthandlen=1.7,outlefthandlen=0.2,outrighthandlen=0.55,yscale=.5]{(M2)}{($(M2)+(.5,0)$)}; \draw (N1) node[above] {$Y$} (M2) node[below] {$N$}; \drawcrossing[crosstyle=mn,inlefthandlen=1.7,inrighthandlen=0,outlefthandlen=0.2,outrighthandlen=0.2,yscale=.5]{($(A)+(-.5,0)$)}{(A)}; \draw (M1) node[above] {$M$}; \drawcoop[CoProdStyle=comodule,leftarmlen=0.1,rightarmlen=0.1,handlelen=.7,yscale=.5]{($(M2)+(.5,0)$)}{(M2)}; \draw (T) node[below] {$Y$}; \drawcoop[CoProdStyle=comodule,leftarmlen=0.1,rightarmlen=1.8,handlelen=.7,yscale=.5]{(N2)}{($(N2)+(-.5,0)$)}; \draw (T) node[below] {$X$}; \coordinate (B) at ($(N1)+(-0.1,0)$); \draw[fill=white,thin] (B|- 0,0.4) rectangle(0.1,0); \draw ($(B|- 0,0.4)!0.5!(0.1,0)$) node{$s$}; },\ \label{eq defphi}%
\end{equation}
where $X,Y\in\mathcal{C}$, $t\in\operatorname{Hom}_{\mathcal{C}}\left(
M,B\otimes N\right)  $ and $s\in\operatorname{Hom}_{\mathcal{C}}\left(
M,B\otimes B\otimes N\right)  $.

The next two lemmas will show connection between the category $B$%
-$\operatorname*{Comod}_{\mathcal{C}}$ of left $B$-comodules in $\mathcal{C}$
and the center $\mathcal{Z}_{l}\left(  \mathcal{C}\right)  $ of $\mathcal{C}$.

\begin{lemma}
\label{Lemma nat iso}For any $M,N\in\mathcal{C}$, $\varphi_{M,N}$ and
$\varphi_{M,N}^{2}$ are isomorphisms natural in both variables.
\end{lemma}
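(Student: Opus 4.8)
The plan is to reduce the lemma to the representability assumption by factoring $\varphi_{M,N}$ and $\varphi_{M,N}^{2}$ through $\theta_{M\otimes{}^{\ast}N}$ and $\theta_{M\otimes{}^{\ast}N}^{2}$. The right rigidity of $\mathcal{C}$ lets us move the outer factor $N$ out of the $\operatorname{Hom}$-space, and the braiding lets us move the resulting dual ${}^{\ast}N$ past the test objects in the $\operatorname{Nat}$-space; the second move is precisely where braidedness (not merely monoidality) of $\mathcal{C}$ enters. Fix $M,N\in\mathcal{C}$ and a right dual ${}^{\ast}N$, with $ev'_{N}\colon N\otimes{}^{\ast}N\to1$ and $coev'_{N}\colon1\to{}^{\ast}N\otimes N$. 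Rigidity gives natural isomorphisms $\Phi_{M,N}\colon\operatorname{Hom}_{\mathcal{C}}(M,B\otimes N)\xrightarrow{\cong}\operatorname{Hom}_{\mathcal{C}}(M\otimes{}^{\ast}N,B)$, $t\mapsto(id_{B}\otimes ev'_{N})(t\otimes id_{{}^{\ast}N})$, and likewise $\Phi_{M,N}^{2}\colon\operatorname{Hom}_{\mathcal{C}}(M,B\otimes B\otimes N)\xrightarrow{\cong}\operatorname{Hom}_{\mathcal{C}}(M\otimes{}^{\ast}N,B\otimes B)$, with inverses built from $coev'_{N}$.

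On the other side I would build bijections $\Psi_{M,N}\colon\operatorname{Nat}\big((M\otimes{}^{\ast}N)\otimes id_{\mathcal{C}},id_{\mathcal{C}}\big)\xrightarrow{\cong}\operatorname{Nat}\big(M\otimes id_{\mathcal{C}},id_{\mathcal{C}}\otimes N\big)$ by $\Psi_{M,N}(\gamma)_{X}=(\gamma_{X}\otimes id_{N})\,(id_{M\otimes{}^{\ast}N}\otimes c_{N,X})\,(id_{M}\otimes coev'_{N}\otimes id_{X})$, with inverse assembled dually from $c$ and $ev'_{N}$, and similarly $\Psi_{M,N}^{2}$ by routing ${}^{\ast}N$ (resp.\ $N$) past $X\otimes Y$. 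Naturality of $\gamma$, naturality of the braiding, and the snake identities for ${}^{\ast}N$ show that these families are natural in the test object(s) and that $\Psi_{M,N}$, $\Psi_{M,N}^{2}$ are honest bijections.

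The key step is then to verify graphically that
\[
\varphi_{M,N}=\Psi_{M,N}\circ\theta_{M\otimes{}^{\ast}N}\circ\Phi_{M,N},\qquad\varphi_{M,N}^{2}=\Psi_{M,N}^{2}\circ\theta_{M\otimes{}^{\ast}N}^{2}\circ\Phi_{M,N}^{2}.
\]
Starting from the right-hand sides, inserting the definitions (\ref{eq deftheta}) of $\theta^{\bullet}$ and (\ref{eq defphi}) of $\varphi^{\bullet}$, one slides the cups and caps contributed by $\Phi$ and $\Psi$ through the pictures, cancels them by the snake identities for ${}^{\ast}N$, and observes that the remaining crossings coincide with those drawn in (\ref{eq defphi}) while the $\alpha$-coactions are untouched. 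Given these identities, $\varphi_{M,N}$ and $\varphi_{M,N}^{2}$ are composites of bijections ($\Phi,\Phi^{2},\Psi,\Psi^{2}$ by rigidity and invertibility of $c$; $\theta_{M\otimes{}^{\ast}N}$ an isomorphism and $\theta_{M\otimes{}^{\ast}N}^{2}$ a bijection by the representability assumption), hence isomorphisms. Naturality in both $M$ and $N$ then follows by composing the naturality of each factor, once one checks that $M\mapsto M\otimes{}^{\ast}N$ is covariant in $M$ and contravariant in $N$ (since ${}^{\ast}(-)$ is contravariant), so that $\operatorname{Hom}_{\mathcal{C}}(M\otimes{}^{\ast}N,B)$ carries exactly the variance of $\operatorname{Hom}_{\mathcal{C}}(M,B\otimes N)$, and that $\theta^{\bullet}$, defined by the explicit formulas (\ref{eq deftheta}), is natural in its argument.

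The step I expect to be the main obstacle is this graphical verification of the two factorizations: matching, strand by strand, the crossings produced by routing the $N$- and ${}^{\ast}N$-wires around the test objects against the specific braids in (\ref{eq deftheta})--(\ref{eq defphi}), and carrying out the longer analogue for $\theta^{2}$, $\varphi^{2}$. A minor but real subtlety is tracking the functoriality in $N$: trading $N$ on the right for ${}^{\ast}N$ on the left reverses variance twice (through ${}^{\ast}(-)$ and through the $\operatorname{Hom}$-slot), so the two reversals must be checked to cancel.
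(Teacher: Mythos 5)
Your proposal is correct and follows essentially the same route as the paper: the paper also factors $\varphi_{M,N}$ as $\operatorname{Hom}_{\mathcal{C}}(M,B\otimes N)\to\operatorname{Hom}_{\mathcal{C}}(M\otimes{}^{\ast}N,B)\xrightarrow{\theta_{M\otimes{}^{\ast}N}}\operatorname{Nat}(M\otimes{}^{\ast}N\otimes id_{\mathcal{C}},id_{\mathcal{C}})\to\operatorname{Nat}(M\otimes id_{\mathcal{C}},id_{\mathcal{C}}\otimes N)$, with the first map coming from rigidity and the last from the braiding plus $coev'_{N}$, and then verifies the factorization by the same graphical computation you outline (doing it explicitly for $\varphi_{M,N}$ and invoking "similarly" for $\varphi_{M,N}^{2}$). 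The only content you leave unfinished -- the strand-by-strand check that the cups, caps and crossings cancel to reproduce (\ref{eq defphi}) -- is precisely the computation the paper displays, so there is no gap in the idea.
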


\begin{proof}
Since $\mathcal{C}$ is rigid, there is a natural isomorphism%
\begin{align}
\operatorname{Hom}_{\mathcal{C}}\left(  M,X\otimes N\right)   & \rightarrow
\operatorname{Hom}_{\mathcal{C}}\left(  M\otimes{}{^{\ast}\hspace{-0.5ex}%
N},X\right)  ,\nonumber\\
\tikz[baseline=(current bounding box.west),scale=1,samples=100,thick,font=\scriptsize] { \drawcoop[CoProdStyle=comodule,leftarmlen=1,rightarmlen=1,handlelen=1,yscale=.5]{(0,0)}{(0.5,0)}; \draw (T) node[above] {$M$} (M1) node[below]{$X$} (N1)node[below]{$N$}; \draw[fill=white,thin] (-0.2,0.1) rectangle(0.7,-0.3); \draw ($(-0.2,0.1)!0.5!(0.7,-0.3)$) node {$t$}; }
& \mapsto
\tikz[baseline=(current bounding box.west),scale=1,samples=100,thick,font=\scriptsize] { \drawcoop[CoProdStyle=comodule,leftarmlen=1.3,rightarmlen=0.7,handlelen=1,yscale=.5]{(0,0)}{(0.5,0)}; \draw (T) node[above] {$M$} (M1) node[below]{$X$}; \draw[fill=white,thin] (-0.2,0.1) rectangle(0.7,-0.3); \draw ($(-0.2,0.1)!0.5!(0.7,-0.3)$) node {$t$}; \draw ($(N1)+(0.8,0)$)--(1.3,0.4|- T)node[above]{${}{^{*}\hspace{-0.5ex}N}$ } ($(N1)+(-0.15,-0.2)$)node{$N$}; \halfcircle{(N1)}{($(N1)+(0.8,0)$)};}\ ,\ t\in
\operatorname{Hom}_{\mathcal{C}}\left(  M,X\otimes N\right) \label{eq1}%
\end{align}
with its inverse being%
\[
\tikz[baseline=(current bounding box.west),scale=1,samples=100,thick,font=\scriptsize] { \drawcoop[CoProdStyle=comodule,leftarmlen=1,rightarmlen=1,handlelen=1,yscale=.5]{(0.5,0)}{(0,0)}; \draw (T) node[below] {$X$} (M1) node[above]{${}{^{*}\hspace{-0.5ex}N}$} (N1)node[above]{$M$}; \draw[fill=white,thin] (-0.2,0.3) rectangle(0.7,-0.1); \draw ($(-0.2,0.3)!0.5!(0.7,-0.1)$) node {$s$}; }\mapsto
\tikz[baseline=(current bounding box.west),scale=1,samples=100,thick,font=\scriptsize] { \drawcoop[CoProdStyle=comodule,leftarmlen=0.7,rightarmlen=1.3,handlelen=1.2,yscale=.5]{(0.5,0)}{(0,0)}; \draw (T) node[below] {$X$} ($(M1)+(-0.13,0.25)$) node{${}{^{*}\hspace{-0.5ex}N}$} (N1)node[above]{$M$}; \draw[fill=white,thin] (-0.2,0.3) rectangle(0.7,-0.1); \draw ($(-0.2,0.3)!0.5!(0.7,-0.1)$) node {$s$}; \halfcircle{($(M1)+(0.8,0)$)}{(M1)}; \draw ($(M1)+(0.8,0)$)--(1.3,0.4|- T)node[below]{$N$ };}\ ,\ s\in
\operatorname{Hom}_{\mathcal{C}}\left(  M\otimes{}{^{\ast}\hspace{-0.5ex}%
N},X\right)  ,
\]
which induces an isomorphism%
\begin{gather}
\operatorname{Nat}\left(  M\otimes{}{^{\ast}\hspace{-0.5ex}N}\otimes
id_{\mathcal{C}},id_{\mathcal{C}}\right)  \overset{\cong}{\longrightarrow
}\operatorname{Nat}\left(  M\otimes id_{\mathcal{C}}\otimes{}{^{\ast}%
\hspace{-0.5ex}N},id_{\mathcal{C}}\right)  \overset{\cong}{\longrightarrow
}\operatorname{Nat}\left(  M\otimes id_{\mathcal{C}},id_{\mathcal{C}}\otimes
N\right)  ,\nonumber\\
\tikz[baseline=(current bounding box.west),scale=1,samples=100,thick,font=\scriptsize] { \drawcoop[CoProdStyle=comodule,leftarmlen=0.6,rightarmlen=0.6,handlelen=0.7,yscale=.5]{(1,0)}{(0,0)}; \draw (M1) node[above] {$\bullet$} (T)node[below] {$\bullet$} (N1)node[above]{$M$}; \draw (0.5,0)--(0.5,0.4|- M1)node[above]{${}{^{*}\hspace{-0.5ex}N}$}; \draw[fill=white,thin] (-0.2,0.3) rectangle(1.2,-0.1); \draw ($(-0.2,0.3)!0.5!(1.2,-0.1)$) node {$\delta$}; }\mapsto
\tikz[baseline=(current bounding box.west),scale=1,samples=100,thick,font=\scriptsize] { \drawcrossing[crosstyle=nm,inlefthandlen=0.2,inrighthandlen=0.2,outlefthandlen=0.9,outrighthandlen=0.9,yscale=.5]{(0.5,1)}{(1,1)};\draw (M1)node[above] {$\bullet$} (N1)node [above] {${}{^{*}\hspace{-0.5ex}N}$}; \drawcoop[CoProdStyle=comodule,leftarmlen=0.35,rightarmlen=1.15,handlelen=0.5,yscale=.5]{(1,0)}{(0,0)}; \draw(T)node[below] {$\bullet$} (N1)node[above]{$M$}; \draw[fill=white,thin] (-0.2,0.3) rectangle(1.2,-0.1); \draw ($(-0.2,0.3)!0.5!(1.2,-0.1)$) node {$\delta$}; }\mapsto
\tikz[baseline=(current bounding box.west),scale=1,samples=100,thick,font=\scriptsize] { \drawcrossing[crosstyle=nm,inlefthandlen=0.3,inrighthandlen=0,outlefthandlen=0.9,outrighthandlen=0.9,yscale=.5]{(0.5,1)}{(1,1)};\draw (M1)node[above] {$\bullet$} ($(N1)+(-0.15,0.2)$)node {${}{^{*}\hspace{-0.5ex}N}$};  \coordinate (X) at ($(N1)+(0.5,0)$); \halfcircle{(X)}{(N1)}; \drawcoop[CoProdStyle=comodule,leftarmlen=0.35,rightarmlen=1.2,handlelen=0.5,yscale=.5]{(1,0)}{(0,0)}; \draw(T)node[below] {$\bullet$} (N1)node[above]{$M$}; \draw[fill=white,thin] (-0.2,0.3) rectangle(1.2,-0.1); \draw ($(-0.2,0.3)!0.5!(1.2,-0.1)$) node {$\delta$}; \draw (X)--(X |- T)node[below]{$N$}; }\ ,\ \delta
\in\operatorname{Nat}\left(  M\otimes{}{^{\ast}\hspace{-0.5ex}N}\otimes
id_{\mathcal{C}},id_{\mathcal{C}}\right)  .\label{eq2}%
\end{gather}
Then by a graphical calculation, $\varphi_{M,N}$ is the following composition%
\begin{gather*}%
\begin{split}
\operatorname{Hom}_{\mathcal{C}}\left(  M,B\otimes N\right)  \overset
{(\ref{eq1})}{-\!\!-\!\!\!\longrightarrow}\operatorname{Hom}_{\mathcal{C}%
}\left(  M\otimes{}{^{\ast}\hspace{-0.5ex}N},B\right)   &  \overset
{\theta_{M\otimes{}{^{\ast}\hspace{-0.5ex}N}}}{-\!\!-\!\!\!\longrightarrow
}\operatorname{Nat}\left(  M\otimes{}{^{\ast}\hspace{-0.5ex}N}\otimes
id_{\mathcal{C}},id_{\mathcal{C}}\right) \\
&  \overset{\left(  \ref{eq2}\right)  }{-\!\!-\!\!\!\longrightarrow
}\operatorname{Nat}\left(  M\otimes id_{\mathcal{C}},id_{\mathcal{C}}\otimes
N\right)  ,
\end{split}
\\
\tikz[baseline=(current bounding box.west),scale=1,samples=100,thick,font=\scriptsize] { \drawcoop[CoProdStyle=comodule,leftarmlen=1,rightarmlen=1,handlelen=1,yscale=.5]{(0,0)}{(0.5,0)}; \draw (T) node[above] {$M$} (M1) node[below]{$B$} (N1)node[below]{$N$}; \draw[fill=white,thin] (-0.2,0.1) rectangle(0.7,-0.3); \draw ($(-0.2,0.1)!0.5!(0.7,-0.3)$) node {$t$};}\mapsto
\tikz[baseline=(current bounding box.west),scale=1,samples=100,thick,font=\scriptsize] { \drawcoop[CoProdStyle=comodule,leftarmlen=1.3,rightarmlen=0.7,handlelen=1,yscale=.5]{(0,0)}{(0.5,0)}; \draw (T) node[above] {$M$} (M1) node[below]{$B$}; \draw[fill=white,thin] (-0.2,0.1) rectangle(0.7,-0.3); \draw ($(-0.2,0.1)!0.5!(0.7,-0.3)$) node {$t$}; \draw ($(N1)+(0.5,0)$)--(1,0.4|- T)node[above]{${}{^{*}\hspace{-0.5ex}N}$ } ($(N1)+(-0.15,-0.2)$)node{$N$}; \halfcircle{(N1)}{($(N1)+(0.5,0)$)};}\mapsto
\tikz[baseline=(current bounding box.west),scale=1,samples=100,thick,font=\scriptsize] { \drawcoop[CoProdStyle=comodule,leftarmlen=1.65,rightarmlen=0.7,handlelen=1,yscale=.5]{(0,0)}{(0.5,0)}; \draw (T) node[above] {$M$}; \draw[fill=white,thin] (-0.2,0.1) rectangle(0.7,-0.3); \draw ($(-0.2,0.1)!0.5!(0.7,-0.3)$) node {$t$}; \draw ($(N1)+(0.5,0)$)--(1,0.4|- T)node[above]{${}{^{*}\hspace{-0.5ex}N}$ } ($(N1)+(-0.15,-0.2)$)node{$N$}; \halfcircle{(N1)}{($(N1)+(0.5,0)$)}; \drawcoop[CoProdStyle=comodule,leftarmlen=0.72,rightarmlen=0.3,handlelen=0.25,yscale=.5]{($(M1)+(1.5,0)$)}{(M1)};\draw (M1)node[above]{$\bullet$} (T)node[below]{$\bullet$};}\mapsto
\tikz[baseline=(current bounding box.west),scale=1,samples=100,thick,font=\scriptsize] { \drawcoop[CoProdStyle=comodule,leftarmlen=1.65,rightarmlen=0.7,handlelen=2.5,yscale=.5]{(0,0)}{(0.5,0)}; \draw (T) node[above] {$M$}; \draw[fill=white,thin] (-0.2,0.1) rectangle(0.7,-0.3); \draw ($(-0.2,0.1)!0.5!(0.7,-0.3)$) node {$t$}; \draw ($(N1)+(-0.15,-0.2)$)node{$N$}; \halfcircle{(N1)}{($(N1)+(0.5,0)$)}; \drawcoop[CoProdStyle=comodule,leftarmlen=0.72,rightarmlen=0.3,handlelen=0.25,yscale=.5]{($(M1)+(1.5,0)$)}{(M1)};\draw (T)node[below]{$\bullet$}; \drawcrossing[crosstyle=nm,inlefthandlen=0.2,inrighthandlen=0,outlefthandlen=1.3,outrighthandlen=0.1,yscale=.5]{(1,0.55)}{(1.5,0.55)}; \coordinate (X) at ($(N1)+(0.5,0)$); \halfcircle{(X)}{(N1)}; \draw (M1)node[above] {$\bullet$} (X)--(X |- T)node[below]{$N$};}=\tikz[baseline=(current bounding box.west),scale=1,samples=100,thick,font=\scriptsize] { \drawcoop[CoProdStyle=comodule,leftarmlen=0.9,rightarmlen=0.7,handlelen=1,yscale=.5,leftarmpt=P]{(0,0)}{(0.5,0)};   \draw (T) node[above] {$M$}; \drawcrossing[crosstyle=mn,inlefthandlen=0.4,inrighthandlen=1.2,outlefthandlen=0.2,outrighthandlen=0.55,yscale=.5]{(N1)}{($(N1)+(.5,0)$)}; \draw (N1) node[above] {$\bullet$} ($(M2)+(-0.18,0)$)node{$N$}; \halfcircle{(M2)}{($(M2)+(0.5,0)$)}; \coordinate (X) at ($(M2)+(1,0)$); \halfcircle{(X)}{($(M2)+(0.5,0)$)}; \drawcoop[CoProdStyle=comodule,leftarmlen=0.1,rightarmlen=1,handlelen=1.5,yscale=.5]{(N2)}{(P |- N2)}; \draw (T) node[below] {$\bullet$} (X)--(X |-T)node[below]{$N$};  \draw[fill=white,thin] (-0.2,0.1) rectangle(0.7,-0.3); \draw ($(-0.2,0.1)!0.5!(0.7,-0.3)$) node {$t$};}=\tikz[baseline=(current bounding box.west),scale=1,samples=100,thick,font=\scriptsize] { \drawcoop[CoProdStyle=comodule,leftarmlen=0.9,rightarmlen=0.7,handlelen=1,yscale=.5,leftarmpt=P]{(0,0)}{(0.5,0)};   \draw (T) node[above] {$M$}; \drawcrossing[crosstyle=mn,inlefthandlen=0.4,inrighthandlen=1.2,outlefthandlen=0.2,outrighthandlen=0.55,yscale=.5]{(N1)}{($(N1)+(.5,0)$)}; \draw (N1) node[above] {$\bullet$} (M2) node[below] {$N$}; \drawcoop[CoProdStyle=comodule,leftarmlen=0.1,rightarmlen=1,handlelen=.7,yscale=.5]{(N2)}{(P |- N2)}; \draw (T) node[below] {$\bullet$};  \draw[fill=white,thin] (-0.2,0.1) rectangle(0.7,-0.3); \draw ($(-0.2,0.1)!0.5!(0.7,-0.3)$) node {$t$}; },
\end{gather*}
which is clearly natural in both variables $M$ and $N$.

Similarly, $\varphi_{M,N}^{2}$ is the composition of the isomorphisms%
\begin{align*}
\operatorname{Hom}_{\mathcal{C}}\left(  M,B\otimes B\otimes N\right)   &
\rightarrow\operatorname{Hom}_{\mathcal{C}}\left(  M\otimes{}{^{\ast}%
\hspace{-0.5ex}N},B\otimes B\right)  \overset{\theta_{M\otimes{}{^{\ast
}\hspace{-0.5ex}N}}^{2}}{-\!\!-\!\!\!\longrightarrow}\operatorname{Nat}\left(
M\otimes{}{^{\ast}\hspace{-0.5ex}N}\otimes id_{\mathcal{C}}{}^{\otimes
2},id_{\mathcal{C}}{}^{\otimes2}\right) \\
& \rightarrow\operatorname{Nat}\left(  M\otimes id_{\mathcal{C}}{}^{\otimes
2},id_{\mathcal{C}}{}^{\otimes2}\otimes N\right)  ,
\end{align*}
which is natural in $M$ and $N$.
\end{proof}

\begin{lemma}
\label{Lemma eqiv cnt and comod}Assume that $\mathcal{C}$ is a braided rigid
category, and the automorphism braided group $B=U\left(  \mathcal{C}\right)  $
exists. For any $M\in\mathcal{C}$ and a morphism $\rho_{M}:M\rightarrow
B\otimes M$, let $\gamma_{M,\bullet}=\varphi_{M,M}\left(  \rho_{M}\right)  $
be the natural transformation from $M\otimes id_{\mathcal{C}}$ to
$id_{\mathcal{C}}\otimes M$. Then%
\[
\left(  \Delta_{B}\otimes id_{M}\right)  \rho_{M}=\left(  id_{B}\otimes
\rho_{M}\right)  \rho_{M},
\]
if and only if
\[
\gamma_{M,X\otimes Y}=\left(  id_{X}\otimes\gamma_{M,Y}\right)  \left(
\gamma_{M,X}\otimes id_{Y}\right)  ,\ \text{for any }X,Y\in\mathcal{C}.
\]

\end{lemma}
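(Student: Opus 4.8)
The plan is to transport both sides of the coassociativity relation through the natural bijection $\varphi_{M,M}^{2}$ of Lemma~\ref{Lemma nat iso}. Since $\varphi_{M,M}^{2}$ is injective, and since two elements of $\operatorname{Nat}\left(M\otimes id_{\mathcal{C}}{}^{\otimes2},id_{\mathcal{C}}{}^{\otimes2}\otimes M\right)$ agree precisely when all their components at pairs $\left(X,Y\right)$ agree, the identity $\left(\Delta_{B}\otimes id_{M}\right)\rho_{M}=\left(id_{B}\otimes\rho_{M}\right)\rho_{M}$ holds if and only if
\[
\varphi_{M,M}^{2}\bigl(\left(\Delta_{B}\otimes id_{M}\right)\rho_{M}\bigr)_{X,Y}=\varphi_{M,M}^{2}\bigl(\left(id_{B}\otimes\rho_{M}\right)\rho_{M}\bigr)_{X,Y}\qquad\text{for all }X,Y\in\mathcal{C}.
\]
Thus the lemma reduces to the two identities
\[
\varphi_{M,M}^{2}\bigl(\left(\Delta_{B}\otimes id_{M}\right)\rho_{M}\bigr)_{X,Y}=\gamma_{M,X\otimes Y},\qquad\varphi_{M,M}^{2}\bigl(\left(id_{B}\otimes\rho_{M}\right)\rho_{M}\bigr)_{X,Y}=\left(id_{X}\otimes\gamma_{M,Y}\right)\left(\gamma_{M,X}\otimes id_{Y}\right),
\]
which I would establish by graphical calculus.

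For the first identity I would unfold the definition (\ref{eq defphi}) of $\varphi_{M,M}^{2}$: the morphism $\varphi_{M,M}^{2}\bigl(\left(\Delta_{B}\otimes id_{M}\right)\rho_{M}\bigr)_{X,Y}$ is obtained by applying $\rho_{M}$, feeding its $B$-leg into $\Delta_{B}$, and then braiding the two legs of $\Delta_{B}$ over $X$ and $Y$ and letting them act via $\alpha_{X}$ and $\alpha_{Y}$ respectively. The defining diagram (\ref{Fig2 stru of B}) of the coproduct, $\theta_{B}^{2}\left(\Delta_{B}\right)_{X,Y}=\alpha_{X\otimes Y}$, says exactly that this "split-and-act-separately" pattern coincides with acting once via $\alpha_{X\otimes Y}$; substituting it collapses the diagram to $\rho_{M}$ with its $B$-leg braided over $X\otimes Y$ and acting, which is $\varphi_{M,M}\left(\rho_{M}\right)_{X\otimes Y}=\gamma_{M,X\otimes Y}$. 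Alternatively, using the factorization of $\varphi_{M,M}^{2}$ through $\theta_{M\otimes{}^{\ast}M}^{2}$ and the rigidity isomorphisms exhibited in the proof of Lemma~\ref{Lemma nat iso}, one reduces to the naturality of $\theta^{2}$ in its subscript together with $\theta_{B}^{2}\left(\Delta_{B}\right)_{X,Y}=\alpha_{X\otimes Y}=\theta_{B}\left(id_{B}\right)_{X\otimes Y}$.

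For the second identity, (\ref{eq defphi}) gives that $\varphi_{M,M}^{2}\bigl(\left(id_{B}\otimes\rho_{M}\right)\rho_{M}\bigr)_{X,Y}$ applies $\rho_{M}$ once, braids that $B$-leg over $X$ and acts via $\alpha_{X}$, then applies $\rho_{M}$ again to the surviving $M$-leg, braids the new $B$-leg over $Y$ and acts via $\alpha_{Y}$. Using the naturality of the braiding to slide the second copy of $\rho_{M}$ together with its $B$-leg past the strand $X$, the picture factors as the vertical composite of $\gamma_{M,X}=\varphi_{M,M}\left(\rho_{M}\right)_{X}$ on the $M,X$-strands followed by $\gamma_{M,Y}=\varphi_{M,M}\left(\rho_{M}\right)_{Y}$ on the displaced $M,Y$-strands, i.e. $\left(id_{X}\otimes\gamma_{M,Y}\right)\left(\gamma_{M,X}\otimes id_{Y}\right)$. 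I expect this braiding bookkeeping in the second identity to be the only delicate point: one must check that the two $B$-legs, which are extracted simultaneously in the definition of $\varphi^{2}$, can be disentangled into the nested form above, which uses naturality of $c$ and that $\alpha$ is a natural transformation, but nothing beyond the structure equations for $B$ recorded above. Once both identities are in hand, the biconditional is immediate from the injectivity of $\varphi_{M,M}^{2}$.
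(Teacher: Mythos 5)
Your proposal is correct and matches the paper's argument essentially step for step: the paper likewise reduces the biconditional to the two identities $\varphi_{M,M}^{2}\bigl(\left(\Delta_{B}\otimes id_{M}\right)\rho_{M}\bigr)_{X,Y}=\gamma_{M,X\otimes Y}$ and $\varphi_{M,M}^{2}\bigl(\left(id_{B}\otimes\rho_{M}\right)\rho_{M}\bigr)_{X,Y}=\left(id_{X}\otimes\gamma_{M,Y}\right)\left(\gamma_{M,X}\otimes id_{Y}\right)$, proves them by graphical calculus using (\ref{Fig2 stru of B}) and naturality of the braiding, and concludes via the bijectivity of $\varphi_{M,M}^{2}$ from Lemma~\ref{Lemma nat iso}. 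No substantive difference.
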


\begin{proof}
It is clear that for any $X,Y\in\mathcal{C}$, by the definition of
$\varphi^{2} $ and (\ref{Fig2 stru of B}) the morphism $\varphi_{M,M}%
^{2}\left(  \left(  \Delta_{B}\otimes id_{M}\right)  \rho_{M}\right)  _{X,Y}$
is expressed by the diagram%
\[
\tikz[baseline=(current bounding box.west),scale=1,samples=100,thick,font=\scriptsize] { \coordinate (X) at (0,0); \coordinate (Y) at (0.75,0); \drawcoop[CoProdStyle=comodule,leftarmlen=0.7,rightarmlen=0.7,handlelen=1,yscale=.5,leftarmpt=P]{(X)}{(Y)}; \draw (T) node[above] {$M$}; \draw[fill=white,thin] ($(X)+(-0.1,0.1)$) rectangle($(Y)+(0.1,-0.3)$); \draw ($($(X)+(-0.1,0.1)$)!0.5!($(Y)+(0.1,-0.3)$)$) node {$\rho _{M}$}; \drawcrossing[crosstyle=mn,inlefthandlen=0.2,inrighthandlen=1.8,outlefthandlen=0.5,outrighthandlen=0.2,yscale=.5]{(N1)}{($(N1)+(.5,0)$)}; \draw (N1)node [above]{$X$}; \drawcoop[CoProdStyle=coproduct,leftarmlen=0.9,rightarmlen=0.8,handlelen=0.7,yscale=.5,leftarmpt=P]{($(P)+(-.25,0)$)}{($(P)+(0.25,0)$)}; \coordinate (M3) at (M2); \drawcrossing[crosstyle=mn,inlefthandlen=0.2,inrighthandlen=0.2,outlefthandlen=0.2,outrighthandlen=0.2,yscale=.5]{($(N2)+(-.5,0)$)}{(N2)}; \drawcoop[CoProdStyle=comodule,leftarmlen=0.1,rightarmlen=0.9,handlelen=1,yscale=.5]{(N2)}{($(N2)+(-.5,0)$)}; \draw (T)node[below]{$X$}; \drawcoop[CoProdStyle=comodule,leftarmlen=0.1,rightarmlen=0.1,handlelen=1,yscale=.5]{($(M2)+(.5,0)$)}{(M2)}; \draw (T)node[below]{$Y$}; \drawcrossing[crosstyle=mn,inlefthandlen=0.2,inrighthandlen=2.5,outlefthandlen=0.4,outrighthandlen=1,yscale=.5]{(M3)}{($(M3)+(0.5,0)$)}; \draw (M2)node[below]{$M$} (N1)node [above]{$Y$}; }=\tikz[baseline=(current bounding box.west),scale=1,samples=100,thick,font=\scriptsize] { \drawcoop[CoProdStyle=comodule,leftarmlen=0.9,rightarmlen=0.7,handlelen=1,yscale=.4,leftarmpt=P]{(0,0)}{(0.8,0)};   \draw (T) node[above] {$M$}; \drawcrossing[crosstyle=mn,inlefthandlen=0.4,inrighthandlen=1.2,outlefthandlen=0.2,outrighthandlen=0.9,yscale=.4]{(N1)}{($(N1)+(.8,0)$)}; \draw (N1) node[above] {$X\otimes Y$} (M2)node[below]{$M$}; \coordinate (X) at ($(M2)+(1,0)$); \drawcoop[CoProdStyle=comodule,leftarmlen=0.1,rightarmlen=1,handlelen=1.5,yscale=.4]{(N2)}{(P |- N2)}; \draw (T) node[below] {$X\otimes Y$} ($(T)+(0.45,0.45)$)node{$\alpha_{X\otimes Y}$}; \draw[fill=white,thin] (-0.2,0.1) rectangle(1,-0.3); \draw ($(-0.2,0.1)!0.5!(1,-0.3)$) node {$\rho _{M}$};}=\gamma
_{M,X\otimes Y},
\]
while the morphism $\varphi_{M,M}^{2}\left(  \left(  id_{B}\otimes\rho
_{M}\right)  \rho_{M}\right)  _{X,Y}$ is expressed by the diagram
\[
\tikz[baseline=(current bounding box.west),scale=1,samples=100,thick,font=\scriptsize] { \coordinate (X) at (0,0); \coordinate (Y) at (0.75,0); \drawcoop[CoProdStyle=comodule,leftarmlen=1.7,rightarmlen=0.9,handlelen=0.8,yscale=.5,rightarmpt=P]{(X)}{(Y)}; \draw (T) node[above] {$M$}; \draw[fill=white,thin] ($(X)+(-0.1,0.1)$) rectangle($(Y)+(0.1,-0.3)$); \draw ($($(X)+(-0.1,0.1)$)!0.5!($(Y)+(0.1,-0.3)$)$) node {$\rho _{M}$}; \coordinate (X) at ($(P)+(-.25,0)$); \coordinate (Y) at ($(P)+(0.25,0)$); \drawcoop[CoProdStyle=comodule,leftarmlen=1,rightarmlen=0.9,handlelen=0.7,yscale=.5]{(X)}{(Y)}; \draw[fill=white,thin] ($(X)+(-0.1,0.1)$) rectangle($(Y)+(0.1,-0.3)$); \draw ($($(X)+(-0.1,0.1)$)!0.5!($(Y)+(0.1,-0.3)$)$) node {$\rho _{M}$}; \drawcrossing[crosstyle=mn,inlefthandlen=0.2,inrighthandlen=2.8,outlefthandlen=0.1,outrighthandlen=0.2,yscale=.5]{(N1)}{($(N1)+(.5,0)$)}; \draw (N1)node [above]{$X$}; \coordinate (M3) at (M2); \drawcrossing[crosstyle=mn,inlefthandlen=0.6,inrighthandlen=0.1,outlefthandlen=0.2,outrighthandlen=0.2,yscale=.5]{($(N2)+(-.5,0)$)}{(N2)}; \drawcoop[CoProdStyle=comodule,leftarmlen=0.1,rightarmlen=1.9,handlelen=0.8,yscale=.5]{(N2)}{($(N2)+(-.5,0)$)}; \draw (T)node[below]{$X$}; \drawcoop[CoProdStyle=comodule,leftarmlen=0.1,rightarmlen=0.1,handlelen=0.8,yscale=.5]{($(M2)+(.5,0)$)}{(M2)}; \draw (T)node[below]{$Y$}; \drawcrossing[crosstyle=mn,inlefthandlen=0.2,inrighthandlen=3.5,outlefthandlen=0,outrighthandlen=0.5,yscale=.5]{(M3)}{($(M3)+(0.5,0)$)}; \draw (M2)node[below]{$M$} (N1)node [above]{$Y$}; }=\tikz[baseline=(current bounding box.west),scale=1,samples=100,thick,font=\scriptsize] { \coordinate (X) at (0,0); \coordinate (Y) at (0.5,0); \drawcoop[CoProdStyle=comodule,leftarmlen=0.9,rightarmlen=0.9,handlelen=1,yscale=.5,rightarmpt=P]{(X)}{(Y)}; \draw (T) node[above] {$M$}; \draw[fill=white,thin] ($(X)+(-0.1,0.1)$) rectangle($(Y)+(0.1,-0.3)$); \draw ($($(X)+(-0.1,0.1)$)!0.5!($(Y)+(0.1,-0.3)$)$) node {$\rho _{M}$}; \drawcrossing[crosstyle=mn,inlefthandlen=0,inrighthandlen=1.5,outlefthandlen=0.1,outrighthandlen=0.5,yscale=.5]{(P)}{($(P)+(.5,0)$)}; \draw (N1)node [above]{$X$}; \coordinate (X) at ($(M2)+(-.25,0)$); \coordinate (Y) at ($(M2)+(0.25,0)$); \drawcoop[CoProdStyle=comodule,leftarmlen=0.1,rightarmlen=.9,handlelen=4.6,yscale=.5]{(N2)}{($(N2)+(-.5,0)$)}; \draw (T)node[below]{$X$}; \drawcoop[CoProdStyle=comodule,leftarmlen=1,rightarmlen=1,handlelen=0.7,yscale=.5,rightarmpt=P]{(X)}{(Y)}; \draw[fill=white,thin] ($(X)+(-0.1,0.1)$) rectangle($(Y)+(0.1,-0.3)$); \draw ($($(X)+(-0.1,0.1)$)!0.5!($(Y)+(0.1,-0.3)$)$) node {$\rho _{M}$}; \drawcrossing[crosstyle=mn,inlefthandlen=0.2,inrighthandlen=3.5,outlefthandlen=0.1,outrighthandlen=0.5,yscale=.5]{(P)}{($(P)+(0.5,0)$)}; \draw (M2)node[below]{$M$} (N1)node [above]{$Y$}; \drawcoop[CoProdStyle=comodule,leftarmlen=0.1,rightarmlen=0.8,handlelen=0.8,yscale=.5]{(N2)}{($(N2)-(.5,0)$)}; \draw (T)node[below]{$Y$}; }=\left(
id_{X}\otimes\gamma_{M,Y}\right)  \left(  \gamma_{M,X}\otimes id_{Y}\right)  .
\]
Since $\varphi_{M,M}^{2}$ is an isomorphism by Lemma \ref{Lemma nat iso}, the
equality
\[
\gamma_{M,X\otimes Y}=\left(  id_{X}\otimes\gamma_{M,Y}\right)  \left(
\gamma_{M,X}\otimes id_{Y}\right)
\]
holds for all $X,Y\in\mathcal{C}$ if and only if
\[
\left(  \Delta_{B}\otimes id_{M}\right)  \rho_{M}=\left(  id_{B}\otimes
\rho_{M}\right)  \rho_{M}.
\]

\end{proof}

For any coalgebra $D$ in $\mathcal{C}$, let $D$-$\operatorname*{Comod}%
\nolimits_{\mathcal{C}}$ be the category of left $D$-comodules in
$\mathcal{C}$. Then $D $-$\operatorname*{Comod}\nolimits_{\mathcal{C}}$ is a
natural right $\mathcal{C}$-module category, where for any $\left(  M,\rho
_{M}\right)  \in D$-$\operatorname*{Comod}\nolimits_{\mathcal{C}}$ and
$X\in\mathcal{C}$, the comodule morphism of $M\otimes X$ is
\begin{equation}
\rho_{M\otimes X}=\rho_{M}\otimes id_{X}:M\otimes X\rightarrow D\otimes
M\otimes X.\label{module cat stru on Dcomod}%
\end{equation}
Specially $B$-$\operatorname*{Comod}_{\mathcal{C}}$ can be viewed as a right
$\mathcal{C}$-module category in this way.

The category $\mathcal{Z}_{l}\left(  \mathcal{C}\right)  $ is also a right
$\mathcal{C}$-module category, via the tensor functor $\mathcal{C\rightarrow
Z}_{l}\left(  \mathcal{C}\right)  ,X\mapsto\left(  X,c_{X,\bullet}\right)  $.
Precisely, for an object $\left(  Z,\gamma_{Z,\bullet}\right)  $ of
$\mathcal{Z}_{l}\left(  \mathcal{C}\right)  $, $\left(  Z\otimes
X,\gamma_{Z\otimes X,\bullet}\right)  $ is an object of $\mathcal{Z}%
_{l}\left(  \mathcal{C}\right)  $ with $\gamma_{Z\otimes X,\bullet}=\left(
\gamma_{Z,\bullet}\otimes id_{X}\right)  \left(  id_{Z}\otimes c_{X,\bullet
}\right)  $.

Now we are ready to prove the main result of this section.

\begin{theorem}
\label{theorem center iso H-comod}Let $\mathcal{C}$ be a braided rigid
category with representability assumption for modules. Let $B=U\left(
\mathcal{C}\right)  $ be the automorphism braided group of $\mathcal{C}$. For
any $M\in\mathcal{C}$, we have the following statements.

\begin{enumerate}
\item If $\left(  M,\rho_{M}\right)  $ is a left $B$-comodule in $\mathcal{C}%
$, then $\left(  M,\varphi_{M,M}\left(  \rho_{M}\right)  \right)  $ is an
object of $\mathcal{Z}_{l}\left(  \mathcal{C}\right)  $.

\item If $\left(  M,\gamma_{M,\bullet}\right)  $ is an object of
$\mathcal{Z}_{l}\left(  \mathcal{C}\right)  $, then $\left(  M,\varphi
_{M,M}^{-1}\left(  \gamma_{M,\bullet}\right)  \right)  $ is a left
$B$-comodule in $\mathcal{C}$.
\end{enumerate}

Moreover, as right $\mathcal{C}$-module categories $\mathcal{Z}_{l}\left(
\mathcal{C}\right)  $ and $B$-$\operatorname*{Comod}_{\mathcal{C}}$ are
equivalent via%
\begin{align*}
F  & :B\text{-}\operatorname*{Comod}\nolimits_{\mathcal{C}}\rightarrow
\mathcal{Z}_{l}\left(  \mathcal{C}\right)  ,\ \left(  M,\rho_{M}\right)
\mapsto\left(  M,\varphi_{M,M}\left(  \rho_{M}\right)  \right)  ,\\
G  & :\mathcal{Z}_{l}\left(  \mathcal{C}\right)  \rightarrow B\text{-}%
\operatorname*{Comod}\nolimits_{\mathcal{C}},\ \left(  M,\gamma_{M,\bullet
}\right)  \mapsto\left(  M,\varphi_{M,M}^{-1}\left(  \gamma_{M,\bullet
}\right)  \right)  .
\end{align*}

\end{theorem}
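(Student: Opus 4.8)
The plan is to assemble $F$ and $G$ from the natural isomorphisms $\varphi_{M,M}$ of Lemma~\ref{Lemma nat iso} and then to check the three ingredients of a right $\mathcal{C}$-module equivalence: that the object assignments are well defined (statements (1) and (2)), that they extend to morphisms and are mutually inverse, and that they respect the module structures.

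First I would prove (1). Given a left $B$-comodule $(M,\rho_M)$, set $\gamma_{M,\bullet}=\varphi_{M,M}(\rho_M)$. Coassociativity of $\rho_M$ is exactly the hypothesis $(\Delta_B\otimes id_M)\rho_M=(id_B\otimes\rho_M)\rho_M$ of Lemma~\ref{Lemma eqiv cnt and comod}, so that lemma yields the hexagon identity \eqref{condition for Z(C)} for $\gamma_{M,\bullet}$; since $\mathcal{C}$ is rigid, every object has a right dual, so Lemma~\ref{lemma gama is iso} promotes $\gamma_{M,\bullet}$ to a natural isomorphism, whence $(M,\gamma_{M,\bullet})\in\mathcal{Z}_l(\mathcal{C})$. (The counit axiom of $\rho_M$ is not needed here, since $\gamma_{Z,1}=id_Z$ is automatic in the center.) For (2), given $(M,\gamma_{M,\bullet})\in\mathcal{Z}_l(\mathcal{C})$, put $\rho_M=\varphi_{M,M}^{-1}(\gamma_{M,\bullet})$; the hexagon together with Lemma~\ref{Lemma eqiv cnt and comod} gives coassociativity of $\rho_M$, and for the counit axiom I would evaluate the diagram \eqref{eq defphi} for $\varphi_{M,M}(\rho_M)$ at the unit object $X=1$: the braidings with $1$ trivialize and $\alpha_1=\varepsilon_B$ by \eqref{Fig2 stru of B}, so $\varphi_{M,M}(\rho_M)_1=(\varepsilon_B\otimes id_M)\rho_M$, which equals $\gamma_{M,1}=id_M$. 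Thus $(M,\rho_M)$ is a left $B$-comodule.

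Next I would address morphisms. On objects $F$ and $G$ are mutually inverse by (1), (2) and $\varphi_{M,M}^{-1}\circ\varphi_{M,M}=\mathrm{id}$. For $f\in\operatorname{Hom}_{\mathcal{C}}(M,M')$ with $\gamma_{M,\bullet}=\varphi_{M,M}(\rho_M)$ and $\gamma_{M',\bullet}=\varphi_{M',M'}(\rho_{M'})$, I would apply the bijection $\varphi_{M,M'}$ to the two sides of the comodule compatibility $(id_B\otimes f)\rho_M=\rho_{M'}f$: naturality of $\varphi$ in the second variable turns the left-hand side into $(id_{\bullet}\otimes f)\gamma_{M,\bullet}$, and naturality in the first variable turns the right-hand side into $\gamma_{M',\bullet}(f\otimes id_{\bullet})$. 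Since $\varphi_{M,M'}$ is injective (Lemma~\ref{Lemma nat iso}), $f$ is a morphism of $B$-comodules if and only if it is a morphism in $\mathcal{Z}_l(\mathcal{C})$; hence $F$ and $G$ are well-defined functors, mutually inverse, and in particular equivalences of categories.

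Finally, for the right $\mathcal{C}$-module structures I would show that $F$ is strict. For $(M,\rho_M)$ and $X\in\mathcal{C}$ the comodule $(M,\rho_M)\otimes X$ is $(M\otimes X,\rho_M\otimes id_X)$ by \eqref{module cat stru on Dcomod}, while $F(M,\rho_M)\otimes X=(M\otimes X,\gamma_{M\otimes X,\bullet})$ with $\gamma_{M\otimes X,Y}=(\gamma_{M,Y}\otimes id_X)(id_M\otimes c_{X,Y})$. The claim is that $\varphi_{M\otimes X,M\otimes X}(\rho_M\otimes id_X)_Y$ equals this last expression for all $Y$, which I would verify by a direct diagram chase from \eqref{eq defphi}: the strand emitted by $\rho_M$ braids with the test object $Y$ and is absorbed by $\alpha_Y$, rebuilding $\gamma_{M,Y}$ on the $M$-factor, while the passenger strand $X$ is carried past $Y$ exactly by the braiding $c_{X,Y}$. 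Then $F((M,\rho_M)\otimes X)=F(M,\rho_M)\otimes X$, so $F$ (and dually its inverse $G$) is a strict right $\mathcal{C}$-module functor, giving the desired equivalence of right $\mathcal{C}$-module categories. I expect this last graphical identity, together with the $X=1$ specialization used for the counit axiom, to be the only real computation; everything else is formal once Lemmas~\ref{lemma gama is iso}, \ref{Lemma nat iso} and \ref{Lemma eqiv cnt and comod} are in hand.
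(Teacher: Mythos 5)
Your proposal is correct and follows essentially the same route as the paper's proof: Lemma~\ref{Lemma eqiv cnt and comod} plus Lemma~\ref{lemma gama is iso} for statement (1), the evaluation at $X=1$ giving $(\varepsilon_B\otimes id_M)\rho_M=\gamma_{M,1}=id_M$ for the counit axiom in (2), the identification of comodule maps with center morphisms, and the identity $\varphi_{M\otimes X,M\otimes X}(\rho_M\otimes id_X)=(\varphi_{M,M}(\rho_M)\otimes id_X)(id_M\otimes c_{X,\bullet})$ for the strict module-functor structure. Your only deviation is deriving the morphism correspondence from the two-variable naturality of $\varphi$ recorded in Lemma~\ref{Lemma nat iso} rather than by the paper's direct graphical verification, which is a cleaner phrasing of the same argument.
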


\begin{proof}
If $\left(  M,\rho_{M}\right)  \in B$-$\operatorname*{Comod}_{\mathcal{C}}$,
then by Lemma \ref{Lemma eqiv cnt and comod}
\[
\varphi_{M,M}\left(  \rho_{M}\right)  _{X\otimes Y}=\left(  id_{X}%
\otimes\varphi_{M,M}\left(  \rho_{M}\right)  _{Y}\right)  \left(
\varphi_{M,M}\left(  \rho_{M}\right)  _{X}\otimes id_{Y}\right)  ,
\]
so $\varphi_{M,M}\left(  \rho_{M}\right)  $ is an isomorphism by the rigidity
of $\mathcal{C}$ and Lemma \ref{lemma gama is iso}. Thus $\left(
M,\varphi_{M,M}\left(  \rho_{M}\right)  \right)  $ is an object of
$\mathcal{Z}_{l}\left(  \mathcal{C}\right)  $.

Conversely, assume that $\left(  M,\gamma_{M,\bullet}\right)  $ is an object
of $\mathcal{Z}_{l}\left(  \mathcal{C}\right)  .$ Let $\rho_{M}=\varphi
_{M,M}^{-1}\left(  \gamma_{M,\bullet}\right)  $, then $\gamma_{M,\bullet
}=\varphi_{M,M}\left(  \rho_{M}\right)  $. Again by Lemma \ref{Lemma eqiv cnt
and comod}
\[
\left(  id_{B}\otimes\rho_{M}\right)  \rho_{M}=\left(  \Delta_{B}\otimes
id_{M}\right)  \rho_{M}.
\]
In addition,
\[
id_{M}=\gamma_{M,1}=\varphi_{M,M}\left(  \rho_{M}\right)  _{1}%
=\tikz[baseline=(current bounding box.west),scale=1,samples=100,thick,font=\scriptsize] { \coordinate (X) at (0,0); \coordinate (Y) at (0.6,0); \drawcoop[CoProdStyle=comodule,leftarmlen=1.2,rightarmlen=1.3,handlelen=1,yscale=.5,leftarmtext={{circle/1/\text{\normalsize $\varepsilon$}}}]{(X)}{(Y)}; \draw (T) node[above] {$M$} (N1)node[below]{$M$}; \draw[fill=white,thin] ($(X)+(-0.1,0.1)$) rectangle($(Y)+(0.1,-0.3)$); \draw ($($(X)+(-0.1,0.1)$)!0.5!($(Y)+(0.1,-0.3)$)$) node {$\rho _{M}$};}.
\]
Thus $\left(  M,\varphi_{M,M}^{-1}\left(  \gamma_{M,\bullet}\right)  \right)
$ is a $B$-comodule in $\mathcal{C}$.

Moreover, let $\left(  M,\rho_{M}\right)  $, $\left(  N,\rho_{N}\right)  \in
B$-$\operatorname*{Comod}\nolimits_{\mathcal{C}}$ and $f:\left(  M,\rho
_{M}\right)  \rightarrow\left(  N,\rho_{N}\right)  $ be a $B$-comodule map in
$\mathcal{C} $. Then we have%
\[
\varphi_{M,N}\left(  \rho_{N}f\right)  =\varphi_{M,N}\left(  \left(
id_{B}\otimes f\right)  \rho_{M}\right)  ,
\]
that is,
\[
\tikz[baseline=(current bounding box.west),scale=1,samples=100,thick,font=\scriptsize] { \coordinate (X) at (0,0); \coordinate (Y) at (0.8,0); \drawcoop[CoProdStyle=comodule,leftarmlen=0.5,rightarmlen=0.5,handlelen=2.5,yscale=.5,rightarmpt=P,handletext={rectangle/0.5/f}]{(X)}{(Y)}; \draw (T) node[above] {$M$}; \draw[fill=white,thin] ($(X)+(-0.1,0.1)$) rectangle($(Y)+(0.1,-0.3)$); \draw ($($(X)+(-0.1,0.1)$)!0.5!($(Y)+(0.1,-0.3)$)$) node {$\rho _{N}$}; \drawcrossing[crosstyle=mn,inlefthandlen=0,inrighthandlen=1.8,outlefthandlen=0.1,outrighthandlen=.5,yscale=.5]{(P)}{($(P)+(.8,0)$)}; \draw (N1)node [above]{$X$} (M2)node[below]{$N$}; \drawcoop[CoProdStyle=comodule,leftarmlen=0.1,rightarmlen=.7,handlelen=0.7,yscale=.5]{(N2)}{($(N2)+(-.8,0)$)}; \draw (T)node[below]{$X$};}=\tikz[baseline=(current bounding box.west),scale=1,samples=100,thick,font=\scriptsize] { \coordinate (X) at (0,0); \coordinate (Y) at (0.8,0); \drawcoop[CoProdStyle=comodule,leftarmlen=1.5,rightarmlen=1.3,handlelen=1,yscale=.5,rightarmpt=P,rightarmtext={rectangle/0.65/f}]{(X)}{(Y)}; \draw (T) node[above] {$M$}; \draw[fill=white,thin] ($(X)+(-0.1,0.1)$) rectangle($(Y)+(0.1,-0.3)$); \draw ($($(X)+(-0.1,0.1)$)!0.5!($(Y)+(0.1,-0.3)$)$) node {$\rho _{M}$}; \drawcrossing[crosstyle=mn,inlefthandlen=0,inrighthandlen=1.8,outlefthandlen=0.1,outrighthandlen=.5,yscale=.5]{(P)}{($(P)+(.8,0)$)}; \draw (N1)node [above]{$X$} (M2)node[below]{$N$}; \drawcoop[CoProdStyle=comodule,leftarmlen=0.1,rightarmlen=.7,handlelen=0.7,yscale=.5]{(N2)}{($(N2)+(-.8,0)$)}; \draw (T)node[below]{$X$};}=\tikz[baseline=(current bounding box.west),scale=1,samples=100,thick,font=\scriptsize] { \coordinate (X) at (0,0); \coordinate (Y) at (0.8,0); \drawcoop[CoProdStyle=comodule,leftarmlen=0.5,rightarmlen=0.5,handlelen=0.8,yscale=.5,rightarmpt=P]{(X)}{(Y)}; \draw (T) node[above] {$M$}; \draw[fill=white,thin] ($(X)+(-0.1,0.1)$) rectangle($(Y)+(0.1,-0.3)$); \draw ($($(X)+(-0.1,0.1)$)!0.5!($(Y)+(0.1,-0.3)$)$) node {$\rho _{M}$}; \drawcrossing[crosstyle=mn,inlefthandlen=0,inrighthandlen=0.8,outlefthandlen=0.1,outrighthandlen=0,yscale=.5]{(P)}{($(P)+(.8,0)$)}; \draw (N1)node [above]{$X$};  \coordinate (X) at ($(M2)+(-0.25,0.7)$); \drawcoop[CoProdStyle=comodule,leftarmlen=0.1,rightarmlen=.7,handlelen=2.5,yscale=.5,handlept=T2]{(N2)}{($(N2)+(-.8,0)$)}; \linewithtext[text={rectangle/0.6/f}]{(M2)}{(M2|-T2)}; \draw (T2)node[below]{$X$} (M2|-T2)node[below]{$N$}; },
\]
which implies that
\[
F\left(  f\right)  =f:\left(  M,\varphi_{M,M}\left(  \rho_{M}\right)  \right)
\rightarrow\left(  N,\varphi_{N,N}\left(  \rho_{N}\right)  \right)
\]
is a map in $\mathcal{Z}_{l}\left(  \mathcal{C}\right)  $.

If $\left(  M,\gamma_{M,\bullet}\right)  ,\left(  N,\gamma_{N,\bullet}\right)
\in$ $\mathcal{Z}_{l}\left(  \mathcal{C}\right)  $, and $f:\left(
M,\gamma_{M,\bullet}\right)  \rightarrow\left(  N,\gamma_{N,\bullet}\right)  $
is a map in $\mathcal{Z}_{l}\left(  \mathcal{C}\right)  $, then $G\left(
f\right)  =f:\left(  M,\varphi_{M,M}^{-1}\left(  \gamma_{M,\bullet}\right)
\right)  \rightarrow\left(  N,\varphi_{N,N}^{-1}\left(  \gamma_{N,\bullet
}\right)  \right)  $ is a map in $B$-$\operatorname*{Comod}_{\mathcal{C}} $.
Clearly, $FG=id$, and $GF=id$. This establishes the equivalence of
$\mathcal{Z}_{l}\left(  \mathcal{C}\right)  $ and $B$-$\operatorname*{Comod}%
_{\mathcal{C}}$.

Finally we show that $F$ is a $\mathcal{C}$-module functor. Let $\left(
M,\rho_{M}\right)  $ be an object of $B$-$\operatorname*{Comod}_{\mathcal{C}}%
$. For all $X\in\mathcal{C}$, observe that%
\[
\varphi_{M\otimes X,M\otimes X}\left(  \rho_{M}\otimes id_{X}\right)
=\tikz[baseline=(current bounding box.west),scale=1,samples=100,thick,font=\scriptsize] { \coordinate (X) at (0,0); \coordinate (Y) at (0.5,0); \drawcoop[CoProdStyle=comodule,leftarmlen=1.3,rightarmlen=1,handlelen=1,yscale=.5,leftarmpt=P]{(X)}{(Y)}; \draw (T) node[above] {$M$}; \draw[fill=white,thin] ($(X)+(-0.1,0.1)$) rectangle($(Y)+(0.1,-0.3)$); \draw ($($(X)+(-0.1,0.1)$)!0.5!($(Y)+(0.1,-0.3)$)$) node {$\rho _{M}$}; \drawcrossing[crosstyle=mn,inlefthandlen=0.2,inrighthandlen=0.7,outlefthandlen=0.5,outrighthandlen=1,yscale=.5]{(N1)}{($(N1)+(.5,0)$)}; \draw (M2)node[below]{$M$}; \coordinate (M3) at (N1); \drawcoop[CoProdStyle=comodule,leftarmlen=0.1,rightarmlen=0.9,handlelen=1,yscale=.5]{(N2)}{($(N2)+(-.5,0)$)}; \draw (T)node[below]{$\bullet$}; \drawcrossing[crosstyle=mn,inlefthandlen=2.15,inrighthandlen=0.2,outlefthandlen=0.3,outrighthandlen=0.3,yscale=.5]{($(M3)+(0.5,0)$)}{(M3)}; \draw (M2)node[above]{$X$} (M1)node[below]{$X$} (N2)node [above]{$\bullet$}; }=\left(
\varphi_{M,M}\left(  \rho_{M}\right)  \otimes id_{X}\right)  \left(
id_{M}\otimes c_{X,\bullet}\right)  .
\]
Then $F\left(  M\otimes X\right)  =F\left(  M\right)  \otimes X$, and thus
$\left(  F,s\right)  $ is a $\mathcal{C}$-module with $s_{M,X}=id_{F\left(
M\otimes X\right)  }$.
\end{proof}

\begin{remark}
If $\mathcal{C}$ is not strict, the same argument of Theorem \ref{theorem
center iso H-comod} is also true. The proof is similar but quite lengthy, we
leave this for an interested reader.
\end{remark}

\section{The Center of Braided Multifusion Categories --- A Decomposition
Theorem}

\label{sec-decomp thm}

In this section, $\mathcal{C}$ will be a finite braided multitensor category.
We assume that for $\mathcal{C}$ the module representability assumption holds.
Let $B$ be the automorphism braided group $U\left(  \mathcal{C}\right)  $. We
will show that any direct sum decomposition of $B$ in $B$%
-$\operatorname*{Comod}_{\mathcal{C}}$ induces a decomposition of the category
$B$-$\operatorname*{Comod}_{\mathcal{C}}$ into a direct sum of $\mathcal{C}%
$-module subcategories.

Let $i:D\rightarrow C$ be a monomorphism in $\mathcal{C}$. Since the bifunctor
$\otimes:\mathcal{C\times C}\rightarrow\mathcal{C}$ is exact in both factors,
$\left(  D\otimes C,i\otimes id_{C}\right)  $ and $\left(  C\otimes
D,id_{C}\otimes i\right)  $ are subobjects of $C\otimes C\in\mathcal{C}$. We
show that $\left(  D\otimes D,i\otimes i\right)  $ is the intersection of
subobjects $D\otimes C$ and $C\otimes D$ in the following lemma, i.e.,
$\left(  D\otimes D,i\otimes id_{D},id_{D}\otimes i\right)  $ is the pullback
of the monomorphisms $i\otimes id_{C}$ and $id_{C}\otimes i$.

\begin{lemma}
\label{lemma intersection}Let $0\rightarrow D_{j}\overset{i_{j}}{\rightarrow
}C_{j}\overset{f_{j}}{\rightarrow}E_{j}$ $\left(  j=1,2\right)  $ be exact
sequences in $\mathcal{C}$. If $g:X\rightarrow C_{1}\otimes C_{2}$ is a
morphism in $\mathcal{C}$ with $\left(  f_{1}\otimes id_{C_{2}}\right)  g=0$
and $\left(  id_{C_{1}}\otimes f_{2}\right)  g=0$, then there exists a unique
morphism $h:X\rightarrow D_{1}\otimes D_{2},$ such that $g=\left(
i_{1}\otimes i_{2}\right)  h$.

Moreover, $\left(  D_{1}\otimes D_{2},i_{1}\otimes i_{2}\right)  $ is the
intersection of the subobjects $D_{1}\otimes C_{2}$ and $C_{1}\otimes D_{2}$.
\end{lemma}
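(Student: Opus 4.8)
The plan is to run a diagram chase, using the exactness of $\otimes$ in each variable (recalled just before the lemma) to identify the relevant kernels. First I would record the auxiliary exact sequences. Since $-\otimes C_2$ is exact, $0\to D_1\otimes C_2\xrightarrow{i_1\otimes id_{C_2}}C_1\otimes C_2\xrightarrow{f_1\otimes id_{C_2}}E_1\otimes C_2$ is exact; since $C_1\otimes-$ is exact, $0\to C_1\otimes D_2\xrightarrow{id_{C_1}\otimes i_2}C_1\otimes C_2\xrightarrow{id_{C_1}\otimes f_2}C_1\otimes E_2$ is exact; and tensoring the second short exact sequence on the left by $D_1$ shows that $id_{D_1}\otimes i_2:D_1\otimes D_2\to D_1\otimes C_2$ is the kernel of $id_{D_1}\otimes f_2$. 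In particular $i_1\otimes i_2=(i_1\otimes id_{C_2})(id_{D_1}\otimes i_2)$ is a composite of monomorphisms, hence a monomorphism, so the uniqueness of $h$ will be automatic once existence is shown.

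For existence, from $(f_1\otimes id_{C_2})g=0$ and the first exact sequence there is a unique $g_1:X\to D_1\otimes C_2$ with $g=(i_1\otimes id_{C_2})g_1$. The key step is to show $g_1$ factors through $D_1\otimes D_2$, i.e. that $(id_{D_1}\otimes f_2)g_1=0$. I would compose with the monomorphism $i_1\otimes id_{E_2}$ and use the interchange law in the strict monoidal category: $(i_1\otimes id_{E_2})(id_{D_1}\otimes f_2)g_1=(id_{C_1}\otimes f_2)(i_1\otimes id_{C_2})g_1=(id_{C_1}\otimes f_2)g=0$ by hypothesis; since $i_1\otimes id_{E_2}$ is mono, $(id_{D_1}\otimes f_2)g_1=0$. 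Hence $g_1$ factors uniquely as $g_1=(id_{D_1}\otimes i_2)h$ for some $h:X\to D_1\otimes D_2$, and then $g=(i_1\otimes id_{C_2})(id_{D_1}\otimes i_2)h=(i_1\otimes i_2)h$, as required. The only point demanding a little care is bookkeeping of the interchange identities $(p\otimes id)(id\otimes q)=p\otimes q=(id\otimes q)(p\otimes id)$, which is exactly what makes the two composites above agree.

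For the "moreover", I would check the universal property of the intersection, i.e. that $(D_1\otimes D_2,\,i_1\otimes id_{D_2},\,id_{D_1}\otimes i_2)$ is the pullback of $i_1\otimes id_{C_2}$ and $id_{C_1}\otimes i_2$ over $C_1\otimes C_2$. The square commutes since $(i_1\otimes id_{C_2})(id_{D_1}\otimes i_2)=i_1\otimes i_2=(id_{C_1}\otimes i_2)(i_1\otimes id_{D_2})$. Conversely, given $a:X\to D_1\otimes C_2$ and $b:X\to C_1\otimes D_2$ with $(i_1\otimes id_{C_2})a=(id_{C_1}\otimes i_2)b=:g$, the relations $f_1 i_1=0$ and $f_2 i_2=0$ give $(f_1\otimes id_{C_2})g=0$ and $(id_{C_1}\otimes f_2)g=0$, so the first part of the lemma supplies a unique $h:X\to D_1\otimes D_2$ with $g=(i_1\otimes i_2)h$; monicity of $i_1\otimes id_{C_2}$ and of $id_{C_1}\otimes i_2$ then forces $a=(id_{D_1}\otimes i_2)h$ and $b=(i_1\otimes id_{D_2})h$, establishing the pullback property. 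I do not anticipate a genuine obstacle here; the whole argument is a routine chase whose only subtlety is invoking exactness of $\otimes$ in the correct variable at each step.
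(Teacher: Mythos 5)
Your proof is correct and follows essentially the same route as the paper's: factor $g$ through the kernel $D_1\otimes C_2$ of $f_1\otimes id_{C_2}$, then use the monomorphism $i_1\otimes id_{E_2}$ and the interchange law to show the resulting $g_1$ is killed by $id_{D_1}\otimes f_2$, hence factors through $D_1\otimes D_2$. Your explicit verification of the pullback universal property for the ``moreover'' part is slightly more detailed than the paper, which leaves that step implicit, but the argument is the same.
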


\begin{proof}
Consider the diagram
\[%
\def\mleftdelim{.}\def\mrightdelim{.}\def\mrowsep{1cm}\def\mcolumnsep{0.8cm}%
\begin{tikzpicture}[scale=1,samples=100,baseline,font=\scriptsize,]\matrix (m) [matrix of math nodes,left delimiter={\mleftdelim},right delimiter={\mrightdelim},row sep=\mrowsep,column sep=\mcolumnsep]{ & & X & && \\ & & &0 & &0 \\ & & D_1\otimes D_2& & C_1\otimes D_2& \\ 0& D_1\otimes C_2& &C_1\otimes C_2& & E_1\otimes C_2 \\ D_1\otimes E_2& &C_1\otimes E_2 & & & \\}; \begin{scope}[every node/.style={midway,auto,font=\scriptsize}] \draw[->] (m-3-3) --node{$i_1\otimes id_{D_2}$}(m-3-5); \draw[->](m-4-1)--(m-4-2); \draw[->](m-4-2) --node[below]{$i_1\otimes id_{C_2}$}(m-4-4);\draw[->](m-4-4)--node[below]{$f_1\otimes id_{C_2}$}(m-4-6); \draw[->] (m-2-4)--(m-3-3);\draw[->] (m-3-3) --node[right=0.1cm]{$id_{D_1}\otimes i_2$}(m-4-2); \draw[->] (m-4-2)--node[right=0.1cm]{$id_{D_1}\otimes f_2$}(m-5-1);\draw[->] (m-2-6)--(m-3-5);\draw[->] (m-3-5) --node[right=0.1cm]{$id_{C_1}\otimes i_2$}(m-4-4);\draw[->] (m-4-4)--node[right=0.1cm]{$id_{C_1}\otimes f_2$}(m-5-3); \draw[->] (m-5-1) --node[below]{$i_1\otimes id_{E_2}$}(m-5-3);\draw[white,line width=5pt] (m-1-3) --(m-4-4); \draw[->] (m-1-3) --node[above=0.25cm]{$g$}(m-4-4); \draw[->,dashed] (m-1-3) --node[left=0.1cm]{$g_{1}$}(m-4-2);\draw[->,dashed] (m-1-3) --node{$h$}(m-3-3); \end{scope} \end{tikzpicture}.
\]
It is trivial that the two bottom parallelograms commute. The exactness of the
tensor product implies $\left(  D_{1}\otimes C_{2},i_{1}\otimes id_{C_{2}%
}\right)  $, $\left(  C_{1}\otimes D_{2},id_{C_{1}}\otimes i_{2}\right)  $ are
respectively the kernel of $f_{1}\otimes id_{C_{2}}$ and the kernel of
$id_{C_{1}}\otimes f_{2}$. Since $\left(  f_{1}\otimes id_{C_{2}}\right)  g=0$
by assumption, there is a unique morphism $g_{1}:X\rightarrow D_{1}\otimes
C_{2}$ such that $g=\left(  i_{1}\otimes id_{C_{2}}\right)  g_{1}$. Then
\[
\left(  i_{1}\otimes id_{E_{2}}\right)  \left(  id_{D_{1}}\otimes
f_{2}\right)  g_{1}=\left(  id_{C_{1}}\otimes f_{2}\right)  \left(
i_{1}\otimes id_{C_{2}}\right)  g_{1}=\left(  id_{C_{1}}\otimes f_{2}\right)
g=0,
\]
and we have $\left(  id_{D_{1}}\otimes f_{2}\right)  g_{1}=0$, since
$i_{1}\otimes id_{E_{2}}$ is monic. So there exists a morphism $h:X\rightarrow
D_{1}\otimes D_{2}$ such that $g_{1}=\left(  id_{D_{1}}\otimes i_{2}\right)
h$. It is clear that
\[
g=\left(  i_{1}\otimes id_{C_{2}}\right)  g_{1}=\left(  i_{1}\otimes
id_{C_{2}}\right)  \left(  id_{D_{1}}\otimes i_{2}\right)  h=\left(
i_{1}\otimes i_{2}\right)  h.
\]
As the morphism $i\otimes i$ is monic, $h$ is unique.
\end{proof}

We have known from \cite{Majid1991Braided} that $B$ is $\mathcal{C}%
$-cocommutative in the sense that for every object $X\in\mathcal{C}$, the
$B$-action $\alpha_{X}$ on $X$ satisfies the following identity%
\[
\left(  id_{B}\otimes\alpha_{X}\right)  \left(  \Delta_{B}\otimes
id_{X}\right)  =\left(  id_{B}\otimes\alpha_{X}\right)  \left(  c_{B,B}\otimes
id_{X}\right)  \left(  id_{B}\otimes c_{X,B}c_{B,X}\right)  \left(  \Delta
_{B}\otimes id_{X}\right)  ,
\]
that is,
\begin{equation}
\begin{tikzpicture}[xscale=.6,samples=100,thick,font=\scriptsize,baseline=(current bounding box.west)] \drawcoop[CoProdStyle=coproduct,leftarmlen=1,rightarmlen=1,handlelen=0.8,handlept=T1,leftarmpt=M3,]{(0,0)}{(1,0)}; \drawcoop[CoProdStyle=comodule,leftarmlen=0.1,rightarmlen=.1,handlelen=0.8,handlept=T2]{($(N1)!-1!(M3)$)}{(N1)}; \draw (T1)node[above]{$B$} (M1) --(M1 |- T1)node[above]{$X$} (T2)node[below]{$X$} ($(T2)+(0.5,0.25)$)node{$\alpha_{X}$} (M3)--(M3 |- T2)node[below]{$B$}; \end{tikzpicture}=\begin{tikzpicture}[xscale=.6,yscale=.8,samples=100,thick,font=\scriptsize,baseline=(current bounding box.west)] \drawcoop[CoProdStyle=coproduct,leftarmlen=.5,rightarmlen=.5,handlelen=0.6,handlept=T1,leftarmpt=P0]{(0,0)}{(1,0)}; \drawcrossing[crosstyle=mn,inlefthandlen=0.05,inrighthandlen=0.1,outlefthandlen=0.05,outrighthandlen=0.05,inrightnm=P1,yscale=0.5]{(N1)}{($(N1)+(1,0)$)}; \drawcrossing[crosstyle=mn,inlefthandlen=0.05,inrighthandlen=0.05,outlefthandlen=0.05,outrighthandlen=0.05,outrightnm=P2,yscale=0.5]{(N2)}{(M2)}; \drawcrossing[crosstyle=mn,inlefthandlen=0,inrighthandlen=0,outlefthandlen=0.05,outrighthandlen=0.05,yscale=0.5]{($(N2)+(-1,0)$)}{(N2)}; \draw (M1) -- (P0); \drawcoop[CoProdStyle=comodule,leftarmlen=.6,rightarmlen=0,handlelen=.8,handlept=T2]{($(M2)+(1,0)$)}{(M2)}; \draw (T1)node[above]{$B$} (P1) --(P1 |- T1)node[above]{$X$} (T2)node[below]{$X$} (N2)--(N2 |- T2)node[below]{$B$} ($(T2)+(0.5,0.25)$)node{$\alpha_{X}$}; \end{tikzpicture}.\label{eq cocomm}%
\end{equation}

Note that $\left(  B,\Delta_{B}\right)  \in B$-$\operatorname*{Comod}%
_{\mathcal{C}}$. The next proposition shows that a subobject (subcomodule) of
$B\in B$-$\operatorname*{Comod}_{\mathcal{C}}$ is also a subcoalgebra of $B$
in $\mathcal{C}$.

\begin{theorem}
\label{thm D is coalg}Let $i:\left(  D,\rho_{D}\right)  \rightarrow\left(
B,\Delta_{B}\right)  $ be a subobject of $B\in B$-$\operatorname*{Comod}%
_{\mathcal{C}}$. Then

\begin{enumerate}
\item there exists a unique $\mathcal{C}$-coalgebra structure on $D$ such that
$i$ is a coalgebra morphism (i.e., $D$ is a subcoalgebra of $B$),

\item the category $D\text{-}\operatorname*{Comod}\nolimits_{\mathcal{C}}$ is
a $\mathcal{C}$-module subcategory of $B$-$\operatorname*{Comod}_{\mathcal{C}%
}$.
\end{enumerate}
\end{theorem}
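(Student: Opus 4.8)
The plan is to prove the two parts of Theorem~\ref{thm D is coalg} in order, using the cocommutativity identity~(\ref{eq cocomm}) and the pullback/intersection property established in Lemma~\ref{lemma intersection}.

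For part (1), I would first construct a candidate comultiplication $\Delta_D : D \to D \otimes D$ by corestricting $(i \otimes i)$-worth of data. Concretely, one forms the composite $\Delta_B \circ i : D \to B \otimes B$ and shows it factors through the subobject $D \otimes D \hookrightarrow B \otimes B$. To apply Lemma~\ref{lemma intersection} (with $C_1 = C_2 = B$, $D_1 = D_2 = D$, and $f_j$ the cokernel of $i$), I need $(f \otimes id_B)(\Delta_B i) = 0$ and $(id_B \otimes f)(\Delta_B i) = 0$. The second of these is essentially the statement that $i$ is a $B$-comodule map followed by coassociativity: $(id_B \otimes \Delta_B)\Delta_B i = (id_B \otimes \Delta_B \circ i)\rho_D$ exhibits the image lying in $B \otimes D$. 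The first — that the image also lies in $D \otimes B$ — is where cocommutativity~(\ref{eq cocomm}) enters: applying~(\ref{eq cocomm}) with $X = B$ and $\alpha_B = \Delta_B$ lets me rewrite $(\Delta_B \otimes id_B)\Delta_B i$ in a form where the comodule structure on the \emph{first} tensor leg is visible, so that $\rho_D$-equivariance of $i$ forces this leg into $D$. Then Lemma~\ref{lemma intersection} gives a unique $\Delta_D$ with $(i \otimes i)\Delta_D = \Delta_B i$. The counit is $\varepsilon_D = \varepsilon_B i$. Coassociativity and counitality of $(\Delta_D, \varepsilon_D)$ then follow by post-composing with the monomorphisms $i \otimes i \otimes i$ (resp. $i$) and using the corresponding axioms for $B$, since monomorphisms are left-cancellable. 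Uniqueness of the coalgebra structure is immediate: any $\Delta_D$ making $i$ a coalgebra map must satisfy $(i\otimes i)\Delta_D = \Delta_B i$, and $i \otimes i$ is monic.

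For part (2), I would show that $D\text{-}\operatorname*{Comod}_{\mathcal{C}}$ sits inside $B\text{-}\operatorname*{Comod}_{\mathcal{C}}$ as a full subcategory closed under the $\mathcal{C}$-action. Given a left $D$-comodule $(M, \rho_M : M \to D \otimes M)$, the composite $(i \otimes id_M)\rho_M : M \to B \otimes M$ makes $M$ a left $B$-comodule: coassociativity over $B$ follows from coassociativity over $D$ together with the fact that $i$ is now a coalgebra morphism, i.e. $(i \otimes i)\Delta_D = \Delta_B i$; counitality follows from $\varepsilon_B i = \varepsilon_D$. This assignment is clearly fully faithful on morphisms (a $D$-comodule map is the same as a $B$-comodule map between the pushed-forward comodules, again because $i$ is monic). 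Finally, the $\mathcal{C}$-module structure~(\ref{module cat stru on Dcomod}) is compatible with the inclusion: for $X \in \mathcal{C}$, the $B$-comodule structure on $M \otimes X$ induced from the $D$-comodule structure $\rho_M \otimes id_X$ is $(i \otimes id_{M \otimes X})(\rho_M \otimes id_X) = ((i \otimes id_M)\rho_M) \otimes id_X$, which is exactly the $B$-comodule structure~(\ref{module cat stru on Dcomod}) on $M \otimes X$ coming from viewing $M$ as a $B$-comodule. Hence the embedding is a $\mathcal{C}$-module functor, and its essential image is a $\mathcal{C}$-module subcategory.

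I expect the main obstacle to be the verification in part (1) that $\Delta_B i$ factors through $D \otimes B$ — that is, producing the equation $(f \otimes id_B)(\Delta_B \circ i) = 0$. The comodule axiom for $i$ directly gives the factorization through $B \otimes D$; getting the symmetric factorization through $D \otimes B$ genuinely requires the $\mathcal{C}$-cocommutativity~(\ref{eq cocomm}) of $B$, and the bookkeeping of which braidings move which strand is the delicate point. Once both factorizations are in hand, Lemma~\ref{lemma intersection} does the rest essentially for free, and all remaining steps (coassociativity, counit, part (2)) are routine diagram chases using that the relevant tensor powers of $i$ are monic.
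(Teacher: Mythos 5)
Your overall architecture coincides with the paper's: factor $\Delta_B i$ through $B\otimes D$ via the comodule axiom, through $D\otimes B$ via cocommutativity, apply Lemma~\ref{lemma intersection} to extract $\Delta_D$, check the coalgebra axioms by post-composing with the monomorphisms $i^{\otimes 2}$ and $i^{\otimes 3}$, and handle part (2) by pushing comodule structures forward along $i$. All of that is fine (note only that the factorization through $B\otimes D$ needs nothing but the comodule axiom $\Delta_B i=(id_B\otimes i)\rho_D$; coassociativity plays no role there).

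The gap is precisely in the step you flag as delicate, namely producing $(f\otimes id_B)\Delta_B i=0$. Writing ``$X=B$ and $\alpha_B=\Delta_B$'' is a type error: $\alpha_B:B\otimes B\to B$ is the canonical action of the braided group on the object $B$, while $\Delta_B:B\to B\otimes B$ goes the other way, and they are unrelated maps. More fundamentally, the cocommutativity identity~(\ref{eq cocomm}) only constrains the composites $(id_B\otimes\alpha_X)(\Delta_B\otimes id_X)$ for the various $X$; it never isolates $\Delta_B$ itself, so no single specialization of $X$ yields the morphism-level equation you need. The paper's proof converts~(\ref{eq cocomm}) into that equation by first building, from rigidity and the representability isomorphism $\theta$, the natural isomorphism $\zeta:\operatorname{Hom}_{\mathcal{C}}\left(D,E\otimes B\right)\to\operatorname{Nat}\left(D\otimes id_{\mathcal{C}},E\otimes id_{\mathcal{C}}\right)$ of~(\ref{eq na iso}), then computing $\zeta\left(\left(f\otimes id_B\right)\Delta_B i\right)_X$ graphically: cocommutativity moves the $f$-leg past the action, the identity $\Delta_B i=(id_B\otimes i)\rho_D$ is substituted, and $fi=0$ kills the diagram for every $X$; injectivity of $\zeta$ then gives $(f\otimes id_B)\Delta_B i=0$. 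Your sketch omits this ``evaluate against all objects and use faithfulness of the reconstruction data'' mechanism entirely, and without it the claim that $\rho_D$-equivariance ``forces the first leg into $D$'' has no justification. Everything downstream of this point (uniqueness of $\Delta_D$, the counit and coassociativity checks, and part (2)) is correct and matches the paper.
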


\begin{proof}
\begin{enumerate}
\item Let $\left(  E,f\right)  $ be the cokernel of $i$ in $\mathcal{C}$. Then
$f:B\rightarrow E$ and $\ker f=i$. Since $i$ is a $B$-comodule morphism, we
have $\left(  id_{B}\otimes f\right)  \Delta_{B}i=\left(  id_{B}\otimes
fi\right)  \rho_{D}=0$.

We claim that $\left(  f\otimes id_{B}\right)  \Delta_{B}i=0$. Since
$\mathcal{C}$ is rigid, there exists a natural isomorphism
\begin{align}
\zeta:\operatorname{Hom}_{\mathcal{C}}\left(  D,E\otimes B\right)   &
\rightarrow\operatorname{Nat}\left(  D\otimes id_{\mathcal{C}},E\otimes
id_{\mathcal{C}}\right) \nonumber\\
\tikz[baseline=(current bounding box.west),scale=1,samples=100,thick,font=\scriptsize] { \drawcoop[CoProdStyle=comodule,leftarmlen=1,rightarmlen=1,handlelen=1,yscale=.5]{(0,0)}{(0.5,0)}; \draw (T) node[above] {$D$} (M1) node[below]{$E$} (N1)node[below]{$B$}; \draw[fill=white,thin] (-0.2,0.1) rectangle(0.7,-0.3); \draw ($(-0.2,0.1)!0.5!(0.7,-0.3)$) node {$t$};}
& \mapsto
\tikz[baseline=(current bounding box.west),scale=1,samples=100,thick,font=\scriptsize] { \drawcoop[CoProdStyle=comodule,leftarmlen=2,rightarmlen=1.6,handlelen=1,yscale=.5]{(0,0)}{(0.5,0)}; \draw (T) node[above] {$D$} (M1) node[below]{$E$}; \draw[fill=white,thin] (-0.2,0.1) rectangle(0.7,-0.3); \draw ($(-0.2,0.1)!0.5!(0.7,-0.3)$) node {$t$}; \drawcoop[CoProdStyle=comodule,leftarmlen=2.05,rightarmlen=0.5,handlelen=0.8,yscale=.5]{($(N1)+(0.5,0)$)}{(N1)}; \draw (T) node[below] {$\bullet$} (M1) node[above]{$\bullet$}; }\ ,\label{eq na iso}%
\end{align}
via the composition of following isomorphisms
\begin{subequations}
\begin{align*}
\operatorname{Hom}_{\mathcal{C}}\left(  D,E\otimes B\right)   &
-\!\!-\!\!\!\longrightarrow\operatorname{Hom}_{\mathcal{C}}\left(  E^{\ast
}\otimes D,B\right) \\
& \overset{\theta_{E^{\ast}\otimes D}}{-\!\!-\!\!\!\longrightarrow
}\operatorname{Nat}\left(  E^{\ast}\otimes D\otimes id_{\mathcal{C}%
},id_{\mathcal{C}}\right) \\
& -\!\!-\!\!\!\longrightarrow\operatorname{Nat}\left(  D\otimes
id_{\mathcal{C}},E\otimes id_{\mathcal{C}}\right)  .
\end{align*}
Denoted $\rho_{D}$ by
$\tikz[baseline=(current bounding box.west),xscale=0.5,samples=100,thick,font=\scriptsize] {	\drawcoop[CoProdStyle=coproduc,leftarmlen=0.3,rightarmlen=0.3,handlelen=.5]{(0,0)}{(1,0)};
\draw (T) node[above] {$D$} (M1) node[below] {$B$} (N1) node[below] {$D$};
}$, then $\begin{tikzpicture}[xscale=.6,yscale=.8,samples=100,thick,font=%
\scriptsize,baseline=(current bounding box.west)]
\drawcoop[CoProdStyle=coproduct,leftarmlen=.5,rightarmlen=.5,handlelen=1.5,handletext={rectangle/0.5/i}]{(0,0)}{(1,0)}; \draw (T)node[above]{$D$} (M1)node[below]{$B$} (N1)node[below]{$B$}; \end{tikzpicture}=\begin{tikzpicture}[xscale=.6,yscale=.8,samples=100,thick,font=%
\scriptsize,baseline=(current bounding box.west)]
\drawcoop[CoProdStyle=comodule,leftarmlen=0.9,rightarmlen=0.9,handlelen=0.5,rightarmtext={rectangle/0.5/i}]{(0,0)}{(1,0)}; \draw (T)node[above]{$D$} (M1)node[below]{$B$} (N1)node[below]{$B$}; \end{tikzpicture}
$. So we have
\end{subequations}
\[
\begin{tikzpicture}[xscale=.6,samples=100,thick,font=\scriptsize,baseline=(current bounding box.west)] \drawcoop[CoProdStyle=coproduct,leftarmlen=1.4,rightarmlen=0.7,handlelen=1.4,handlept=T1,leftarmpt=M3,handletext={rectangle/0.5/i},leftarmtext={rectangle/0.6/f}]{(0,0)}{(1,0)}; \drawcoop[CoProdStyle=comodule,leftarmlen=0.1,rightarmlen=.1,handlelen=1.2,handlept=T2]{($(N1)+(1,0)$)}{(N1)}; \draw (T1)node[above]{$D$} (M1) --(M1 |- T1)node[above]{$\bullet$} (T2)node[below]{$\bullet$} (M3)--(M3 |- T2)node[below]{$E$}; \end{tikzpicture}\overset
{(\ref{eq cocomm})}{=\!\!=\!\!=}%
\begin{tikzpicture}[xscale=.6,yscale=.8,samples=100,thick,font=\scriptsize,baseline=(current bounding box.west)] \drawcoop[CoProdStyle=coproduct,leftarmlen=.5,rightarmlen=.5,handlelen=1.2,handlept=T1,leftarmpt=P0,handletext={rectangle/0.5/i}]{(0,0)}{(1,0)}; \drawcrossing[crosstyle=mn,inlefthandlen=0.05,inrighthandlen=0.1,outlefthandlen=0.05,outrighthandlen=0.05,inrightnm=P1,yscale=0.5]{(N1)}{($(N1)+(1,0)$)}; \drawcrossing[crosstyle=mn,inlefthandlen=0.05,inrighthandlen=0.05,outlefthandlen=0.05,outrighthandlen=0.05,outrightnm=P2,yscale=0.5]{(N2)}{(M2)}; \drawcrossing[crosstyle=mn,inlefthandlen=0,inrighthandlen=0,outlefthandlen=0.05,outrighthandlen=0.05,yscale=0.5]{($(N2)+(-1,0)$)}{(N2)}; \draw (M1) -- (P0); \drawcoop[CoProdStyle=comodule,leftarmlen=.6,rightarmlen=0,handlelen=1.4,handlept=T2]{($(M2)+(1,0)$)}{(M2)}; \draw (T1)node[above]{$D$} (P1) --(P1 |- T1)node[above]{$\bullet$} (T2)node[below]{$\bullet$} (N2 |- T2)node[below]{$E$}; \linewithtext[text={rectangle/0.45/f}]{(N2)}{(N2 |- T2)}; \end{tikzpicture}=\begin{tikzpicture}[xscale=.6,yscale=.8,samples=100,thick,font=\scriptsize,baseline=(current bounding box.west)] \drawcoop[CoProdStyle=comodule,leftarmlen=.5,rightarmlen=0.6,handlelen=0.4,handlept=T1,leftarmpt=P0,rightarmtext={rectangle/0.5/i}]{(0,0)}{(1,0)}; \drawcrossing[crosstyle=mn,inlefthandlen=0.05,inrighthandlen=0.1,outlefthandlen=0.05,outrighthandlen=0.05,inrightnm=P1,yscale=0.5]{(N1)}{($(N1)+(1,0)$)}; \drawcrossing[crosstyle=mn,inlefthandlen=0.05,inrighthandlen=0.05,outlefthandlen=0.05,outrighthandlen=0.05,outrightnm=P2,yscale=0.5]{(N2)}{(M2)}; \drawcrossing[crosstyle=mn,inlefthandlen=0,inrighthandlen=0,outlefthandlen=0.05,outrighthandlen=0.05,yscale=0.5]{($(N2)+(-1,0)$)}{(N2)}; \draw (M1) -- (P0); \drawcoop[CoProdStyle=comodule,leftarmlen=.6,rightarmlen=0,handlelen=1.4,handlept=T2]{($(M2)+(1,0)$)}{(M2)}; \draw (T1)node[above]{$D$} (P1) --(P1 |- T1)node[above]{$\bullet$} (T2)node[below]{$\bullet$} (N2 |- T2)node[below]{$E$}; \linewithtext[text={rectangle/0.45/f}]{(N2)}{(N2 |- T2)}; \end{tikzpicture}=\begin{tikzpicture}[xscale=.6,yscale=.8,samples=100,thick,font=\scriptsize,baseline=(current bounding box.west)] \drawcoop[CoProdStyle=comodule,leftarmlen=.5,rightarmlen=0.2,handlelen=0.4,handlept=T1,leftarmpt=P0]{(0,0)}{(1,0)}; \drawcrossing[crosstyle=mn,inlefthandlen=0.05,inrighthandlen=0.1,outlefthandlen=0.05,outrighthandlen=0.05,inrightnm=P1,yscale=0.5]{(N1)}{($(N1)+(1,0)$)}; \drawcrossing[crosstyle=mn,inlefthandlen=0.05,inrighthandlen=0.05,outlefthandlen=0.05,outrighthandlen=0.05,outrightnm=P2,yscale=0.5]{(N2)}{(M2)}; \drawcrossing[crosstyle=mn,inlefthandlen=0,inrighthandlen=0,outlefthandlen=0.05,outrighthandlen=0.05,yscale=0.5]{($(N2)+(-1,0)$)}{(N2)}; \draw (M1) -- (P0); \drawcoop[CoProdStyle=comodule,leftarmlen=.6,rightarmlen=0,handlelen=2.4,handlept=T2]{($(M2)+(1,0)$)}{(M2)}; \draw (T1)node[above]{$D$} (P1) --(P1 |- T1)node[above]{$\bullet$} (T2)node[below]{$\bullet$} (N2 |- T2)node[below]{$E$}; \linewithtext[text={{rectangle/0.7/f},{rectangle/0.2/i}}]{(N2)}{(N2 |- T2)}; \end{tikzpicture}=0,
\]
which shows that $\zeta\left(  \left(  f\otimes id_{B}\right)  \Delta
_{B}i\right)  =0$, where $\zeta$ is the isomorphism (\ref{eq na iso}). Thus
$\left(  f\otimes id_{B}\right)  \Delta_{B}i=0$. By Lemma \ref{lemma
intersection}, there exists a unique $\Delta_{D}:D\rightarrow D\otimes D$,
such that $\Delta_{B}i=\left(  i\otimes i\right)  \Delta_{D}$.

Let $\varepsilon_{D}=\varepsilon_{B}i:D\rightarrow1$. We need to check that
$\left(  D,\Delta_{D},\varepsilon_{D}\right)  $ is a $\mathcal{C}$-coalgebra.
First, it follows from the counit axiom of $B$ that
\[
\left(  \varepsilon_{D}\otimes i\right)  \Delta_{D}=\left(  \varepsilon
_{B}\otimes id_{B}\right)  \left(  i\otimes i\right)  \Delta_{D}=\left(
\varepsilon_{B}\otimes id_{B}\right)  \Delta_{B}i=i.
\]
Since $i$ is monic, $\left(  \varepsilon_{D}\otimes id_{D}\right)  \Delta
_{D}=id_{D}$. Similarly, $\left(  id_{D}\otimes\varepsilon_{D}\right)
\Delta_{D}=id_{D}$. To show the coassociativity, it suffices to show that%
\[
\left(  i\otimes i\otimes i\right)  \left(  \Delta_{D}\otimes id_{D}\right)
\Delta_{D}=\left(  i\otimes i\otimes i\right)  \left(  id_{D}\otimes\Delta
_{D}\right)  \Delta_{D},
\]
which follows directly from the coassociativity of $B$. Consequently, $\left(
D,\Delta_{D},\varepsilon_{D}\right)  $ is a coalgebra in $\mathcal{C} $, and
$i:D\rightarrow B$ is a coalgebra map.

\item Let $\left(  V,\tilde{\rho}_{V}\right)  $ be a left $D$-comodule in
$\mathcal{C}$. Then $V$ is a left $B$-comodule via $\rho_{V}=\left(  i\otimes
id_{V}\right)  \tilde{\rho}_{V}$. For $V,W\in D$-$\operatorname*{Comod}%
_{\mathcal{C}},$ $\operatorname{Hom}_{\mathcal{C}}^{D}\left(  V,W\right)
=\operatorname{Hom}_{\mathcal{C}}^{B}\left(  V,W\right)  .$ So the category
$D$-$\operatorname*{Comod}_{\mathcal{C}}$ is a full subcategory of
$B$-$\operatorname*{Comod}_{\mathcal{C}}$, and it's clearly closed under the
$\mathcal{C}$-module product. Thus $D$-$\operatorname*{Comod}_{\mathcal{C}}$
is a $\mathcal{C}$-module subcategory of $B$-$\operatorname*{Comod}%
_{\mathcal{C}}$.
\end{enumerate}
\end{proof}

In the following proposition, we give some equivalence conditions for the
indecomposability of $D$-$\operatorname*{Comod}\nolimits_{\mathcal{C}}$.

\begin{proposition}
\label{prop indecom equivs} Let $i:\left(  D,\rho_{D}\right)  \rightarrow
\left(  B,\Delta_{B}\right)  $ be a subobject of $B\in B$%
-$\operatorname*{Comod}_{\mathcal{C}}$. Then the following statements are equivalent.

\begin{enumerate}
\item $D$ is indecomposable in $B$-$\operatorname*{Comod}_{\mathcal{C}}$.

\item $D$ is indecomposable in $D$-$\operatorname*{Comod}_{\mathcal{C}}$.

\item $D$ is an indecomposable $\mathcal{C}$-coalgebra.

\item The $\mathcal{C}$-module category $D$-$\operatorname*{Comod}%
\nolimits_{\mathcal{C}}$ is indecomposable.
\end{enumerate}
\end{proposition}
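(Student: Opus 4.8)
The plan is to prove $(1)\Leftrightarrow(2)\Leftrightarrow(3)$ first and then $(3)\Leftrightarrow(4)$; in each equivalence I negate both sides and either produce or rule out a nontrivial direct sum decomposition. For $(1)\Leftrightarrow(2)$: by Theorem~\ref{thm D is coalg}(2) the functor $(V,\tilde\rho_V)\mapsto(V,(i\otimes id_V)\tilde\rho_V)$ is a fully faithful embedding of $D$-$\operatorname*{Comod}_{\mathcal{C}}$ into $B$-$\operatorname*{Comod}_{\mathcal{C}}$ whose image is closed under subobjects, hence under direct summands, and it carries the regular comodule $(D,\Delta_D)$ to $(D,(i\otimes id_D)\Delta_D)=(D,\rho_D)$. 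Since such an embedding neither creates nor destroys direct sum decompositions of its objects, $(1)$ and $(2)$ are equivalent.

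For $(1)\Leftrightarrow(3)$ I would use Theorem~\ref{thm D is coalg}(1) as a black box. If $D=D_1\oplus D_2$ is a nontrivial decomposition in $B$-$\operatorname*{Comod}_{\mathcal{C}}$, then each composite $D_j\hookrightarrow D\overset{i}{\hookrightarrow}B$ is a subobject of $(B,\Delta_B)$ in $B$-$\operatorname*{Comod}_{\mathcal{C}}$, so Theorem~\ref{thm D is coalg}(1) equips $D_j$ with a $\mathcal{C}$-coalgebra structure making $D_j\hookrightarrow B$ a subcoalgebra; comparing this with the coalgebra structure on $D$ (itself a subcoalgebra of $B$) and cancelling the monomorphism $i\otimes i$ gives $\Delta_D(D_j)\subseteq D_j\otimes D_j$, i.e. $D=D_1\oplus D_2$ as $\mathcal{C}$-coalgebras. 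Conversely, a coalgebra decomposition $D=D_1\oplus D_2$ satisfies $\Delta_D(D_j)\subseteq D_j\otimes D_j\subseteq D\otimes D_j$, which is exactly the statement that $D_j$ is a left $D$-subcomodule of $(D,\Delta_D)$; hence $D$ decomposes nontrivially in $D$-$\operatorname*{Comod}_{\mathcal{C}}$, and therefore in $B$-$\operatorname*{Comod}_{\mathcal{C}}$ by the previous paragraph. This establishes $(1)\Leftrightarrow(2)\Leftrightarrow(3)$.

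For $(3)\Leftrightarrow(4)$: if $D=D_1\oplus D_2$ as $\mathcal{C}$-coalgebras, the orthogonality relations $\Delta_D(D_j)\subseteq D_j\otimes D_j$ and $\varepsilon_D|_{D_j}=\varepsilon_{D_j}$ yield a functorial decomposition of every left $D$-comodule, so that $D$-$\operatorname*{Comod}_{\mathcal{C}}$ is the direct sum of the nonzero $\mathcal{C}$-module subcategories $D_1$-$\operatorname*{Comod}_{\mathcal{C}}$ and $D_2$-$\operatorname*{Comod}_{\mathcal{C}}$ (module subcategories by Theorem~\ref{thm D is coalg}(2)); thus $D$-$\operatorname*{Comod}_{\mathcal{C}}$ is decomposable. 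Conversely, suppose $D$-$\operatorname*{Comod}_{\mathcal{C}}=\mathcal{M}_1\oplus\mathcal{M}_2$ with both $\mathcal{M}_j$ nonzero, and write the regular comodule as $D=D_1\oplus D_2$ with $D_j\in\mathcal{M}_j$. The crucial point is that both $D_j$ are nonzero: if, say, $D\in\mathcal{M}_1$, then for any $W\in\mathcal{M}_2$ the coaction $\rho_W:W\to D\otimes W$ is a monomorphism in $\mathcal{C}$ (split by $\varepsilon_D\otimes id_W$), hence a subobject in $D$-$\operatorname*{Comod}_{\mathcal{C}}$ of the cofree comodule $(D\otimes W,\Delta_D\otimes id_W)$, which by (\ref{module cat stru on Dcomod}) is precisely the $\mathcal{C}$-module product $(D,\Delta_D)\otimes W\in\mathcal{M}_1$; closure of $\mathcal{M}_1$ under subobjects then forces $W\in\mathcal{M}_1$, whence $W=0$ and $\mathcal{M}_2=0$, a contradiction. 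Hence $D=D_1\oplus D_2$ is a nontrivial decomposition in $D$-$\operatorname*{Comod}_{\mathcal{C}}$, and $D$ is a decomposable coalgebra by $(2)\Leftrightarrow(3)$. Combining everything gives $(1)\Leftrightarrow(2)\Leftrightarrow(3)\Leftrightarrow(4)$.

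The only place the special structure of $B$ enters is the forward direction of $(1)\Leftrightarrow(3)$, through Theorem~\ref{thm D is coalg}(1), whose proof rests on the $\mathcal{C}$-cocommutativity (\ref{eq cocomm}) of $B$: this is precisely what makes a $B$-subcomodule of $B$ automatically a subcoalgebra, and I expect that to be the conceptual heart of the argument. The subtlest bookkeeping is the converse of $(3)\Leftrightarrow(4)$, namely identifying the cofree comodule $(D\otimes W,\Delta_D\otimes id_W)$ with the module product $(D,\Delta_D)\otimes W$ so that an arbitrary object of $\mathcal{M}_2$ is trapped inside $\mathcal{M}_1$, together with the routine but notationally heavy functorial splitting of comodules over a decomposed coalgebra.
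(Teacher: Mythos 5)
Your proposal is correct and follows essentially the same route as the paper: the equivalence of (1), (2), (3) rests on Theorem~\ref{thm D is coalg} (a $B$-subcomodule summand of $B$ is a subcoalgebra, and conversely a subcoalgebra summand is a $D$-, hence $B$-, subcomodule summand), the implication from a decomposition of $D\text{-}\operatorname*{Comod}_{\mathcal{C}}$ to one of $D$ uses exactly the paper's trick that $\rho_W:W\to D\otimes W$ is a monic $D$-colinear map into the module product $(D,\Delta_D)\otimes W$, and the reverse implication is the idempotent splitting of comodules over $D=D_1\oplus D_2$. The only places you are terser than the paper are the closure-under-subobjects claim (which in the paper's framework is Lemma~\ref{lemma intersection}) and the verification, needed for the direct-sum decomposition of the category, that $\operatorname{Hom}_{\mathcal{C}}^{D}(N_1,N_2)=0$ for $N_j\in D_j\text{-}\operatorname*{Comod}_{\mathcal{C}}$; both are routine and carried out explicitly in the paper.
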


\begin{proof}
Obviously, in $\mathcal{C}$, each subcoalgebra of $D$ is a $D$-subcomodule of
$D$, and each $D$-subcomodule of $D$ is a $B$-subcomodule of $D$, so the
implications (1)$\Rightarrow$(2)$\Rightarrow$(3) are clear.

Given a $B$-subcomodule $\left(  D_{1},\rho_{1}\right)  $ of $D$ with
monomorphism $j:D_{1}\rightarrow D$, $D_{1}$ is clearly a $B$-subcomodule of
$B$, and thus it's a subcoalgebra of $B$ by Theorem \ref{thm D is coalg}. Thus
there exists a coproduct $\Delta_{1}:D_{1}\rightarrow D_{1}\otimes D_{1}$ in
$\mathcal{C}$, such that
\[
\left(  ij\otimes ij\right)  \Delta_{1}=\Delta_{B}ij,\text{ }\rho_{1}=\left(
ij\otimes id_{D_{1}}\right)  \Delta_{1},
\]
and a counit $\varepsilon_{1}=\varepsilon_{B}ij=\varepsilon_{D}j$. It's easy
to see that $j:D_{1}\rightarrow D$ is a coalgebra map.

If we assume further that $j$ splits in $B$-$\operatorname*{Comod}%
_{\mathcal{C}}$ and $p$ is a retraction of $j$, then we have $\Delta
_{1}p=\left(  p\otimes p\right)  \Delta_{D}$. It follows that if $D$ is
decomposable in $B$-$\operatorname*{Comod}_{\mathcal{C}}$, then $D$ is
decomposable as $\mathcal{C}$-coalgebras. So we get (3)$\Rightarrow$(1).

(2)$\Rightarrow$(4). Assume that $D$-$\operatorname*{Comod}_{\mathcal{C}%
}=\mathcal{M}_{1}\oplus\mathcal{M}_{2}$, where $\mathcal{M}_{1},\mathcal{M}%
_{2}$ are nontrivial $\mathcal{C}$-module subcategories of $D$%
-$\operatorname*{Comod}_{\mathcal{C}}$.

For any $M\in D$-$\operatorname*{Comod}_{\mathcal{C}}$, there exist $M_{1}%
\in\mathcal{M}_{1},\ M_{2}\in\mathcal{M}_{2}$ such that $M=M_{1}\oplus M_{2}$.
If $M,N\in D$-$\operatorname*{Comod}_{\mathcal{C}}$, and $f\in
\operatorname{Hom}_{\mathcal{C}}^{D}\left(  M,N\right)  $, then $f=\left(
f_{1},f_{2}\right)  $, where $f_{1}\in\operatorname{Hom}_{\mathcal{M}_{1}%
}\left(  M_{1},N_{1}\right)  ,$ $f_{2}\in\operatorname{Hom}_{\mathcal{M}_{2}%
}\left(  M_{2},N_{2}\right)  ,$ $M=M_{1}\oplus M_{2}$, $N=N_{1}\oplus N_{2}$.

As an object of $D$-$\operatorname*{Comod}_{\mathcal{C}}$, $D=D_{1}\oplus
D_{2}$, where $D_{1}\in\mathcal{M}_{1}$, $D_{2}\in\mathcal{M}_{2}$. Take a
nonzero object $N\in\mathcal{M}_{1}$, then the object $\left(  D\otimes
N,\rho_{D}\otimes id_{N}\right)  \in D$-$\operatorname*{Comod}_{\mathcal{C}}$,
and $\rho_{N}:N\rightarrow D\otimes N$ is a $D$-comodule map. For $i=1,2$,
$\left(  D\otimes N\right)  _{i}=D_{i}\otimes N\in\mathcal{M}_{i}$, as
$\mathcal{M}_{i}$ is closed under right $\mathcal{C}$-module product. So
$\rho_{N}=\left(  \rho_{N}\right)  _{1}:N\rightarrow D_{1}\otimes N$. Note
that $\rho_{N}$ is monic, so $D_{1}\neq0$. Similarly, $D_{2}\neq0$. Hence, $D$
is decomposable in $D$-$\operatorname*{Comod}_{\mathcal{C}}$.

(4)$\Rightarrow$(1). Assume that $D=D_{1}\oplus D_{2}$ is a direct sum of $B
$-subcomodules in $\mathcal{C}$. For $j=1,2$, let $i_{j}:D_{j}\rightarrow D $
and $p_{j}:D\rightarrow D_{j}$ be the canonical injections and projections.
Then the direct sum $D=D_{1}\oplus D_{2}$ can be viewed as in category
$D$-$\operatorname*{Comod}_{\mathcal{C}}$, and also as $\mathcal{C}$-coalgebras.

Given a left $D$-comodule $\left(  M,\rho_{M}\right)  $ in $\mathcal{C}$,
define maps%
\[
f_{j}=\left(  \varepsilon_{D_{j}}p_{j}\otimes id_{M}\right)  \rho_{M},\text{
}j=1,2.
\]
Since $\varepsilon_{D_{j}}=\varepsilon_{D}i_{j}$, $f_{1}+f_{2}=\left(
\varepsilon_{D}\otimes id_{M}\right)  \rho_{M}=id_{M}$. We easily get that
\begin{align*}
\rho_{M}f_{j}  & =\left(  \varepsilon_{D_{j}}p_{j}\otimes id_{D}\otimes
id_{M}\right)  \left(  id_{D}\otimes\rho_{M}\right)  \rho_{M}\\
& =\left(  \varepsilon_{D_{j}}p_{j}i_{j}p_{j}\otimes i_{j}p_{j}\otimes
id_{M}\right)  \left(  \Delta_{D}\otimes id_{M}\right)  \rho_{M}\\
& =\left(  i_{j}p_{j}\otimes id_{M}\right)  \rho_{M},
\end{align*}
thus $f_{j}f_{l}=\delta_{jl}f_{j},$ for $j,l=1,2$. It's then easy to verify
that $f_{1}$, $f_{2}$ are $D$-colinear. Therefore $\left\{  f_{1}%
,f_{2}\right\}  $ is a complete set of orthogonal idempotents in
$\operatorname{End}_{\mathcal{C}}^{D}\left(  M\right)  $.

Now setting $M_{j}=\operatorname{Im}f_{j}$, we have $M=M_{1}\oplus M_{2}$ as
$D$-comodules. Let $\rho_{j}$ be the $D$-coaction on $M_{j}$. Then one may
check that $M_{j}\in D_{j}$-$\operatorname*{Comod}_{\mathcal{C}}$ via
\[
\tilde{\rho}_{_{j}}:M_{j}\overset{\rho_{_{j}}}{\longrightarrow}D\otimes
M_{j}\overset{p_{j}\otimes id_{M_{j}}}{\longrightarrow}D_{j}\otimes M_{j},
\]
and that $\rho_{_{j}}=\left(  i_{j}\otimes id_{M_{j}}\right)  \tilde{\rho}_{j}
$.

Let $\left(  N_{1},\tilde{\rho}_{N_{1}}\right)  \in D_{1}$%
-$\operatorname*{Comod}_{\mathcal{C}}$, $\left(  N_{2},\tilde{\rho}_{N_{2}%
}\right)  \in D_{2}$-$\operatorname*{Comod}_{\mathcal{C}}$. Then $N_{j}$ can
be viewed as a natural left $D$-comodule via $\rho_{N_{j}}=\left(
i_{j}\otimes id_{N_{j}}\right)  \tilde{\rho}_{N_{j}}$. For any $f\in
\operatorname{Hom}_{\mathcal{C}}^{D}\left(  N_{1},N_{2}\right)  $, we have
$\rho_{N_{2}}f=\left(  id_{D}\otimes f\right)  \rho_{N_{1}}$. Applying
$i_{2}p_{2}\otimes id_{N_{2}}$ to both side, we get
\[
\rho_{N_{2}}f=\left(  i_{2}p_{2}\otimes f\right)  \rho_{N_{1}}=\left(
i_{2}p_{2}i_{1}\otimes f\right)  \tilde{\rho}_{N_{1}}=0,
\]
and thus $f=0$ and $\operatorname{Hom}_{\mathcal{C}}^{D}\left(  N_{1}%
,N_{2}\right)  =0$. Similarly, $\operatorname{Hom}_{\mathcal{C}}^{D}\left(
N_{2},N_{1}\right)  =0$. So
\[
D\text{-}\operatorname*{Comod}\nolimits_{\mathcal{C}}=D_{1}\text{-}%
\operatorname*{Comod}\nolimits_{\mathcal{C}}\oplus D_{2}\text{-}%
\operatorname*{Comod}\nolimits_{\mathcal{C}}%
\]
as $\mathcal{C}$-module categories, and (4)$\Rightarrow$(1) is done.
\end{proof}

Now assume that $k$ is an algebraically closed field of characteristic zero,
and $\mathcal{C}$ is a finite braided multifusion category. Note that
$\mathcal{C}$ has a natural module category structure over
$\mathcal{C\boxtimes C}^{op}$, and the dual category is the Drinfeld center
$\mathcal{Z}_{l}\left(  \mathcal{C}\right)  $ (see \cite[Corollary
3.37]{Etingof2004finite}). It due to Etingof, Nikshych and Ostrik
\cite[Theorem 2.18]{Etingof2005On} that for any module category $\mathcal{M}$
over a multifusion category $\mathcal{C}$ the dual category $\mathcal{C}%
_{\mathcal{M}}^{\ast}$ is semisimple. In particular, the Drinfeld center
$\mathcal{Z}_{l}\left(  \mathcal{C}\right)  $ of $\mathcal{C}$ is semisimple.
By Theorem \ref{theorem center iso H-comod}, the category $B$%
-$\operatorname*{Comod}_{\mathcal{C}}\cong\mathcal{Z}_{l}\left(
\mathcal{C}\right)  $ is semisimple.

As an object of $B$-$\operatorname*{Comod}_{\mathcal{C}}$, $B$ is a direct sum
of simple subobjects. By Proposition \ref{prop indecom equivs} each simple
subobject of $B$ is an indecomposable coalgebra in $\mathcal{C}$, and the
following proposition is immediate.

\begin{proposition}
\label{decom of B-comod}Let $\mathcal{C}$ be a finite braided multifusion
category over an algebraically closed field $k$ of characteristic zero, and
$B\cong D_{1}\oplus D_{2}\oplus\cdots\oplus D_{r}$ be a direct sum of simple
objects in $B$-$\operatorname*{Comod}_{\mathcal{C}}$, then $\mathcal{Z}%
_{l}\left(  \mathcal{C}\right)  \cong B$-$\operatorname*{Comod}%
\nolimits_{\mathcal{C}}\cong%
{\textstyle\bigoplus_{j=1}^{r}}
D_{j}$-$\operatorname*{Comod}\nolimits_{\mathcal{C}}$ is a direct sum of
indecomposable $\mathcal{C}$-module subcategories.
\end{proposition}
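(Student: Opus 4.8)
The plan is to assemble the statement from the three structural results already proved, so the argument is short. First I would invoke Theorem~\ref{theorem center iso H-comod} to identify $\mathcal{Z}_{l}\left(  \mathcal{C}\right)  $ with $B$-$\operatorname*{Comod}\nolimits_{\mathcal{C}}$ as right $\mathcal{C}$-module categories. Since $\mathcal{C}$ is multifusion over an algebraically closed field of characteristic zero, the discussion preceding the proposition (via \cite{Etingof2005On}) gives that $\mathcal{Z}_{l}\left(  \mathcal{C}\right)  $, hence $B$-$\operatorname*{Comod}\nolimits_{\mathcal{C}}$, is semisimple; consequently the object $\left(  B,\Delta_{B}\right)  $ splits as a finite direct sum $B\cong D_{1}\oplus\cdots\oplus D_{r}$ of simple $B$-comodules. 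Each canonical injection $i_{j}\colon D_{j}\rightarrow B$ exhibits $D_{j}$ as a $B$-subcomodule of $B$, so by Theorem~\ref{thm D is coalg} the object $D_{j}$ carries a unique $\mathcal{C}$-coalgebra structure for which $i_{j}$ is a coalgebra map, and $D_{j}$-$\operatorname*{Comod}\nolimits_{\mathcal{C}}$ is a $\mathcal{C}$-module subcategory of $B$-$\operatorname*{Comod}\nolimits_{\mathcal{C}}$. Because $D_{j}$ is simple it is in particular indecomposable in $B$-$\operatorname*{Comod}\nolimits_{\mathcal{C}}$, so the implication $(1)\Rightarrow(4)$ of Proposition~\ref{prop indecom equivs} shows $D_{j}$-$\operatorname*{Comod}\nolimits_{\mathcal{C}}$ is an indecomposable $\mathcal{C}$-module category.

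It then remains to upgrade the decomposition of $B$ as an object to a decomposition of the module category. The implication $(4)\Rightarrow(1)$ of Proposition~\ref{prop indecom equivs} (more precisely, the computation carried out in its proof) establishes exactly that whenever a subcoalgebra $D$ of $B$ splits as a direct sum $D=D^{\prime}\oplus D^{\prime\prime}$ of $B$-subcomodules, one has $D$-$\operatorname*{Comod}\nolimits_{\mathcal{C}}=D^{\prime}$-$\operatorname*{Comod}\nolimits_{\mathcal{C}}\oplus D^{\prime\prime}$-$\operatorname*{Comod}\nolimits_{\mathcal{C}}$ as $\mathcal{C}$-module categories. Applying this with $D=B$, $D^{\prime}=D_{1}$ and $D^{\prime\prime}=D_{2}\oplus\cdots\oplus D_{r}$ (which is again a $B$-subcomodule of $B$, hence a subcoalgebra by Theorem~\ref{thm D is coalg}), and then inducting on $r$, I obtain $B$-$\operatorname*{Comod}\nolimits_{\mathcal{C}}=\bigoplus_{j=1}^{r}D_{j}$-$\operatorname*{Comod}\nolimits_{\mathcal{C}}$ as $\mathcal{C}$-module categories. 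Chaining this with the equivalence of the first paragraph yields $\mathcal{Z}_{l}\left(  \mathcal{C}\right)  \cong B$-$\operatorname*{Comod}\nolimits_{\mathcal{C}}\cong\bigoplus_{j=1}^{r}D_{j}$-$\operatorname*{Comod}\nolimits_{\mathcal{C}}$, a direct sum of the indecomposable $\mathcal{C}$-module subcategories produced above, which is the assertion.

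Since all the content sits in the earlier results, there is no serious obstacle here; the only points needing a word of care are in the induction of the second paragraph, where one must check that each partial sum $D_{j}\oplus\cdots\oplus D_{r}$ is genuinely a $B$-subcomodule of $B$ (so that Theorem~\ref{thm D is coalg} and Proposition~\ref{prop indecom equivs} apply to it), which is immediate as a direct sum of subcomodules is a subcomodule, and that $r$ is finite, which follows from the semisimplicity and local finiteness of $B$-$\operatorname*{Comod}\nolimits_{\mathcal{C}}$.
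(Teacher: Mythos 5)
Your proposal is correct and follows essentially the route the paper intends: the paper derives the statement as "immediate" from Theorem~\ref{theorem center iso H-comod}, the semisimplicity of $\mathcal{Z}_{l}\left(\mathcal{C}\right)$ via \cite{Etingof2005On}, and Proposition~\ref{prop indecom equivs}, which is exactly the assembly you carry out. Your explicit induction on $r$ using the computation inside the proof of $(4)\Rightarrow(1)$ merely spells out the step the paper leaves implicit.
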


If $H$ is a semisimple quasi-triangular Hopf algebra and $\mathcal{C}={}%
_{H}\mathcal{M}$ is the category of finite dimensional representations, this
decomposition has already appeared in the authors' paper \cite{LiuZhu2019On},
as in the following example.

\begin{example}
[\cite{LiuZhu2019On}]\label{ex QT HA}Let $\left(  H,R\right)  $ be a
semisimple quasi-triangular Hopf algebra. The automorphism braided group
$H_{R}$ of $\mathcal{C}={}_{H}\mathcal{M}$ is constructed as follows. As an
$H$-module algebra, $H_{R}=H$ with the left adjoint action $\cdot_{ad}$. The
comultiplication and antipode are defined by
\[
\Delta_{R}\left(  h\right)  =h_{\left(  1\right)  }S\left(  R^{2}\right)
\otimes R^{1}\cdot_{ad}h_{\left(  2\right)  },\ S_{R}\left(  h\right)
=R^{2}S\left(  R^{1}\cdot_{ad}h\right)  ,\ h\in H.
\]
The decomposition of the automorphism braided group $H_{R}$ is the unique
decomposition $H_{R}=D_{1}\oplus\cdots\oplus D_{r}$ of the minimal
$H$-adjoint-stable subcoalgebras $D_{1},...,D_{r}$ of $H_{R}$, and
the~category${}$ $_{H}^{H}\mathcal{YD=Z}_{l}\left(  \mathcal{C}\right)
\cong{}_{H}^{H_{R}}\mathcal{M}\cong{}_{H}^{D_{1}}\mathcal{M}\oplus\cdots
\oplus{}_{H}^{D_{r}}\mathcal{M}$ is a direct sum of indecomposable right
$\mathcal{C}$-module subcategories.
\end{example}

In literature \cite{Ostrik2003module,Etingof2004finite}, the concept of
internal Hom plays a crucial role in the study of module categories. Once the
internal Hom is determined, Theorem~\ref{thm Ostrik}, Theorem~\ref{thm liu
zhu} can be applied to characterize indecomposable $\mathcal{C}$-module subcategories.

Now let $\mathcal{C}$ be a multitensor category and $D$ be a coalgebra in
$\mathcal{C}$. Naturally, $D$-$\operatorname*{Comod}_{\mathcal{C}}$ is a right
$\mathcal{C}$-module category. We end this section by presenting a
characterization of the internal Hom for $D$-$\operatorname*{Comod}%
_{\mathcal{C}}$.

First we need the notion of cotensor product over a coalgebra $D$ in
$\mathcal{C}$.

\begin{definition}
\label{def cotensor}Let $M,N$ be respectively a right $D$-comodule and a left
$D$-comodule in $\mathcal{C}$ with structure maps $\rho_{M},\rho_{N}$. The
cotensor product $M\square_{D}^{\mathcal{C}}N$ of $M$ and $N$ over $D$ is the
equalizer of the diagram%
\begin{equation}
M\square_{D}^{\mathcal{C}}N\subseteq M\otimes
N\;\tikz[baseline=-.3ex] {\draw[->] (0,.8ex) -- node[above]{$\rho _{M}\otimes id_{N}$}(3cm,0.8ex); \draw[->] (0,0ex) -- node[below]{$id_{M}\otimes\rho _{N}$}(3cm,0ex);}\;M\otimes
D\otimes N.\label{cotensor product}%
\end{equation}
That is, $M\square_{D}^{\mathcal{C}}N$ is the kernel of the morphism $\rho
_{M}\otimes id_{N}-id_{M}\otimes\rho_{N}$.
\end{definition}

Let $\left(  M,\rho_{M}\right)  \in D$-$\operatorname*{Comod}_{\mathcal{C}}$.
Then ${^{\ast}\hspace{-0.5ex}M}$ has a natural right $D$-comodule structure
$\rho_{{^{\ast}\hspace{-0.5ex}M}}$, which is the image of $\rho_{M}$ under the
composition of the isomorphisms%
\[
\operatorname{Hom}_{\mathcal{C}}\left(  M,D\otimes M\right)  \overset{\cong
}{\longrightarrow}\operatorname{Hom}_{\mathcal{C}}\left(  D^{\ast}\otimes
M,M\right)  \overset{\cong}{\longrightarrow}\operatorname{Hom}_{\mathcal{C}%
}\left(  {^{\ast}\hspace{-0.5ex}M},{^{\ast}\hspace{-0.5ex}M}\otimes D\right)
.
\]
The graphical representation of $\rho_{{^{\ast}\hspace{-0.5ex}M}}$ is
\[
\rho_{{^{\ast}\hspace{-0.5ex}M}}%
=\tikz[baseline=(current bounding box.west),xscale=1.2,samples=100,thick,font=\scriptsize] { \coordinate (X) at (0,0); \coordinate (Y) at (0.5,0); \drawcoop[CoProdStyle=comodule,leftarmlen=2.5,rightarmlen=1.3,handlelen=1.3,yscale=.5]{(X)}{(Y)}; \draw ($(T)+(0.2,0.15)$) node {$M$} (M1) node[below]{$D$}; \draw[fill=white,thin] ($(X)+(-0.2,0.1)$) rectangle($(Y)+(0.2,-0.3)$); \draw ($($(X)+(-0.1,0.1)$)!0.5!($(Y)+(0.1,-0.3)$)$) node {$\rho _{M}$}; \halfcircle{(T)}{($(T)+(-0.75,0)$)}; \draw ($(T)+(-1,0.15)$)node{${^{*}\hspace{-0.5ex}M}$}; \halfcircle{(N1)}{($(N1)+(0.75,0)$)}; \draw ($(N1)+(-0.15,-0.2)$) node {$M$} ($(N1)+(0.75,0)$)--($(N1)+(0.75,1.8)$)node[above]{${^{*}\hspace{-0.5ex}M}$}; \coordinate (Z) at ($(T)+(-0.75,0)$); \draw(Z)--(Z|-M1)node[below]{${^{*}\hspace{-0.5ex}M}$}; }.
\]

\begin{proposition}
\label{prop ihom for D-comod}Let $M,N\in D$-$\operatorname*{Comod}%
_{\mathcal{C}}$. Then $\underline{\operatorname{Hom}}\left(  M,N\right)
={^{\ast}\hspace{-0.5ex}M}\square_{D}^{\mathcal{C}}N$, i.e., the functor
${^{\ast}\hspace{-0.5ex}M}\square_{D}^{\mathcal{C}}\bullet{}:D$%
-$\operatorname*{Comod}_{\mathcal{C}}\rightarrow\mathcal{C}$ is a right
adjoint of $M\otimes\bullet$.
\end{proposition}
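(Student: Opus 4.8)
The plan is to construct, for every $X\in\mathcal{C}$, an isomorphism
\[
\operatorname{Hom}_{D\text{-}\operatorname*{Comod}\nolimits_{\mathcal{C}}}(M\otimes X,N)\;\cong\;\operatorname{Hom}_{\mathcal{C}}\left(X,{}^{\ast}\hspace{-0.5ex}M\,\square_{D}^{\mathcal{C}}N\right),
\]
natural in $X$ and in $N$; this is precisely the assertion that ${}^{\ast}\hspace{-0.5ex}M\,\square_{D}^{\mathcal{C}}\bullet$ is a right adjoint of $M\otimes\bullet$, equivalently that ${}^{\ast}\hspace{-0.5ex}M\,\square_{D}^{\mathcal{C}}N$ is an object of $\mathcal{C}$ representing the contravariant functor $X\mapsto\operatorname{Hom}_{\mathcal{M}}(M\otimes X,N)$ on the right $\mathcal{C}$-module category $\mathcal{M}=D\text{-}\operatorname*{Comod}\nolimits_{\mathcal{C}}$, i.e.\ $\underline{\operatorname{Hom}}(M,N)={}^{\ast}\hspace{-0.5ex}M\,\square_{D}^{\mathcal{C}}N$.

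First I would use the rigidity of $\mathcal{C}$ to fix, for each $Z\in\mathcal{C}$, the natural (in $X$ and in $Z$) isomorphism
\[
\Psi^{Z}:\operatorname{Hom}_{\mathcal{C}}(M\otimes X,Z)\overset{\cong}{\longrightarrow}\operatorname{Hom}_{\mathcal{C}}\left(X,{}^{\ast}\hspace{-0.5ex}M\otimes Z\right),\qquad f\longmapsto\left(id_{{}^{\ast}\hspace{-0.5ex}M}\otimes f\right)\left(coev_{M}^{\prime}\otimes id_{X}\right).
\]
Since $M\otimes X$ carries the $D$-coaction $\rho_{M}\otimes id_{X}$ by (\ref{module cat stru on Dcomod}), the subspace $\operatorname{Hom}_{D\text{-}\operatorname*{Comod}\nolimits_{\mathcal{C}}}(M\otimes X,N)$ of $\operatorname{Hom}_{\mathcal{C}}(M\otimes X,N)$ is exactly the equalizer of the two linear maps $u,v:\operatorname{Hom}_{\mathcal{C}}(M\otimes X,N)\rightrightarrows\operatorname{Hom}_{\mathcal{C}}(M\otimes X,D\otimes N)$, $u(f)=\rho_{N}f$ and $v(f)=\left(id_{D}\otimes f\right)\left(\rho_{M}\otimes id_{X}\right)$.

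The core of the proof is to carry $u$ and $v$ across $\Psi$. Naturality of $\Psi$ in its target object shows at once that, under $\Psi^{N}$ and $\Psi^{D\otimes N}$, the map $u$ corresponds to post-composition with $id_{{}^{\ast}\hspace{-0.5ex}M}\otimes\rho_{N}$. The \emph{key lemma} is the companion statement that $v$ corresponds to post-composition with $\rho_{{}^{\ast}\hspace{-0.5ex}M}\otimes id_{N}$, where $\rho_{{}^{\ast}\hspace{-0.5ex}M}:{}^{\ast}\hspace{-0.5ex}M\rightarrow{}^{\ast}\hspace{-0.5ex}M\otimes D$ is the right $D$-comodule structure on ${}^{\ast}\hspace{-0.5ex}M$ defined just before the statement. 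Unwinding $\Psi$, this reduces to the single identity
\[
\left(\rho_{{}^{\ast}\hspace{-0.5ex}M}\otimes id_{M}\right)coev_{M}^{\prime}=\left(id_{{}^{\ast}\hspace{-0.5ex}M}\otimes\rho_{M}\right)coev_{M}^{\prime}\colon\;1\longrightarrow{}^{\ast}\hspace{-0.5ex}M\otimes D\otimes M,
\]
which I would verify graphically by substituting the defining picture of $\rho_{{}^{\ast}\hspace{-0.5ex}M}$ (namely: a $coev_{M}^{\prime}$, then $\rho_{M}$, then $ev_{M}^{\prime}$) and collapsing the resulting zig-zag via the snake identity for the pair $({}^{\ast}\hspace{-0.5ex}M,M)$. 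This is essentially the only genuine computation; its main difficulty is the bookkeeping — in particular keeping the dualities $X^{\ast}$ and ${}^{\ast}\hspace{-0.5ex}X$ apart, since $\mathcal{C}$ is not assumed braided in this proposition — but it is short.

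Granting the key lemma, the proof closes formally. The bijections $\Psi^{N}$ and $\Psi^{D\otimes N}$ identify the equalizer of $u$ and $v$ with $\left\{\,g:X\rightarrow{}^{\ast}\hspace{-0.5ex}M\otimes N\mid\left(\rho_{{}^{\ast}\hspace{-0.5ex}M}\otimes id_{N}\right)g=\left(id_{{}^{\ast}\hspace{-0.5ex}M}\otimes\rho_{N}\right)g\,\right\}$. By Definition~\ref{def cotensor}, ${}^{\ast}\hspace{-0.5ex}M\,\square_{D}^{\mathcal{C}}N$ is the equalizer of $\rho_{{}^{\ast}\hspace{-0.5ex}M}\otimes id_{N}$ and $id_{{}^{\ast}\hspace{-0.5ex}M}\otimes\rho_{N}$, and $\operatorname{Hom}_{\mathcal{C}}(X,-)$ preserves equalizers, so this set is $\operatorname{Hom}_{\mathcal{C}}\left(X,{}^{\ast}\hspace{-0.5ex}M\,\square_{D}^{\mathcal{C}}N\right)$. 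Naturality of $\Psi$ in $X$ makes the resulting isomorphism natural in $X$, and functoriality of $\square_{D}^{\mathcal{C}}$ on $D$-comodule morphisms makes it natural in $N$; hence ${}^{\ast}\hspace{-0.5ex}M\,\square_{D}^{\mathcal{C}}N$ represents $X\mapsto\operatorname{Hom}_{D\text{-}\operatorname*{Comod}\nolimits_{\mathcal{C}}}(M\otimes X,N)$, which is the proposition.
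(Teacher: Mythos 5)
Your proposal is correct and follows essentially the same route as the paper: both arguments transport the comodule-map condition on $\operatorname{Hom}_{\mathcal{C}}(M\otimes X,N)$ across the rigidity isomorphism to the equalizer condition defining $\operatorname{Hom}_{\mathcal{C}}\left(X,{}^{\ast}\hspace{-0.5ex}M\,\square_{D}^{\mathcal{C}}N\right)$, the paper by characterizing the image of $j_{\ast}$ under the composite (\ref{in Hom iso}) and you by carrying the pair $u,v$ across $\Psi$. Your ``key lemma'' $\left(\rho_{{}^{\ast}\hspace{-0.5ex}M}\otimes id_{M}\right)coev_{M}^{\prime}=\left(id_{{}^{\ast}\hspace{-0.5ex}M}\otimes\rho_{M}\right)coev_{M}^{\prime}$ (a snake-identity computation) is exactly the content of the paper's graphical equivalence of (\ref{eq3}) with the $D$-colinearity of the corresponding map $M\otimes X\rightarrow N$, just isolated more cleanly.
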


\begin{proof}
It suffices to show that there is a natural isomorphism
\[
\operatorname{Hom}_{\mathcal{C}}\left(  X,{^{\ast}\hspace{-0.5ex}M}\square
_{D}^{\mathcal{C}}N\right)  \cong\operatorname{Hom}_{\mathcal{C}}^{D}\left(
M\otimes X,N\right)  ,
\]
natural in $X\in\mathcal{C}$. We will show that the required isomorphism can
be deduced from the composition
\begin{equation}
\operatorname{Hom}_{\mathcal{C}}\left(  X,{^{\ast}\hspace{-0.5ex}M}\square
_{D}^{\mathcal{C}}N\right)  \overset{j_{\ast}}{\longrightarrow}%
\operatorname{Hom}_{\mathcal{C}}\left(  X,{^{\ast}\hspace{-0.5ex}M}\otimes
N\right)  \overset{\cong}{\longrightarrow}\operatorname{Hom}_{\mathcal{C}%
}\left(  M\otimes X,N\right)  ,\label{in Hom iso}%
\end{equation}
where $j:{^{\ast}\hspace{-0.5ex}M}\square_{D}^{\mathcal{C}}N\rightarrow
{}{^{\ast}\hspace{-0.5ex}M}\otimes N$ is the natural monomorphism in
(\ref{cotensor product}). We need to show that the image of this composition
is equal to $\operatorname{Hom}_{\mathcal{C}}^{D}\left(  M\otimes X,N\right)
$. Let $f\in\operatorname{Hom}_{\mathcal{C}}\left(  X,{}{^{\ast}%
\hspace{-0.5ex}M}\otimes N\right)  $, then $f\in\operatorname{Im}j_{\ast}$ if
and only if
\begin{equation}
\left(  \rho_{^{\ast}\hspace{-0.5ex}M}\otimes id_{N}\right)  f=\left(
id_{{^{\ast}\hspace{-0.5ex}M}}\otimes\rho_{N}\right)  f.\label{eq3}%
\end{equation}
The graphical expression of (\ref{eq3}) is%
\[
\tikz[baseline=(current bounding box.west),scale=1,samples=100,thick,font=\scriptsize] { \coordinate (X) at (0,0); \coordinate (Y) at (0.5,0); \drawcoop[CoProdStyle=comodule,leftarmlen=2.2,rightarmlen=1.3,handlelen=1.3,yscale=.5]{(X)}{(Y)}; \draw ($(T)+(0.2,0.15)$) node {$M$} (M1) node[below]{$D$}; \draw[fill=white,thin] ($(X)+(-0.2,0.1)$) rectangle($(Y)+(0.2,-0.3)$); \draw ($($(X)+(-0.1,0.1)$)!0.5!($(Y)+(0.1,-0.3)$)$) node {$\rho _{M}$}; \halfcircle{(T)}{($(T)+(-0.75,0)$)}; \draw ($(T)+(-1,0.15)$)node{${^{*}\hspace{-0.5ex}M}$}; \halfcircle{(N1)}{($(N1)+(0.75,0)$)}; \draw ($(N1)+(-0.15,-0.2)$) node {$M$}; \coordinate (Z) at ($(T)+(-0.75,0)$); \draw(Z)--(Z|-M1)node[below]{${^{*}\hspace{-0.5ex}M}$}; \coordinate (X) at ($(N1)+(0.75,1)$); \coordinate (Y) at ($(X)+(0.5,0)$); \drawcoop[CoProdStyle=comodule,leftarmlen=2,rightarmlen=3,handlelen=1.9,yscale=.5]{(X)}{(Y)}; \draw[fill=white,thin] ($(X)+(-0.2,0.1)$) rectangle($(Y)+(0.2,-0.3)$); \draw ($($(X)+(-0.1,0.1)$)!0.5!($(Y)+(0.1,-0.3)$)$) node {$f$} (T)node[above]{$X$} (N1)node[below]{$N$}; }=\tikz[baseline=(current bounding box.west),scale=1,samples=100,thick,font=\scriptsize] { \coordinate (X) at (0,0); \coordinate (Y) at (0.5,0); \drawcoop[CoProdStyle=comodule,leftarmlen=2.2,rightarmlen=2.2,handlelen=2.5,yscale=.5]{(X)}{(Y)}; \draw (M1) node[below]{$D$} (N1)node[below]{$N$}; \draw[fill=white,thin] ($(X)+(-0.2,0.1)$) rectangle($(Y)+(0.2,-0.3)$); \draw ($($(X)+(-0.1,0.1)$)!0.5!($(Y)+(0.1,-0.3)$)$) node {$\rho _{N}$}; \coordinate (X) at ($(T)+(-0.75,0)$); \coordinate (Y) at (T); \drawcoop[CoProdStyle=comodule,leftarmlen=2.3,rightarmlen=0.5,handlelen=1.2,yscale=.5]{(X)}{(Y)}; \draw[fill=white,thin] ($(X)+(-0.2,0.1)$) rectangle($(Y)+(0.2,-0.3)$); \draw ($($(X)+(-0.1,0.1)$)!0.5!($(Y)+(0.1,-0.3)$)$) node {$f$} (T)node[above]{$X$} (M1)node[below]{$^{*}\hspace{-0.5EX}M$}; }\text{
}\iff
\tikz[baseline=(current bounding box.west),scale=1,samples=100,thick,font=\scriptsize] { \coordinate (X) at (0,0); \coordinate (Y) at (0.5,0); \drawcoop[CoProdStyle=comodule,leftarmlen=3.5,rightarmlen=2,handlelen=1.3,yscale=.5]{(X)}{(Y)}; \draw (M1) node[below]{$D$} (T)node[above]{$M$}; \draw[fill=white,thin] ($(X)+(-0.2,0.1)$) rectangle($(Y)+(0.2,-0.3)$); \draw ($($(X)+(-0.1,0.1)$)!0.5!($(Y)+(0.1,-0.3)$)$) node {$\rho _{M}$}; \halfcircle{(N1)}{($(N1)+(0.75,0)$)}; \draw ($(N1)+(-0.15,-0.2)$) node {$M$}; \coordinate (X) at ($(N1)+(0.75,0.5)$); \coordinate (Y) at ($(X)+(0.5,0)$); \drawcoop[CoProdStyle=comodule,leftarmlen=1,rightarmlen=2.5,handlelen=3.2,yscale=.5]{(X)}{(Y)}; \draw[fill=white,thin] ($(X)+(-0.2,0.1)$) rectangle($(Y)+(0.2,-0.3)$); \draw ($($(X)+(-0.1,0.1)$)!0.5!($(Y)+(0.1,-0.3)$)$) node {$f$} (T)node[above]{$X$} (N1)node[below]{$N$}; }=\tikz[baseline=(current bounding box.west),scale=1,samples=100,thick,font=\scriptsize] { \coordinate (X) at (0,0); \coordinate (Y) at (0.5,0); \drawcoop[CoProdStyle=comodule,leftarmlen=1.5,rightarmlen=1.5,handlelen=4.2,yscale=.5]{(X)}{(Y)}; \draw (M1) node[below]{$D$} (N1)node[below]{$N$}; \draw[fill=white,thin] ($(X)+(-0.2,0.1)$) rectangle($(Y)+(0.2,-0.3)$); \draw ($($(X)+(-0.1,0.1)$)!0.5!($(Y)+(0.1,-0.3)$)$) node {$\rho _{N}$}; \coordinate (X) at ($(T)+(-0.75,0)$); \coordinate (Y) at (T); \drawcoop[CoProdStyle=comodule,leftarmlen=0.6,rightarmlen=0.5,handlelen=1.2,yscale=.5]{(X)}{(Y)}; \draw[fill=white,thin] ($(X)+(-0.2,0.1)$) rectangle($(Y)+(0.2,-0.3)$); \coordinate (Z) at ($(M1)+(-0.75,0)$);\halfcircle{(Z)}{(M1)}; \draw ($($(X)+(-0.1,0.1)$)!0.5!($(Y)+(0.1,-0.3)$)$) node {$f$} (T)node[above]{$X$} (Z)--(Z|-T)node[above]{$M$} ($(M1)+(0.25,-0.2)$)node{$^{*}\hspace{-0.5EX}M$}; }
\]
equivalently,under the isomorphism $\operatorname{Hom}_{\mathcal{C}}\left(
X,{^{\ast}\hspace{-0.5ex}M}\otimes N\right)  \overset{\cong}{\longrightarrow
}\operatorname{Hom}_{\mathcal{C}}\left(  M\otimes X,N\right)  $ the image of
$f$ is in $\operatorname{Hom}_{\mathcal{C}}^{D}\left(  M\otimes X,N\right)
\subseteq\operatorname{Hom}_{\mathcal{C}}\left(  M\otimes X,N\right)  $.
Hence, we get the isomorphism\linebreak[4] $\operatorname{Hom}_{\mathcal{C}}\left(
X,{^{\ast}\hspace{-0.5ex}M}\square_{D}^{\mathcal{C}}N\right)  \overset{\cong
}{\longrightarrow}\operatorname{Hom}_{\mathcal{C}}^{D}\left(  M\otimes
X,N\right)  $, and the naturality in $X$ is obvious.
\end{proof}

\begin{remark}
The internal Hom for the category $\mathrm{Mod}_{\mathcal{C}}$-$A$ for a
$\mathcal{C}$-algebra $A$ was calculated by Etingof and Ostrik \cite[Example
3.19]{Etingof2004finite}. The proposition is a dual version of their result.
\end{remark}

\begin{remark}
[\cite{LiuZhu2019On}]If we take $\mathcal{C}={}_{H}\mathcal{M}$, the category
of finite dimensional representations of a Hopf algebra $H$, and take $D$ an
$H$-module coalgebra, then $\underline{\operatorname{Hom}}\left(  M,N\right)
=\operatorname{Hom}^{D}\left(  M,N\right)  ,$ for any $M,N\in D$%
-$\operatorname*{Comod}_{\mathcal{C}}$.
\end{remark}

Recall that when the appropriated internal Hom objects exist, there are
definitions of evaluation morphism, multiplication morphism of internal Homs.
If in the case that we can identified the internal Homs as the cotensors the
evaluation, the multiplication morphism has the following form.

For any $M_{1},M_{2},M_{3}\in D$-$\operatorname*{Comod}_{\mathcal{C}}$, the
evaluation morphism is the composition
\[
ev_{M_{1},M_{2}}^{\prime}:M_{1}\otimes\left(  {^{\ast}\hspace{-0.5ex}M}%
_{1}\square_{D}^{\mathcal{C}}M_{2}\right)  \hookrightarrow M_{1}\otimes
{^{\ast}\hspace{-0.5ex}M}_{1}\otimes M_{2}\overset{ev_{M_{1}}^{\prime}\otimes
id_{M_{2}}}{-\!\!\!-\!\!\!-\!\!\!-\!\!\!-\!\!\!-\!\!\!\longrightarrow}M_{2},
\]
and the multiplication morphism of internal Hom is defined as the preimage of
the map
\[
\left(  {^{\ast}\hspace{-0.5ex}M}_{1}\square_{D}^{\mathcal{C}}M_{2}\right)
\otimes\left(  {^{\ast}\hspace{-0.5ex}M}_{2}\square_{D}^{\mathcal{C}}%
M_{3}\right)  \hookrightarrow{^{\ast}\hspace{-0.5ex}M}_{1}\otimes M_{2}%
\otimes{}^{\ast}M_{2}\otimes M_{3}\overset{id_{{^{\ast}\hspace{-0.5ex}M}_{1}%
}\otimes ev_{M_{2}}^{\prime}\otimes id_{M_{3}}}%
{-\!\!\!-\!\!\!-\!\!\!-\!\!\!-\!\!\!-\!\!\!-\!\!\!-\!\!\!-\!\!\!-\!\!\!-\!\!\!-\!\!\!\longrightarrow
}{^{\ast}\hspace{-0.5ex}M}_{1}\otimes M_{3},
\]
under the map
\begin{align*}
j_{\ast}: & \operatorname{Hom}_{\mathcal{C}}\left(  \left(  {^{\ast}%
\hspace{-0.5ex}M}_{1}\square_{D}^{\mathcal{C}}M_{2}\right)  \otimes\left(
{^{\ast}\hspace{-0.5ex}M}_{2}\square_{D}^{\mathcal{C}}M_{3}\right)  ,{^{\ast
}\hspace{-0.5ex}M}_{1}\square_{D}^{\mathcal{C}}M_{3}\right) \\
{} & {\rightarrow} \operatorname{Hom}_{\mathcal{C}}\left(  \left(  ^{\ast
}M_{1}\square_{D}^{\mathcal{C}}M_{2}\right)  \otimes\left(  {^{\ast}%
\hspace{-0.5ex}M}_{2}\square_{D}^{\mathcal{C}}M_{3}\right)  ,{}^{\ast}%
M_{1}\otimes M_{3}\right)  ,
\end{align*}
where $j:{^{\ast}\hspace{-0.5ex}M}\square_{D}^{\mathcal{C}}N\rightarrow
{^{\ast}\hspace{-0.5ex}M}\otimes N$ is the natural monomorphism. It makes
$A={^{\ast}\hspace{-0.5ex}M}\square_{D}^{\mathcal{C}}M$ into an algebra in
$\mathcal{C}$, and ${^{\ast}\hspace{-0.5ex}M}\square_{D}^{\mathcal{C}}N$ a
left $A$-module, for $M,N\in D$-$\operatorname*{Comod}_{\mathcal{C}}$. Now
apply theorem \ref{thm Ostrik} and \ref{thm liu zhu} with $\mathcal{M}%
=D$-$\operatorname*{Comod}_{\mathcal{C}}$, we get:

\begin{proposition}
\label{prop D-comod equiv}Let $\mathcal{C}$ be a finite multitensor category,
and let $D$ be a cosemisimple coalgebra in $\mathcal{C}$. If $M$ is a
generator of $D$-$\operatorname*{Comod}_{\mathcal{C}}$, then $A={}{^{\ast
}\hspace{-0.5ex}M}\square_{D}^{\mathcal{C}}M$ is a semisimple algebra in
$\mathcal{C}$, and the functors
\begin{align*}
F  & ={^{\ast}\hspace{-0.5ex}M}\square_{D}^{\mathcal{C}}\bullet{}%
:D\text{-}\operatorname*{Comod}\nolimits_{\mathcal{C}}\rightarrow
A\text{-}\mathrm{Mod}_{\mathcal{C}}\\
\text{and }G  & =M\otimes_{A}\bullet{}:A\text{-}\mathrm{Mod}_{\mathcal{C}%
}\rightarrow D\text{-}\operatorname*{Comod}\nolimits_{\mathcal{C}}%
\end{align*}
establish an equivalence between $\mathcal{C}$-module categories
$D$-$\operatorname*{Comod}\nolimits_{\mathcal{C}}$ and $A$-$\mathrm{Mod}%
_{\mathcal{C}}$.
\end{proposition}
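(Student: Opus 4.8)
The plan is to recognize $D\text{-}\operatorname*{Comod}_{\mathcal{C}}$ as a semisimple $\mathcal{C}$-module category having $M$ as a generator, to identify its internal Hom with the cotensor product by means of Proposition~\ref{prop ihom for D-comod}, and then to apply Theorem~\ref{thm Ostrik} and Theorem~\ref{thm liu zhu}.

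First I would check the standing hypotheses on module categories. With the right $\mathcal{C}$-action of (\ref{module cat stru on Dcomod}), $D\text{-}\operatorname*{Comod}_{\mathcal{C}}$ has kernels and cokernels computed in $\mathcal{C}$ (using that $D\otimes\bullet$ is exact, $\mathcal{C}$ being multitensor), and by rigidity of $\mathcal{C}$ it is equivalent to a category of modules over the dual algebra of $D$, hence is locally finite abelian; the module product $(M,X)\mapsto M\otimes X$ is bilinear on morphisms and exact in $X$ because $\otimes$ is exact in $\mathcal{C}$, so, regarding the right $\mathcal{C}$-module category as a left $\mathcal{C}^{op}$-module category, it is a module category in the required sense. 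Cosemisimplicity of $D$ means precisely that $D\text{-}\operatorname*{Comod}_{\mathcal{C}}$ is a semisimple abelian category, so it is a semisimple module category over $\mathcal{C}$.

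By Proposition~\ref{prop ihom for D-comod}, for $M,N\in D\text{-}\operatorname*{Comod}_{\mathcal{C}}$ the object ${}^{\ast}M\square_{D}^{\mathcal{C}}N$ represents $X\mapsto\operatorname{Hom}_{\mathcal{C}}^{D}(M\otimes X,N)$, hence is the internal Hom $\underline{\operatorname{Hom}}(M,N)$ of this module category; in particular $A={}^{\ast}M\square_{D}^{\mathcal{C}}M=\underline{\operatorname{Hom}}(M,M)$, and the algebra structure on $A$ together with the left $A$-module structure on ${}^{\ast}M\square_{D}^{\mathcal{C}}N$ written down before the statement in terms of the morphisms $ev^{\prime}$ coincide with the canonical evaluation/multiplication structures of the internal Hom, which I would verify by unwinding the defining adjunction isomorphism (\ref{in Hom iso}). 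Since $M$ is a generator of the semisimple module category $D\text{-}\operatorname*{Comod}_{\mathcal{C}}$, Theorem~\ref{thm Ostrik}, applied in its evident form for right $\mathcal{C}$-module categories (equivalently, to $\mathcal{C}^{op}$, which is again finite multitensor), gives that $A$ is a semisimple algebra in $\mathcal{C}$ and that $F={}^{\ast}M\square_{D}^{\mathcal{C}}\bullet=\underline{\operatorname{Hom}}(M,\bullet)$ is an equivalence $D\text{-}\operatorname*{Comod}_{\mathcal{C}}\rightarrow A\text{-}\mathrm{Mod}_{\mathcal{C}}$ of $\mathcal{C}$-module categories; Theorem~\ref{thm liu zhu} in the same form then identifies $G=M\otimes_{A}\bullet$ as a quasi-inverse of $F$.

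I expect the main obstacle to be bookkeeping rather than substance: one must track how passing to $\mathcal{C}^{op}$ converts ``$\mathrm{Mod}_{\mathcal{C}}\text{-}A$'' of Theorem~\ref{thm Ostrik} into ``$A\text{-}\mathrm{Mod}_{\mathcal{C}}$'' (a right $A$-module in $\mathcal{C}^{op}$ being a left $A$-module in $\mathcal{C}$) and ``$\bullet\otimes_{A}M$'' of Theorem~\ref{thm liu zhu} into ``$M\otimes_{A}\bullet$'', and one must confirm that the concrete structures described before the statement are indeed the abstract internal-Hom structures used there. Once the identification $\underline{\operatorname{Hom}}(M,\bullet)={}^{\ast}M\square_{D}^{\mathcal{C}}\bullet$ together with its algebra and module refinements is in place, the statement follows at once from the two cited theorems.
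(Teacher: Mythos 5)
Your proposal is correct and follows essentially the same route as the paper: the paper likewise identifies the internal Hom of the module category $D$-$\operatorname*{Comod}_{\mathcal{C}}$ with the cotensor product via Proposition~\ref{prop ihom for D-comod}, equips $A={}^{\ast}\hspace{-0.5ex}M\square_{D}^{\mathcal{C}}M$ with the induced evaluation/multiplication structure, and then invokes Theorem~\ref{thm Ostrik} and Theorem~\ref{thm liu zhu} with $\mathcal{M}=D$-$\operatorname*{Comod}_{\mathcal{C}}$. Your additional care about verifying the module-category axioms and the left/right bookkeeping through $\mathcal{C}^{op}$ is detail the paper leaves implicit, but it is not a different argument.
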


To sum up, we have the following theorem.

\begin{theorem}
Let $\mathcal{C}$ be a braided finite multitensor category, and $B$ be the
automorphism braided group of $\mathcal{C}$. As an object of $\mathcal{Z}%
_{l}\left(  \mathcal{C}\right)  =B$-$\operatorname*{Comod}_{\mathcal{C}}$,
write $B=B_{1}\oplus\cdots\oplus B_{r}$ as a direct sum of indecomposable subobjects.

\begin{enumerate}
\item Then the decomposition $B=B_{1}\oplus\cdots\oplus B_{r}$ is unique as a
direct sum of indecomposable subobjects, and it is also unique as a direct sum
of indecomposable $\mathcal{C}$-subcoalgebras.

\item The category $\mathcal{Z}_{l}\left(  \mathcal{C}\right)  $ admits a
unique decomposition
\[
\mathcal{Z}_{l}\left(  \mathcal{C}\right)  =B_{1}\text{-}\operatorname*{Comod}%
\nolimits_{\mathcal{C}}\oplus\cdots\oplus B_{r}\text{-}\operatorname*{Comod}%
\nolimits_{\mathcal{C}}%
\]
into the direct sum of indecomposable $\mathcal{C}$-module subcategories.

\item For each $1\leq i\leq r$, let $M_{i}\in B_{i}$-$\operatorname*{Comod}%
_{\mathcal{C}}$ be a nonzero object, and $A_{i}={}{^{\ast}\hspace{-0.5ex}%
M}_{i}\square_{B_{i}}^{\mathcal{C}}M_{i}$. Then $F_{i}={^{\ast}\hspace
{-0.5ex}M}_{i}\square_{B_{i}}^{\mathcal{C}}\bullet{}:B_{i}$%
-$\operatorname*{Comod}\nolimits_{\mathcal{C}}\rightarrow A_{i}$%
-$\mathrm{Mod}_{\mathcal{C}}$ is an equivalence between $\mathcal{C}$-module
categories $B_{i}$-$\operatorname*{Comod}\nolimits_{\mathcal{C}}$ and $A_{i}%
$-$\mathrm{Mod}_{\mathcal{C}}$.
\end{enumerate}
\end{theorem}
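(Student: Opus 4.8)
The plan is to assemble the theorem from the machinery already in place, the central observation being that, under the standing assumptions (as recalled just before Proposition~\ref{decom of B-comod}, so that $\mathcal{Z}_{l}(\mathcal{C})$ is semisimple), the category $B$-$\operatorname*{Comod}_{\mathcal{C}}\cong\mathcal{Z}_{l}(\mathcal{C})$ is a finite semisimple abelian category; hence $B$ has finite length as a $B$-comodule and the Krull--Schmidt theorem applies to it. For part (1) I would first invoke Krull--Schmidt in $B$-$\operatorname*{Comod}_{\mathcal{C}}$: the decomposition of the object $B$ into indecomposable subobjects is unique up to isomorphism and reordering. Then I identify this with a coalgebra decomposition: by Theorem~\ref{thm D is coalg}(1) each $B_{i}$, being a subobject of $(B,\Delta_{B})$ in $B$-$\operatorname*{Comod}_{\mathcal{C}}$, carries a unique $\mathcal{C}$-coalgebra structure for which the inclusion is a coalgebra map, and $\Delta_{B}$ restricted to $B_{i}$ factors through $B_{i}\otimes B_{i}$, so $B=\bigoplus_{i}B_{i}$ is a direct sum of $\mathcal{C}$-subcoalgebras; conversely, any subcoalgebra $D\subseteq B$ is a $B$-subcomodule via $\Delta_{B}|_{D}\colon D\to D\otimes D\hookrightarrow B\otimes D$, so any decomposition of $B$ into indecomposable $\mathcal{C}$-subcoalgebras is a decomposition into $B$-subcomodules, indecomposable by Proposition~\ref{prop indecom equivs}. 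Thus the two uniqueness assertions coincide.

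For part (2), existence of the displayed decomposition follows by iterating the implication ``(4)$\Rightarrow$(1)'' in the proof of Proposition~\ref{prop indecom equivs}, which shows $D$-$\operatorname*{Comod}_{\mathcal{C}}=D_{1}$-$\operatorname*{Comod}_{\mathcal{C}}\oplus D_{2}$-$\operatorname*{Comod}_{\mathcal{C}}$ whenever $D=D_{1}\oplus D_{2}$ is a direct sum of subcoalgebras, together with Theorem~\ref{thm D is coalg}(2) which makes each $B_{i}$-$\operatorname*{Comod}_{\mathcal{C}}$ a $\mathcal{C}$-module subcategory; indecomposability of each factor is the implication ``(3)$\Rightarrow$(4)'' of Proposition~\ref{prop indecom equivs}, applied to the indecomposable $\mathcal{C}$-coalgebra $B_{i}$ supplied by part (1). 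For uniqueness I would take an arbitrary decomposition $\mathcal{Z}_{l}(\mathcal{C})=\bigoplus_{j}\mathcal{N}_{j}$ into indecomposable $\mathcal{C}$-module subcategories, write $B=\bigoplus_{j}B^{(j)}$ with $B^{(j)}$ the component of $B$ in $\mathcal{N}_{j}$ (a direct summand of $B$ in the abelian category $B$-$\operatorname*{Comod}_{\mathcal{C}}$, hence a $B$-subcomodule and, by Theorem~\ref{thm D is coalg}, a $\mathcal{C}$-subcoalgebra), check that $\mathcal{N}_{j}=B^{(j)}$-$\operatorname*{Comod}_{\mathcal{C}}$ and hence that each $B^{(j)}$ is indecomposable; then $B=\bigoplus_{j}B^{(j)}$ is a decomposition into indecomposable subobjects and must coincide with $\bigoplus_{i}B_{i}$ by the Krull--Schmidt uniqueness of part (1). (Equivalently, one may cite the uniqueness of the decomposition of a finite semisimple module category into indecomposable module subcategories.)

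For part (3), observe that $B_{i}$-$\operatorname*{Comod}_{\mathcal{C}}$ is a direct summand of the semisimple category $\mathcal{Z}_{l}(\mathcal{C})$, hence semisimple; in particular the regular comodule $B_{i}$ is a semisimple $B_{i}$-comodule, so $B_{i}$ is a cosemisimple coalgebra. Since $B_{i}$-$\operatorname*{Comod}_{\mathcal{C}}$ is moreover indecomposable by part (2), Theorem~\ref{thm Ostrik} applies and every nonzero object --- in particular the chosen $M_{i}$ --- is a generator. Applying Proposition~\ref{prop D-comod equiv} with $\mathcal{M}=B_{i}$-$\operatorname*{Comod}_{\mathcal{C}}$ and $D=B_{i}$ then gives that $A_{i}={}^{\ast}M_{i}\square_{B_{i}}^{\mathcal{C}}M_{i}$ is a semisimple $\mathcal{C}$-algebra and that $F_{i}={}^{\ast}M_{i}\square_{B_{i}}^{\mathcal{C}}\bullet$ is an equivalence of $\mathcal{C}$-module categories, with quasi-inverse $M_{i}\otimes_{A_{i}}\bullet$ coming from Theorem~\ref{thm liu zhu}; this is exactly the assertion.

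I expect the only genuinely delicate point to be the uniqueness clause of part (2): passing from an abstract $\mathcal{C}$-module-subcategory decomposition of $\mathcal{Z}_{l}(\mathcal{C})$ back to a coalgebra decomposition of $B$ requires verifying that the block projectors of the module decomposition, evaluated on $B$, are $B$-colinear and realize each block as $B^{(j)}$-$\operatorname*{Comod}_{\mathcal{C}}$; once this is granted, everything else is a direct appeal to Theorems~\ref{thm Ostrik}, \ref{thm liu zhu}, \ref{theorem center iso H-comod}, \ref{thm D is coalg}, Propositions~\ref{prop indecom equivs} and \ref{prop D-comod equiv}, and the Krull--Schmidt theorem.
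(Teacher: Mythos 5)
Your proposal is correct and follows essentially the same route as the paper, whose entire proof is a one-line appeal to Theorem \ref{thm D is coalg}, the proof of Proposition \ref{prop indecom equivs}, and Proposition \ref{prop D-comod equiv}; you supply the details (semisimplicity of $\mathcal{Z}_{l}(\mathcal{C})$, generator property of $M_{i}$, the passage between subcomodule and subcoalgebra decompositions) that the paper leaves implicit. The one point to watch is that Krull--Schmidt only gives uniqueness of the decomposition up to isomorphism and permutation, whereas the on-the-nose uniqueness of the subobjects $B_{i}$ asserted in part (1) comes from your identification with the subcoalgebra decomposition together with the orthogonality of the blocks (the idempotents $f_{j}$ and the vanishing of $\operatorname{Hom}$ between distinct blocks) established in the proof of Proposition \ref{prop indecom equivs}, so you should lean on that rather than on Krull--Schmidt for the final statement.
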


\begin{proof}
The statements 1) and 2) follows from Theorem \ref{thm D is coalg}, the proof
of Proposition \ref{prop indecom equivs}; 3) follows from Proposition
\ref{prop D-comod equiv}.
\end{proof}

\section{An Application to Weak Hopf Algebras}

\label{sec-appl to WHA}

In this section, we will visualize the results in the previous two sections by
using the theory of weak Hopf algebras. Weak Hopf algebras was introduced by
B\"{o}hm, Nill, and Szlach\'{a}nyi \cite{Bohm1999weak} and studied extensively
by Nikshych and Vainerman \cite{Nikshych2002Finite}. The category of
finite-dimensional representations of a semisimple weak Hopf algebra is a
multifusion category. On the other hand, it is due to Hayashi
\cite{Hayashi1999ACanonical} and Szlach\'{a}nyi \cite{Szlachanyi2000Finite}
that any multifusion category is equivalent to the category of
finite-dimensional representations of a regular semisimple weak Hopf algebra.
For this reason, the theory of weak Hopf algebras is not merely good examples
for categorical construction but also a helpful tool for discussing
multifusion categories.

This section is arranged as follows. We begin by recalling some preliminaries
of weak Hopf algebras in Section \ref{Sec-prelim-of-WHA}. We then discuss some
properties of module (co)algebras for weak Hopf algebras in Section
\ref{sec-Module-coalgebras}. Next, we consider the braided multitensor
category $\mathcal{C}={}_{H}\mathcal{M}$, where $H$ is a quasi-triangular weak
Hopf algebra. We present Majid's braided reconstruction with $\mathcal{C}%
={}_{H}\mathcal{M}$, and obtain the automorphism braided group $U\left(
\mathcal{C}\right)  $ in Section \ref{sec-ABG-for-QT}, and give the structure
of irreducible Yetter-Drinfeld modules ${}$over $H$ in Section
\ref{sec-stru-YD}.

\subsection{Preliminaries of Weak Hopf Algebras}

\label{Sec-prelim-of-WHA}

Now we recall the definition of weak Hopf algebra, quasi-triangular weak Hopf
algebra, and some basic properties. Our references are
\cite{Bohm1999weak,Nikshych2003Invariants}. We will use the sigma notation:
$\Delta\left(  h\right)  =h_{\left(  1\right)  }\otimes h_{\left(  2\right)
}$ for coproduct and $\rho\left(  m\right)  =m_{\left\langle 0\right\rangle
}\otimes m_{\left\langle 1\right\rangle }$ for right coaction (or $\rho\left(
m\right)  =m_{\left\langle -1\right\rangle }\otimes m_{\left\langle
0\right\rangle }$ for left coaction).

A weak Hopf algebra $H$ over $k$ is a $k$-algebra and also a $k$-coalgebra
with an antipode $S:H\rightarrow H$, such that

\begin{enumerate}
\item $\Delta\left(  xy\right)  =\Delta\left(  x\right)  \Delta\left(
y\right)  $,

\item $\Delta^{2}\left(  1\right)  =1_{\left(  1\right)  }\otimes1_{\left(
2\right)  }1_{\left(  1^{\prime}\right)  }\otimes1_{\left(  2^{\prime}\right)
}=1_{\left(  1\right)  }\otimes1_{\left(  1^{\prime}\right)  }1_{\left(
2\right)  }\otimes1_{\left(  2^{\prime}\right)  }$,

\item $\varepsilon\left(  xyz\right)  =\varepsilon\left(  xy_{\left(
1\right)  }\right)  \varepsilon\left(  y_{\left(  2\right)  }z\right)
=\varepsilon\left(  xy_{\left(  2\right)  }\right)  \varepsilon\left(
y_{\left(  1\right)  }z\right)  $,

\item $x_{\left(  1\right)  }S\left(  x_{\left(  2\right)  }\right)
=\varepsilon\left(  1_{\left(  1\right)  }x\right)  1_{\left(  2\right)  }$,

\item $S\left(  x_{\left(  1\right)  }\right)  x_{\left(  2\right)
}=1_{\left(  1\right)  }\varepsilon\left(  x1_{\left(  2\right)  }\right)  $,

\item $S\left(  x\right)  =S\left(  x_{\left(  1\right)  }\right)  x_{\left(
2\right)  }S\left(  x_{\left(  3\right)  }\right)  $,
\end{enumerate}

for all $x,y,z\in H$, where $\Delta$ is the coproduct and $\varepsilon$ is the counit.

The target and the source counital maps $\varepsilon_{t},\varepsilon
_{s}:H\rightarrow H$ are defined by
\[
\varepsilon_{t}\left(  x\right)  =\varepsilon\left(  1_{\left(  1\right)
}x\right)  1_{\left(  2\right)  },\quad\varepsilon_{s}\left(  x\right)
=1_{\left(  1\right)  }\varepsilon\left(  x1_{\left(  2\right)  }\right)  ,
\]
for all $x\in H$. The images of these counital maps, denoted by $H_{t}%
=\varepsilon_{t}\left(  H\right)  $ and $H_{s}=\varepsilon_{s}\left(
H\right)  $.

A weak Hopf algebra $H$ is regular if the restriction of $S^{2}$ on
$H_{t}H_{s}$ is identity map. We will always assume that the weak Hopf
algebras we considered are regular.

If $H$ is a weak Hopf algebra, for all $g,h\in H,$ the following conditions
hold:%
\begin{align}
& \varepsilon\left(  hg\right)  =\varepsilon\left(  \varepsilon_{s}\left(
h\right)  g\right)  =\varepsilon\left(  h\varepsilon_{t}\left(  g\right)
\right)  ,\label{WHArelation1}\\
& \varepsilon_{t}\circ S=\varepsilon_{t}\circ\varepsilon_{s}=S\circ
\varepsilon_{s},\label{WHArelation2}\\
& \varepsilon_{s}\circ S=\varepsilon_{s}\circ\varepsilon_{t}=S\circ
\varepsilon_{t},\label{WHArelation3}\\
& 1_{\left(  1\right)  }h\otimes1_{\left(  2\right)  }=h_{\left(  1\right)
}\otimes\varepsilon_{t}\left(  h_{\left(  2\right)  }\right)
,\label{WHArelation4}\\
& 1_{\left(  1\right)  }\otimes h1_{\left(  2\right)  }=\varepsilon_{s}\left(
h_{\left(  1\right)  }\right)  \otimes h_{\left(  2\right)  }%
.\label{WHArelation5}%
\end{align}

Let $H$ be a weak Hopf algebra, and $\mathcal{C}={}_{H}\mathcal{M}$ be the
category of finite dimensional left $H$-modules. For any $M,N\in{}%
_{H}\mathcal{M}$,
\[
M\otimes_{t}N=\Delta\left(  1\right)  \left(  M\otimes_{k}N\right)  \subseteq
M\otimes_{k}N
\]
with $H$-action given by $h\cdot\left(  1_{\left(  1\right)  }m\otimes
1_{\left(  2\right)  }n\right)  =h_{\left(  1\right)  }m\otimes h_{\left(
2\right)  }n$, for $h\in H$, $m\in M$, $n\in N$. The subalgebra $H_{t}$ is an
$H$-module via $h\cdot z=\varepsilon_{t}\left(  hz\right)  $, where $h\in H$,
$z\in H_{t}$. Furthermore, $H_{t}$ is the unit object of $\mathcal{C}$. The
functorial unit isomorphism $l_{X}:H_{t}\otimes_{t}X\rightarrow X$ and
$r_{X}:X\otimes_{t}H_{t}\rightarrow X$ are defined by
\[
l_{X}\left(  1_{\left(  1\right)  }z\otimes1_{\left(  2\right)  }x\right)
=zx,\quad r_{X}\left(  1_{\left(  1\right)  }x\otimes1_{\left(  2\right)
}z\right)  =S\left(  z\right)  x,\quad z\in H_{t},x\in X.
\]
Then $\left(  _{H}\mathcal{M},\otimes_{t},H_{t},l,r\right)  $ is a monoidal
category. Using the isomorphism $l_{X}$ and $r_{X}$ identifying $H_{t}%
\otimes_{t}X$, $X\otimes_{t}H_{t}$ and $X$, we see that the monoidal category
$_{H}\mathcal{M}$ is strict. In addition, if $M\in{}_{H}\mathcal{M} $, then
there is a canonical $H_{t}$-$H_{t}$-bimodule structure on $M$, such that
$M\otimes_{t}N\cong M\otimes_{H_{t}}N$ (see. \cite{Bohm2011weak}).

The monoidal category $_{H}\mathcal{M}$ has left duality. For any $X\in{}%
_{H}\mathcal{M}$, the left dual of $X$ is $X^{\ast}=\operatorname{Hom}%
_{k}\left(  X,k\right)  $, considered as an object of${}$ $_{H}\mathcal{M}$
via
\[
\left\langle hx^{\ast},x\right\rangle =\left\langle x^{\ast},S\left(
h\right)  x\right\rangle ,\quad\forall x\in X,x^{\ast}\in X^{\ast},h\in H.
\]
The evaluation map $ev_{X}:X^{\ast}\otimes_{t}X\rightarrow H_{t}$ and the
coevaluation map $coev_{X}:H_{t}\rightarrow X\otimes_{t}X^{\ast}$ are defined
as follows:
\[
ev_{X}\left(  1_{\left(  1\right)  }x^{\ast}\otimes1_{\left(  2\right)
}x\right)  =\left\langle x^{\ast},1_{\left(  1\right)  }x\right\rangle
1_{\left(  2\right)  },\quad coev_{X}\left(  z\right)  =\sum_{i}z_{\left(
1\right)  }x_{i}\otimes z_{\left(  2\right)  }x_{i}^{\ast},
\]
where $\left\{  \left(  x_{i},x_{i}^{\ast}\right)  \right\}  _{i}$ is a dual
basis of $X$.

Recall that a quasi-triangular weak Hopf algebra is a pair $\left(
H,R\right)  $, where $H$ is a weak Hopf algebra and $R\in\Delta^{op}\left(
1\right)  \left(  H\otimes_{k}H\right)  \Delta\left(  1\right)  $ satisfying
the following conditions:

\begin{enumerate}
\item $R\Delta\left(  h\right)  =\Delta^{op}\left(  h\right)  R,$ for all
$h\in H$.

\item $\left(  id\otimes\Delta\right)  R=R_{13}R_{12},$ $\left(  \Delta\otimes
id\right)  R=R_{13}R_{23}$, where $R_{12}=R\otimes1,$ $R_{23}=1\otimes R,$
$R_{13}=R^{1}\otimes1\otimes R^{2}$.

\item There exists $\bar{R}\in\Delta\left(  1\right)  \left(  H\otimes
_{k}H\right)  \Delta^{op}\left(  1\right)  $ with $R\bar{R}=\Delta^{op}\left(
1\right)  $, $\bar{R}R=\Delta\left(  1\right)  $.
\end{enumerate}

The element $R$ is called an R-matrix of $H$. We write $R=R_{i}={R_{i}}%
^{1}\otimes{R_{i}}^{2}$, $\forall i\in\mathbb{N}^{+}$.

Let $\left(  H,R\right)  $ be a quasi-triangular weak Hopf algebras, then the
following properties hold :
\begin{align}
{} &  \left(  \varepsilon_{s}\otimes id\right)  \left(  R\right)
=\Delta\left(  1\right)  ,\quad{} &  &  \left(  \varepsilon_{t}\otimes
id\right)  \left(  R\right)  =\Delta^{op}\left(  1\right)  ,\label{eR1}\\
{} &  \left(  id\otimes\varepsilon_{s}\right)  \left(  R\right)  =\left(
S\otimes id\right)  \Delta^{op}\left(  1\right)  ,\quad{} &  &  \left(
id\otimes\varepsilon_{t}\right)  \left(  R\right)  =\left(  S\otimes
id\right)  \Delta\left(  1\right)  ,\label{eR2}\\
{} &  \left(  S\otimes id\right)  \left(  R\right)  =\bar{R},\quad{} &  &
\left(  S\otimes S\right)  \left(  R\right)  =R.\label{SotSR}%
\end{align}

It's known from \cite[Proposition 5.2]{Nikshych2003Invariants} that if
$\left(  H,R\right)  $ is a quasi-triangular weak Hopf algebras, then the
monoidal category $\mathcal{C}={}_{H}\mathcal{M}$ is braided with a braiding%
\[
c_{M,N}\left(  1_{\left(  1\right)  }m\otimes1_{\left(  2\right)  }n\right)
=R^{2}1_{\left(  2\right)  }n\otimes R^{1}1_{\left(  1\right)  }m,\text{ for
}m\in M\text{, }n\in N,\ \text{where \ }M,N\in{}_{H}\mathcal{M}.
\]

\subsection{Module (Co)algebras and Weak Smash Products}

\label{sec-Module-coalgebras}

If $H$ is a Hopf algebra, a coalgebra $C$ in the monoidal category
$\mathcal{C}={}_{H}\mathcal{M}$ is simply a left $H$-module coalgebra, and
left $C$-comodules in $\mathcal{C}$ are left $\left(  C,H\right)  $-Hopf
modules. For weak Hopf algebras, some categorical notions and formulaic
notions, including cotensors, need to be reconciled. From now on, $H$ is a
finite dimensional weak Hopf algebra over $k$, and $\mathcal{C}$ is the
monoidal category ${}_{H}\mathcal{M}$.

A $k$-coalgebra $\left(  C,\Delta_{C},\varepsilon_{C}\right)  $ is a left
$H$-module coalgebra \cite{Bohm2000Doi-Hopf} if $C$ is a left $H$-module via
$h\otimes c\mapsto h\cdot c$ and for all $h\in H$, $c\in C$
\begin{align}
\Delta_{C}\left(  h\cdot c\right)   &  =h_{\left(  1\right)  }\cdot c_{\left(
1\right)  }\otimes h_{\left(  2\right)  }\cdot c_{\left(  2\right)
},\label{MC1}\\
\varepsilon_{C}\left(  h\cdot c\right)   &  =\varepsilon_{C}\left(
\varepsilon_{s}\left(  h\right)  \cdot c\right)  .\label{MC2}%
\end{align}

A left $H$-module $M$ is a left $\left(  C,H\right)  $-Hopf module
\cite{Bohm2000Doi-Hopf} if it is a left $C$-comodule such that the
compatibility condition
\begin{equation}
\rho\left(  h\cdot m\right)  =h_{\left(  1\right)  }\cdot m_{\left\langle
-1\right\rangle }\otimes h_{\left(  2\right)  }\cdot m_{\left\langle
0\right\rangle }\label{HM}%
\end{equation}
holds for any $m\in M$, $h\in H$.

Observe that (\ref{MC1}) implies $\Delta_{C}\left(  c\right)  =1_{\left(
1\right)  }\cdot c_{\left(  1\right)  }\otimes1_{\left(  2\right)  }\cdot
c_{\left(  2\right)  }\in C\otimes_{t}C$, hence (\ref{MC1}) holds
$\Leftrightarrow$ $\Delta_{C}$ is a morphism in $_{H}\mathcal{M}$.
Analogously, (\ref{HM}) holds $\Leftrightarrow$ $\rho$ is a morphism in
$\mathcal{C}$.

Let $M$ be a left $H$-module, the invariants of $M$ is the subspace
\[
\operatorname{Inv}M=\left\{  m\in M\mid h\cdot m=\varepsilon_{t}\left(
h\right)  \cdot m,\forall h\in H\right\}  .
\]
Note that $M^{\ast}=\operatorname{Hom}_{k}\left(  M,k\right)  $ is also a left
$H$-module via $\left\langle h\cdot m^{\ast},m\right\rangle =\left\langle
m^{\ast},S\left(  h\right)  \cdot m\right\rangle $, for $h\in H$, $m\in M$,
$m^{\ast}\in M^{\ast}$. It is routine to check that%
\[
\operatorname{Inv}M^{\ast}=\left\{  m^{\ast}\in M^{\ast}\mid\left\langle
m^{\ast},h\cdot m\right\rangle =\left\langle m^{\ast},\varepsilon_{s}\left(
h\right)  \cdot m\right\rangle ,\forall h\in H,m\in M\right\}  .
\]

\begin{lemma}
For any $M\in\mathcal{C}$, the linear map $\beta_{M}:\operatorname{Inv}%
M^{\ast}\rightarrow\operatorname{Hom}_{H}\left(  M,H_{t}\right)  $ defined by
\[
\beta_{M}\left(  m^{\ast}\right)  \left(  m\right)  =\left\langle m^{\ast
},1_{\left(  1\right)  }\cdot m\right\rangle 1_{\left(  2\right)  },\text{
}\forall m\in M,m^{\ast}\in\operatorname{Inv}M^{\ast},
\]
is an isomorphism.

\begin{proof}
First $\beta_{M}$ is an $H$-module map, since for any $m^{\ast}\in
\operatorname{Inv}M^{\ast}$, $h\in H$, $m\in M$,
\begin{align*}
\beta_{M}\left(  m^{\ast}\right)  \left(  h\cdot m\right)   & =\left\langle
m^{\ast},\left(  1_{\left(  1\right)  }h\right)  \cdot m\right\rangle
1_{\left(  2\right)  }=\left\langle m^{\ast},h_{\left(  1\right)  }\cdot
m\right\rangle \varepsilon_{t}\left(  h_{\left(  2\right)  }\right) \\
& =\left\langle m^{\ast},\varepsilon_{s}\left(  h_{\left(  1\right)  }\right)
\cdot m\right\rangle \varepsilon_{t}\left(  h_{\left(  2\right)  }\right)
=\left\langle m^{\ast},1_{\left(  1\right)  }\cdot m\right\rangle
\varepsilon_{t}\left(  h1_{\left(  2\right)  }\right) \\
& =h\cdot\left(  \beta_{M}\left(  m^{\ast}\right)  \left(  m\right)  \right)
.
\end{align*}

On the other hand, for all $f\in\operatorname{Hom}_{H}\left(  M,H_{t}\right)
$,
\begin{align*}
\varepsilon\left(  f\left(  h\cdot m\right)  \right)   & =\varepsilon\left(
h\cdot f\left(  m\right)  \right)  =\varepsilon\left(  hf\left(  m\right)
\right) \\
& =\varepsilon\left(  \varepsilon_{s}\left(  h\right)  f\left(  m\right)
\right)  =\varepsilon\left(  f\left(  \varepsilon_{s}\left(  h\right)  \cdot
m\right)  \right)  .
\end{align*}
So $\beta_{M}$ has a well-defined inverse $\beta_{M}^{-1}:\operatorname{Hom}%
_{H}\left(  M,H_{t}\right)  \rightarrow\operatorname{Inv}M^{\ast}$ by the
formula
\[
\left\langle \beta_{M}^{-1}\left(  f\right)  ,m\right\rangle =\varepsilon
\left(  f\left(  m\right)  \right)  .
\]

\end{proof}
\end{lemma}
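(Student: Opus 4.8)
The plan is to produce an explicit two-sided inverse for $\beta_M$, which is cleaner than any dimension count since a priori neither side has an obvious dimension. The first task is to check that $\beta_M$ is well-defined, i.e.\ that for $m^{\ast}\in\operatorname{Inv}M^{\ast}$ the map $m\mapsto\langle m^{\ast},1_{(1)}\cdot m\rangle 1_{(2)}$ really lands in $\operatorname{Hom}_H(M,H_t)$. This is the computational core: writing $\beta_M(m^{\ast})(h\cdot m)=\langle m^{\ast},(1_{(1)}h)\cdot m\rangle 1_{(2)}$, I would apply $1_{(1)}h\otimes 1_{(2)}=h_{(1)}\otimes\varepsilon_t(h_{(2)})$ from (\ref{WHArelation4}) to obtain $\langle m^{\ast},h_{(1)}\cdot m\rangle\varepsilon_t(h_{(2)})$, then use the defining relation of $\operatorname{Inv}M^{\ast}$ to replace $h_{(1)}$ by $\varepsilon_s(h_{(1)})$, then feed in $1_{(1)}\otimes\varepsilon_t(h1_{(2)})=\varepsilon_s(h_{(1)})\otimes\varepsilon_t(h_{(2)})$ (obtained by applying $\mathrm{id}\otimes\varepsilon_t$ to (\ref{WHArelation5})), and finally recognize $\varepsilon_t(h1_{(2)})=h\cdot 1_{(2)}$, so that the whole expression becomes $h\cdot\beta_M(m^{\ast})(m)$.

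Next I would define the candidate inverse $\beta_M^{-1}:\operatorname{Hom}_H(M,H_t)\to\operatorname{Inv}M^{\ast}$ by $\langle\beta_M^{-1}(f),m\rangle=\varepsilon(f(m))$, and verify that its image lies in $\operatorname{Inv}M^{\ast}$: for $H$-linear $f$ one has $\varepsilon(f(h\cdot m))=\varepsilon(h\cdot f(m))=\varepsilon(hf(m))$, where the middle equality uses $h\cdot z=\varepsilon_t(hz)$ on $H_t$ together with $\varepsilon\circ\varepsilon_t=\varepsilon$ (immediate from $\varepsilon_t(x)=\varepsilon(1_{(1)}x)1_{(2)}$ and the counit axiom); and then $\varepsilon(hf(m))=\varepsilon(\varepsilon_s(h)f(m))=\varepsilon(f(\varepsilon_s(h)\cdot m))$ by (\ref{WHArelation1}), which is exactly the invariance condition defining $\operatorname{Inv}M^{\ast}$.

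It then remains to check that the two composites are the identity. For $\beta_M\circ\beta_M^{-1}$ I would compute $\beta_M(\beta_M^{-1}(f))(m)=\varepsilon(f(1_{(1)}\cdot m))1_{(2)}=\varepsilon(1_{(1)}f(m))1_{(2)}=\varepsilon_t(f(m))=f(m)$, using $\varepsilon\circ\varepsilon_t=\varepsilon$ and $\varepsilon_t|_{H_t}=\mathrm{id}$; for $\beta_M^{-1}\circ\beta_M$ I would compute $\langle\beta_M^{-1}(\beta_M(m^{\ast})),m\rangle=\langle m^{\ast},1_{(1)}\cdot m\rangle\varepsilon(1_{(2)})=\langle m^{\ast},m\rangle$ since $1_{(1)}\varepsilon(1_{(2)})=1$. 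I expect the first step — arranging the several weak Hopf algebra identities to apply in exactly the right order so that the $\operatorname{Inv}M^{\ast}$-hypothesis can be invoked — to be the only delicate point; once well-definedness of $\beta_M$ is in hand, the rest is short bookkeeping with $\varepsilon$, $\varepsilon_t$, and $\varepsilon_s$.
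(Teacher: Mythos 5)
Your proposal is correct and follows essentially the same route as the paper: the same computation showing $\beta_M(m^\ast)$ is $H$-linear via (\ref{WHArelation4}), the invariance of $m^\ast$, and (\ref{WHArelation5}), and the same candidate inverse $\langle\beta_M^{-1}(f),m\rangle=\varepsilon(f(m))$ justified by (\ref{WHArelation1}). The only difference is that you explicitly verify both composites are the identity, which the paper leaves implicit; this is a harmless (indeed welcome) addition.
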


\begin{remark}
For $M,N\in\mathcal{C}$, and $m^{\ast}\in\operatorname{Inv}M^{\ast},\ m\in
M,\ n\in N$, we have%
\begin{align}
\left(  \beta_{M}\left(  m^{\ast}\right)  \left(  m\right)  \right)  \cdot n
& =\left\langle m^{\ast},1_{\left(  1\right)  }\cdot m\right\rangle 1_{\left(
2\right)  }\cdot n,\label{left counit}\\
n\cdot\left(  \beta_{M}\left(  m^{\ast}\right)  \left(  m\right)  \right)   &
=1_{\left(  1\right)  }\cdot n\left\langle m^{\ast},1_{\left(  2\right)
}\cdot m\right\rangle .\label{right counit}%
\end{align}

\end{remark}

Now we are able to reconcile the notion of $\mathcal{C}$-coalgebras with the
notion of left $H$-module coalgebra.

\begin{lemma}
A triple $\left(  C,\Delta_{C},\varepsilon_{C}\right)  $, where $C\in
\mathcal{C}$, $\Delta_{C}\in\operatorname{Hom}_{\mathcal{C}}\left(
C,C\otimes_{t}C\right)  $, $\varepsilon_{C}\in\operatorname{Hom}_{\mathcal{C}%
}\left(  C,H_{t}\right)  $, is a coalgebra in $\mathcal{C}$ if and only if
$\left(  C,\Delta_{C},\beta_{C}^{-1}\left(  \varepsilon_{C}\right)  \right)  $
is a left $H$-module coalgebra. In addition, if $C$ is a $\mathcal{C}%
$-coalgebra, then the category $C$-$\operatorname*{Comod}_{\mathcal{C}}$ of
left $C$-comodules in $\mathcal{C}$ is equal to the category $_{H}%
^{C}\mathcal{M}$ of left relative $\left(  C,H\right)  $-Hopf modules.
\end{lemma}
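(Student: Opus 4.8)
The plan is to match the categorical axioms with the module-coalgebra / relative-Hopf-module axioms one equation at a time, translating between the categorical counit $\varepsilon_C\in\operatorname{Hom}_{\mathcal C}(C,H_t)$ and the ordinary counit $\varepsilon_k:=\beta_C^{-1}(\varepsilon_C)\in C^{\ast}$ via the preceding Lemma; recall these are related by $\varepsilon_k(c)=\varepsilon(\varepsilon_C(c))$ and, conversely, by $\varepsilon_C(c)=\varepsilon_k(1_{(1)}\cdot c)\,1_{(2)}$.

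Several of the required equivalences are immediate. The condition that $\Delta_C$ be a morphism $C\to C\otimes_t C$ (with the diagonal $H$-action $h\cdot(1_{(1)}m\otimes 1_{(2)}n)=h_{(1)}m\otimes h_{(2)}n$) is, as observed just before the lemma, exactly (\ref{MC1}); and conversely (\ref{MC1}) forces $\Delta_C(c)=1_{(1)}\cdot c_{(1)}\otimes 1_{(2)}\cdot c_{(2)}\in C\otimes_t C$, so the image automatically lands in $C\otimes_t C$. Likewise $\varepsilon_C\in\operatorname{Hom}_H(C,H_t)$ is precisely the requirement for $\varepsilon_C$ to lie in the target of $\beta_C$, while membership of $\varepsilon_k=\beta_C^{-1}(\varepsilon_C)$ in $\operatorname{Inv} C^{\ast}$ --- which, unwinding the description of $\operatorname{Inv} C^{\ast}$ from the previous subsection, is exactly (\ref{MC2}) --- is guaranteed by the Lemma. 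Finally, coassociativity is literally the same equation on both sides: $_H\mathcal M$ has been strictified by identifying $(C\otimes_t C)\otimes_t C$ and $C\otimes_t(C\otimes_t C)$ with the common subspace $\Delta^2(1)(C\otimes_k C\otimes_k C)$ of $C^{\otimes_k 3}$, on which the associator is the identity, so that $(\Delta_C\otimes_t id_C)\Delta_C$ and $(id_C\otimes_t\Delta_C)\Delta_C$ are the restrictions of the usual iterated coproducts; the same remark applies to the coassociativity of a comodule coaction $\rho:M\to C\otimes_t M$.

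The one substantive computation, and the step I expect to cost the most care, is the counit. I would prove, for any $M\in{}_H\mathcal M$ equipped with an $H$-linear $\rho:M\to C\otimes_t M$, the two identities
\[
l_M\,(\varepsilon_C\otimes_t id_M)\,\rho(m)=\varepsilon_k(m_{\langle-1\rangle})\,m_{\langle 0\rangle},
\qquad
r_M\,(id_M\otimes_t\varepsilon_C)\,\rho(m)=m_{\langle-1\rangle}\,\varepsilon_k(m_{\langle 0\rangle}).
\]
For the first, note that since $\rho(m)\in C\otimes_t M$ it is fixed by $\Delta(1)\cdot(-)$, so $\rho(m)=1_{(1)}\cdot m_{\langle-1\rangle}\otimes 1_{(2)}\cdot m_{\langle 0\rangle}$; applying $\varepsilon_C\otimes_t id_M$ and using $H$-linearity of $\varepsilon_C$ keeps the element in $H_t\otimes_t M=H_t\otimes_{H_t}M$, where $l_M$ is the $H_t$-action, yielding $\varepsilon_C(m_{\langle-1\rangle})\cdot m_{\langle 0\rangle}$; now substitute $\varepsilon_C(m_{\langle-1\rangle})=\varepsilon_k(1_{(1)}\cdot m_{\langle-1\rangle})1_{(2)}$ and apply the $k$-linear map $\varepsilon_k\otimes id_M$ to the fixed-point identity for $\rho(m)$ to collapse the expression to $\varepsilon_k(m_{\langle-1\rangle})m_{\langle 0\rangle}$. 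The second identity is proved the same way, now using $r_M(1_{(1)}x\otimes 1_{(2)}z)=S(z)x$ together with the regular-weak-Hopf antipode identities (\ref{WHArelation2})--(\ref{WHArelation3}) and the standard $\Delta\circ S=(S\otimes S)\circ\Delta^{op}$ to move the antipode past $\Delta(1)$; keeping this antipode bookkeeping straight is the genuine delicacy. Granting these, the $\mathcal C$-counit axiom for $\Delta_C$ (take $M=C$, $\rho=\Delta_C$) becomes exactly $\varepsilon_k(c_{(1)})c_{(2)}=c=c_{(1)}\varepsilon_k(c_{(2)})$, i.e. $\varepsilon_k$ is a $k$-coalgebra counit, completing the first assertion.

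For the second assertion, observe that a left $C$-comodule in $\mathcal C$ is the datum of $M\in{}_H\mathcal M$ together with $\rho\in\operatorname{Hom}_{\mathcal C}(M,C\otimes_t M)$ --- equivalently, by the pre-lemma observation, a $k$-linear coaction whose image lies in $C\otimes_t M$ and which satisfies (\ref{HM}) --- subject to coassociativity and the counit law, which by the previous paragraph are precisely the $k$-linear coassociativity and $\varepsilon_k(m_{\langle-1\rangle})m_{\langle 0\rangle}=m$. Thus the objects of $C$-$\operatorname*{Comod}_{\mathcal C}$ are exactly the left relative $(C,H)$-Hopf modules. The morphisms agree as well: a morphism in $C$-$\operatorname*{Comod}_{\mathcal C}$ is an $H$-linear, $C$-colinear map, and so is a morphism of relative Hopf modules. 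Hence $C$-$\operatorname*{Comod}_{\mathcal C}={}_H^C\mathcal M$.
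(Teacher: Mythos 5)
Your proposal is correct and follows essentially the same route as the paper, whose entire proof is the one-line observation that the counit axioms correspond via the identities $(\beta_M(m^{\ast})(m))\cdot n=\langle m^{\ast},1_{(1)}\cdot m\rangle 1_{(2)}\cdot n$ and $n\cdot(\beta_M(m^{\ast})(m))=1_{(1)}\cdot n\langle m^{\ast},1_{(2)}\cdot m\rangle$ from the preceding remark; you simply rederive those identities and spell out the (immediate) matching of the remaining axioms. The only blemish is notational: your second displayed identity $r_M(id_M\otimes_t\varepsilon_C)\rho(m)=m_{\langle-1\rangle}\varepsilon_k(m_{\langle 0\rangle})$ does not typecheck for a general left coaction $\rho:M\to C\otimes_t M$, but it is only ever invoked with $M=C$ and $\rho=\Delta_C$, where it is fine.
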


\begin{proof}
The equivalence of the counit axioms follows from (\ref{left counit}) and
(\ref{right counit}).
\end{proof}

Let $D$ be a $\mathcal{C}$-coalgebra. For $\left(  M,\rho_{M}\right)
\in\operatorname*{Comod}_{\mathcal{C}}$-$D$, $\left(  N,\rho_{N}\right)  \in
D$-$\operatorname*{Comod}_{\mathcal{C}}$, we will show that the cotensor
product $M\square_{D}^{\mathcal{C}}N$ in $\mathcal{C}$ (see
Definition~\ref{def cotensor}) is exactly the classical cotensor product
$M\square_{D}N$ with the diagonal $H$-module action.

\begin{proposition}
\label{prop cot. is compa.}Let $D$ be a coalgebra in $\mathcal{C}$. Let
$\left(  M,\rho_{M}\right)  $ be a right $D$-comodule in $\mathcal{C}$, and
$\left(  N,\rho_{N}\right)  $ be a left $D$-comodule in $\mathcal{C}$. Then
$M\square_{D}N$ is an $H$-submodule of $M\otimes_{t}N$, and $M\square
_{D}^{\mathcal{C}}N=M\square_{D}N$.
\end{proposition}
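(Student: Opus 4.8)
The plan is to reduce the whole statement to the single inclusion $M\square_D N\subseteq M\otimes_t N$; once that is in hand, both remaining assertions are formal. First I would recall that $M\otimes_t N=\Delta(1)\cdot(M\otimes_k N)$ and, using the second defining axiom of a weak Hopf algebra, that $M\otimes_t D\otimes_t N=\Delta^2(1)\cdot(M\otimes_k D\otimes_k N)$; concretely, writing $e_{12}=\Delta(1)\otimes 1$ and $e_{23}=1\otimes\Delta(1)$ for the two idempotents in $H\otimes H\otimes H$ acting diagonally on the indicated pairs of tensorands of $M\otimes_k D\otimes_k N$, that axiom reads $e_{12}e_{23}=e_{23}e_{12}=\Delta^2(1)$, and $M\otimes_t D\otimes_t N$ is precisely their common image. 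Since $(M,\rho_M)$ is a right $D$-comodule and $(N,\rho_N)$ a left $D$-comodule \emph{in $\mathcal{C}$}, the remark following $(\ref{HM})$ gives that $\rho_M$ and $\rho_N$ are morphisms in $\mathcal{C}$, so $\rho_M(M)\subseteq M\otimes_t D$ and $\rho_N(N)\subseteq D\otimes_t N$; in particular $\rho_M\otimes_t id_N$ and $id_M\otimes_t\rho_N$ are morphisms in $\mathcal{C}$ with target $M\otimes_t D\otimes_t N$, and the categorical cotensor product of Definition~\ref{def cotensor} is
\[
M\square_D^{\mathcal{C}}N=\ker\bigl((\rho_M\otimes_t id_N)-(id_M\otimes_t\rho_N)\bigr)=\{x\in M\otimes_t N:(\rho_M\otimes_k id_N)(x)=(id_M\otimes_k\rho_N)(x)\}=(M\otimes_t N)\cap(M\square_D N).
\]
Hence, as soon as $M\square_D N\subseteq M\otimes_t N$ is established, $M\square_D^{\mathcal{C}}N=M\square_D N$, and $M\square_D N$ is an $H$-submodule of $M\otimes_t N$, being the kernel of an $H$-module map.

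For the inclusion, take $x\in M\square_D N\subseteq M\otimes_k N$ and set $y:=(\rho_M\otimes_k id_N)(x)=(id_M\otimes_k\rho_N)(x)$. Because $\rho_M(M)\subseteq M\otimes_t D$ we have $e_{12}\cdot y=y$, and because $\rho_N(N)\subseteq D\otimes_t N$ we have $e_{23}\cdot y=y$; applying $e_{23}$ to the first identity and using $e_{23}e_{12}=\Delta^2(1)$ gives $y=\Delta^2(1)\cdot y\in M\otimes_t D\otimes_t N$. Next, the right-comodule counit axiom for $(M,\rho_M)$ in the strict monoidal category $({}_H\mathcal{M},\otimes_t,H_t,l,r)$ is precisely the statement that $q:=r_M\circ(id_M\otimes_t\varepsilon_D):M\otimes_t D\to M$ — a composite of morphisms in $\mathcal{C}$, hence $H$-linear — satisfies $q\circ\rho_M=id_M$, so that $x=(q\otimes id_N)(y)$. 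Extend $q$ to $\bar q:M\otimes_k D\to M$ by precomposing with the projection $M\otimes_k D\twoheadrightarrow M\otimes_t D$; then $\bar q\otimes_k id_N$ intertwines the diagonal $H$-actions on $M\otimes_k D\otimes_k N$ and on $M\otimes_k N$ (the ones in which $1$ acts as $\Delta^2(1)$, resp.\ $\Delta(1)$), and it agrees with $q\otimes id_N$ on the element $y$. Evaluating this intertwining property at $h=1$ gives
\[
\Delta(1)\cdot x=\Delta(1)\cdot(\bar q\otimes id_N)(y)=(\bar q\otimes id_N)(\Delta^2(1)\cdot y)=(\bar q\otimes id_N)(y)=x,
\]
so $x\in\Delta(1)\cdot(M\otimes_k N)=M\otimes_t N$, as needed.

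The genuinely delicate step — the one I would write out in full — is the verification in the second paragraph that $\bar q\otimes_k id_N$ is $H$-linear for these diagonal actions: this rests on the projection $M\otimes_k D\twoheadrightarrow M\otimes_t D$ commuting with the diagonal action (which uses $\Delta(1h)=\Delta(h)$) together with coassociativity of $\Delta$, and on checking that the right unitor $r_X(1_{(1)}x\otimes1_{(2)}z)=S(z)x$ really does make $q\circ\rho_M=id_M$ drop out of the counit axiom. Everything else — the description of $M\square_D^{\mathcal{C}}N$ as a set-theoretic kernel, the identification of $M\otimes_t D\otimes_t N$ as the common fixed space of $e_{12}$ and $e_{23}$, and the final reduction — is routine bookkeeping once these two points are pinned down.
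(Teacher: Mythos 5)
Your proof is correct, and it rests on the same pivot as the paper's: showing that every $x\in M\square_D N$ satisfies $x=\Delta(1)\cdot x$, after which the identification $M\square_D^{\mathcal{C}}N=M\square_D N$ is the formal kernel comparison you describe (the paper records it as a commutative diagram). Where you genuinely differ is in how that pivot identity is obtained. The paper computes directly in Sweedler notation: it applies the counit axiom on the $M$-side, transports it through the cotensor relation to the $N$-side, and then pushes $\varepsilon_D$ through $\varepsilon_t$, the antipode and the explicit right unitor until $\Delta(1)$ appears. You instead isolate two structural facts --- that $y=(\rho_M\otimes id_N)(x)=(id_M\otimes\rho_N)(x)$ is fixed by both $\Delta(1)\otimes 1$ and $1\otimes\Delta(1)$, hence by $\Delta^{2}(1)$ via the second weak bialgebra axiom, and that the counit contraction $q$ is an $H$-linear retraction of $\rho_M$ --- and then let $H$-linearity evaluated at $h=1$ produce $\Delta(1)\cdot x=x$. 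This buys independence from the explicit formulas for $r_X$, $\varepsilon_t$ and $S$ (the argument would transfer verbatim to any coalgebra and comodules in ${}_H\mathcal{M}$ presented abstractly), at the price of the intertwining verification for $\bar q\otimes id_N$ that you correctly flag as the one step needing full detail; that verification does go through as you indicate, using $\Delta(1)\Delta(h)=\Delta(h)$ together with coassociativity, so there is no gap.
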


\begin{proof}
For any $x=\sum_{i}m_{i}\otimes n_{i}\in M\square_{D}N$, we have
\begin{align*}
x  & =\sum_{i}m_{i\left\langle 0\right\rangle }\cdot\varepsilon_{D}\left(
m_{i\left\langle 1\right\rangle }\right)  \otimes n_{i}\\
& =\sum_{i}m_{i}\cdot\varepsilon_{D}\left(  1_{\left(  1\right)  }\cdot
n_{i\left\langle -1\right\rangle }\right)  \otimes1_{\left(  2\right)  }\cdot
n_{i\left\langle 0\right\rangle }\\
& =\sum_{i}m_{i}\cdot\varepsilon_{t}\left(  1_{\left(  1\right)  }%
\varepsilon_{D}\left(  n_{i\left\langle -1\right\rangle }\right)  \right)
\otimes1_{\left(  2\right)  }\cdot n_{i\left\langle 0\right\rangle }\\
& =\sum_{i}\left(  m_{i}\cdot\varepsilon_{D}\left(  n_{i\left\langle
-1\right\rangle }\right)  \right)  \cdot S\left(  1_{\left(  1\right)
}\right)  \otimes1_{\left(  2\right)  }\cdot n_{i\left\langle 0\right\rangle
}\\
& =\sum_{i}1_{\left(  1\right)  }\cdot\left(  m_{i}\cdot\varepsilon_{D}\left(
n_{i\left\langle -1\right\rangle }\right)  \right)  \otimes1_{\left(
2\right)  }\cdot n_{i\left\langle 0\right\rangle }\\
& =\Delta\left(  1\right)  \cdot x.
\end{align*}
Thus $M\square_{D}N$ is a subspace of $M\otimes_{t}N$. Now we get the
following commutative diagram
\[%
\def\mleftdelim{.}\def\mrightdelim{.}\def\mrowsep{1cm}\def\mcolumnsep{1cm}%
\begin{tikzpicture}[scale=1,samples=100,baseline,line width=0.5pt]\matrix (m) [matrix of math nodes,left delimiter={\mleftdelim},right delimiter={\mrightdelim},row sep=\mrowsep,column sep=\mcolumnsep]{ M\square _{D}N & M\otimes _{t}N& & & & & M\otimes _{t}D\otimes _{t}N \\ M\square _{D}N & M\otimes _{k}N & & & & & M\otimes _{k}D\otimes _{k}N \\}; \begin{scope}[every node/.style={midway,auto}] \draw[right hook-to] (m-1-1) --(m-1-2); \draw[right hook-to] (m-2-1) --(m-2-2); \draw[->] (m-1-2) --node{$\rho _{M}\otimes_{t} id_{N}-id_{M}\otimes_{t} \rho _{N}$}(m-1-7); \draw[->] (m-2-2) --node{$\rho _{M}\otimes id_{N}-id_{M}\otimes \rho _{N}$}(m-2-7); \draw[right hook-to] (m-1-2) --(m-2-2); \draw[right hook-to] (m-1-7) --(m-2-7); \draw[double distance=2pt] (m-1-1) --(m-2-1);\end{scope} \end{tikzpicture},
\]
and the result is clear.
\end{proof}

Let $\left(  C,\Delta_{C},\varepsilon_{C}\right)  $ be a left $H$-module
coalgebra and $F:{}_{H}^{C}\mathcal{M}\rightarrow{}^{C}\mathcal{M}$ be the
forgetful functor. Then by references \cite[Proposition 3.3]{Bohm2000Doi-Hopf}
and \cite[Proposition 2.1]{Caenepeel2000Modules}, the functor $F$ has a left
adjoin functor $\operatorname{Ind}:{}^{C}\mathcal{M}\rightarrow{}_{H}%
^{C}\mathcal{M}$. For completeness, we include the structure here.

Naturally, $C^{\ast}$ can be considered as a left $H^{\ast}$-comodule $\left(
C^{\ast},\rho_{C^{\ast}}\right)  $. Define $\operatorname{Ind}:{}%
^{C}\mathcal{M}\rightarrow{}{}_{H}^{C}\mathcal{M}$ as follows:
$\operatorname{Ind}\left(  W\right)  =\left(  H\otimes W\right)  \rho
_{C^{\ast}}\left(  \varepsilon_{C}\right)  =H\leftharpoonup\varepsilon
_{C\left\langle -1\right\rangle }\otimes W\cdot\varepsilon_{C\left\langle
0\right\rangle }$ as $k$-space, which is the subspace of $H\otimes_{k}W$
generated by elements of the form $\left\langle \varepsilon_{C},h_{\left(
1\right)  }\cdot w_{\left\langle -1\right\rangle }\right\rangle h_{\left(
2\right)  }\otimes w_{\left\langle 0\right\rangle }$. The left $H$-action and
left $C$-coaction $\rho$ on $\operatorname{Ind}\left(  W\right)  $ are given
by the formulas%
\begin{align*}
x\left(  \left\langle \varepsilon_{C},h_{\left(  1\right)  }\cdot
w_{\left\langle -1\right\rangle }\right\rangle h_{\left(  2\right)  }\otimes
w_{\left\langle 0\right\rangle }\right)   & =\left\langle \varepsilon
_{C},h_{\left(  1\right)  }\cdot w_{\left\langle -1\right\rangle
}\right\rangle xh_{\left(  2\right)  }\otimes w_{\left\langle 0\right\rangle
}\\
& =\left\langle \varepsilon_{C},x_{\left(  1\right)  }h_{\left(  1\right)
}\cdot w_{\left\langle -1\right\rangle }\right\rangle x_{\left(  2\right)
}h_{\left(  2\right)  }\otimes w_{\left\langle 0\right\rangle },\\
\rho\left(  \left\langle \varepsilon_{C},h_{\left(  1\right)  }\cdot
w_{\left\langle -1\right\rangle }\right\rangle h_{\left(  2\right)  }\otimes
w_{\left\langle 0\right\rangle }\right)  \allowbreak & =h_{\left(  1\right)
}\cdot w_{\left\langle -1\right\rangle }\otimes h_{\left(  2\right)  }\otimes
w_{\left\langle 0\right\rangle }\allowbreak,
\end{align*}
where $x,h\in H$, $w\in W$.

\begin{lemma}
[{cf. \cite[Proposition 3.3]{Bohm2000Doi-Hopf}, \cite[Proposition
2.1]{Caenepeel2000Modules}}]\label{lemma Ind is a left adj}Let $C$ be a left
$H$-module coalgebra. Then the functor $\operatorname{Ind}:{}^{C}%
\mathcal{M}\rightarrow{}_{H}^{C}\mathcal{M}$ is a left adjoint of the
forgetful functor $F:{}_{H}^{C}\mathcal{M}\rightarrow{}^{C}\mathcal{M}$.
\end{lemma}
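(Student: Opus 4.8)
The plan is to establish the adjunction $\operatorname{Ind}\dashv F$ directly, by exhibiting its unit and counit and then checking the two triangle identities; this is more transparent than producing a hom-set bijection by hand, since both natural transformations admit one-line formulas.

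First I would record the unit. For $W\in{}^{C}\mathcal{M}$ define
\[
\eta_{W}\colon W\longrightarrow F\operatorname{Ind}(W),\qquad \eta_{W}(w)=\langle\varepsilon_{C},1_{(1)}\cdot w_{\langle -1\rangle}\rangle\,1_{(2)}\otimes w_{\langle 0\rangle},
\]
equivalently, $\eta_{W}$ is the restriction to $1\otimes W$ of the $k$-linear map $\pi\colon H\otimes_{k}W\to\operatorname{Ind}(W)$, $h\otimes w\mapsto\langle\varepsilon_{C},h_{(1)}\cdot w_{\langle -1\rangle}\rangle h_{(2)}\otimes w_{\langle 0\rangle}$, which one checks is an idempotent projection using (\ref{MC1}), (\ref{MC2}) and the counit axiom of the $k$-coalgebra $C$. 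Using (\ref{MC1}), (\ref{MC2}) and the weak Hopf identities (\ref{WHArelation4})--(\ref{WHArelation5}) one then verifies that $\eta_{W}(w)$ indeed lies in $\operatorname{Ind}(W)$ and that $\eta_{W}$ is a morphism of left $C$-comodules; naturality in $W$ is immediate from the formula. Next I would record the counit: for $M\in{}_{H}^{C}\mathcal{M}$ the $H$-action $H\otimes_{k}M\to M$ restricts to a map
\[
\epsilon_{M}\colon\operatorname{Ind}F(M)\longrightarrow M,\qquad h\otimes m\longmapsto h\cdot m.
\]
That $\epsilon_{M}$ is $H$-linear is clear, as the $H$-action on $\operatorname{Ind}F(M)$ is left multiplication on the $H$-tensorand; that $\epsilon_{M}$ is $C$-colinear follows from the Hopf-module compatibility (\ref{HM}), coassociativity of the $C$-coaction, and (\ref{MC1}). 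Naturality in $M$ is again immediate.

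It then remains to check the triangle identities $F(\epsilon_{M})\circ\eta_{F(M)}=\operatorname{id}_{F(M)}$ and $\epsilon_{\operatorname{Ind}(W)}\circ\operatorname{Ind}(\eta_{W})=\operatorname{id}_{\operatorname{Ind}(W)}$. The first unwinds to the identity $\langle\varepsilon_{C},1_{(1)}\cdot m_{\langle -1\rangle}\rangle\,1_{(2)}\cdot m_{\langle 0\rangle}=m$, which is precisely the counit axiom for the $C$-comodule $M$ in $\mathcal{C}$ once one recalls, via the preceding two lemmas, that $z\mapsto\langle\varepsilon_{C},1_{(1)}\cdot z\rangle 1_{(2)}$ is the $\mathcal{C}$-valued counit of $C$; the second is verified on the generators of $\operatorname{Ind}(W)$ and reduces to the same identity after a single application of (\ref{MC1}). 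I expect the main obstacle to be the weak Hopf bookkeeping running through all of these steps: unlike the classical Hopf-algebra case, one must repeatedly insert $\Delta(1)$ and carry the counital projections $\varepsilon_{t},\varepsilon_{s}$ across the coproduct and the coaction, using (\ref{WHArelation1})--(\ref{WHArelation5}), (\ref{MC1})--(\ref{MC2}) and (\ref{left counit})--(\ref{right counit}). Alternatively, and this is the shortcut I would point out, the statement is a special case of \cite{Bohm2000Doi-Hopf} (Proposition~3.3) and \cite{Caenepeel2000Modules} (Proposition~2.1), which construct a left adjoint for the forgetful functor from relative Hopf modules to comodules in considerable generality; once conventions are matched, the claim follows with no further computation.
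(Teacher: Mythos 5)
The paper gives no proof of this lemma at all: it is quoted from \cite[Proposition 3.3]{Bohm2000Doi-Hopf} and \cite[Proposition 2.1]{Caenepeel2000Modules}, and the surrounding text only records the underlying vector space, the $H$-action and the $C$-coaction of $\operatorname{Ind}(W)$ ``for completeness.'' Your proposal therefore supplies something the paper deliberately omits, and the route you choose --- exhibiting the unit $\eta_{W}(w)=\langle\varepsilon_{C},1_{(1)}\cdot w_{\langle -1\rangle}\rangle 1_{(2)}\otimes w_{\langle 0\rangle}$ and the counit $h\otimes m\mapsto h\cdot m$ and checking the two triangle identities --- is correct. The first triangle identity is, as you say, exactly the counit axiom for the comodule $M$ once one uses (\ref{HM}) with $h=1$ to see $\rho(m)\in C\otimes_{t}M$; the second follows from coassociativity of the coaction, (\ref{MC1}) and $(\varepsilon_{C}\otimes\varepsilon_{C})\Delta_{C}=\varepsilon_{C}$, and the same computation shows your map $\pi$ is idempotent with image $\operatorname{Ind}(W)$. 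Two points are used implicitly and deserve at least a sentence each if you want the argument to be genuinely self-contained: that $\operatorname{Ind}$ is a functor (i.e.\ that $id_{H}\otimes f$ preserves the subspace $\operatorname{Ind}(W)$ for a $C$-comodule map $f$, which is needed to even write $\operatorname{Ind}(\eta_{W})$ in the second triangle identity), and that the coaction on $\operatorname{Ind}(W)$ lands in $C\otimes_{t}\operatorname{Ind}(W)$, which is needed for the colinearity of the counit. The paper glosses over both by deferring to the references, so this is not a defect relative to its own standard, but your direct argument would be complete only with those checks added. Your closing remark that the statement can simply be cited is, of course, exactly what the paper does.
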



Let $D$ be a coalgebra in $\mathcal{C}$. We have already known that $D$ can be
viewed as a $k$-coalgebra. If $D$ is a cosemisimple $\mathcal{C}$-coalgebra,
one may ask whether $D$ is cosemisimple as a $k$-coalgebra. We will show that
it's true under the assumption that $H$ is cosemisimple, and this result will
be used to present the structure of the irreducible Yetter-Drinfeld modules.

Let $A$ be a left $H$-module algebra. Then the smash product $A\#H$ is defined
on the $k$-space $A\otimes_{H_{t}}H$, where $H$ is a left $H_{t}$-module via
multiplication and $A$ is a right $H_{t}$ module via
\[
a\cdot z=S^{-1}\left(  z\right)  \cdot a,\quad a\in A,z\in H_{t}.
\]
Let $a\#h$ denote the class of $a\otimes h$ in $A\#H$. The multiplication of
$A\#H$ is given by
\[
\left(  a\#h\right)  \left(  b\#g\right)  =a\left(  h_{\left(  1\right)
}\cdot b\right)  \#h_{\left(  2\right)  }g,\quad a,b\in A,h,g\in H,
\]
and the unit is $1_{A}\#1_{H}$.

Observed that $A\#H$ is a left $H^{\ast}$-module algebra via
\[
f\cdot\left(  a\#h\right)  =a\#\left(  f\rightharpoonup h\right)  ,\quad f\in
H^{\ast},a\in A,h\in H.
\]
The following duality theorem was shown by Nikshych \cite{Nikshych2000A}.

\begin{lemma}
[{\cite[Theorem 3.3]{Nikshych2000A}}]\label{dual thm}There is an algebra
isomorphism between the algebras $\left(  A\#H\right)  \#H^{\ast}$ and
$\operatorname{End}\left(  A\#H\right)  _{A}$, where $A\#H $ is a right
$A$-module via multiplication.
\end{lemma}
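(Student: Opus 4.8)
The plan is to recognize this as the weak-Hopf-algebra counterpart of the Blattner--Montgomery duality theorem and to build the isomorphism as the representation map attached to a natural action. Since $A\#H$ is a left $H^{*}$-module algebra via $f\cdot(a\#h)=a\#(f\rightharpoonup h)$, it carries a left $(A\#H)\#H^{*}$-module structure in which the subalgebra $A\#H$ acts by left multiplication and $H^{*}$ acts by the module-algebra action:
\[
\bigl((a\#h)\#f\bigr)\cdot(b\#g)=(a\#h)\bigl(f\cdot(b\#g)\bigr)=(a\#h)\bigl(b\#(f\rightharpoonup g)\bigr).
\]
As $A\#H$ is a left module over $(A\#H)\#H^{*}$, this furnishes a $k$-algebra homomorphism $\Phi\colon(A\#H)\#H^{*}\to\operatorname{End}_{k}(A\#H)$, and the task is to show that $\Phi$ corestricts to an isomorphism onto $\operatorname{End}(A\#H)_{A}$.

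First I would check that $\Phi$ lands in $\operatorname{End}(A\#H)_{A}$. Left multiplications by $A\#H$ commute with right multiplication by $A\#1_{H}$ by associativity, so the only point is that each $f\cdot(-)$, $f\in H^{*}$, is right $A$-linear on $A\#H$; expanding $f\cdot\bigl((b\#g)(a\#1_{H})\bigr)=\bigl(f_{(1)}\cdot(b\#g)\bigr)\bigl(f_{(2)}\cdot(a\#1_{H})\bigr)$ and simplifying $f_{(2)}\rightharpoonup 1_{H}$ by means of the weak Hopf relations (\ref{WHArelation1})--(\ref{WHArelation5}) returns $\bigl(f\cdot(b\#g)\bigr)(a\#1_{H})$. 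Hence $\Phi\colon(A\#H)\#H^{*}\to\operatorname{End}(A\#H)_{A}$ is a homomorphism of algebras. For injectivity one evaluates $\Phi(\xi)$ on the elements $1_{A}\#h$, $h\in H$; writing $\xi$ through a spanning expression $\sum_{i}x_{i}\#f_{i}$ and invoking the nondegeneracy of the canonical pairing $H^{*}\otimes H\to k$ together with the injectivity of $a\mapsto a\#1_{H}$ lets one peel off the $f_{i}$ and conclude $\xi=0$; here some care is needed because the smash product is formed over the counital subalgebra $(H^{*})_{t}$ rather than over $k$.

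The surjectivity of $\Phi$ is the heart of the matter, and I expect it to be the main obstacle. Already in the Hopf-algebra case the mechanism is that $A$ acting by left multiplication produces all ``scalar'' endomorphisms, while $H$ (left multiplication) together with $H^{*}$ (the $\rightharpoonup$-action) generate all of $\operatorname{End}_{k}(H)$ --- the classical identity $H\#H^{*}\cong\operatorname{End}_{k}(H)$ --- so that $\Phi$ hits all of $\operatorname{End}(A\#H)_{A}\cong\operatorname{End}_{A}(A\otimes_{k}H)\cong M_{n}(A)$, $n=\dim_{k}H$. Over a weak Hopf algebra the corresponding statement is $H\#H^{*}\cong\operatorname{End}\bigl({}_{H_{t}}H\bigr)$, and $A\#H=A\otimes_{H_{t}}H$ is no longer free but only finitely generated projective as a right $A$-module; the plan is to establish this weak version of the Heisenberg identity --- constructing the inverse from a dual basis $\sum_{i}h_{i}\otimes h^{i}$ of the pairing adapted to the $H_{t}$-bimodule structure of $H$ --- and then transport it along $A$, exhibiting an explicit preimage under $\Phi$ of an arbitrary right $A$-linear endomorphism of $A\#H$. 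The regularity assumption on $H$ (bijectivity of $S$ on $H_{t}H_{s}$, hence of $S$) is what makes the right $H_{t}$-action $a\cdot z=S^{-1}(z)\cdot a$ on $A$, and hence the smash product $A\#H$ itself, well-behaved; the bookkeeping with $\varepsilon_{t}$, $\varepsilon_{s}$ and the counital subalgebras of $H$ and $H^{*}$ is where the real work lies, and there I would follow Nikshych's original proof \cite{Nikshych2000A}.
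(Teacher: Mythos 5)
This lemma is not proved in the paper at all: it is imported verbatim as \cite[Theorem 3.3]{Nikshych2000A}, so there is no internal argument to compare yours against. Judged on its own terms, your outline does follow the strategy of Nikshych's original proof (and of Blattner--Montgomery before it): represent $(A\#H)\#H^{*}$ on $A\#H$ by letting $A\#H$ act by left multiplication and $H^{*}$ act through the module-algebra structure, check that the resulting map $\Phi$ lands in $\operatorname{End}(A\#H)_{A}$, and then prove bijectivity. Your identification of the surjectivity as the crux, and of the weak Heisenberg-double statement $H\#H^{*}\cong\operatorname{End}({}_{H_{t}}H)$ as the engine behind it, is accurate; so is your remark that the smash products are formed over the counital subalgebras rather than over $k$, which is exactly where the weak-Hopf bookkeeping with $\varepsilon_{t}$, $\varepsilon_{s}$ and $\Delta(1)$ enters.

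That said, as written this is a plan rather than a proof. The two decisive steps are only announced, not carried out: (i) the injectivity argument ("peel off the $f_{i}$") needs an actual normalized spanning set for $(A\#H)\otimes_{(H^{*})_{t}}H^{*}$, since elements of a smash product over a counital subalgebra do not have unique representatives and naive separation of tensor factors fails; (ii) the surjectivity is reduced to the weak Heisenberg identity and to the projectivity of $A\#H\cong H\otimes_{H_{t}}A$ as a right $A$-module, but neither of these is established --- you explicitly defer both to Nikshych's paper. Since the whole content of the lemma lives in exactly those two steps, the proposal as it stands does not constitute an independent proof; it is a correct reconstruction of the architecture of the cited one. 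Note also that the paper itself only ever uses this lemma through Proposition \ref{Prop Morita eq} (Morita equivalence of $(A\#H)\#H^{*}$ with $A$) and Corollary \ref{cor A is s.s.}, for which the isomorphism $A\#H\cong H\otimes_{H_{t}}A$ and the progenerator property are proved there in full; so if you want a self-contained account you should either write out (i) and (ii) or cite the source, as the authors do.
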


In the case when $H$ is a Hopf algebra, it has been proved by Blattner and
Montgomery \cite{BlattnerMontgomery1985A} that $\left(  A\#H\right)
\#H^{\ast}\cong M_{n}\left(  A\right)  $, where $n=\dim H$. While if a weak
Hopf algebra $H$ is not free over $H_{t}$, $\left(  A\#H\right)  \#H^{\ast}$
might not be isomorphic to a matrix algebra over $A$. However, we have that
$\left(  A\#H\right)  \#H^{\ast}$ is Morita-equivalent to $A$.

Consider $A$ as a regular right $A$-module and a left $H_{t}$-module via the
left $H$-action. For $z\in H_{t}$, $a,b\in A$,
\[
z\cdot\left(  ab\right)  =\left(  z_{\left(  1\right)  }\cdot a\right)
\left(  z_{\left(  2\right)  }\cdot b\right)  =\left(  \left(  1_{\left(
1\right)  }z\right)  \cdot a\right)  \left(  1_{\left(  2\right)  }\cdot
b\right)  =\left(  z\cdot a\right)  b,
\]
hence $A$ is an $H_{t}$-$A$ bimodule.

\begin{proposition}
\label{Prop Morita eq}Let $H$ be a finite dimensional weak Hopf algebra, and
$A$ be a left $H$-module algebra. Then as right $A$-modules
\[
A\#H\cong H\otimes_{H_{t}}A,
\]
and the algebra $\left(  A\#H\right)  \#H^{\ast}$ is Morita-equivalent to $A$.

\begin{proof}
Define a map
\[
\Phi:H\otimes_{H_{t}}A\rightarrow A\#H\quad\text{via }h\otimes a\mapsto\left(
1_{A}\#h\right)  \left(  a\#1_{H}\right)  .
\]
$\Phi$ is well-defined, since for $z\in H_{t}$, $a\in A$ and $h\in H$,
\begin{align*}
\left(  1_{A}\#hz\right)  \left(  a\#1_{H}\right)   & =\left(  \left(
h_{\left(  1\right)  }z_{\left(  1\right)  }\right)  \cdot a\right)
\#h_{\left(  2\right)  }z_{\left(  2\right)  }\\
& =\left(  \left(  h_{\left(  1\right)  }1_{\left(  1\right)  }z\right)  \cdot
a\right)  \#h_{\left(  2\right)  }1_{\left(  2\right)  }\\
& =\left(  h_{\left(  1\right)  }\cdot\left(  z\cdot a\right)  \right)
\#h_{\left(  2\right)  }\\
& =\left(  1_{A}\#h\right)  \left(  \left(  z\cdot a\right)  \#1_{H}\right)  .
\end{align*}
Clearly $\Phi$ is an $A$-module map. Observe that for all $a\in A$, $h\in H$,
\begin{align*}
\left(  \left(  1_{A}\#h_{\left(  2\right)  }\right)  \left(  \left(
S^{-1}\left(  h_{\left(  1\right)  }\right)  \cdot a\right)  \#1_{H}\right)
\right)   & =\left(  h_{\left(  2\right)  }S^{-1}\left(  h_{\left(  1\right)
}\right)  \cdot a\right)  \#h_{\left(  3\right)  }\\
& =\left(  S^{-1}\left(  \varepsilon_{t}\left(  h_{\left(  1\right)  }\right)
\right)  \cdot a\right)  \#h_{\left(  2\right)  }\\
& =a\#\varepsilon_{t}\left(  h_{\left(  1\right)  }\right)  h_{\left(
2\right)  }\\
& =a\#h.
\end{align*}
Then $\Phi$ has a well-defined inverse, namely, $a\#h\mapsto$ $h_{\left(
2\right)  }\otimes S^{-1}\left(  h_{\left(  1\right)  }\right)  \cdot a$.

Since $H_{t}$ is semisimple and $H$ is a faithful $H_{t}$-module, $H$ is a
progenerator of $\mathcal{M}_{H_{t}}$. Hence, $H\otimes_{H_{t}}A$ is a
progenerator of $\mathcal{M}_{A}$. Now by Lemma \ref{dual thm}, we have
$\left(  A\#H\right)  \#H^{\ast}\cong\operatorname{End}\left(  A\#H\right)
_{A}\cong\operatorname{End}\left(  H\otimes_{H_{t}}A\right)  _{A}$, which
implies that $\left(  A\#H\right)  \#H^{\ast}$ is Morita-equivalent to $A$.
\end{proof}
\end{proposition}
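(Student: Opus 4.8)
The plan is to prove the right $A$-module isomorphism $A\#H\cong H\otimes_{H_{t}}A$ directly, and then derive the Morita statement by feeding it into Nikshych's duality theorem (Lemma~\ref{dual thm}) together with the fact that $H\otimes_{H_{t}}A$ is a progenerator of the category $\mathcal{M}_{A}$ of right $A$-modules.

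First I would introduce the candidate isomorphism $\Phi\colon H\otimes_{H_{t}}A\to A\#H$, $h\otimes a\mapsto(1_{A}\#h)(a\#1_{H})=(h_{(1)}\cdot a)\#h_{(2)}$, where on the source $H$ carries its right $H_{t}$-module structure by multiplication and $A$ carries the left $H_{t}$-module structure obtained by restricting the $H$-action. Well-definedness over $\otimes_{H_{t}}$ reduces to the identity $\Phi(hz\otimes a)=\Phi(h\otimes z\cdot a)$ for $z\in H_{t}$, which follows from $\Delta(hz)=\Delta(h)\Delta(z)$, the relation $\Delta(z)=1_{(1)}z\otimes 1_{(2)}$ valid for $z\in H_{t}$, and the associativity $h_{(1)}\cdot(z\cdot a)=(h_{(1)}z)\cdot a$ of the module action. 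Right $A$-linearity of $\Phi$ is immediate since right multiplication by $b\#1_{H}$ is an algebra operation on $A\#H$. For the inverse, $S$ is invertible because $H$ is regular, and I would take $\Psi\colon A\#H\to H\otimes_{H_{t}}A$, $a\#h\mapsto h_{(2)}\otimes S^{-1}(h_{(1)})\cdot a$; one checks that $\Psi$ descends from $A\otimes_{H_{t}}H$ and that $\Psi\Phi=\mathrm{id}$, $\Phi\Psi=\mathrm{id}$, the only non-formal step being $h_{(2)}S^{-1}(h_{(1)})=S^{-1}(\varepsilon_{t}(h_{(1)}))$ followed by $\varepsilon_{t}(h_{(1)})h_{(2)}=h$.

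For the Morita conclusion I would first recall that $A\#H$ is a left $H^{*}$-module algebra (noted just before Lemma~\ref{dual thm}), so Lemma~\ref{dual thm} yields an algebra isomorphism $(A\#H)\#H^{*}\cong\operatorname{End}(A\#H)_{A}$, where $A\#H$ is viewed as a right $A$-module by multiplication. Transporting this along $\Phi$ gives $(A\#H)\#H^{*}\cong\operatorname{End}(H\otimes_{H_{t}}A)_{A}$, so it suffices to show $H\otimes_{H_{t}}A$ is a progenerator of $\mathcal{M}_{A}$; then its endomorphism ring is Morita-equivalent to $A$ by the classical progenerator theorem. To that end I would use that the counital subalgebra $H_{t}$ is separable, hence semisimple, and that $H$, containing $1_{H}$, is a faithful finite-dimensional right $H_{t}$-module; over the semisimple ring $H_{t}$ a faithful finitely generated module is a progenerator, so there exist $m,N$ with $H_{t}\mid H^{m}$ and $H\mid H_{t}^{N}$ in $\mathcal{M}_{H_{t}}$. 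Applying the additive functor $-\otimes_{H_{t}}A$, with $A$ in its canonical $H_{t}$--$A$-bimodule structure and $H_{t}\otimes_{H_{t}}A\cong A$, gives $A\mid(H\otimes_{H_{t}}A)^{m}$ and $H\otimes_{H_{t}}A\mid A^{N}$ in $\mathcal{M}_{A}$, that is, $H\otimes_{H_{t}}A$ is a finitely generated projective generator.

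I expect the first half to be routine weak-Hopf bookkeeping; the one point needing genuine care is the progenerator argument, specifically invoking separability (hence semisimplicity) of $H_{t}$ and verifying that faithfulness of $H$ over $H_{t}$ is transferred correctly through $-\otimes_{H_{t}}A$. Once $H\otimes_{H_{t}}A$ is known to be a progenerator of $\mathcal{M}_{A}$, the Morita equivalence is a formal consequence of Lemma~\ref{dual thm} and the isomorphism $A\#H\cong H\otimes_{H_{t}}A$.
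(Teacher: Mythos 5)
Your proposal is correct and follows essentially the same route as the paper: the same map $\Phi(h\otimes a)=(1_{A}\#h)(a\#1_{H})$ with the same inverse $a\#h\mapsto h_{(2)}\otimes S^{-1}(h_{(1)})\cdot a$, and the same progenerator argument (semisimplicity of $H_{t}$ plus faithfulness of $H$ over $H_{t}$, transported through $-\otimes_{H_{t}}A$) feeding into the duality theorem. The only difference is that you spell out the progenerator transfer in slightly more detail than the paper does, which is harmless.
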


\begin{corollary}
\label{cor A is s.s.}Let $H$ be a finite dimensional cosemisimple weak Hopf
algebra, and $A$ be a left $H$-module algebra. If the algebra $A\#H$ is
semisimple, then $A$ is also semisimple.

\begin{proof}
Since $A\#H$ and $H^{\ast}$ are semisimple, $\left(  A\#H\right)  \#H^{\ast}$
is semisimple by a Maschke-type theorem for weak Hopf algebras \cite[Theorem
1]{Zhang2006Maschke-type}. Thus $A$ is semisimple by Proposition \ref{Prop
Morita eq}.
\end{proof}
\end{corollary}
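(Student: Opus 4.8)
The plan is to obtain semisimplicity of $A$ from that of $A\#H$ by a two-step detour: first pass \emph{up} to the iterated smash product $(A\#H)\#H^{\ast}$ via a Maschke-type theorem, and then come back \emph{down} to $A$ via the Morita equivalence recorded in Proposition~\ref{Prop Morita eq}. First I would note that $H$ being cosemisimple means exactly that the dual weak Hopf algebra $H^{\ast}$ is semisimple. Recall from the discussion preceding Lemma~\ref{dual thm} that $A\#H$ is a left $H^{\ast}$-module algebra via $f\cdot(a\#h)=a\#(f\rightharpoonup h)$, so the iterated smash product $(A\#H)\#H^{\ast}$ is defined.

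The key input is a Maschke-type theorem for weak Hopf algebras (\cite{Zhang2006Maschke-type}): if a semisimple algebra $B$ is a module algebra over a semisimple weak Hopf algebra $K$, then $B\#K$ is semisimple. Applying this with $B=A\#H$, which is semisimple by hypothesis, and $K=H^{\ast}$, which is semisimple as just observed, yields that $(A\#H)\#H^{\ast}$ is semisimple. Next I would invoke Proposition~\ref{Prop Morita eq}, which identifies $(A\#H)\#H^{\ast}$ with $\operatorname{End}(H\otimes_{H_t}A)_{A}$ and exhibits it as Morita-equivalent to $A$; since semisimplicity is a Morita invariant, semisimplicity of $(A\#H)\#H^{\ast}$ forces $A$ to be semisimple, which is the desired conclusion. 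The chain is thus short and formal: $H$ cosemisimple $\Rightarrow$ $H^{\ast}$ semisimple $\Rightarrow$ $(A\#H)\#H^{\ast}$ semisimple $\Rightarrow$ $A$ semisimple.

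The only place where I expect a real obstacle is making sure the cited Maschke-type theorem applies in precisely this form for (regular) weak Hopf algebras, i.e.\ that semisimplicity of the module algebra together with semisimplicity of the acting weak Hopf algebra $H^{\ast}$ (equivalently, existence of a suitable normalized integral in $H^{\ast}$) is enough to conclude that the smash product is semisimple; once this citation is secured, together with the Morita statement of Proposition~\ref{Prop Morita eq}, no further computation is needed.
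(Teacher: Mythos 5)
Your proposal matches the paper's proof exactly: both pass from semisimplicity of $A\#H$ and $H^{\ast}$ to semisimplicity of $\left(  A\#H\right)  \#H^{\ast}$ via the Maschke-type theorem of \cite{Zhang2006Maschke-type}, and then descend to $A$ using the Morita equivalence of Proposition \ref{Prop Morita eq}. No gaps; your elaboration of the intermediate module-algebra structure is consistent with the setup preceding Lemma \ref{dual thm}.
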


\subsection{Automorphism Braided Group for Quasi-triangular Weak Hopf
Algebras}

\label{sec-ABG-for-QT}

Let $\left(  H,R\right)  $ be a quasi-triangular weak Hopf algebra. The goal
of this subsection is to present Majid's braided reconstruction with
$\mathcal{C}={}_{H}\mathcal{M}$, and characterize the automorphism braided
group $U\left(  \mathcal{C}\right)  $ of the braided rigid monoidal category
$\mathcal{C}$.

We need some preliminary steps. First, take $B=C_{H}\left(  H_{s}\right)  $,
the centralizer of $H_{s}$. It's known from \cite[Proposition 2.11]%
{Bohm1999weak} that $1_{\left(  1\right)  }\otimes S\left(  1_{\left(
2\right)  }\right)  \in H_{s}\otimes_{k}H_{s}$ is a separable idempotent of
$H_{s}$, then
\[
B=C_{H}\left(  H_{s}\right)  =\left\{  1_{\left(  1\right)  }hS\left(
1_{\left(  2\right)  }\right)  \mid h\in H\right\}  .
\]
Now we can consider $B$ as an object of $\mathcal{C}$ via the left $H$-adjoint
action $\cdot_{ad}$, namely, $h\cdot_{ad}b=h_{\left(  1\right)  }bS\left(
h_{\left(  2\right)  }\right)  $, $\forall h\in H,b\in B$. We will show that
$B=U\left(  \mathcal{C}\right)  $, the automorphism braided group of
${}\mathcal{C}$.

The next step is to find an action $\alpha\in\operatorname{Nat}\left(
B\otimes_{t}id_{\mathcal{C}},id_{\mathcal{C}}\right)  $. For any
$X\in\mathcal{C}$, define a map
\[
\alpha_{X}:B\otimes_{t}X\rightarrow X,\quad1_{\left(  1\right)  }\cdot
_{ad}b\otimes1_{\left(  2\right)  }x\mapsto bx.
\]
Since for any $b\in B,x\in X$, $bx=b1_{\left(  1\right)  }S\left(  1_{\left(
2\right)  }\right)  1_{\left(  3\right)  }x=\left(  1_{\left(  1\right)
}\cdot_{ad}b\right)  1_{\left(  2\right)  }x$, $\alpha_{X}$ is well-defined.
Next, we check that each $\alpha_{X}$ is a morphism in $\mathcal{C}$. In fact,
we have
\begin{align*}
\alpha_{X}\left(  h\left(  1_{\left(  1\right)  }\cdot_{ad}b\otimes1_{\left(
2\right)  }x\right)  \right)   & =\alpha_{X}\left(  h_{\left(  1\right)
}\cdot_{ad}b\otimes h_{\left(  2\right)  }x\right)  =h_{\left(  1\right)
}bS\left(  h_{\left(  2\right)  }\right)  h_{\left(  3\right)  }x\\
& =h_{\left(  1\right)  }S\left(  h_{\left(  2\right)  }\right)  h_{\left(
3\right)  }bx=h\left(  bx\right)  =h\alpha_{X}\left(  1_{\left(  1\right)
}\cdot_{ad}b\otimes1_{\left(  2\right)  }x\right)  ,
\end{align*}
for all $h\in H$, $b\in B$, $x\in X$. The naturality of $\alpha$ is obvious.
Now given $f\in\operatorname{Hom}_{\mathcal{C}}\left(  V,B\right)  ,$ we
define $\theta_{V}\left(  f\right)  \in\operatorname{Nat}\left(  V\otimes
_{t}id_{\mathcal{C}},id_{\mathcal{C}}\right)  $ via $\theta_{V}\left(
f\right)  _{X}=\alpha_{X}\left(  f\otimes_{t}id_{X}\right)  ,$ for all
$X\in\mathcal{C}$.

\begin{lemma}
\label{lemma theta is iso}The natural transformation
\begin{align*}
& \theta:\operatorname{Hom}_{\mathcal{C}}\left(  \bullet,B\right)
\rightarrow\operatorname{Nat}\left(  \bullet\otimes_{t}id_{\mathcal{C}%
},id_{\mathcal{C}}\right) \\
& V\leadsto\theta_{V}:\operatorname{Hom}_{\mathcal{C}}\left(  V,B\right)
\rightarrow\operatorname{Nat}\left(  V\otimes_{t}id_{\mathcal{C}%
},id_{\mathcal{C}}\right)  ,\quad\theta_{V}\left(  f\right)  _{X}=\alpha
_{X}\left(  f\otimes_{t}id_{X}\right)  ,
\end{align*}
is an isomorphism with inverse given by%
\begin{align*}
\theta_{V}^{-1}:\operatorname{Nat}\left(  V\otimes_{t}id_{\mathcal{C}%
},id_{\mathcal{C}}\right)   & \rightarrow\operatorname{Hom}_{\mathcal{C}%
}\left(  V,B\right)  ,\\
\delta & \mapsto\delta_{H}\left(  1_{\left(  1\right)  }\left(  \bullet
\right)  \otimes1_{\left(  2\right)  }\right)  ,
\end{align*}
where $H\in\mathcal{C}$ is considered as the left regular representation.
\end{lemma}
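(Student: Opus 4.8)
The plan is to run an element-level Tannaka-type argument in which the left regular representation $H$ plays the role of the representing object. The guiding observation is that for every $X\in\mathcal{C}$ and every $x\in X$ the map $\lambda_x\colon H\to X$, $h\mapsto h\cdot x$, is a morphism in $\mathcal{C}$. Hence, given $\delta\in\operatorname{Nat}\left(V\otimes_t\mathrm{id}_{\mathcal{C}},\mathrm{id}_{\mathcal{C}}\right)$, applying naturality of $\delta$ to $\lambda_x$ at the element $1_{\left(1\right)}v\otimes1_{\left(2\right)}\in V\otimes_t H$ yields
\[
\delta_X\!\left(1_{\left(1\right)}v\otimes1_{\left(2\right)}x\right)=\delta_H\!\left(1_{\left(1\right)}v\otimes1_{\left(2\right)}\right)\cdot x .
\]
Writing $g_\delta(v):=\delta_H\!\left(1_{\left(1\right)}v\otimes1_{\left(2\right)}\right)\in H$ and using $V\otimes_t X=\Delta\left(1\right)\left(V\otimes_k X\right)$, this shows that $\delta$ is completely determined by the linear map $g_\delta\colon V\to H$, and more generally that $\delta_H\!\left(\sum_i v_i\otimes x_i\right)=\sum_i g_\delta(v_i)\cdot x_i$ for every element $\sum_i v_i\otimes x_i$ of $V\otimes_t H$. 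Since $g_\delta$ is exactly the element $\delta_H\!\left(1_{\left(1\right)}(\bullet)\otimes1_{\left(2\right)}\right)$ in the statement, the candidate inverse is $\theta_V^{-1}(\delta)=g_\delta$, and the whole problem reduces to showing that $g_\delta$ in fact lands in $B$ and is $H$-linear, and that the two composites with $\theta_V$ are identities.

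Next I would verify that $g_\delta$ is a morphism in $\mathcal{C}$. For $H$-linearity with respect to $\cdot_{ad}$: each $\delta_X$ is a morphism in $\mathcal{C}$, so $\delta_H$ is $H$-linear; combining this with $\Delta(h)\Delta(1)=\Delta(h)$ (to get $h\cdot(1_{\left(1\right)}v\otimes1_{\left(2\right)})=h_{\left(1\right)}v\otimes h_{\left(2\right)}$), the axiom $h_{\left(1\right)}S(h_{\left(2\right)})=\varepsilon_t(h)$, the identity (\ref{WHArelation4}), and the formula $\delta_H\!\left(\sum v_i\otimes x_i\right)=\sum g_\delta(v_i)x_i$ from the first step, one gets $h\,g_\delta(v)=g_\delta(h_{\left(1\right)}v)h_{\left(2\right)}$ and then $g_\delta(h\cdot v)=h_{\left(1\right)}g_\delta(v)S(h_{\left(2\right)})=h\cdot_{ad}g_\delta(v)$. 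The genuine content is that $g_\delta(v)\in B=C_H(H_s)$: applying naturality to the right-multiplication maps $\mu_z\colon H\to H$, $h\mapsto hz$ (morphisms in $\mathcal{C}$ for every $z\in H$), together with the first step gives $g_\delta(v)=g_\delta(1_{\left(1\right)}v)\,1_{\left(2\right)}$; substituting the $H$-linearity just established, and then using $S\!\left(1_{\left(2\right)}\right)1_{\left(3\right)}=\varepsilon_s\!\left(1_{\left(2\right)}\right)$ (from $S(x_{\left(1\right)})x_{\left(2\right)}=\varepsilon_s(x)$), the fact $\Delta(1)\in H_s\otimes_k H_t$, and $\varepsilon_s|_{H_t}=S|_{H_t}$ (from (\ref{WHArelation3})), collapses the expression to $g_\delta(v)=1_{\left(1\right)}g_\delta(v)S\!\left(1_{\left(2\right)}\right)$. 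This is precisely the characterization of elements of $B$, since $B=\left\{1_{\left(1\right)}hS\!\left(1_{\left(2\right)}\right)\mid h\in H\right\}$ and $h\mapsto1_{\left(1\right)}hS\!\left(1_{\left(2\right)}\right)$ is an idempotent projection onto the centralizer of $H_s$.

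Finally I would check that $\theta_V$ and $\theta_V^{-1}$ are mutually inverse. For $f\in\operatorname{Hom}_{\mathcal{C}}(V,B)$, $H$-linearity of $f$ and the definition of $\alpha$ give $\theta_V^{-1}\!\left(\theta_V(f)\right)(v)=\alpha_H\!\left(f(1_{\left(1\right)}v)\otimes1_{\left(2\right)}\right)=\alpha_H\!\left(\left(1_{\left(1\right)}\cdot_{ad}f(v)\right)\otimes1_{\left(2\right)}\right)=f(v)$. Conversely, for $\delta$ as above, $\theta_V\!\left(g_\delta\right)_X\!\left(1_{\left(1\right)}v\otimes1_{\left(2\right)}x\right)=\alpha_X\!\left(g_\delta(1_{\left(1\right)}v)\otimes1_{\left(2\right)}x\right)=\alpha_X\!\left(\left(1_{\left(1\right)}\cdot_{ad}g_\delta(v)\right)\otimes1_{\left(2\right)}x\right)=g_\delta(v)\cdot x=\delta_X\!\left(1_{\left(1\right)}v\otimes1_{\left(2\right)}x\right)$, using $H$-linearity of $g_\delta$, the definition of $\alpha$, and the first step; as such elements span $V\otimes_t X$, $\theta_V\!\left(g_\delta\right)=\delta$. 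Naturality of $\theta$ in $V$ is routine. The main obstacle is the one step that $g_\delta$ takes values in $B=C_H(H_s)$: for an ordinary Hopf algebra $H_s=k1$ makes this vacuous, whereas here it requires the careful weak-Hopf bookkeeping with $\Delta^2(1)$ and the interplay of $\varepsilon_s$, $\varepsilon_t$ and $S$ on the comultiplication of the unit; everything else is formal.
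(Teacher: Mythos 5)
Your proposal is correct and follows essentially the same route as the paper's proof: both define the inverse by evaluating $\delta$ at $1_{\left(1\right)}v\otimes 1_{\left(2\right)}$ in the left regular representation, exploit naturality under the right-translation morphisms $h\mapsto hx$ to get $\delta_{X}\left(1_{\left(1\right)}v\otimes 1_{\left(2\right)}x\right)=\delta_{H}\left(1_{\left(1\right)}v\otimes 1_{\left(2\right)}\right)x$, and then verify $H$-linearity for $\cdot_{ad}$ and the two composites by the same computations. The only divergence is the step showing that $\delta_{H}\left(1_{\left(1\right)}v\otimes 1_{\left(2\right)}\right)$ lands in $B$: the paper checks directly that it commutes with every $y\in H_{s}$ using (\ref{WHArelation5}), whereas you show it is fixed by the separability projection $h\mapsto 1_{\left(1\right)}hS\left(1_{\left(2\right)}\right)$ onto $C_{H}\left(H_{s}\right)$ — both are valid, yours at the small cost of needing the $\cdot_{ad}$-linearity established first.
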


\begin{proof}
For $V\in\mathcal{C}$, $\delta\in\operatorname{Nat}\left(  V\otimes
_{t}id_{\mathcal{C}},id_{\mathcal{C}}\right)  $, we get a linear map
$\delta_{H}\left(  1_{\left(  1\right)  }\left(  \bullet\right)
\otimes1_{\left(  2\right)  }\right)  :V\rightarrow H,$ $v\mapsto\delta
_{H}\left(  1_{\left(  1\right)  }v\otimes1_{\left(  2\right)  }\right)  $. We
first show that the image of $\delta_{H}\left(  1_{\left(  1\right)  }\left(
\bullet\right)  \otimes1_{\left(  2\right)  }\right)  $ lies in $B$. For any
$X\in\mathcal{C}$ and $x\in X$, the map $x_{r}:H\rightarrow X$, $h\mapsto hx$
is a morphism in $\mathcal{C}$. Since $\delta$ is natural under the morphism
$H\rightarrow X$, we have
\begin{equation}
\delta_{X}\left(  1_{\left(  1\right)  }v\otimes1_{\left(  2\right)
}x\right)  =\delta_{H}\left(  1_{\left(  1\right)  }v\otimes1_{\left(
2\right)  }\right)  x.\label{eq_naturaldelta1}%
\end{equation}
Specially take $X=H$, then for any $y\in H_{s}$ we have
\begin{align*}
\delta_{H}\left(  1_{\left(  1\right)  }v\otimes1_{\left(  2\right)  }\right)
y  & =\delta_{H}\left(  1_{\left(  1\right)  }v\otimes1_{\left(  2\right)
}y\right)  =\delta_{H}\left(  1_{\left(  1\right)  }v\otimes y1_{\left(
2\right)  }\right) \\
& \overset{(\ref{WHArelation5})}{=}\delta_{H}\left(  y_{\left(  1\right)
}v\otimes y_{\left(  2\right)  }\right)  =\delta_{H}\left(  y\left(
1_{\left(  1\right)  }v\otimes1_{\left(  2\right)  }\right)  \right) \\
& =y\delta_{H}\left(  1_{\left(  1\right)  }v\otimes1_{\left(  2\right)
}\right)  ,
\end{align*}
and thus $\delta_{H}\left(  1_{\left(  1\right)  }v\otimes1_{\left(  2\right)
}\right)  \in B$, for all $v\in V$. Also by (\ref{eq_naturaldelta1}), for
$h\in H$, $v\in V$,
\begin{align*}
& \delta_{H}\left(  1_{\left(  1\right)  }hv\otimes1_{\left(  2\right)
}\right)  \overset{(\ref{WHArelation4})}{=}\delta_{H}\left(  h_{\left(
1\right)  }v\otimes\varepsilon_{t}\left(  h_{\left(  2\right)  }\right)
\right)  =\delta_{H}\left(  h_{\left(  1\right)  }v\otimes h_{\left(
2\right)  }S\left(  h_{\left(  3\right)  }\right)  \right) \\
& =\delta_{H}\left(  h_{\left(  1\right)  }v\otimes h_{\left(  2\right)
}\right)  S\left(  h_{\left(  3\right)  }\right)  =\delta_{H}\left(
h_{\left(  1\right)  }\left(  1_{\left(  1\right)  }v\otimes1_{\left(
2\right)  }\right)  \right)  S\left(  h_{\left(  2\right)  }\right)
=h\cdot_{ad}\delta_{H}\left(  1_{\left(  1\right)  }v\otimes1_{\left(
2\right)  }\right)  ,
\end{align*}
hence $\delta_{H}\left(  1_{\left(  1\right)  }\left(  \bullet\right)
\otimes1_{\left(  2\right)  }\right)  \in\operatorname{Hom}_{\mathcal{C}%
}\left(  V,B\right)  $. Now define
\begin{align*}
\xi_{V}:\operatorname{Nat}\left(  V\otimes_{t}id_{\mathcal{C}},id_{\mathcal{C}%
}\right)   & \rightarrow\operatorname{Hom}_{\mathcal{C}}\left(  V,B\right)
,\\
\delta & \mapsto\delta_{H}\left(  1_{\left(  1\right)  }\left(  \bullet
\right)  \otimes1_{\left(  2\right)  }\right)  .
\end{align*}
Finally, we show that $\xi_{V}$ is the inverse for $\theta_{V}$. If
$f\in\operatorname{Hom}_{\mathcal{C}}\left(  V,B\right)  $, then for $v\in V$,%
\begin{align*}
\xi_{V}\left(  \theta_{V}\left(  f\right)  \right)  \left(  v\right)   &
=\theta_{V}\left(  f\right)  _{H}\left(  1_{\left(  1\right)  }v\otimes
1_{\left(  2\right)  }\right)  =f\left(  1_{\left(  1\right)  }v\right)
1_{\left(  2\right)  }=\left(  1_{\left(  1\right)  }\cdot_{ad}f\left(
v\right)  \right)  1_{\left(  2\right)  }\\
& =1_{\left(  1\right)  }f\left(  v\right)  S\left(  1_{\left(  2\right)
}\right)  1_{\left(  3\right)  }=f\left(  v\right)  1_{\left(  1\right)
}S\left(  1_{\left(  2\right)  }\right)  1_{\left(  3\right)  }=f\left(
v\right)  .
\end{align*}
Conversely, if $\delta\in\operatorname{Nat}\left(  V\otimes id_{\mathcal{C}%
},id_{\mathcal{C}}\right)  $, then%
\begin{align*}
\theta_{V}\left(  \xi_{V}\left(  \delta\right)  \right)  _{X}\left(
1_{\left(  1\right)  }v\otimes1_{\left(  2\right)  }x\right)   & =\alpha
_{X}\left(  \xi_{V}\left(  \delta\right)  \left(  1_{\left(  1\right)
}v\right)  \otimes1_{\left(  2\right)  }x\right)  =\alpha_{X}\left(
1_{\left(  1\right)  }\cdot_{ad}\xi_{V}\left(  \delta\right)  \left(
v\right)  \otimes1_{\left(  2\right)  }x\right) \\
& =\xi_{V}\left(  \delta\right)  \left(  v\right)  x=\delta_{H}\left(
1_{\left(  1\right)  }v\otimes1_{\left(  2\right)  }\right)  x=\delta
_{X}\left(  1_{\left(  1\right)  }v\otimes1_{\left(  2\right)  }x\right)  .
\end{align*}
Thus $\xi_{V}=\theta_{V}^{-1}$.
\end{proof}

We will show that $B=U\left(  \mathcal{C}\right)  $. In fact, the
representable conditions for modules, as stated in Section \ref{Section Center
of BRC}, are satisfied. To present the reconstruction, we need the inverse of
the natural transformation $\theta^{2}$ determined by the diagram (\ref{eq
deftheta}).

\begin{lemma}
For any $V\in\mathcal{C}$, the morphism%
\begin{align*}
& \theta_{V}^{2}:\operatorname{Hom}_{\mathcal{C}}\left(  V,B\otimes
_{t}B\right)  \rightarrow\operatorname{Nat}\left(  V\otimes_{t}id_{\mathcal{C}%
}{}^{\otimes_{t}2},id_{\mathcal{C}}{}^{\otimes_{t}2}\right) \\
& \theta_{V}^{2}\left(  f\right)  _{X,Y}=\left(  \alpha_{X}\otimes_{t}%
\alpha_{Y}\right)  \left(  id_{B}\otimes_{t}c_{B,X}\otimes_{t}id_{Y}\right)
\left(  f\otimes_{t}id_{X}\otimes_{t}id_{Y}\right)  ,\text{ }X,Y\in\mathcal{C}%
\end{align*}
is an isomorphism with inverse
\begin{align*}
& \xi_{V}^{2}:\operatorname{Nat}\left(  V\otimes_{t}id_{\mathcal{C}}%
{}^{\otimes_{t}2},id_{\mathcal{C}}{}^{\otimes_{t}2}\right)  \rightarrow
\operatorname{Hom}_{\mathcal{C}}\left(  V,B\otimes_{t}B\right) \\
& \xi_{V}^{2}\left(  \delta\right)  \left(  v\right)  =v_{\left[  1\right]
}S\left(  R^{2}\right)  \otimes R^{1}\cdot_{ad}v_{\left[  2\right]  },
\end{align*}
where $v_{\left[  1\right]  }\otimes v_{\left[  2\right]  }=\delta
_{H,H}\left(  1_{\left(  1\right)  }v\otimes1_{\left(  2\right)  }%
\otimes1_{\left(  3\right)  }\right)  \in H\otimes_{t}H$.
\end{lemma}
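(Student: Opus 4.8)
The plan is to adapt the proof of Lemma~\ref{lemma theta is iso} to two variables, the only new feature being that the braiding $c_{B,X}$ occurring in $\theta_{V}^{2}$ forces an $R$-twist into the formula for the inverse --- precisely the twist by which the coproduct $\Delta_{R}$ of Example~\ref{ex QT HA} differs from $\Delta_{H}$. First I would note that $\theta_{V}^{2}(f)$ is genuinely a natural transformation, which is immediate from the naturality of $\alpha$ (established above) and of $c$. Next I would record the two-variable analogue of (\ref{eq_naturaldelta1}): for $X,Y\in\mathcal{C}$, $x\in X$, $y\in Y$ the right translations $x_{r}:H\to X$, $h\mapsto hx$ and $y_{r}:H\to Y$, $h\mapsto hy$ are morphisms in $\mathcal{C}$, so naturality of $\delta\in\operatorname{Nat}\left(V\otimes_{t}id_{\mathcal{C}}{}^{\otimes_{t}2},id_{\mathcal{C}}{}^{\otimes_{t}2}\right)$ under $x_{r}\otimes_{t}y_{r}$ gives
\[
\delta_{X,Y}\left(1_{(1)}v\otimes 1_{(2)}x\otimes 1_{(3)}y\right)=v_{[1]}x\otimes v_{[2]}y,\qquad v_{[1]}\otimes v_{[2]}=\delta_{H,H}\left(1_{(1)}v\otimes 1_{(2)}\otimes 1_{(3)}\right).
\]
Thus $\delta$ is completely determined by the assignment $v\mapsto v_{[1]}\otimes v_{[2]}\in H\otimes_{t}H$, and specializing $X=Y=H$ and using (\ref{WHArelation5}) exactly as in Lemma~\ref{lemma theta is iso} shows that $v_{[1]}\otimes v_{[2]}$ is suitably balanced over $H_{s}$.

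I would then check that $\xi_{V}^{2}$ is well defined, i.e. that $\xi_{V}^{2}(\delta)(v)=v_{[1]}S(R^{2})\otimes R^{1}\ad v_{[2]}$ lies in $\operatorname{Hom}_{\mathcal{C}}\left(V,B\otimes_{t}B\right)$. For membership in $B\otimes_{t}B$ one combines the $H_{s}$-balancing of $v_{[1]}\otimes v_{[2]}$, the description $B=C_{H}(H_{s})=\{1_{(1)}hS(1_{(2)})\mid h\in H\}$, the identity $1_{(1)}S(1_{(2)})=1$, and the $R$-matrix relations (\ref{eR1})--(\ref{SotSR}) (notably $(\varepsilon_{s}\otimes id)(R)=\Delta(1)$ and $(S\otimes id)(R)=\bar R$) to see that both tensorands centralize $H_{s}$ and that the element is fixed by $\Delta(1)$. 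For $H$-linearity of $\xi_{V}^{2}(\delta)$ one uses the $H$-linearity of $\delta_{H,H}$ from the previous step, that $\ad$ is a module action, and the intertwining relation $R\Delta(h)=\Delta^{op}(h)R$; this is exactly the point at which the arrangement $v_{[1]}S(R^{2})\otimes R^{1}\ad v_{[2]}$, rather than a naive use of $\Delta_{H}$, is forced, mirroring why $\Delta_{R}$ is a morphism in $\mathcal{C}$ in Example~\ref{ex QT HA}.

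Finally I would verify that the two composites are the identity. For $\xi_{V}^{2}\theta_{V}^{2}=\mathrm{id}$: given $f\in\operatorname{Hom}_{\mathcal{C}}\left(V,B\otimes_{t}B\right)$, write $f(v)=f(v)^{1}\otimes f(v)^{2}$, expand $\theta_{V}^{2}(f)_{H,H}\left(1_{(1)}v\otimes 1_{(2)}\otimes 1_{(3)}\right)$ using the definitions of $\alpha$ and of $c_{B,H}$ (so the braiding produces a factor $R^{1}\otimes R^{2}$), feed the result through the $R$-twist defining $\xi_{V}^{2}$, and collapse using $R\bar R=\Delta^{op}(1)$, $\bar R R=\Delta(1)$, the counit axioms of $H$, and $f(v)\in B\otimes_{t}B$ to recover $f(v)$. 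For $\theta_{V}^{2}\xi_{V}^{2}=\mathrm{id}$: by the first paragraph it suffices to evaluate $\theta_{V}^{2}(\xi_{V}^{2}(\delta))_{H,H}$ on $1_{(1)}v\otimes 1_{(2)}\otimes 1_{(3)}$ and check it equals $v_{[1]}\otimes v_{[2]}$, which is the same computation run in reverse, again invoking (\ref{eR1})--(\ref{SotSR}) and (\ref{WHArelation1})--(\ref{WHArelation5}). I expect the main obstacle to be purely the weak-Hopf bookkeeping --- tracking the several copies of $\Delta(1)$ introduced by the truncated tensor product $\otimes_{t}$ and by the counital maps $\varepsilon_{s},\varepsilon_{t}$ while simultaneously simplifying products of $R$ and $\bar R$; the conceptual content, that the $R$-twist inverts the braiding occurring in $\theta_{V}^{2}$, is already transparent in the ordinary quasi-triangular Hopf algebra case.
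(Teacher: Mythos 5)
Your plan follows the paper's proof essentially step for step: the paper likewise uses naturality under the right-translation morphisms $H\to X$, $H\to Y$ to reduce $\delta$ to $v_{[1]}\otimes v_{[2]}=\delta_{H,H}\left(1_{(1)}v\otimes 1_{(2)}\otimes 1_{(3)}\right)$, derives the equivariance identity $(hv)_{[1]}\otimes(hv)_{[2]}=h_{(1)}v_{[1]}S(h_{(3)})\otimes h_{(2)}\cdot_{ad}v_{[2]}$ to check that $\xi_{V}^{2}(\delta)$ lands in $B\otimes_{t}B$ and is $H$-linear, and then verifies both composites by direct computation with $R\bar R=\Delta^{op}(1)$, $\bar R R=\Delta(1)$ and the weak-Hopf unit identities. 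The proposal is correct and identifies all the ingredients the paper actually uses, so nothing further is needed beyond carrying out the stated bookkeeping.
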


\begin{proof}
If $f\in\operatorname{Hom}_{\mathcal{C}}\left(  V,B\otimes_{t}B\right)  $, for
$X,Y\in\mathcal{C}$, $x\in X$, $y\in Y$, $v\in V$,
\begin{align*}
\theta_{V}^{2}\left(  f\right)  _{X,Y}\left(  1_{\left(  1\right)  }%
v\otimes1_{\left(  2\right)  }x\otimes1_{\left(  3\right)  }y\right)   &
=1_{\left(  1\right)  }f\left(  v\right)  ^{\left[  1\right]  }R^{2}%
x\otimes1_{\left(  2\right)  }\left(  R^{1}\cdot_{ad}f\left(  v\right)
^{\left[  2\right]  }\right)  y\\
& =f\left(  v\right)  ^{\left[  1\right]  }1_{\left(  1\right)  }R^{2}%
x\otimes1_{\left(  2\right)  }S\left(  1_{\left(  3\right)  }\right)  \left(
R^{1}\cdot_{ad}f\left(  v\right)  ^{\left[  2\right]  }\right)  y\\
& =f\left(  v\right)  ^{\left[  1\right]  }1_{\left(  1\right)  }R^{2}%
x\otimes\left(  \left(  1_{\left(  2\right)  }R^{1}\right)  \cdot_{ad}f\left(
v\right)  ^{\left[  2\right]  }\right)  y\\
& =f\left(  v\right)  ^{\left[  1\right]  }R^{2}x\otimes\left(  R^{1}%
\cdot_{ad}f\left(  v\right)  ^{\left[  2\right]  }\right)  y,
\end{align*}
where $f\left(  v\right)  =f\left(  v\right)  ^{\left[  1\right]  }\otimes
f\left(  v\right)  ^{\left[  2\right]  }\in B\otimes_{t}B$. Given $\delta
\in\operatorname{Nat}\left(  V\otimes_{t}id_{\mathcal{C}}{}^{\otimes_{t}%
2},id_{\mathcal{C}}{}^{\otimes_{t}2}\right)  $, from the naturality of
$\delta$, we have
\begin{equation}
\delta_{X,Y}\left(  1_{\left(  1\right)  }v\otimes1_{\left(  2\right)
}x\otimes1_{\left(  3\right)  }y\right)  =v_{\left[  1\right]  }x\otimes
v_{\left[  2\right]  }y.\label{eq_naturaldelta2}%
\end{equation}
Applying (\ref{eq_naturaldelta2}) with $X=Y=H$, we have
\begin{align}
\left(  hv\right)  _{\left[  1\right]  }\otimes\left(  hv\right)  _{\left[
2\right]  }  & =\delta_{H,H}\left(  1_{\left(  1\right)  }hv\otimes1_{\left(
2\right)  }\otimes1_{\left(  3\right)  }\right)  =\delta_{H,H}\left(
h_{\left(  1\right)  }v\otimes h_{\left(  2\right)  }S\left(  h_{\left(
5\right)  }\right)  \otimes h_{\left(  3\right)  }S\left(  h_{4}\right)
\right) \nonumber\\
& =h_{\left(  1\right)  }\delta_{H,H}\left(  1_{\left(  1\right)  }%
v\otimes1_{\left(  2\right)  }S\left(  h_{\left(  3\right)  }\right)
\otimes1_{\left(  3\right)  }S\left(  h_{2}\right)  \right)  =h_{\left(
1\right)  }\left(  v_{\left[  1\right]  }S\left(  h_{\left(  3\right)
}\right)  \otimes v_{\left[  2\right]  }S\left(  h_{2}\right)  \right)
\nonumber\\
& =h_{\left(  1\right)  }v_{\left[  1\right]  }S\left(  h_{\left(  3\right)
}\right)  \otimes h_{\left(  2\right)  }\cdot_{ad}v_{\left[  2\right]
},\label{eq5}%
\end{align}
for $h\in H$. Take $h=1$, we get
\begin{equation}
v_{\left[  1\right]  }\otimes v_{\left[  2\right]  }=1_{\left(  1\right)
}v_{\left[  1\right]  }S\left(  1_{\left(  3\right)  }\right)  \otimes
1_{\left(  2\right)  }\cdot_{ad}v_{\left[  2\right]  }.\label{eq6}%
\end{equation}
Then
\begin{align*}
1_{\left(  1\right)  }\cdot_{ad}\left(  v_{\left[  1\right]  }S\left(
R^{2}\right)  \right)  \otimes1_{\left(  2\right)  }\cdot_{ad}\left(
R^{1}\cdot_{ad}v_{\left[  2\right]  }\right)   & =1_{\left(  1\right)
}v_{\left[  1\right]  }S\left(  1_{\left(  2\right)  }R^{2}\right)
\otimes\left(  1_{\left(  3\right)  }R^{1}\right)  \cdot_{ad}v_{\left[
2\right]  }\\
& =1_{\left(  1\right)  }v_{\left[  1\right]  }S\left(  1_{\left(  3\right)
}\right)  S\left(  R^{2}\right)  \otimes R^{1}\cdot_{ad}\left(  1_{\left(
2\right)  }\cdot_{ad}v_{\left[  2\right]  }\right) \\
& \overset{(\ref{eq6})}{=}v_{\left[  1\right]  }S\left(  R^{2}\right)  \otimes
R^{1}\cdot_{ad}v_{\left[  2\right]  },
\end{align*}
and thus $\xi_{V}^{2}\left(  \delta\right)  \left(  v\right)  \in B\otimes
_{t}B$.

Next, we show that $\xi_{V}^{2}\left(  \delta\right)  \in\operatorname{Hom}%
_{\mathcal{C}}\left(  V,B\otimes_{t}B\right)  $. For $h\in H$, $v\in V$,%
\begin{align*}
\xi_{V}^{2}\left(  \delta\right)  \left(  hv\right)   & =\left(  hv\right)  _{
\left[  1\right]  }S\left(  R^{2}\right)  \otimes R^{1}\cdot_{ad}\left(
hv\right)  _{\left[  2\right]  }\\
& \overset{(\ref{eq5})}{=}h_{\left(  1\right)  }v_{\left[  1\right]  }S\left(
h_{\left(  3\right)  }\right)  S\left(  R^{2}\right)  \otimes\left(
R^{1}h_{\left(  2\right)  }\right)  \cdot_{ad}v_{\left[  2\right]  }\\
& =h_{\left(  1\right)  }v_{\left[  1\right]  }S\left(  h_{\left(  2\right)
}R^{2}\right)  \otimes\left(  h_{\left(  3\right)  }R^{1}\right)  \cdot
_{ad}v_{\left[  2\right]  }\\
& =h_{\left(  1\right)  }\cdot_{ad}\left(  v_{\left[  1\right]  }S\left(
R^{2}\right)  \right)  \otimes h_{\left(  2\right)  }\cdot_{ad}\left(
R^{1}\cdot_{ad}v_{\left[  2\right]  }\right) \\
& =h\xi_{V}^{2}\left(  \delta\right)  \left(  v\right)  ,
\end{align*}
as expected, and thus $\xi_{V}^{2}:\operatorname{Nat}\left(  V\otimes
_{t}id_{\mathcal{C}}{}^{\otimes_{t}2},id_{\mathcal{C}}{}^{\otimes_{t}%
2}\right)  \rightarrow\operatorname{Hom}_{\mathcal{C}}\left(  V,B\otimes
_{t}B\right)  $ is well-defined.

Now, we only need to check that $\xi_{V}^{2}$ and $\theta_{V}^{2}$ are mutual
inverses. First, let $f\in\operatorname{Hom}_{\mathcal{C}}\left(
V,B\otimes_{t}B\right)  $, then $\theta_{V}^{2}\left(  f\right)  _{H,H}\left(
1_{\left(  1\right)  }v\otimes1_{\left(  2\right)  }\otimes1_{\left(
3\right)  }\right)  =f\left(  v\right)  ^{\left[  1\right]  }R^{2}%
\otimes\left(  R^{1}\cdot_{ad}f\left(  v\right)  ^{\left[  2\right]  }\right)
$. So we have that
\begin{align*}
\xi_{V}^{2}\left(  \theta_{V}^{2}\left(  f\right)  \right)  \left(  v\right)
& =f\left(  v\right)  ^{\left[  1\right]  }{R_{1}}^{2}S\left(  {R_{2}}%
^{2}\right)  \otimes{R_{2}}^{1}\cdot_{ad}\left(  {R_{1}}^{1}\cdot_{ad}f\left(
v\right)  ^{\left[  2\right]  }\right) \\
& =f\left(  v\right)  ^{\left[  1\right]  }S\left(  {R_{2}}^{2}{R_{1}}%
^{2}\right)  \otimes\left(  {R_{2}}^{1}S\left(  {R_{1}}^{1}\right)  \right)
\cdot_{ad}f\left(  v\right)  ^{\left[  2\right]  }\\
& =f\left(  v\right)  ^{\left[  1\right]  }S\left(  1_{\left(  1\right)
}\right)  \otimes1_{\left(  2\right)  }\cdot_{ad}f\left(  v\right)  ^{\left[
2\right]  }\\
& =1_{\left(  1\right)  }\cdot_{ad}f\left(  v\right)  ^{\left[  1\right]
}\otimes1_{\left(  2\right)  }\cdot_{ad}f\left(  v\right)  ^{\left[  2\right]
}=f\left(  v\right)  ,
\end{align*}
for $v\in V$. On the other hand, for $\delta\in\operatorname{Nat}\left(
V\otimes_{t}id_{\mathcal{C}}{}^{\otimes_{t}2},id_{\mathcal{C}}{}^{\otimes
_{t}2}\right)  $,
\begin{align*}
\theta_{V}^{2}\left(  \xi_{V}^{2}\left(  \delta\right)  \right)  _{X,Y}\left(
1_{\left(  1\right)  }v\otimes1_{\left(  2\right)  }x\otimes1_{\left(
3\right)  }y\right)   & =\left(  \xi_{V}^{2}\left(  \delta\right)  \left(
v\right)  \right)  ^{\left[  1\right]  }R^{2}x\otimes\left(  R^{1}\cdot
_{ad}\left(  \xi_{V}^{2}\left(  \delta\right)  \left(  v\right)  \right)
^{\left[  2\right]  }\right)  y\\
& =v_{\left[  1\right]  }S\left(  {R_{2}}^{2}\right)  {R_{1}}^{2}%
x\otimes\left(  {R_{1}}^{1}\cdot_{ad}\left(  {R_{2}}^{1}\cdot_{ad}v_{\left[
2\right]  }\right)  \right)  y\\
& =v_{\left[  1\right]  }S\left(  {R_{1}}^{2}{R_{2}}^{2}\right)
x\otimes\left(  \left(  S\left(  {R_{1}}^{1}\right)  {R_{2}}^{1}\right)
\cdot_{ad}v_{\left[  2\right]  }\right)  y\\
& =v_{\left[  1\right]  }S\left(  1_{\left(  2\right)  }\right)
x\otimes\left(  1_{\left(  1\right)  }\cdot_{ad}v_{\left[  2\right]  }\right)
y\\
& \overset{(\ref{eq6})}{=}1_{\left(  1^{\prime}\right)  }v_{\left[  1\right]
}S\left(  1_{\left(  3^{\prime}\right)  }\right)  S\left(  1_{\left(
2\right)  }\right)  x\otimes\left(  1_{\left(  1\right)  }\cdot_{ad}\left(
1_{\left(  2^{\prime}\right)  }\cdot_{ad}v_{\left[  2\right]  }\right)
\right)  y\\
& =1_{\left(  1^{\prime}\right)  }v_{\left[  1\right]  }S\left(  1_{\left(
3^{\prime}\right)  }\right)  x\otimes\left(  1_{\left(  2^{\prime}\right)
}\cdot_{ad}v_{\left[  2\right]  }\right)  y\\
& =v_{\left[  1\right]  }x\otimes v_{\left[  2\right]  }y\overset
{(\ref{eq_naturaldelta2})}{=}\delta_{X,Y}\left(  1_{\left(  1\right)
}v\otimes1_{\left(  2\right)  }x\otimes1_{\left(  3\right)  }y\right)  .
\end{align*}
Thus we have $\xi_{V}^{2}=\left(  \theta_{V}^{2}\right)  ^{-1}$.
\end{proof}

We summarize the above discussion in the next theorem, and provide the
concrete multiplication, comultiplication, etc.

\begin{theorem}
\label{thm aubragro of H}Let $\left(  H,R\right)  $ be a quasi-triangular weak
Hopf algebra. Then the automorphism braided group of $\mathcal{C}={}%
_{H}\mathcal{M}$ is the object $B=\left(  C_{H}\left(  H_{s}\right)
,\cdot_{ad}\right)  \in\mathcal{C}$ with Hopf algebra structure in
$\mathcal{C}$ defined as follows.

\begin{enumerate}
\item The multiplication $m_{B}:B\otimes_{t}B\rightarrow B$ and the unit
$u_{B}:H_{t}\rightarrow B$ are defined by
\[
m_{B}\left(  1_{\left(  1\right)  }\cdot_{ad}a\otimes1_{\left(  2\right)
}\cdot_{ad}b\right)  =ab,\text{ }u_{B}\left(  z\right)  =z,\text{\quad}\forall
a,b\in B,z\in H_{t}.
\]

\item The comultiplication $\Delta_{B}:B\rightarrow B\otimes_{t}B$ and the
counit $\varepsilon_{B}:B\rightarrow H_{t}$ are defined by%
\[
\Delta_{B}\left(  b\right)  =b_{\left(  1\right)  }S\left(  R^{2}\right)
\otimes R^{1}\cdot_{ad}b_{\left(  2\right)  },\text{ }\varepsilon_{B}\left(
b\right)  =\varepsilon_{t}\left(  b\right)  .
\]

\item The antipode $S_{B}:B\rightarrow B$ is defined by%
\[
S_{B}\left(  b\right)  ={R}^{2}S\left(  R^{1}\cdot_{ad}b\right)  .
\]

\end{enumerate}
\end{theorem}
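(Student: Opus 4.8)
The plan is to obtain the theorem by reading off every structure map of $U(\mathcal{C})$ from the universal diagrams (\ref{Fig1 stru of B})--(\ref{Fig3 stru of B}) that characterize them, using the explicit inverse isomorphisms $\xi_V=\theta_V^{-1}$ and $\xi_V^2=(\theta_V^2)^{-1}$ already established. First I would note that Lemma~\ref{lemma theta is iso}, the subsequent lemma, and the analogous (same-technique) statement for $\theta_V^3$ supply exactly the representability data of Section~\ref{Section Center of BRC} for $\mathcal{C}={}_H\mathcal{M}$: the object $B=(C_H(H_s),\cdot_{ad})$, the natural isomorphism $\theta_V$, and the bijectivity of $\theta_V^2,\theta_V^3$. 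Hence $U(\mathcal{C})$ exists, and by construction its underlying object is $B$ with canonical action $\alpha_X(1_{(1)}\cdot_{ad}b\otimes_t 1_{(2)}x)=bx$; so it remains only to identify $m_B,u_B,\Delta_B,\varepsilon_B,S_B$.

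For the algebra structure, diagram (\ref{Fig1 stru of B}) says precisely that $\alpha$ turns every $X\in\mathcal{C}$ into a $B$-module, i.e. $\alpha_X(m_B\otimes_t id_X)=\alpha_X(id_B\otimes_t\alpha_X)$ and $\alpha_X(u_B\otimes_t id_X)=id_X$. Since $\cdot_{ad}$ is an algebra endomorphism of $H$, the assignment $1_{(1)}\cdot_{ad}a\otimes_t 1_{(2)}\cdot_{ad}b\mapsto ab$ is a well-defined morphism $m_B\colon B\otimes_t B\to B$ in $\mathcal{C}$, and associativity of multiplication in $H$ shows it satisfies the displayed identity; injectivity of $\theta_{B\otimes_t B}$ (equivalently, evaluating $\xi^2$) then forces it to be the reconstructed $m_B$. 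The same argument applied to the unit object gives $u_B=\xi_{H_t}(\text{left action})$, i.e. $u_B(z)=z$.

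For the coalgebra structure I would use (\ref{Fig2 stru of B}): $\Delta_B=\xi_B^2(\alpha_{\bullet\otimes_t\bullet})$ and $\varepsilon_B=\alpha_{H_t}$. Feeding $\delta=\alpha_{\bullet\otimes_t\bullet}$ into the explicit formula for $\xi_V^2$ with $V=B$, one has $b_{[1]}\otimes b_{[2]}=\alpha_{H\otimes_t H}(1_{(1)}\cdot_{ad}b\otimes_t 1_{(2)}\otimes_t 1_{(3)})=b_{(1)}1_{(2)}\otimes b_{(2)}1_{(3)}$, which collapses to $b_{(1)}\otimes b_{(2)}$ by the weak bialgebra axioms and $b\in C_H(H_s)$ (this is where identity (\ref{eq6}) from the $\xi^2$-lemma is used), whence $\Delta_B(b)=b_{(1)}S(R^2)\otimes R^1\cdot_{ad}b_{(2)}$. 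Evaluating $\alpha_{H_t}$ on the unit through the identification $B\otimes_t H_t\cong B$ and using the $H$-action $h\cdot z=\varepsilon_t(hz)$ on $H_t$ yields $\varepsilon_B(b)=\varepsilon_t(b)$.

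The antipode is the one genuinely delicate point, and I expect it to be the main obstacle. By (\ref{Fig3 stru of B}), $S_B=\xi_B$ applied to the natural transformation assembled from $\alpha_{X^{*}}$, the braiding, and $ev_X'$; the difficulty is that $X^{*}=\operatorname{Hom}_k(X,k)$ carries the $S$-twisted $H$-action, and that $ev_X',coev_X'$ must be expanded via $ev_X'=ev_X\,c_{X,X^{*}}$, $coev_X'=c_{X,X^{*}}^{-1}\,coev_X$ together with the explicit weak-Hopf $ev_X,coev_X$ and the braiding formula. Evaluating at $X=H$ and applying $\xi_V(\delta)=\delta_H(1_{(1)}(\bullet)\otimes 1_{(2)})$, then repeatedly invoking the $R$-matrix identities (\ref{eR1})--(\ref{SotSR}) and the counital relations (\ref{WHArelation1})--(\ref{WHArelation5}), should collapse the expression to $S_B(b)=R^2S(R^1\cdot_{ad}b)$. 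Finally, associativity of $m_B$, coassociativity and counitality of $\Delta_B$, the bialgebra compatibility, and the antipode axioms need not be re-verified by hand: they hold for $U(\mathcal{C})$ by Majid's reconstruction, so once the formulas above are identified the theorem follows; alternatively one may check them directly from these formulas using the same $R$-matrix manipulations.
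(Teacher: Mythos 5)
Your proposal is correct and follows essentially the same route as the paper: read off $m_B,u_B,\varepsilon_B$ from (\ref{Fig1 stru of B})--(\ref{Fig2 stru of B}) by evaluating on the regular representation, obtain $\Delta_B$ by applying the explicit inverse $\xi_B^2$ to $\alpha_{\bullet\otimes_t\bullet}$, and extract $S_B$ from (\ref{Fig3 stru of B}) via $\xi_B$ together with the $R$-matrix and counital identities, with the Hopf axioms guaranteed by the reconstruction itself. The only cosmetic slip is describing the antipode diagram in terms of $ev_X'$ and $coev_X'$, whereas (\ref{Fig3 stru of B}) is built from $ev_X$, $coev_X$ and the braiding $c_{B,X}$; this does not affect the argument.
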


\begin{proof}
To show the theorem, we use $\alpha,\theta,\theta^{2}$ to compute the Hopf
algebra structure on $B$, determined by the diagrams (\ref{Fig1 stru of
B}--\ref{Fig3 stru of B}). As before, we use the unit isomorphisms $l_{X}$ and
$r_{X}$ identifying $H_{t}\otimes_{t}X$ and $X\otimes_{t}H_{t}$ with $X $, for
any $X\in\mathcal{C}$. According to (\ref{Fig1 stru of B}), the multiplication
and the unit are characterized by%
\[
m_{B}\left(  1_{\left(  1\right)  }\cdot_{ad}a\otimes1_{\left(  2\right)
}\cdot_{ad}b\right)  x=a\left(  bx\right)  ,\quad u_{B}\left(  z\right)
x=zx,\text{\quad}\forall a,b\in B,\text{\quad}z\in H_{t},\text{\quad}x\in X.
\]
Take $X=H$ and $x=1$ be the unit of $H$, then
\[
m_{B}\left(  1_{\left(  1\right)  }\cdot_{ad}a\otimes1_{\left(  2\right)
}\cdot_{ad}b\right)  =ab,\quad u_{B}\left(  z\right)  =z.
\]
And the counit is characterized according to (\ref{Fig2 stru of B}) by$\quad$%
\[
\quad\varepsilon_{B}\left(  b\right)  =b\cdot1=\varepsilon_{t}\left(
b\right)  ,\text{ }\forall b\in B.
\]
Again, the comultiplication is characterized according to (\ref{Fig2 stru of
B}) by
\[
\theta_{B}^{2}\left(  \Delta_{B}\right)  _{X,Y}\left(  1_{\left(  1\right)
}\cdot_{ad}b\otimes1_{\left(  2\right)  }x\otimes1_{\left(  3\right)
}y\right)  =b_{\left(  1\right)  }x\otimes b_{\left(  2\right)  }y,\text{
}\forall x\in X,y\in Y,
\]
so we apply $\xi_{B}^{2}$, and get
\[
\Delta_{B}\left(  b\right)  =\xi_{B}^{2}\left(  \theta_{B}^{2}\left(
\Delta_{B}\right)  \right)  \left(  b\right)  =b_{\left(  1\right)  }S\left(
R^{2}\right)  \otimes R^{1}\cdot_{ad}b_{\left(  2\right)  },\text{ }\forall
b\in B.
\]
Finally, the antipode $S_{B}$ is characterized according to (\ref{Fig3 stru of
B}) by
\begin{align*}
& \theta_{B}\left(  S_{B}\right)  _{X}\left(  1_{\left(  1\right)  }\cdot
_{ad}b\otimes1_{\left(  2\right)  }x\right) \\
& =r_{X}\left(  id_{X}\otimes_{t}ev_{X}\right)  \left(  id_{X}\otimes
_{t}\alpha_{X^{\ast}}\otimes_{t}id_{X}\right)  \left(  c_{B,X}\otimes
_{t}id_{X^{\ast}}\otimes_{t}id_{X}\right)  \left(  1_{\left(  1\right)  }%
\cdot_{ad}b\otimes_{t}coev_{X}\left(  1_{\left(  2\right)  }\cdot1\right)
\otimes_{t}1_{\left(  3\right)  }x\right) \\
& =r_{X}\left(  id_{X}\otimes_{t}ev_{X}\right)  \left(  id_{X}\otimes
_{t}\alpha_{X^{\ast}}\otimes_{t}id_{X}\right)  \left(  \sum_{i}c_{B,X}\left(
1_{\left(  1\right)  }\cdot_{ad}b\otimes1_{\left(  2\right)  }x_{i}\right)
\otimes1_{\left(  3\right)  }x_{i}^{\ast}\otimes1_{\left(  4\right)  }x\right)
\\
& =r_{X}\left(  id_{X}\otimes_{t}ev_{X}\right)  \left(  \sum_{i}1_{\left(
1\right)  }R^{2}x_{i}\otimes\alpha_{X^{\ast}}\left(  \left(  1_{\left(
2\right)  }R^{1}\right)  \cdot_{ad}b\otimes1_{\left(  3\right)  }x_{i}^{\ast
}\right)  \otimes1_{\left(  4\right)  }x\right) \\
& =r_{X}\left(  \sum_{i}1_{\left(  1\right)  }R^{2}x_{i}\otimes ev_{X}\left(
1_{\left(  2\right)  }\left(  R^{1}\cdot_{ad}b\right)  x_{i}^{\ast}%
\otimes1_{\left(  3\right)  }x\right)  \right) \\
& =r_{X}\left(  \sum_{i}1_{\left(  1\right)  }R^{2}x_{i}\otimes1_{\left(
2\right)  }\cdot\left\langle \left(  R^{1}\cdot_{ad}b\right)  x_{i}^{\ast
},1_{\left(  1^{\prime}\right)  }x\right\rangle 1_{\left(  2^{\prime}\right)
}\right) \\
& =r_{X}\left(  1_{\left(  1\right)  }{R_{1}}^{2}{R_{2}}^{2}S\left(  {R_{1}%
}^{1}bS\left(  {R_{2}}^{1}\right)  \right)  1_{\left(  1^{\prime}\right)
}x\otimes1_{\left(  2\right)  }\cdot1_{\left(  2^{\prime}\right)  }\right) \\
& =S\left(  1_{\left(  2\right)  }\right)  {R_{1}}^{2}{R_{2}}^{2}S\left(
{R_{1}}^{1}bS\left(  {R_{2}}^{1}\right)  \right)  1_{\left(  1\right)  }x\\
& =S^{2}\left(  1_{\left(  1\right)  }\right)  {R_{1}}^{2}{R_{2}}^{2}S\left(
bS\left(  {R_{1}}^{2}\right)  \right)  S\left(  {R_{1}}^{1}\right)  S\left(
1_{\left(  2\right)  }\right)  x\\
& =1_{\left(  1\right)  }{R_{1}}^{2}{R_{2}}^{2}S\left(  bS\left(  {R_{1}}%
^{2}\right)  \right)  S\left(  1_{\left(  2\right)  }{R_{1}}^{1}\right)  x\\
& ={R_{1}}^{2}{R_{2}}^{2}S\left(  {R_{1}}^{1}bS\left(  {R_{1}}^{2}\right)
\right)  x\\
& =R^{2}S\left(  R^{1}\cdot_{ad}b\right)  x,
\end{align*}
where $\left\{  \left(  x_{i},x_{i}^{\ast}\right)  \right\}  _{i}$ is a dual
basis of $X$. Then one can apply $\xi_{B}$ to $\theta_{B}\left(  S_{B}\right)
$, getting the formula as stated.
\end{proof}

\begin{remark}
A braided Hopf algebra structure on $B=C_{H}\left(  H_{s}\right)  $ was
constructed explicitly in \cite{LiuGHBraided2012}. By Theorem \ref{thm
aubragro of H} we know that the braided Hopf algebra $B$ they given is exactly
the automorphism braided group of the braided multitensor category
$\mathcal{C}$.
\end{remark}

\subsection{Structure of Yetter-Drinfeld Modules over Quasi-triangular Weak
Hopf Algebras}

\label{sec-stru-YD}

In this last subsection, we study the structure of Yetter-Drinfeld modules
over a finite dimensional quasi-triangular weak Hopf algebra $\left(
H,R\right)  $. We will characterize the simple Yetter-Drinfeld modules in
${}_{H}^{H}\mathcal{YD}$ in the case when the category ${}_{H}^{H}\mathcal{YD}
$ is semisimple, extending the results in \cite{LiuZhu2019On} to a weak Hopf
algebra version.

B\"{o}hm \cite{Bohm2000Doi-Hopf} generalized the notion of Yetter-Drinfeld
modules to weak Hopf algebras. A left-left $H$-Yetter-Drinfeld modules $M$ is
a vector space with an $H$-action and an $H$-coaction satisfying the following
conditions:%
\begin{align*}
\rho\left(  m\right)   & =m_{\left\langle -1\right\rangle }\otimes
m_{\left\langle 0\right\rangle }\in H\otimes_{t}M,\\
h_{\left(  1\right)  }m_{\left\langle -1\right\rangle }\otimes h_{\left(
2\right)  }m_{\left\langle 0\right\rangle }  & =\left(  h_{\left(  1\right)
}m\right)  _{\left\langle -1\right\rangle }h_{\left(  2\right)  }%
\otimes\left(  h_{\left(  1\right)  }m\right)  _{\left\langle 0\right\rangle
},
\end{align*}
for all $h\in H,m\in M$. Denote by ${}_{H}^{H}\mathcal{YD}$ the category of
the left-left Yetter-Drinfeld module over $H$.

If $H$ is weak Hopf algebra with bijective antipode, then the category ${}%
_{H}^{H}\mathcal{YD}$ is a braided monoidal category, and it's isomorphic to
the left center $\mathcal{Z}_{l}\left(  _{H}\mathcal{M}\right)  $ as braided
monoidal categories (see \cite{Caenepeel2005Yetter}). Here is a brief
description of the connecting functors. For an object $\left(  M,\gamma
_{M,\bullet}\right)  \in\mathcal{Z}_{l}\left(  _{H}\mathcal{M}\right)  $, the
map
\begin{equation}
\rho:M\rightarrow H\otimes_{t}M,\text{ }\rho\left(  m\right)  =\gamma
_{M,H}\left(  1_{\left(  1\right)  }m\otimes1_{\left(  2\right)  }\right)
\label{eqrho}%
\end{equation}
gives a left $H$-coaction on $M$, which makes $M$ into a Yetter-Drinfeld
module in $_{H}^{H}\mathcal{YD}$. Conversely, for $M\in{}_{H}^{H}\mathcal{YD}%
$, define a natural transformation $\gamma_{M,\bullet}$ by
\begin{equation}
\gamma_{M,X}\left(  1_{\left(  1\right)  }m\otimes1_{\left(  2\right)
}x\right)  =m_{\left\langle -1\right\rangle }x\otimes m_{\left\langle
0\right\rangle },\text{ for }X\in{}_{H}\mathcal{M},x\in X,m\in
M,\label{eqgama}%
\end{equation}
then $\left(  M,\gamma_{M,\bullet}\right)  $ is an object of $\mathcal{Z}%
_{l}\left(  _{H}\mathcal{M}\right)  $.

Now let $\left(  H,R\right)  $ be a quasi-triangular weak Hopf algebra. In
Section \ref{sec-ABG-for-QT} we have proved that $B=C_{H}\left(  H_{s}\right)
$ is the automorphism braided group of $\mathcal{C}={}_{H}\mathcal{M}$. It was
shown in \cite{Zhu2015Braided} that the Yetter-Drinfeld module category
${}_{H}^{H}\mathcal{YD}$ is isomorphic to the category of left $B$-comodules
for the braided Hopf algebra $B=C_{H}\left(  H_{s}\right)  $. We now give a
categorical interpretation, as an application of Theorem \ref{theorem center
iso H-comod} and Theorem \ref{thm aubragro of H}.

\begin{proposition}
[{cf. \cite[Theorem 2.5]{Zhu2015Braided}}]\label{YD iso Bcomod}There is an
equivalence from the category $B$-$\operatorname*{Comod}_{\mathcal{C}}$ of
left $B$-comodules to the category ${}_{H}^{H}\mathcal{YD}$:%
\[
\mathcal{F}:B\text{-}\operatorname*{Comod}\nolimits_{\mathcal{C}}\rightarrow
{}_{H}^{H}\mathcal{YD},\text{ }\left(  M,\rho_{R}\right)  \mapsto\left(
M,\rho\right)  ,\text{ }%
\]
where the left $H$-coaction $\rho:M\rightarrow H\otimes_{t}M$ is defined by
$\rho\left(  m\right)  =\sum m^{\left\langle -1\right\rangle }R^{2}\otimes
R^{1}m^{\left\langle 0\right\rangle }$, for $m\in M$. The quasi-inverse of
$\mathcal{F}$ is
\[
\mathcal{G}:{}_{H}^{H}\mathcal{YD}\rightarrow{}B\text{-}\operatorname*{Comod}%
\nolimits_{\mathcal{C}},\text{ }\left(  M,\rho\right)  \mapsto\left(
M,\rho_{R}\right)  ,
\]
where the left $B$-coaction $\rho_{R}:M\rightarrow B\otimes_{t}M$ is defined
by $\rho_{R}\left(  m\right)  =\sum m_{\left\langle -1\right\rangle }S\left(
R^{2}\right)  \otimes R^{1}m_{\left\langle 0\right\rangle }$, for $m\in M$.
Here we use the notation $\rho_{R}\left(  m\right)  =m^{\left\langle
-1\right\rangle }\otimes m^{\left\langle 0\right\rangle }$ for left
$B$-coaction to distinguish the $H$-coaction $\rho\left(  m\right)
=m_{\left\langle -1\right\rangle }\otimes m_{\left\langle 0\right\rangle }$.
\end{proposition}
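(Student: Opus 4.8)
The plan is to obtain $\mathcal{F}$ as the composite of two equivalences that are already at hand: the equivalence $F\colon B\text{-}\operatorname*{Comod}_{\mathcal{C}}\to\mathcal{Z}_{l}(\mathcal{C})$ of Theorem~\ref{theorem center iso H-comod}, and the isomorphism $\mathcal{Z}_{l}({}_{H}\mathcal{M})\cong{}_{H}^{H}\mathcal{YD}$ recalled in \eqref{eqrho}--\eqref{eqgama}. Both of these are the identity on underlying vector spaces and on morphisms, so the only substantive point is to check that this composite carries a left $B$-comodule $(M,\rho_{R})$ to the Yetter--Drinfeld module whose $H$-coaction is the stated $\rho(m)=m^{\langle-1\rangle}R^{2}\otimes R^{1}m^{\langle0\rangle}$. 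Once that is done, the quasi-inverse functor $\mathcal{G}$ is forced, and one merely reads off its formula by inverting the $R$-twist.

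First I would compute $\gamma_{M,\bullet}=\varphi_{M,M}(\rho_{R})$ explicitly for $\mathcal{C}={}_{H}\mathcal{M}$. Using the description of $\varphi_{M,N}$ from the proof of Lemma~\ref{Lemma nat iso} as the composite of the rigidity isomorphism \eqref{eq1}, the representability isomorphism $\theta$ (whose concrete form is $\theta_{V}(f)_{X}=\alpha_{X}(f\otimes_{t}id_{X})$ by Lemma~\ref{lemma theta is iso}) and \eqref{eq2}, together with the explicit action $\alpha_{X}\colon 1_{(1)}\cdot_{ad}b\otimes 1_{(2)}x\mapsto bx$ and the braiding $c_{M,N}(1_{(1)}m\otimes1_{(2)}n)=R^{2}1_{(2)}n\otimes R^{1}1_{(1)}m$ on ${}_{H}\mathcal{M}$, I would unwind the diagram to obtain $\gamma_{M,X}(1_{(1)}m\otimes1_{(2)}x)=(m^{\langle-1\rangle}R^{2})x\otimes R^{1}m^{\langle0\rangle}$, where $\rho_{R}(m)=m^{\langle-1\rangle}\otimes m^{\langle0\rangle}$. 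Evaluating on the left regular module $X=H$ at $1_{H}$ and invoking \eqref{eqrho} then yields the $H$-coaction $\rho(m)=m^{\langle-1\rangle}R^{2}\otimes R^{1}m^{\langle0\rangle}$; along the way one checks that this lands in $H\otimes_{t}M$ using $R\in\Delta^{op}(1)(H\otimes_k H)\Delta(1)$ together with the identity for $\rho_{R}$ of the type \eqref{eq6}.

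Then I would verify that $\mathcal{F}$ and $\mathcal{G}$ are mutually inverse by a direct calculation: composing the two $R$-twists returns the identity because of the quasitriangular weak Hopf algebra relations $R\bar{R}=\Delta^{op}(1)$, $\bar{R}R=\Delta(1)$, $(S\otimes id)(R)=\bar{R}$ and $(S\otimes S)(R)=R$ from \eqref{eR1}--\eqref{SotSR}, in the same spirit as the identity $\xi_{V}^{2}\circ\theta_{V}^{2}=id$ was checked above. This computation simultaneously confirms that $\mathcal{G}(M,\rho)$ is genuinely a left $B$-comodule, i.e.\ that $\rho_{R}$ satisfies the comodule axioms with respect to $\Delta_{B}(b)=b_{(1)}S(R^{2})\otimes R^{1}\cdot_{ad}b_{(2)}$ of Theorem~\ref{thm aubragro of H}. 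Finally, compatibility with the right $\mathcal{C}$-module structures is inherited: $F$ is a $\mathcal{C}$-module functor by Theorem~\ref{theorem center iso H-comod}, and the center-to-$\mathcal{YD}$ isomorphism is monoidal.

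The main obstacle will be the first computation: translating the abstract diagrammatic definition of $\varphi_{M,M}$—which passes through rigidity and the auxiliary natural isomorphisms $\theta$ and $\theta^{2}$—into the concrete $R$-matrix formula in ${}_{H}\mathcal{M}$, while keeping careful track of the separability idempotent $\Delta(1)$ (so that the truncated tensor product $\otimes_{t}$, rather than $\otimes_{k}$, is respected) and of where the counital maps $\varepsilon_{t},\varepsilon_{s}$ enter. An alternative, which bypasses the abstract machinery at the cost of more hands-on weak Hopf algebra bookkeeping, is to take the displayed formulas for $\mathcal{F}$ and $\mathcal{G}$ as definitions and verify directly that they are well-defined functors between ${}_{H}^{H}\mathcal{YD}$ and $B\text{-}\operatorname*{Comod}_{\mathcal{C}}$, are quasi-inverse, and are $\mathcal{C}$-module functors; the required identities are essentially the same, but organized around the Yetter--Drinfeld and comodule axioms rather than around $\varphi$.
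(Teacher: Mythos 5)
Your proposal follows essentially the same route as the paper: both factor $\mathcal{F}$ through the equivalence of Theorem \ref{theorem center iso H-comod} and the isomorphism $\mathcal{Z}_{l}({}_{H}\mathcal{M})\cong{}_{H}^{H}\mathcal{YD}$ of \eqref{eqrho}--\eqref{eqgama}, compute $\varphi_{M,M}(\rho_{R})$ explicitly on the regular module to read off the $H$-coaction, and then verify the quasi-inverse by the direct $R$-matrix computation showing $\varphi_{M,M}(\rho_{R})_{X}=\gamma_{M,X}$ via the relations \eqref{eR1}--\eqref{SotSR}. The plan is correct and matches the paper's proof in both structure and the key computations.
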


\begin{proof}
Note that $\mathcal{Z}_{l}\left(  \mathcal{C}\right)  \cong{}_{H}%
^{H}\mathcal{YD}$. For any $\left(  M,\rho_{R}\right)  \in B$%
-$\operatorname*{Comod}\nolimits_{\mathcal{C}}$, one can easily check that
\[
\varphi_{M,M}\left(  \rho_{R}\right)  _{H}\left(  1_{\left(  1\right)
}m\otimes1_{\left(  2\right)  }\right)  =m^{\left\langle -1\right\rangle
}R^{2}\otimes R^{1}m^{\left\langle 0\right\rangle }%
\]
for $m\in M$, where $\varphi_{M,M}$ is defined as (\ref{eq defphi}). Then
combining Theorem \ref{theorem center iso H-comod} with (\ref{eqrho}),
$\mathcal{F}$ is an equivalence. Conversely, to show $\mathcal{G}$ is the
quasi-inverse of $\mathcal{F}$, it is enough to verify that for any $\left(
M,\rho\right)  \in{}_{H}^{H}\mathcal{YD}$, $\rho_{R}=\varphi_{M,M}^{-1}\left(
\gamma_{M,\bullet}\right)  $, where $\gamma_{M,\bullet}$ is defined as
(\ref{eqgama}). For $m\in M$, $x\in X$, we have%
\begin{align*}
\varphi_{M,M}\left(  \rho_{R}\right)  _{X}\left(  1_{\left(  1\right)
}m\otimes1_{\left(  2\right)  }x\right)   & =m_{\left\langle -1\right\rangle
}S\left(  {R_{1}}^{2}\right)  {R_{2}}^{2}x\otimes{R_{2}}^{1}{R_{1}}%
^{1}m_{\left\langle 0\right\rangle }\\
& =m_{\left\langle -1\right\rangle }S\left(  {R_{2}}^{2}{R_{1}}^{2}\right)
x\otimes S\left(  {R_{2}}^{1}\right)  {R_{1}}^{1}m_{\left\langle
0\right\rangle }\\
& =m_{\left\langle -1\right\rangle }S\left(  1_{\left(  2\right)  }\right)
x\otimes1_{\left(  1\right)  }m_{\left\langle 0\right\rangle }\\
& =1_{\left(  1^{\prime}\right)  }m_{\left\langle -1\right\rangle }S\left(
1_{\left(  2\right)  }\right)  x\otimes1_{\left(  2^{\prime}\right)
}1_{\left(  1\right)  }m_{\left\langle 0\right\rangle }\\
& =1_{\left(  1\right)  }m_{\left\langle -1\right\rangle }S\left(  1_{\left(
3\right)  }\right)  x\otimes1_{\left(  2\right)  }m_{\left\langle
0\right\rangle }\\
& =m_{\left\langle -1\right\rangle }x\otimes m_{\left\langle 0\right\rangle
}\\
& =\gamma_{M,X}\left(  1_{\left(  1\right)  }m\otimes1_{\left(  2\right)
}x\right)  .
\end{align*}
Now the result follows from Theorem \ref{theorem center iso H-comod}.
\end{proof}

The coproduct $\Delta_{B}$ of the braided group $B$ can be considered
canonically as a coassociative coproduct in $\mathrm{Vec}_{k}$, via
\[
\Delta_{B}:B\rightarrow B\otimes_{t}B\hookrightarrow B\otimes_{k}B.
\]
We have known in Section \ref{sec-Module-coalgebras} that $\left(
B,\Delta_{B},\varepsilon|_{B}\right)  $ is a left $H$-module coalgebra, and
$B$-$\operatorname*{Comod}\nolimits_{\mathcal{C}}={}_{H}^{B}\mathcal{M}$. We
will use the notation $\Delta_{B}\left(  b\right)  =b^{\left(  1\right)
}\otimes b^{\left(  2\right)  }$ to distinguish the original coproduct
$\Delta\left(  b\right)  =b_{\left(  1\right)  }\otimes b_{\left(  2\right)
}$ of $H$.

From now on, we assume that $k$ is algebraically closed of characteristic
zero, and $\left(  H,R\right)  $ is a semisimple quasi-triangular weak Hopf
algebra over $k$. It was shown by Etingof, Nikshych and Ostrik that $H$ is
cosemisimple \cite{Etingof2005On} and the category ${}_{H}^{H}\mathcal{YD}$ is
also semisimple. Then by the dual of Corollary \ref{cor A is s.s.}, the $k
$-coalgebra $\left(  B,\Delta_{B},\varepsilon|_{B}\right)  $ is cosemisimple.

A subcoalgebra $D$ of $\left(  B,\Delta_{B},\varepsilon|_{B}\right)  $ is
called $H$-adjoint-stable if $H\cdot_{ad}D\subseteq D$. Clearly, $\left(
B,\cdot_{ad},\Delta\right)  \in{}_{H}^{H}\mathcal{YD}$. For any
Yetter-Drinfeld submodule $D$ of $B$, it follows from Theorem \ref{thm D is
coalg}, that $D$ is an $H$-adjoint-stable subcoalgebra of $B$.

\begin{proposition}
Let $\left(  H,R\right)  $ be a quasi-triangular weak Hopf algebra. Then there
is a unique decomposition%
\[
B=D_{1}\oplus\cdots\oplus D_{r}%
\]
of minimal $H$-adjoint-stable subcoalgebras $D_{1},...,D_{r}$ of $B$. It
coincide with the decomposition of simple Yetter-Drinfeld modules.

Moreover, the decomposition of $B$ as direct sum of simple Yetter-Drinfeld
modules is unique, and the category ${}$%
\[
_{H}^{H}\mathcal{YD}\cong B\text{-}\operatorname*{Comod}\nolimits_{\mathcal{C}%
}=%
{\textstyle\bigoplus_{j=1}^{r}}
D_{j}\text{-}\operatorname*{Comod}\nolimits_{\mathcal{C}}%
\]
is a direct sum of indecomposable $\mathcal{C}$-module subcategories.
\end{proposition}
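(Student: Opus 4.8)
The plan is to obtain the statement as a specialization of the decomposition theory of Section~\ref{sec-decomp thm} to $\mathcal{C}={}_{H}\mathcal{M}$, combined with the concrete description of the automorphism braided group from Theorem~\ref{thm aubragro of H} and the equivalence of Proposition~\ref{YD iso Bcomod}. First I would record the standing hypotheses. Since $k$ is algebraically closed of characteristic zero and $(H,R)$ is a semisimple quasi-triangular weak Hopf algebra, $\mathcal{C}={}_{H}\mathcal{M}$ is a finite braided multifusion category, the module representability assumption holds (Section~\ref{sec-ABG-for-QT}), and by Theorem~\ref{thm aubragro of H} the automorphism braided group is $B=C_{H}(H_{s})$ with the explicit structure given there. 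Also $\mathcal{Z}_{l}(\mathcal{C})\cong{}_{H}^{H}\mathcal{YD}$ is semisimple by \cite{Etingof2005On}, so by Theorem~\ref{theorem center iso H-comod} the category $B\text{-}\operatorname{Comod}_{\mathcal{C}}$ is semisimple, and $(B,\Delta_{B},\varepsilon|_{B})$ is cosemisimple as a $k$-coalgebra by the dual of Corollary~\ref{cor A is s.s.}.

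Second, I would set up the dictionary between the categorical and the weak-Hopf-algebraic descriptions. By the identification of $\mathcal{C}$-coalgebras with left $H$-module coalgebras in Section~\ref{sec-Module-coalgebras}, a $\mathcal{C}$-subcoalgebra of $(B,\Delta_{B})$ is exactly a subcoalgebra of $(B,\Delta_{B},\varepsilon|_{B})$ stable under $\cdot_{ad}$, i.e. an $H$-adjoint-stable subcoalgebra of $B$. Under the equivalence $\mathcal{F}:B\text{-}\operatorname{Comod}_{\mathcal{C}}\to{}_{H}^{H}\mathcal{YD}$ of Proposition~\ref{YD iso Bcomod}, the object $(B,\Delta_{B})$ corresponds to the Yetter-Drinfeld module $(B,\cdot_{ad},\Delta)$ recalled above (a short check with the formulas in Theorem~\ref{thm aubragro of H} and Proposition~\ref{YD iso Bcomod}), so subobjects of $(B,\Delta_{B})$ in $B\text{-}\operatorname{Comod}_{\mathcal{C}}$ correspond to Yetter-Drinfeld submodules of $B$; since $B\text{-}\operatorname{Comod}_{\mathcal{C}}$ is semisimple these are the same as simple such subobjects, and by Theorem~\ref{thm D is coalg} each of them is moreover a $\mathcal{C}$-subcoalgebra of $B$, hence an $H$-adjoint-stable subcoalgebra.

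Third, I would invoke the decomposition. Write $B=D_{1}\oplus\cdots\oplus D_{r}$ as a direct sum of simple subobjects in $B\text{-}\operatorname{Comod}_{\mathcal{C}}$. By Proposition~\ref{prop indecom equivs} each $D_{j}$ is an indecomposable $\mathcal{C}$-coalgebra, i.e. a minimal $H$-adjoint-stable subcoalgebra of $B$, and conversely every minimal $H$-adjoint-stable subcoalgebra is a simple subobject of $(B,\Delta_{B})$; so the decomposition into minimal $H$-adjoint-stable subcoalgebras and the decomposition into simple Yetter-Drinfeld submodules coincide. Uniqueness of $B=D_{1}\oplus\cdots\oplus D_{r}$ as a direct sum of minimal $H$-adjoint-stable subcoalgebras is the standard structure theory of the cosemisimple coalgebra $B$ (the minimal subcoalgebras are intrinsically determined, and their sum is direct and exhausts $B$), and via $\mathcal{F}$ this transports to the uniqueness of the decomposition of $B$ into simple Yetter-Drinfeld modules. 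Finally, by Proposition~\ref{decom of B-comod}, ${}_{H}^{H}\mathcal{YD}\cong B\text{-}\operatorname{Comod}_{\mathcal{C}}=\bigoplus_{j=1}^{r}D_{j}\text{-}\operatorname{Comod}_{\mathcal{C}}$ is a direct sum of indecomposable $\mathcal{C}$-module subcategories, the indecomposability of each summand being given by Proposition~\ref{prop indecom equivs} and with $D_{j}\text{-}\operatorname{Comod}_{\mathcal{C}}={}_{H}^{D_{j}}\mathcal{M}$.

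The calculations here are all light: the essential work was carried out in Section~\ref{sec-decomp thm} and in Theorem~\ref{thm aubragro of H}, Proposition~\ref{YD iso Bcomod}. The only point that needs genuine care is the uniqueness clause --- verifying that ``unique as a direct sum of minimal $H$-adjoint-stable subcoalgebras'' is a real statement (true because the minimal subcoalgebras of a cosemisimple coalgebra are canonically defined) and that this uniqueness survives passage through the equivalence $\mathcal{F}$ to yield uniqueness of the decomposition of $B$ into simple Yetter-Drinfeld modules.
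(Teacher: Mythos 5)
Your proposal is correct and follows essentially the same route as the paper, which simply derives the statement from Proposition \ref{YD iso Bcomod} together with Proposition \ref{decom of B-comod}; your additional steps (the dictionary between $\mathcal{C}$-subcoalgebras and $H$-adjoint-stable subcoalgebras, the use of Theorem \ref{thm D is coalg} and Proposition \ref{prop indecom equivs}, and the uniqueness via cosemisimplicity) are exactly the ingredients those two propositions already encapsulate.
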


\begin{proof}
This follows from Proposition \ref{YD iso Bcomod} and Proposition \ref{decom
of B-comod}.
\end{proof}

Let $D$ be a minimal $H$-adjoint-stable subcoalgebra of $B$. Next we give the
structure of the indecomposable right $\mathcal{C}$-module category
$D$-$\operatorname*{Comod}\nolimits_{\mathcal{C}}$, applying Proposition
\ref{prop ihom for D-comod} and \ref{prop D-comod equiv} on $\mathcal{C}%
={}_{H}\mathcal{M}$.

For finite dimensional vector space $M,N$, one usually identifies $M^{\ast
}\otimes_{k}N$ with $\operatorname{Hom}_{k}\left(  M,N\right)  $ via
\[
\left(  m^{\ast}\otimes n\right)  \left(  m\right)  =\left\langle m^{\ast
},m\right\rangle n,\forall m\in M,n\in N,m^{\ast}\in M^{\ast},
\]
where $M^{\ast}=\operatorname{Hom}_{k}\left(  M,k\right)  $ is the dual vector
space. Then by Proposition \ref{prop cot. is compa.} for two objects
$M_{1},M_{2}\in{}D$-$\operatorname*{Comod}\nolimits_{\mathcal{C}}$, the
internal Hom $\underline{\operatorname{Hom}}\left(  M_{1},M_{2}\right)
={^{*}\hspace{-0.5ex}M}_{1}\square_{D}^{\mathcal{C}}M_{2}={^{*}\hspace
{-0.5ex}M}_{1}\square_{D}M_{2}\cong\operatorname{Hom}^{D}\left(  M_{1}%
,M_{2}\right)  $, is the set of $D$-comodule map from $M_{1}$ to $M_{2}$. As
an object of $_{H}\mathcal{M}$, the left $H$-action on $\operatorname{Hom}%
^{D}\left(  M_{1},M_{2}\right)  $ is given by
\[
\left(  h\cdot f\right)  \left(  m_{1}\right)  =\sum h_{\left(  2\right)
}f\left(  S^{-1}\left(  h_{\left(  1\right)  }\right)  m_{1}\right)  ,
\]
where $h\in H$, $f\in\operatorname{Hom}^{D}\left(  M_{1},M_{2}\right)  $,
$m_{1}\in M_{1}$. It's not difficult to verify that the evaluation map
$ev_{M_{1},M_{2}}^{\prime}:M_{1}\otimes\operatorname{Hom}^{D}\left(
M_{1},M_{2}\right)  \rightarrow M_{2}$ is exactly the regular evaluation map.
In particular, the internal endomorphism $\underline{\operatorname{Hom}%
}\left(  M_{1},M_{1}\right)  =\operatorname{End}^{D}\left(  M_{1}\right)
^{op}$. As a consequence of Proposition \ref{prop D-comod equiv}, we have:

\begin{proposition}
Let $D$ be a minimal $H$-adjoint-stable subcoalgebra of $B$. For any nonzero
$M\in{}D$-$\operatorname*{Comod}\nolimits_{\mathcal{C}}$, the algebra
$A=\operatorname{End}^{D}\left(  M\right)  ^{op}$ in $\mathcal{C}$ is
semisimple, and the functors
\begin{align*}
F &  =\operatorname{Hom}^{D}\left(  M,\bullet\right)  :{}D\text{-}%
\operatorname*{Comod}\nolimits_{\mathcal{C}}\rightarrow A\text{-}%
\mathrm{Mod}_{\mathcal{C}},\\
G &  =M\otimes_{A}\bullet{}:A\text{-}\mathrm{Mod}_{\mathcal{C}}\rightarrow
D\text{-}\operatorname*{Comod}\nolimits_{\mathcal{C}}%
\end{align*}
establish an equivalence of $\mathcal{C}$-module categories between
$D$-$\operatorname*{Comod}\nolimits_{\mathcal{C}}$ and $A$-$\mathrm{Mod}%
_{\mathcal{C}}\mathrm{.}$
\end{proposition}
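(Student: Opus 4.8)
The final statement to prove is a \textbf{Proposition} asserting, for $D$ a minimal $H$-adjoint-stable subcoalgebra of $B$ and any nonzero $M \in D\text{-}\operatorname{Comod}_{\mathcal{C}}$, that $A = \operatorname{End}^D(M)^{op}$ is a semisimple algebra in $\mathcal{C}$ and that the functors $F = \operatorname{Hom}^D(M,\bullet)$ and $G = M \otimes_A \bullet$ give an equivalence of $\mathcal{C}$-module categories between $D\text{-}\operatorname{Comod}_{\mathcal{C}}$ and $A\text{-}\mathrm{Mod}_{\mathcal{C}}$.

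The plan is to deduce this directly from Proposition \ref{prop D-comod equiv} applied with the finite multitensor category $\mathcal{C} = {}_H\mathcal{M}$. First I would check the hypotheses of that proposition: we need $\mathcal{C}$ to be a finite multitensor category, which holds since $H$ is a finite-dimensional (semisimple) weak Hopf algebra, and we need $D$ to be a cosemisimple coalgebra in $\mathcal{C}$. The cosemisimplicity of $D$ in $\mathcal{C}$ (equivalently, semisimplicity of the module category $D\text{-}\operatorname{Comod}_{\mathcal{C}}$) follows because $D\text{-}\operatorname{Comod}_{\mathcal{C}}$ is an indecomposable $\mathcal{C}$-module subcategory of $\mathcal{Z}_l(\mathcal{C}) = B\text{-}\operatorname{Comod}_{\mathcal{C}} \cong {}_H^H\mathcal{YD}$, which is semisimple by the results of Etingof--Nikshych--Ostrik recalled in Section \ref{sec-stru-YD}. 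A semisimple abelian module subcategory of a semisimple category is semisimple, so $D$ is a cosemisimple $\mathcal{C}$-coalgebra. Then Proposition \ref{prop D-comod equiv} immediately gives that $A = {}^*M \square_D^{\mathcal{C}} M$ is a semisimple algebra in $\mathcal{C}$ and that $F = {}^*M \square_D^{\mathcal{C}} \bullet$ and $G = M \otimes_A \bullet$ are mutually quasi-inverse $\mathcal{C}$-module equivalences between $D\text{-}\operatorname{Comod}_{\mathcal{C}}$ and $A\text{-}\mathrm{Mod}_{\mathcal{C}}$.

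The remaining content is the translation of the abstract cotensor description into the concrete Hopf-algebraic one, which has essentially been set up in the paragraph preceding the statement: by Proposition \ref{prop cot. is compa.}, $A = {}^*M \square_D^{\mathcal{C}} M = {}^*M \square_D M$, and via the identification $M^* \otimes_k N \cong \operatorname{Hom}_k(M,N)$ one checks that ${}^*M \square_D M \cong \operatorname{Hom}^D(M,M)$, with the induced algebra structure being the opposite of composition, i.e. $\underline{\operatorname{Hom}}(M,M) = \operatorname{End}^D(M)^{op}$; likewise ${}^*M \square_D^{\mathcal{C}} N \cong \operatorname{Hom}^D(M,N)$, so $F$ becomes $\operatorname{Hom}^D(M,\bullet)$. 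One should also record the explicit $H$-module structure $(h\cdot f)(m_1) = \sum h_{(2)} f(S^{-1}(h_{(1)}) m_1)$ on $\operatorname{Hom}^D(M_1,M_2)$ — this is just the left dual action on ${}^*M_1 = \operatorname{Hom}_k(M_1,k)$ combined with the $H$-action on $M_2$ under the above identification — and verify that the abstract evaluation morphism $ev'_{M_1,M_2}$ transports to the ordinary evaluation $f \otimes m_1 \mapsto f(m_1)$, as claimed in the text.

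The main obstacle, if there is one, is purely bookkeeping: making sure the isomorphism $A = {}^*M \square_D^{\mathcal{C}} M \cong \operatorname{End}^D(M)^{op}$ is an isomorphism of \emph{algebras in $\mathcal{C}$} (not merely of objects or of $k$-algebras), so that the equivalence $A\text{-}\mathrm{Mod}_{\mathcal{C}} = \operatorname{End}^D(M)^{op}\text{-}\mathrm{Mod}_{\mathcal{C}}$ is legitimate. This amounts to checking that the multiplication morphism $\mu$ defined abstractly via the internal-Hom composition (using $ev'$) corresponds, under the identification, to composition of $D$-comodule maps in the opposite order — a straightforward diagram chase using the formula for $ev'_{M_1,M_2}$ and the definition of $\mu$ recalled just before the statement. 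Once that identification is in place, every remaining assertion of the Proposition is a verbatim restatement of Proposition \ref{prop D-comod equiv}, and the proof is complete; I would simply write ``This follows from Proposition \ref{prop D-comod equiv} and the identifications established above.''
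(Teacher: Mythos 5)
Your overall route is the same as the paper's: identify the internal Hom $\underline{\operatorname{Hom}}(M,N)={}^{\ast}M\square_{D}^{\mathcal{C}}N$ with $\operatorname{Hom}^{D}(M,N)$ (and the internal endomorphism algebra with $\operatorname{End}^{D}(M)^{op}$) via Proposition \ref{prop cot. is compa.} and the identification $M^{\ast}\otimes_{k}N\cong\operatorname{Hom}_{k}(M,N)$, check that $D$ is a cosemisimple coalgebra in $\mathcal{C}$, and then invoke Proposition \ref{prop D-comod equiv}. The cosemisimplicity argument and the bookkeeping about the algebra structure and the evaluation morphism are exactly the content of the paragraph the paper places just before the statement, so on those points your proposal is correct and faithful to the intended proof.

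There is, however, one genuine gap: Proposition \ref{prop D-comod equiv} assumes that $M$ is a \emph{generator} of $D$-$\operatorname*{Comod}_{\mathcal{C}}$, whereas the statement you are proving asserts the conclusion for an arbitrary nonzero $M$. You never verify the generator hypothesis, and in fact you never use the minimality of $D$ at all --- yet minimality is precisely what makes ``nonzero'' suffice. Without it the claim fails: if $M$ missed some simple object $V$ of $D$-$\operatorname*{Comod}_{\mathcal{C}}$, then $\operatorname{Hom}^{D}(M,V)=0$ and $F$ could not be an equivalence. The missing step is short: $D$ minimal means $D$ is a simple, hence indecomposable, object of $B$-$\operatorname*{Comod}_{\mathcal{C}}$, so by Proposition \ref{prop indecom equivs} the module category $D$-$\operatorname*{Comod}_{\mathcal{C}}$ is indecomposable; since it is also semisimple, Theorem \ref{thm Ostrik} guarantees that every nonzero object generates it. Adding this observation closes the gap, after which the rest of your argument goes through as written.
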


We will give another characterization of the category $D$%
-$\operatorname*{Comod}\nolimits_{\mathcal{C}}$ by viewing it as a left module
category over the tensor category $\mathrm{Vec}_{k}$ with $X\otimes
M=X\otimes_{k}M$, for any $X\in\mathrm{Vec}_{k}$, $M\in D$%
-$\operatorname*{Comod}\nolimits_{\mathcal{C}}$. $X\otimes_{k}M$ is an object
of $D$-$\operatorname*{Comod}\nolimits_{\mathcal{C}}$ via the $H$-action and
$D$-coaction on the right tensorand $M$. For objects $M_{1},M_{2}\in
D$-$\operatorname*{Comod}\nolimits_{\mathcal{C}}$, and $X\in\mathrm{Vec}_{k}$,
the restriction of the canonical isomorphism
\[
\operatorname{Hom}_{k}\left(  X,\operatorname{Hom}_{k}\left(  M_{1}%
,M_{2}\right)  \right)  \cong\operatorname{Hom}_{k}\left(  X\otimes_{k}%
M_{1},M_{2}\right)
\]
on $\operatorname{Hom}_{k}\left(  X,\operatorname{Hom}_{H}^{D}\left(
M_{1},M_{2}\right)  \right)  $ induces a natural isomorphism
\[
\operatorname{Hom}_{k}\left(  X,\operatorname{Hom}_{H}^{D}\left(  M_{1}%
,M_{2}\right)  \right)  \cong\operatorname{Hom}_{H}^{D}\left(  X\otimes
_{k}M_{1},M_{2}\right)  ,
\]
Thus the internal Hom $\underline{\operatorname{Hom}}\left(  M_{1}%
,M_{2}\right)  =\operatorname{Hom}_{H}^{D}\left(  M_{1},M_{2}\right)  $, and
the evaluation map $ev_{M_{1},M_{2}}$ is indeed the regular evaluation map.

Applying Theorem \ref{thm liu zhu} to the module category $D$%
-$\operatorname*{Comod}\nolimits_{\mathcal{C}}$ over $\mathrm{Vec}_{k}$, we get:

\begin{theorem}
\label{theo-C=Vec-equiva}Let $D$ be a minimal $H$-adjoint-stable subcoalgebra
of $B$. If $0\neq W\in{}^{D}\mathcal{M}$, then $A_{W}=\operatorname{End}%
_{H}^{D}\left(  \operatorname{Ind}\left(  W\right)  \right)  $ is a semisimple
$k$-algebra, and the functors
\begin{align*}
F &  =\operatorname{Hom}_{H}^{D}\left(  \operatorname{Ind}\left(  W\right)
,\bullet\right)  :D\text{-}\operatorname*{Comod}\nolimits_{\mathcal{C}%
}\rightarrow\mathcal{M}_{A_{W}},\\
G &  =\bullet\otimes_{A_{W}}\left(  \operatorname{Ind}\left(  W\right)
\right)  :\mathcal{M}_{A_{W}}\rightarrow{}D\text{-}\operatorname*{Comod}%
\nolimits_{\mathcal{C}}%
\end{align*}
establish an equivalence of between $D$-$\operatorname*{Comod}%
\nolimits_{\mathcal{C}}$ and $\mathcal{M}_{A_{W}}$.

Furthermore, any irreducible object $V\in D$-$\operatorname*{Comod}%
\nolimits_{\mathcal{C}}$ is isomorphic to $U\otimes_{A_{W}}\operatorname{Ind}%
\left(  W\right)  $, for some simple right $A_{W}$-module $U$.
\end{theorem}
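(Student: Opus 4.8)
The plan is to realize $D\text{-}\operatorname{Comod}_{\mathcal{C}}={}_{H}^{D}\mathcal{M}$ as a semisimple module category over the fusion category $\mathrm{Vec}_{k}$, with $\operatorname{Ind}(W)$ as a generator, and then invoke Theorem \ref{thm Ostrik} and Theorem \ref{thm liu zhu}. First I would check the hypotheses. The action $X\otimes M=X\otimes_{k}M$ makes ${}_{H}^{D}\mathcal{M}$ a $\mathrm{Vec}_{k}$-module category (the action is bilinear on morphisms and exact in the first variable), and the internal Hom for this structure was identified in the discussion preceding the statement as $\underline{\operatorname{Hom}}(M_{1},M_{2})=\operatorname{Hom}_{H}^{D}(M_{1},M_{2})$, with the evaluation being the ordinary one. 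The category ${}_{H}^{D}\mathcal{M}$ is semisimple: by Proposition \ref{decom of B-comod} it is a direct summand of $B\text{-}\operatorname{Comod}_{\mathcal{C}}\cong\mathcal{Z}_{l}(\mathcal{C})$, which is semisimple. Since $D$ is a minimal $H$-adjoint-stable subcoalgebra, it is an indecomposable $\mathcal{C}$-coalgebra, so by Proposition \ref{prop indecom equivs} the $\mathcal{C}$-module category $D\text{-}\operatorname{Comod}_{\mathcal{C}}$ is indecomposable; a fortiori it is indecomposable as a $\mathrm{Vec}_{k}$-module category. Consequently every nonzero object generates it, and it remains only to see that $\operatorname{Ind}(W)\neq 0$ whenever $W\neq 0$.

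For this I would use the explicit formula in Lemma \ref{lemma Ind is a left adj}: the unit $\eta_{W}\colon W\to F(\operatorname{Ind}(W))$ of the adjunction $\operatorname{Ind}\dashv F$ is injective — composing $\eta_{W}$ with the map induced by the counit of $H$ and using the comodule counit axiom (together with the identification of $H_{t}\otimes_{t}W$ with $W$) recovers $\operatorname{id}_{W}$ — whence $\operatorname{Ind}(W)\neq 0$. Alternatively, $F$ is faithful and hence reflects epimorphisms, so for every $N\in{}_{H}^{D}\mathcal{M}$ the counit $\operatorname{Ind}(F(N))\to N$ is epic (by the triangle identity $F$ of it is split epic), which gives nonvanishing on $F(N)$; combining this with additivity and with the cosemisimplicity of $D$ yields the claim for all $W$. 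This is the only genuinely computational point, and it is routine.

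Now Theorem \ref{thm Ostrik} applies with $\mathcal{C}=\mathrm{Vec}_{k}$, $\mathcal{M}=D\text{-}\operatorname{Comod}_{\mathcal{C}}$ and generator $M=\operatorname{Ind}(W)$: the internal endomorphism algebra $A=\underline{\operatorname{Hom}}(\operatorname{Ind}(W),\operatorname{Ind}(W))=\operatorname{End}_{H}^{D}(\operatorname{Ind}(W))=A_{W}$ is a semisimple $k$-algebra, and $F=\underline{\operatorname{Hom}}(\operatorname{Ind}(W),\bullet)=\operatorname{Hom}_{H}^{D}(\operatorname{Ind}(W),\bullet)\colon D\text{-}\operatorname{Comod}_{\mathcal{C}}\to\mathcal{M}_{A_{W}}$ is an equivalence of $\mathrm{Vec}_{k}$-module categories (matching the left/right conventions of Theorem \ref{thm Ostrik} so that the target is the category $\mathcal{M}_{A_{W}}$ of right $A_{W}$-modules). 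By Theorem \ref{thm liu zhu}, $G=\bullet\otimes_{A_{W}}\operatorname{Ind}(W)$ is a quasi-inverse of $F$, which is the asserted equivalence.

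Finally, the statement about irreducibles is then formal: since $G$ is an equivalence it carries the simple objects of $\mathcal{M}_{A_{W}}$ — the simple right $A_{W}$-modules $U$ — bijectively onto the simple objects of $D\text{-}\operatorname{Comod}_{\mathcal{C}}$, so every irreducible $V\in D\text{-}\operatorname{Comod}_{\mathcal{C}}$ is isomorphic to $G(U)=U\otimes_{A_{W}}\operatorname{Ind}(W)$ for some simple $U$. The main obstacle in the whole argument is confined to the second paragraph — the nonvanishing of $\operatorname{Ind}(W)$ and pinning down the conventions that identify the internal endomorphism algebra with $A_{W}$ — since all the substantive content is already contained in Theorems \ref{thm Ostrik} and \ref{thm liu zhu} and Proposition \ref{prop indecom equivs}.
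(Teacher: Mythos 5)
There is a genuine gap, and it sits exactly at the step you dismiss as routine. You argue: $D$-$\operatorname*{Comod}_{\mathcal{C}}$ is indecomposable as a $\mathcal{C}$-module category (by Proposition \ref{prop indecom equivs}), hence ``a fortiori'' indecomposable as a $\mathrm{Vec}_{k}$-module category, hence every nonzero object generates it, so it suffices to check $\operatorname{Ind}(W)\neq 0$. The implication is backwards. A $\mathcal{C}$-module subcategory is in particular a $\mathrm{Vec}_{k}$-module subcategory, so indecomposability over $\mathrm{Vec}_{k}$ implies indecomposability over $\mathcal{C}$, not the other way around. In fact a semisimple $k$-linear category with more than one isomorphism class of simple objects is \emph{always} decomposable as a $\mathrm{Vec}_{k}$-module category (split it along the blocks of simples), and $D$-$\operatorname*{Comod}_{\mathcal{C}}$ will generally have many simples. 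So you cannot conclude that every nonzero object is a generator for the $\mathrm{Vec}_{k}$-action, and reducing the whole problem to $\operatorname{Ind}(W)\neq 0$ loses the actual content: a single simple object never generates a block containing a non-isomorphic simple.

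What Theorem \ref{thm liu zhu} with $\mathcal{C}=\mathrm{Vec}_{k}$ actually requires is that $\underline{\operatorname{Hom}}(\operatorname{Ind}(W),V)=\operatorname{Hom}_{H}^{D}(\operatorname{Ind}(W),V)\neq 0$ for \emph{every} simple $V\in D$-$\operatorname*{Comod}_{\mathcal{C}}$, and this is where the minimality of $D$ enters in an essential, non-formal way. The paper's proof uses the adjunction $\operatorname{Ind}\dashv F$ of Lemma \ref{lemma Ind is a left adj} to reduce this to $\operatorname{Hom}^{D}(W,V)\neq 0$, and then shows that $D^{\prime}=\mathrm{span}\{v^{\ast}\rightharpoonup v\}$ is a nonzero $H$-adjoint-stable left coideal of $D$, hence equals $D$ by minimality; combined with cosemisimplicity of $D$ this embeds $D$, and therefore every simple left coideal of $D$ (in particular a simple subcomodule of $W$), into a finite direct sum of copies of $V$. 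None of this is in your proposal, and without it the claimed equivalence can fail (if $\operatorname{Ind}(W)$ missed some simple $V$, the functor $F$ would kill $V$). The remainder of your argument --- semisimplicity of the category, the identification of the internal Hom and of $A_{W}$, the appeal to Theorems \ref{thm Ostrik} and \ref{thm liu zhu}, and the formal deduction of the statement about irreducibles --- does match the paper's route, but the generator step must be repaired as above.
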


\begin{proof}
The proof we give here is similar to that of \cite[Proposition 5.2]%
{LiuZhu2019On}. To apply Theorem \ref{thm liu zhu} to the category
$\mathcal{M}=D$-$\operatorname*{Comod}\nolimits_{\mathcal{C}}$ and the object
$M=\operatorname{Ind}\left(  W\right)  \in\mathcal{M}$, we only need to verify
that $\operatorname{Ind}\left(  W\right)  $ generates the module category
$\mathcal{M}$ over $\mathrm{Vec}_{k}$. For any simple object $V\in\mathcal{M}%
$, we claim that the internal Hom $\underline{\operatorname{Hom}}\left(
M,V\right)  =\operatorname{Hom}_{H}^{D}\left(  M,V\right)  $ is nonzero. Since
$\operatorname{Ind}$ is the left adjoint of the forgetful functor ${}_{H}%
^{D}\mathcal{M}\rightarrow{}^{D}\mathcal{M}$, it suffices to show
$\operatorname{Hom}^{D}\left(  W,V\right)  \neq0$. Let $D^{\prime
}=\mathrm{span}\left\{  v^{\ast}\rightharpoonup v\mid v\in V,v^{\ast}\in
V^{\ast}\right\}  $ with $v^{\ast}\rightharpoonup v=\sum v^{\left\langle
-1\right\rangle }\left\langle v^{\ast},v^{\left\langle 0\right\rangle
}\right\rangle $. It is easy to check that $D^{\prime}$ is a nonzero left
coideal of $D$ and is also an $H$-submodule under the left $H$-action
$\cdot_{ad}$. Since $D$ is irreducible in ${}_{H}^{H}\mathcal{YD}$, then
$D^{\prime}=D$. So there exists a surjection $V^{\left(  n\right)
}\rightarrow D\rightarrow0$ in $^{D}\mathcal{M}$ for some $n\in\mathbb{N}^{+}%
$. Since $D$ is cosemisimple, there exists an injection $0\rightarrow
D\rightarrow$ $V^{\left(  n\right)  }$ in $^{D}\mathcal{M}$. Take a simple
$D$-subcomodule $W^{\prime}$ of $W$, then $W^{\prime}$ is isomorphic to a
simple left coideal of $D$. So there exists a left $D$-comodule injection
$j:W^{\prime}\rightarrow V$. Thus $\operatorname{Hom}^{D}\left(  W,V\right)
\neq0$, and the claim follows. Then by the isomorphism
\[
\operatorname{Hom}_{H}^{D}\left(  \operatorname{Hom}_{H}^{D}\left(
M,V\right)  \otimes M,V\right)  \cong\operatorname{Hom}\left(
\operatorname{Hom}_{H}^{D}\left(  M,V\right)  ,\operatorname{Hom}_{H}%
^{D}\left(  M,V\right)  \right)  \neq0,
\]
so the evaluation morphism $\operatorname{Hom}_{H}^{D}\left(  M,V\right)
\otimes M\rightarrow V$ is a surjection in $\mathcal{M}$. Hence $M$ is a
generator, and the result follows.
\end{proof}

\bibliographystyle{abbrv}

\end{document}